\DeclareMathAlphabet{\mathpzc}{OT1}{pzc}{m}{it}
\newtheorem{theorem}{Theorem}[section]
\newtheorem*{theorem*}{Theorem}
\newtheorem{theorem-non}{Theorem}
\newtheorem{proposition}[theorem]{Proposition}
\newtheorem*{lemma*}{Lemma}
\newtheorem{corollary}[theorem]{Corollary}
\newtheorem{conjecture}[theorem]{Conjecture}
\newtheorem*{conjecture*}{Conjecture}
\theoremstyle{definition}
\newtheorem{definition}[theorem]{Definition}
\newtheorem{example}[theorem]{Example}
\theoremstyle{remark}
\newtheorem{remark}[theorem]{Remark}
\DeclareMathOperator{\rank}{rank}
\numberwithin{equation}{section}
\begin{document}
\title[]{Hermitian non-K\"{a}hler structures on products of principal $S^{1}$-bundles over complex flag manifolds and applications in Hermitian geometry with torsion}

\author{Eder M. Correa}
\address{\resizebox{12cm}{.2cm}{ \textit{IMPA \ - \ Instituto de Matem\'{a}tica Pura e Aplicada,  Estr. Dona Castorina, 110, Rio de Janeiro, 22460-320, Brasil}} }
\thanks{Eder M. Correa was supported by CNPq grant 150899/2017-3}
\thanks{E-mail: \rm edermoraes@impa.br} 
\begin{abstract} 
In this paper we provide an explicit description of normal almost contact structures obtained from Cartan-Ehresmann connections (gauge fields) on principal $S^{1}$-bundles over complex flag manifolds. The main feature of our approach is to employ elements of representation theory of complex simple Lie algebras in order to describe and classify these structures. By following \cite{MORIMOTO}, we use these normal almost contact structures to explicitly describe a huge class of compact Hermitian non-K\"{a}hler manifolds obtained from products of principal $S^{1}$-bundles over complex flag manifolds. Moreover, by following \cite{Manjarin}, we obtain from our description several concrete examples of 1-parametric families of complex structures on products of principal $S^{1}$-bundles over flag manifolds, these concrete examples generalize the Calabi-Eckmann manifolds \cite{CALABIECKMANN}. Further, by following \cite{GRANTCHAROV}, as an application of our main results in the setting of KT structures on toric bundles over flag manifolds, we classify a huge class of explicit examples of Calabi-Yau structures with torsion (CYT) on certain Vaisman manifolds (generalized Hopf manifolds \cite{Vaisman}). Also as an application of our main results, we provide several new concrete examples of astheno-K\"{a}hler structures on products of compact homogeneous Sasaki manifolds. This last construction generalizes, in the homogeneous setting, the results introduced in \cite{Matsuo1} for Calabi-Eckmann manifolds. 
\end{abstract}

\maketitle

\hypersetup{linkcolor=black}

\hypersetup{linkcolor=black}


\section{Introduction}

\subsection{An overview for the reader}
In 1948, H. Hopf \cite{HOPF} gave the first examples of compact complex manifolds which are non-K\"{a}hler by showing that $S^{1} \times S^{2m+1}$ admits a complex structure for any positive integer $m$. These structures are obtained from the quotient of $\mathbb{C}^{m+1} \backslash \{0\}$ by a holomorphic and totally discontinuous action of $\mathbb{Z}$.

In 1953, Calabi and Eckmann \cite{CALABIECKMANN} showed that any product of spheres of odd dimension $S^{2n+1} \times S^{2m+1}$ (for $n,m \geq 0$) can be endowed with a structure of complex manifold. In order to achieve that, they considered the fibration 
\begin{equation}
S^{1} \times S^{1} \hookrightarrow S^{2n+1} \times S^{2m+1} \to \mathbb{C}P^{n} \times \mathbb{C}P^{m},
\end{equation}
where $\mathbb{C}P^{n}$  denotes the complex projective space of dimension $n$, and equipped the torus fiber of this bundle with a structure of elliptic curve. 

In 1963, A. Morimoto \cite{MORIMOTO} made a study of almost complex structures on the product space of almost contact manifolds \cite{GRAY}, \cite{SASAKIALMOST}. He showed that any product of almost contact manifolds can be endowed with an almost complex structure, and proved that the induced almost complex structure on the product is integrable if and only if both underlying almost contact structures are normal \cite{SASAKIHARAKEYMAALMOST}. By using the normal contact metric structure on $S^{2n+1}$ introduced in \cite{SASAKIHARAKEYMAONDIFF}, he generalized Calabi-Eckmann construction of complex structures on products of odd-dimensional spheres. Later, in 1980, by means of Morimoto's ideas, K. Tsukada showed in \cite{Tsukada} how to endow $S^{2n+1} \times S^{2m+1}$ with a 1-parametric family of complex structures which include Morimoto's complex structure as a particular case.

In \cite{LOEB}, J.-J. Loeb and M. Nicolau generalized Calabi-Eckmann and Hopf complex structures through the construction of a class of complex structures on the product $S^{2n+1} \times S^{2m+1}$ that contains the precedents. Similar techniques were used by S. L\'{o}pez de Medrano and A. Verjovsky in \cite{MEDRANO} to construct another family of non-K\"{a}hlerian compact manifolds, and later it was generalized by L. Meersseman in \cite{MEERSSEMAN}. In 2008, M. Manjar\'{i}n constructed 1-parameter families of complex structures by means of normal almost contact structures and CR-structures. These families include as particular cases the Hopf manifolds, the Calabi-Eckmann manifolds, and the complex structures on the product of two normal almost contact manifolds constructed by Morimoto and Tsukada.

Recently, in \cite{SANKARAN}, inspired by Loeb-Nicolau's construction \cite{LOEB}, Sankaran and Thakur obtained a family of complex structures on $S(L_{1})\times S(L_{2})$, where $S(L_{i}) \to X_{i}$, $i = 1, 2$, is the smooth principal $S^{1}$-bundle associated to a holomorphic principal $\mathbb{C}^{\times}$-bundle $L_{i} \to X_{i}$ over a complex manifold $X_{i}$, $i = 1, 2$.

In this paper, we study Morimoto's construction \cite{MORIMOTO}, and Manjar\'{i}n's construction \cite{Manjarin}, of complex structures, and 1-parameter families of complex structures, respectively, on products of normal almost contact manifolds provided by principal $S^{1}$-bundles over flag manifolds. Our main purpose is classify these complex structures by using elements of Lie theory which underlie the geometry of complex flag manifolds, such as representation theory of simple Lie algebras and {\textit{painted Dynkin diagrams}} \cite{Alekseevsky}. In order to do so, we develop some techniques introduced in \cite{CONTACTCORREA} to explicitly compute the Cartan-Ehresmann connections on principal $S^{1}$-bundles over complex flag manifolds. Then, we combine this with the ideas developed in \cite{MORIMOTO}, and \cite[Proposition 2.9]{Manjarin}, obtaining a concrete treatment in terms of differential forms, irreducible representations, and painted Dynkin diagrams for such constructions of complex structures. 

Also, by looking at the aspects of Hermitian geometry on manifolds provided by total spaces of principal torus bundles over flag manifolds \cite[Proposition 4.26]{Algmodels}, \cite{GRANTCHAROV}, \cite{Poddar}, as applications of our main results, we give a concrete description of Calabi-Yau connections with torsion (CYT structures) on certain Vaisman manifolds (generalized Hopf manifolds \cite{Vaisman}). Recently, CYT structures on non-K\"{a}hler manifolds have attracted attention as models for string compactifications, see for instance \cite{IVANOV} and references therein. By following \cite{GRANTCHAROV}, we provide a constructive method to describe a huge class of concrete examples of CYT structures which generalizes, in the homogeneous setting, some well-known constructions from Hopf manifolds $S^{2n+1}\times S^{1}$ to more general spaces.

Still in the setting of Hermitian geometry, we give also a concrete description of astheno-K\"{a}hler structures on products of compact homogeneous Sasaki manifolds. As pointed out in \cite{FINO}, until recently astheno-K\"{a}hler metrics were not receiving a
lot of attention due to the lack of examples. Actually, there are not many examples of astheno-K\"{a}hler manifolds, some of them are given by Calabi-Eckmann manifolds \cite{Matsuo1} and by nilmanifolds \cite{FINO2}. By following \cite{Tsukada}, \cite{Matsuo1}, and \cite{Matsuo2}, we obtain as an application of our main results several concrete new examples of astheno-K\"{a}hler manifolds which generalize the construction on Calabi-Eckmann manifolds $S^{2n+1} \times S^{2m+1}$ introduced in \cite{Matsuo1}. In this last case, as in the previous constructions described, our treatment take into account elements of representation theory of complex simple Lie algebras which control the projective algebraic geometry of complex flag manifolds. 

\subsection{Main results} Our main results can be organized as follows: 

\begin{enumerate}

\item Classification of normal almost contact structures obtained from Cartan-Ehresmann connections on principal circle bundles over flag manifolds;

\item Classification of 1-parametric families of complex structures on products of circle bundles over flag manifolds by using elements of Lie theory;

\item Explicit description by using elements of Lie theory of Calabi-Yau structures with torsion on certain compact Vaisman manifolds;

\item Explicit description, by using elements of Lie theory, of astheno-K\"{a}hler structures on products of compact homogeneous Sasaki manifolds.
\end{enumerate}

In what follows, we provide a brief description for all the results listed above. Our first result provides an improvement for Morimoto's result \cite[p. 432]{MORIMOTO}, which asserts that any simply connected homogeneous contact manifold admits a normal almost contact structure. Actually, we show that the normality condition holds for any principal $S^{1}$-bundle over a complex flag manifold by describing explicitly these normal almost contact structures in terms of the associated Cartan-Ehresmann connection. We recall that a complex flag manifold is a compact simply connected homogeneous Hodge manifold $X$ defined by
\begin{equation}
X = G^{\mathbb{C}}/P = G/G \cap P,
\end{equation}
where $G^{\mathbb{C}}$ is a complex simple Lie group with compact real form given by $G$, and $P \subset G^{\mathbb{C}}$ is a parabolic Lie subgroup. By considering ${\text{Lie}}(G^{\mathbb{C}}) = \mathfrak{g}^{\mathbb{C}}$, if we choose a Cartan subalgebra $\mathfrak{h} \subset \mathfrak{g}^{\mathbb{C}}$, and a simple root system $\Sigma \subset \mathfrak{h}^{\ast}$, up to conjugation, we have that $P = P_{\Theta}$, for some $\Theta \subset \Sigma$, where $P_{\Theta}$ is a parabolic Lie subgroup completely determined by $\Theta$, see for instance \cite{Alekseevsky}. In this last setting, our first result is the following.

\begin{theorem-non}
\label{Theo1}
Let $X_{P} = G^{\mathbb{C}}/P$ be a complex flag manifold associated to some parabolic Lie subgroup $P = P_{\Theta}$ of a complex simple Lie group $G^{\mathbb{C}}$. Then, given a principal $S^{1}$-bundle $Q \in \mathscr{P}(X_{P},{\rm{U}}(1))$, we have that

\begin{enumerate}

\item $Q = \displaystyle \sum_{\alpha \in \Sigma \backslash \Theta} Q(\ell_{\alpha}\omega_{\alpha})$, such that $\ell_{\alpha} \in \mathbb{Z}$, $\forall \alpha \in \Sigma \backslash \Theta$.

\item The manifold defined by the total space $Q$ admits a normal almost contact structure $(\phi,\xi = \frac{\partial}{\partial \theta},\eta)$, such that
\begin{equation}
\label{localalmostcontactform}
\eta = \displaystyle \frac{\sqrt{-1}}{2}\big ( \partial - \overline{\partial} \big ) \log \Big ( \prod_{\alpha \in \Sigma \backslash \Theta} ||s_{U}v_{\omega_{\alpha}}^{+}||^{2  \ell_{\alpha}}\Big) + d\theta_{U},
\end{equation}
where $s_{U} \colon U \subset X_{P} \to G^{\mathbb{C}}$ is a local section, and $v_{\omega_{\alpha}}^{+}$ is the highest weight vector with weight $\omega_{\alpha}$ associated to the fundamental irreducible $\mathfrak{g}^{\mathbb{C}}$-module $V(\omega_{\alpha})$, $\forall \alpha \in \Sigma \backslash \Theta$. Moreover, $\phi \in {\text{End}}(TQ)$ is completely determined by the horizontal lift of $\sqrt{-1}\eta$ and the canonical invariant complex structure $J_{0}$ of $X_{P}$. Furthermore, it satisfies $\pi_{\ast} \circ \phi = J_{0} \circ \pi_{\ast}$.

\item We have a Riemannian metric $g_{Q}$ on $Q$ such that 
\begin{equation}
g_{Q} = \pi^{\ast} \big (\omega_{X_{P}}({\rm{id}} \otimes J_{0}) \big ) + \eta \otimes \eta \ \ {\text{and}} \ \ \mathscr{L}_{\xi}g_{Q} = 0, 
\end{equation}
where $\omega_{X_{P}}$ is an invariant K\"{a}hler form on $X_{P}$.

\end{enumerate}

\end{theorem-non}

The result above combines \cite[Theorem 1]{HATAKEYMA}, and \cite[Theorem 6]{MORIMOTO}, with the recent results provided by the author in \cite{CONTACTCORREA}, and \cite{CALABICORREA}. The key point in our construction is to provide the precise description of
\begin{equation}
\label{isocircleline}
{\text{Pic}}(X_{P}) \cong \mathscr{P}(X_{P},{\rm{U}}(1)), \ \ L \mapsto Q(L),
\end{equation}
for any complex flag manifold $X_{P} = G^{\mathbb{C}}/P$, where $\mathscr{P}(X_{P},{\rm{U}}(1))$ is the set of isomorphism classes of principal $S^{1}$-bundles over $X_{P}$. 
Although the isomorphism \ref{isocircleline} is well-known, e.g. \cite{TOROIDAL}, the precise correspondence in terms of connections, curvature and characteristic classes has been recently described in \cite{CONTACTCORREA}. 

\begin{remark}
Let us recall that, given an almost contact manifold $M$ with structure tensors $(\phi, \xi,\eta)$, we can define an almost ${\rm{CR}}$-structure on $M$ by using its structure tensors as follows: consider $\mathscr{D} = \ker(\eta)$, and define 
\begin{equation}
{\mathcal{J}}_{\phi} \colon \mathscr{D}^{\mathbb{C}} \to \mathscr{D}^{\mathbb{C}},
\end{equation}
where $\mathscr{D}^{\mathbb{C}} = \mathscr{D}\otimes\mathbb{C}$, and ${\mathcal{J}}_{\phi}$ is the $\mathbb{C}$-linear extension of $\phi|_{\mathscr{D}}$. From this, it can be shown that the normality condition for $(\phi, \xi,\eta)$ implies the integrability of ${\mathcal{J}}_{\phi}$, so a normal almost contact manifold has a canonical ${\rm{CR}}$-structure, see for instance \cite[Theorem 6.6]{BLAIR}.
\end{remark}
By following the ideas developed in \cite{Manjarin}, and the result of Theorem \ref{Theo1}, we provide the following result which classifies 1-parametric families of complex structures on products of principal circle bundles over complex flag manifolds.

\begin{theorem-non}
\label{Theo2}
Let $X_{P_{i}}$ be a complex flag manifold associated to some parabolic Lie subgroup $P_{i} \subset G_{i}^{\mathbb{C}}$, such that $i = 1,2$. Then, given principal $S^{1}$-bundles $Q_{1} \in \mathscr{P}(X_{P_{1}},{\rm{U}}(1))$ and $Q_{2} \in \mathscr{P}(X_{P_{2}},{\rm{U}}(1))$, we have the following:

\begin{enumerate}

\item There exists a $1$-parametric family of complex structures $J_{\tau} \in {\text{End}}(T(Q_{1} \times Q_{2}))$ determined by $\mathcal{J}_{1} \oplus \mathcal{J}_{2}$, and by a complex valued $1$-form $\Psi_{\tau} \in \Omega^{1}(Q_{1} \times Q_{2}) \otimes \mathbb{C}$, which satisfies $J_{\tau}(\Psi_{\tau}) = \sqrt{-1} \Psi_{\tau}$, defined by
$$\Psi_{\tau} = \displaystyle{\frac{\sqrt{-1}}{2{\text{Im}}(\tau)} \Bigg \{\overline{\tau} \Bigg [d^{c}\log \Bigg ( \frac{1}{\displaystyle{\prod_{\alpha \in \Sigma_{1} \backslash \Theta_{1}} ||s_{U_{1}}v_{\omega_{\alpha}}^{+}||^{\ell_{\alpha}}}}\Bigg ) + d\theta_{U_{1}} \Bigg ] + d^{c}\log \Bigg ( \frac{1}{\displaystyle{\prod_{\beta \in \Sigma_{2} \backslash \Theta_{2}} ||s_{U_{2}}w_{\omega_{\beta}}^{+}||^{\ell_{\beta}}}}\Bigg ) + d\theta_{U_{2}} \Bigg \}},$$
 for some local section $s_{U_{i}} \colon U_{i} \subset X_{P_{i}} \to G_{i}^{\mathbb{C}}$, $i = 1,2$, such that $\tau \in \mathbb{C} \backslash \mathbb{R}$. 
\item Particularly, if $\tau = \sqrt{-1}$, we have Morimoto's complex structure  $J_{\sqrt{-1}} \in {\text{End}}(T(Q_{1} \times Q_{2}))$, i.e.,
\begin{equation}
\label{cplxstructure}
J_{\sqrt{-1}}(X,Y) = \big (\phi_{1}(X) - \eta_{2}(Y)\xi_{1}, \phi_{2}(Y) + \eta_{1}(X)\xi_{2} \big ),
\end{equation}
for all $(X,Y) \in T(Q_{1} \times Q_{2})$, where $(\phi_{i},\xi_{i},\eta_{i})$ is a normal almost contact structure on $Q_{i}$, $i =1,2$, as in Theorem \ref{Theo1}. Moreover, by considering the Riemannian metric $g_{Q_{1}} \times g_{Q_{2}}$, with $g_{Q_{i}}$ as in Theorem \ref{Theo1}, we obtain a Hermitian non-K\"{a}hler structure $(J_{\sqrt{-1}},g_{Q_{1}} \times g_{Q_{2}})$ on $Q_{1} \times Q_{2}$, with fundamental form $\Omega$ given by 
\begin{equation}
\label{khlstructure}
\Omega = \pi_{1}^{\ast} \omega_{X_{P_{1}}} + \pi_{2}^{\ast} \omega_{X_{P_{2}}} + \eta_{1} \wedge \eta_{2},
\end{equation}
where $\pi_{i} \colon Q_{i} \to X_{P_{i}}$, and $\omega_{X_{P_{i}}}$ is an invariant K\"{a}hler metric on $X_{P_{i}}$, $i = 1,2$. Furthermore, regarding the complex structure $J_{\sqrt{-1}} \in {\text{End}}(T(Q_{1} \times Q_{2}))$ described above, we have that the natural projection map
\begin{equation}
\label{morimototorus}
\pi_{1} \times \pi_{2} \colon \big (Q_{1}\times Q_{2},J_{\sqrt{-1}} \big ) \to  \big (X_{P_{1}}\times X_{P_{2}}, J_{1} \times J_{2} \big ),
\end{equation}
is holomorphic, where $J_{i}$ is an invariant complex structure on $X_{P_{i}}$, $i = 1,2$.
\end{enumerate}

\end{theorem-non}

The result of Theorem \ref{Theo2} together with the result of Theorem \ref{Theo1}, and the description of invariant K\"{a}hler metrics given in \cite{AZAD}, allow us to describe explicitly a huge class of complex manifolds and Hermitian non-K\"{a}hler manifolds. Furthermore, the last theorem also generalizes Morimoto's result \cite[p. 432]{MORIMOTO}, which in turn implies the construction of complex structures on products of odd-dimensional spheres.

As we have seen, the second part of Theorem \ref{Theo2} gives us a concrete description of Hermitian non-K\"{a}hler structures on principal $T^{2}$-bundles over Cartesian products of complex flag manifolds. In a more general setting, given a principal $G$-bundle $G \hookrightarrow M \to B$, over an almost complex manifold $B$, if we suppose that $G$ is a Lie group of even real dimension, then we can endow the manifold underlying the total space $M$ with an almost complex structure $\mathscr{J} \in {\text{End}}(TM)$, see for instance \cite[Proposition 4.25]{Algmodels}. A particular setting in which the last ideas becomes even more interesting is when $G = T^{2n}$. Actually, when the fiber of the principal bundle is an even dimensional torus, under the assumption of integrability for the complex structure on the base manifold, and that the characteristic classes of $M$ to be of $(1,1)$-type, one can show that the almost complex structure, as just mentioned, is in fact integrable, see for instance \cite[Proposition 4.26]{Algmodels}. This last description of complex structure on torus bundles turns out to be essentially the same complex structure obtained in Theorem \ref{Theo1} for the particular case when $G = T^{2}$.

Recently, many interesting results related to K\"{a}hler structures with torsion (a.k.a. KT structures) on torus bundle were introduced in \cite{GRANTCHAROV}, see also \cite{Poddar}. In order to contextualize the first application of our main results, let us recall some generalities about Hermitian geometry with torsion. 

We recall that a KT structure on a Hermitian manifold $(M,J,g)$ can be defined from the unique Hermitian connection $\nabla^{B}$, called in the literature as {\textit{Bismut connection}} or KT {\textit{connection}}, satisfying the conditions
\begin{equation}
\nabla^{B}g = 0 \ \ {\text{and}} \ \ \nabla^{B} J = 0.
\end{equation}
In this last setting, we call the triple $(g, J, \nabla^{B})$ a KT structure defined on $M$. By considering the fundamental $2$-form $\Omega = g(J \otimes {\rm{id}})$, associated to a KT structure $(g, J, \nabla^{B})$, we have
\begin{equation}
g(T_{\nabla^{B}}(X,Y),Z) = d\Omega(JX,JY,JZ) = (Jd\Omega)(X,Y,Z),
\end{equation}
$\forall X,Y,Z \in TM$, where $T_{\nabla^{B}}$ is the torsion of $\nabla^{B}$, e.g. \cite{Gauduchon1}. Thus, we can associate to a KT structure $(g, J, \nabla^{B})$ a torsion 3-form $T_{B} = Jd\Omega \in \Omega^{3}(M)$.

Now, we notice that, since $\nabla^{B}$ is a Hermitian connection, its (restricted) holonomy group ${\text{Hol}}^{0}(\nabla^{B})$ is in general contained in the unitary group ${\rm{U}}(n)$. If the restricted holonomy group of a KT connection $\nabla^{B}$ can be reduced to ${\rm{SU}}(n)$, the Hermitian structure is said to be Calabi-Yau with torsion (a.k.a. CYT), and, in this latter case, we call $(g, J, \nabla^{B})$ a CYT structure.

In the context of complex structures on torus bundle, by following the results of \cite{GRANTCHAROV}, as an application of Theorem \ref{Theo1} we obtain the following theorem:

\begin{theorem-non}
\label{CYThomo1}
Let $X_{P}$ be a complex flag manifold, with real dimension $2m$, associated to some parabolic Lie subgroup $P \subset G^{\mathbb{C}}$, and let $I(X_{P})$ be its Fano index. Then the manifold $M = Q(L) \times {\rm{U}}(1)$, such that $L = K_{X_{P}}^{\otimes \frac{\ell}{I(X_{P})}}$, $\ell > 0$, admits a CYT structure $(g_{M},\mathscr{J},\nabla^{B})$ whose fundamental form $\Omega_{M} = g_{M}(\mathscr{J} \otimes {\rm{id}})$ is given by 
\begin{equation}
\label{CYTstructure1}
\Omega_{M} = \frac{m\ell}{I(X_{P})} d\eta + \eta \wedge d\sigma,
\end{equation}
such that $\sqrt{-1} d\sigma \in \Omega^{1}({\rm{U}}(1);\sqrt{-1}\mathbb{R})$ is the Maurer-Cartan form, and (locally)
\begin{equation}
\eta =  \frac{1}{2I(X_{P})}d^{c}\log \Big (\big | \big |s_{U}v_{\delta_{P}}^{+} \big| \big |^{2} \Big ) + d\theta_{U},
\end{equation}
for some local section $s_{U} \colon U \subset X_{P} \to G^{\mathbb{C}}$, where $v_{\delta_{P}}^{+}$ denotes the highest weight vector of weight $\delta_{P}$ associated to the irreducible $\mathfrak{g}^{\mathbb{C}}$-module $V(\delta_{P})$.
\end{theorem-non}
\begin{remark}
\label{HopfVaisman}
Notice that manifolds defined by Cartesian products of the form $Q \times S^{1}$, where $Q$ is a compact Sasakian manifold, are particular examples of {\textit{Vaisman manifolds}}, or {\textit{generalized Hopf manifolds}}, see \cite{Vaisman}, \cite{Structurevaisman}. Notice also that in this last setting we have a locally conformal K\"{a}hler structure on $Q \times S^{1}$ obtained from the globally conformal structure associated to the K\"{a}hler covering given by the metric cone
\begin{equation}
\label{conesymplectic}
\big (\mathscr{C}(Q) = Q \times \mathbb{R}^{+}, \omega_{\mathscr{C}} = \frac{1}{2}d(r^{2}\eta) \big),
\end{equation}
where $\eta \in \Omega^{1}(Q)$ is the underlying contact structure, for more details see \cite{Structurevaisman}, \cite{Dragomir}. It is straightforward to show that the Hermitian structure $(g_{M},\mathscr{J})$ which underlies the CYT structure $(g_{M},\mathscr{J},\nabla^{B})$ defined on $M = Q(L) \times {\rm{U}}(1)$ in Theorem \ref{CYTstructure1} is in fact Vaisman, i.e., locally conformally K\"{a}hler with parallel\footnote{Here the parallelism required is with respect to the Levi-Civita connection $\nabla$ associated to $g_{M}$, see for instance \cite{Vaisman}.} {\textit{Lee form}}, see Corollary \ref{CYTVaisman}.
\end{remark}

As we have seen, from Theorem \ref{CYThomo1} we obtain a quite constructive method to produce KT structures $(g_{M},\mathscr{J},\nabla^{B})$ which are explicit solutions of the equation
\begin{equation}
{\text{Ric}}^{\nabla^{B}}(\Omega_{M}) = 0,
\end{equation}
i.e., the last theorem provides a constructive method to produce explicit examples of KT structures which satisfies ${\text{Hol}}^{0}(\nabla^{B}) \subset {\rm{SU}}(m+1)$. 

In the theoretical physics context, CYT structures was considered first by A. Strominger \cite{Strominger} and C. Hull \cite{Hull}. According to \cite{IVANOV}, for compactifications of string theory with non-vanishing torsion, it is required that the connection with torsion has holonomy contained in ${\rm{SU}}(n)$, $n = 2,3,4$, ${\rm{G_{2}}}$, ${\rm{Spin}}(7)$.

\begin{table}[H]
\caption{ Table with examples of complex Hermitian non-K\"{a}hler manifolds with CYT structure provided by Theorem \ref{CYThomo1}.}
\begin{tabular}{|c|c|c|c|c|}
\hline
               &         &             &            &     \\  
${\text{Hol}}^{0}(\nabla^{B}) \subset {\rm{SU(2)}}$ &    $ {\text{Hol}}^{0}(\nabla^{B}) \subset {\rm{SU(3)}}$     &   $ {\text{Hol}}^{0}(\nabla^{B}) \subset {\rm{SU(4)}}$  &   $ {\text{Hol}}^{0}(\nabla^{B}) \subset {\rm{SU(5)}}$       &   $ {\text{Hol}}^{0}(\nabla^{B}) \subset {\rm{SU}} \big (|\Pi^{+}| + 1 \big )$  \\
               &         &             &            &             \\         
\hline
               &         &              &           &           \\  
    $S^{3} \times S^{1}$ &     $S^{5} \times S^{1}$  &     $X_{1,1} \times S^{1}$  & $\mathscr{V}_{2}(\mathbb{R}^{6}) \times S^{1}$      &  $Q(K_{G/T})\times S^{1}$  \\
               &         &              &             &        \\  
\hline
    
\end{tabular}

\end{table}
\begin{remark}
In \cite{GRANTCHAROV1} it was shown that any compact complex homogeneous space with vanishing first Chern class, after a suitable deformation in the complex structure, admits a homogeneous Calabi-Yau structure with torsion (CYT), provided that it also has an invariant volume form. The proof of this result uses Guan's result \cite{Guan}, the torus bundle construction of the CYT structures provided in \cite{GRANTCHAROV} and the result \cite[Theorem 3]{GRANTCHAROV1}, related to the existence of CYT structures on certain C-spaces \cite{WANG} with vanishing first Chern class. 
\end{remark}
Our next result is also an application of Theorem \ref{Theo1} and Theorem \ref{Theo2}. We recall that a Hermitian manifold $(M,J,g)$ is said to be {\textit{astheno-K\"{a}hler}} if its fundamental $2$-form $\Omega = g(J \otimes {\rm{id}})$ satisfies 
\begin{equation}
\label{astheno}
dd^{c}\Omega^{n-2} = 0,
\end{equation}
where $\dim_{\mathbb{C}}(M) = n$. In \cite{Jost}, Jost and Yau used the condition \ref{astheno} to study Hermitian harmonic maps and to extend Siu's Rigidity Theorem \cite{Siu} to non-K\"{a}hler manifolds. Since then, many other results related to astheno-K\"{a}hler manifolds have been established, see for instance \cite{Liu}, \cite{LiYau}, \cite{Tosatti}. However, as pointed out in \cite{FINO}, there are not many examples of astheno-K\"{a}hler manifolds, some of them are given by Calabi-Eckmann manifolds \cite{Matsuo1} and by nilmanifolds \cite{FINO2}. Thus, by following \cite{Tsukada}, \cite{Matsuo1}, \cite{Matsuo2}, our next result combines Theorem \ref{Theo1}, Theorem \ref{Theo2}, and the ideas introduced in \cite{CONTACTCORREA}, in order to provide a systematic method to obtain explicit examples of astheno-K\"{a}hler manifolds. These manifolds are given by Cartesian products of compact homogeneous Sasaki manifolds, with Hermitian structure completely determined by elements of Lie theory. The result is the following:

\begin{theorem-non}
\label{asthenoflag}
Let $Q_{i} = Q(L_{i})$ be a compact homogeneous Sasaki manifold with structure tensors $(\phi_{i},\xi_{i},\eta_{i},g_{i})$, such that $L_{i}^{-1} \in {\text{Pic}}(X_{P_{i}})$ is an ample line bundle, for some $P_{i} = P_{\Theta_{i}} \subset G_{i}^{\mathbb{C}}$, $i = 1,2$. Then we have that:
\begin{enumerate}

\item The manifold $M = Q(L_{1}) \times Q(L_{2})$ admits a $1$-parametric family of Hermitian structures $(\Omega_{a,b},J_{a,b})$, $a + \sqrt{-1}b \in \mathbb{C} \backslash \mathbb{R}$, completely determined by principal $S^{1}$-connections $\sqrt{-1}\eta_{i} \in \Omega^{1}(Q(L_{i});\sqrt{-1}\mathbb{R})$, $i = 1,2$, such that 
\begin{equation}
\displaystyle \eta_{i} = \frac{1}{2}d^{c} \log \Big ( ||s_{U_{i}}v_{\lambda(L_{i})}^{+}||^{2}\Big) + d\theta_{U_{i}},
\end{equation}
for some local section $s_{U_{i}} \colon U_{i} \subset X_{P_{i}} \to G_{i}^{\mathbb{C}}$, where $v_{\lambda(L_{i})}^{+}$ is the highest weight vector of weight $\lambda(L_{i})$ associated to the irreducible $\mathfrak{g}^{\mathbb{C}}$-module $V(\lambda(L_{i}))$, $i = 1,2$; 

\item Moreover, the Hermitian structure $(\Omega_{a,b},J_{a,b})$ is astheno-K\"{a}hler if and only if the real constants $a,b \in \mathbb{R}$ satisfy 
\begin{equation}
\label{asthenocond}
m_{\Theta_{1}}\big(m_{\Theta_{1}} - 1\big) + 2am_{\Theta_{1}}m_{\Theta_{2}} + m_{\Theta_{2}}\big(m_{\Theta_{2}} - 1\big)\big(a^{2} + b^{2}\big) = 0,
\end{equation}
where $m_{\Theta_{i}} = | \Pi_{i}^{+} \backslash \langle \Theta_{i} \rangle^{+} |$, $i = 1,2$, and $m_{\Theta_{1}} + m_{\Theta_{2}} + 1 > 3$.
\end{enumerate}

\end{theorem-non}

It is worth mentioning that, under the hypotheses of theorem above, the complex structure $J_{a,b} \in {\text{End}}(T(Q(L_{1}) \times Q(L_{2})))$, $a + \sqrt{-1}b \in \mathbb{C} \backslash \mathbb{R}$, is given by Tsukada's complex structure \cite{Tsukada}. It is defined by
\begin{equation}
\displaystyle J_{a,b} = \phi_{1} - \bigg ( \frac{a}{b}\eta_{1} + \frac{a^{2} + b^{2}}{b} \eta_{2}\bigg) \otimes \xi_{1} + \phi_{2} + \bigg ( \frac{1}{b}\eta_{1} + \frac{a}{b} \eta_{2}\bigg) \otimes \xi_{2}.
\end{equation}
After a suitable change, the complex structure above is the same provided in Theorem \ref{Theo2}. The complex structure above has a compatible Riemannian metric $g_{a,b} = \Omega_{a,b}({\rm{id}} \otimes J_{a,b})$, such that $\Omega_{a,b} \in \Omega^{2}(Q(L_{1}) \times Q(L_{2}))$ is given by
\begin{equation}
\Omega_{a,b} = \frac{1}{2}\Big (d\eta_{1} + d\eta_{2} \Big ) + b\eta_{1} \wedge \eta_{2}.
\end{equation}
From \cite{Matsuo1}, we have that $dd^{c}\Omega_{a,b}^{n-2} = 0$, $n = m_{\Theta_{1}} + m_{\Theta_{2}} + 1 > 3$, if and only if Equation \ref{asthenocond} holds. We also note that, from Theorem \ref{asthenoflag}, we obtain a huge class of examples that naturally generalize the construction provided in \cite{Matsuo1} for Calabi-Eckmann manifolds.

In 1984, Gauduchon made the following conjecture.
\begin{conjecture}[{\cite[IV.5]{Gauduchon}}]
\label{Gauduchonconjecture}
Let $M$ be a compact complex manifold. Let $\psi$ be a closed real $(1,1)$-form on $M$ satisfying $[\psi] = c_{1}^{BC}(M) \in H_{BC}^{1,1}(M,\mathbb{R})$. Then there exists a Gauduchon metric $\omega$ on $M$, i.e., satisfying $\partial \overline{\partial}(\omega^{n-1}) = 0$, such that 
\begin{equation}
{\text{Ric}}(\omega) = \psi,
\end{equation}
where ${\text{Ric}}(\omega)$ is the Chern-Ricci curvature, given locally by ${\text{Ric}}(\omega) = -\sqrt{-1}\partial \overline{\partial}\log \omega^{n}$.
\end{conjecture}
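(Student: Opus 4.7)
The plan is to follow the strategy of Sz\'{e}kelyhidi--Tosatti--Weinkove, who resolved this conjecture by converting it into a fully nonlinear elliptic PDE for $(n-1)$-plurisubharmonic potentials and closing a continuity method with delicate a priori estimates. First I would fix an arbitrary Gauduchon background metric $\omega_{0}$ on $M$, whose existence on any compact Hermitian manifold of complex dimension $n\geq 2$ is Gauduchon's foundational theorem. Since $[\mathrm{Ric}(\omega_{0})]=c_{1}^{BC}(M)=[\psi]$ in $H_{BC}^{1,1}(M,\mathbb{R})$, there is a real smooth function $F$ with $\mathrm{Ric}(\omega_{0})-\psi=\sqrt{-1}\,\partial\bar{\partial}F$. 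Thus the prescribed-Ricci equation $\mathrm{Ric}(\omega)=\psi$ becomes equivalent to the volume prescription $\omega^{n}=e^{F}\omega_{0}^{n}$ subject to the Gauduchon constraint $\partial\bar{\partial}\omega^{n-1}=0$, reducing the conjecture to finding a Gauduchon metric with prescribed volume form in a fixed Bott--Chern class of positive $(n-1,n-1)$-forms.

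By Michelsohn's theorem, every strictly positive $(n-1,n-1)$-form on $M$ is of the form $\omega^{n-1}$ for a unique positive $(1,1)$-form $\omega$. Accordingly, I would seek the unknown $\omega^{n-1}$ in the Bott--Chern class of $\omega_{0}^{n-1}$ via the ansatz
$$\omega^{n-1}=\omega_{0}^{n-1}+\sqrt{-1}\,\partial\bar{\partial}\bigl(u\,\omega_{0}^{n-2}\bigr)+\mathrm{Re}\bigl(\sqrt{-1}\,\partial u\wedge\bar{\partial}\omega_{0}^{n-2}\bigr)+(\text{torsion corrections}),$$
where the correction terms involve derivatives of $\omega_{0}^{n-1}$ arranged so that the right-hand side is automatically $\partial\bar{\partial}$-closed (using $\partial\bar{\partial}\omega_{0}^{n-1}=0$). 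Taking $(n-1)$-st roots and imposing $\omega^{n}=e^{F}\omega_{0}^{n}$ yields a Monge--Amp\`{e}re-type equation for the $(n-1)$-plurisubharmonic scalar $u$, of the kind studied systematically by Tosatti--Weinkove. I would then solve it by a continuity method along the family $\psi_{t}=(1-t)\mathrm{Ric}(\omega_{0})+t\,\psi$, $t\in[0,1]$, with trivial solution at $t=0$.

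Openness at each $t$ is routine elliptic theory applied to the linearisation, after normalising $u$ by its integral. Closedness requires the full chain of a priori estimates: a $C^{0}$ bound via Moser iteration or the Alexandrov--Bakelman--Pucci technique of B{\l}ocki; a second-order Laplacian bound on $\mathrm{tr}_{\omega_{0}}\omega$ in terms of $\|u\|_{C^{0}}$; and a gradient estimate obtained by the B{\l}ocki--Dinew--Ko{\l}odziej interpolation from $C^{0}$ and $C^{2}$.

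The hard part will be the Laplacian estimate. Unlike the K\"{a}hler case, the Gauduchon background $\omega_{0}$ carries non-trivial torsion, so the standard Yau--Aubin maximum-principle computation picks up error terms proportional to $\nabla\omega_{0}$ and third-order cross terms in $u$ that the good second-order term does not absorb under any naive choice of test function. I would handle this via the Tosatti--Weinkove device of applying the maximum principle to a test quantity of the form $e^{\phi(u)}\,\mathrm{tr}_{\omega_{0}}\omega + A\,u^{2}$, with $\phi$ a carefully tuned convex function and $A$ a large constant, engineered so that the torsion errors are dominated at an interior maximum point. Once these estimates are in place, Evans--Krylov theory and the Schauder bootstrap upgrade the solution to $C^{\infty}$, closing the continuity method and producing the desired Gauduchon metric with $\mathrm{Ric}(\omega)=\psi$.
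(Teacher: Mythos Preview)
The paper does not prove this statement. It is stated as a \emph{Conjecture}, and immediately afterward the paper simply records that ``the general result has been proved by Sz\'{e}kelyhidi, Tosatti, and Weinkove in \cite{STW}'' (with the astheno-K\"{a}hler case handled earlier by Tosatti and Weinkove). The conjecture is quoted only to motivate the relevance of the astheno-K\"{a}hler examples constructed in Theorem~\ref{asthenoflag}; Remark~\ref{gauduchonatheno} shows the paper \emph{applying} the already-established result, not proving it.

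Your sketch is a reasonable high-level outline of the Sz\'{e}kelyhidi--Tosatti--Weinkove strategy that actually resolved the conjecture, so you have identified the correct line of attack. But there is no ``paper's own proof'' to compare against: the paper neither claims nor supplies one, and no proof was expected here.
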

Here we consider the Bott-Chern cohomology group 
\begin{center}
$H_{BC}^{1,1}(M,\mathbb{R}) = \displaystyle \frac{\big \{d{\text{-closed real}} \ (1,1){\text{-forms}}\big\}}{\big\{\sqrt{-1}\partial \overline{\partial}f, \ f\in C^{\infty}(M)\big\}},$
\end{center}
and the first Bott-Chern class $c_{1}^{BC} \colon {\text{Pic}}(M) \to H_{BC}^{1,1}(M,\mathbb{R})$ (cf. \cite{BottChern}). Gauduchon's conjecture \ref{Gauduchonconjecture} is a natural extension of the celebrated Calabi conjecture \cite{Yau} to compact complex manifolds. Recently, Tosatti and Weinkove showed in \cite[Corollary 1.4]{Tosatti} that Conjecture \ref{Gauduchonconjecture} holds for the class of compact Hermitian non-K\"{a}hler manifolds satisfying the astheno-K\"{a}hler condition \ref{astheno}. The general result has been proved by Sz\'{e}kelyhidi, Tosatti, and Weinkove in \cite{STW}. From Theorem \ref{asthenoflag}, we have a constructive method to describe a huge class of explicit examples of compact complex manifolds which can be used to illustrate the solution of Gauduchon's Conjecture \ref{Gauduchonconjecture} (cf. Remark \ref{gauduchonatheno}).

It is worth pointing out that the manifolds described in Theorem \ref{asthenoflag} do not admit any K\"{a}hler structure. Actually, according to \cite[Theorem 2.13]{SANKARAN}, the manifolds considered in the last theorem above do not admit any symplectic structure. Hence, complex manifolds obtained from the Cartesian product of compact homogeneous Sasaki manifolds can not be algebraic.

\subsection{Outline of the paper} The content and main ideas of this paper are organized as follows: In Section \ref{sec2}, we shall cover some generalities about almost contact manifolds and contact manifolds. In this section we also describe Morimoto's construction \cite{MORIMOTO} of almost complex structures on products of almost contact manifolds and Manjar\'{i}n's construction of 1-parametric families of complex structures on products of normal almost contact manifolds. In Section \ref{sec3}, we provide a description of connections and curvatures on holomorphic line bundles and principal $S^{1}$-bundles over complex flag manifolds, our approach is mainly based on \cite{TOROIDAL}, \cite{CONTACTCORREA}, and \cite{BLAIR}. In Section \ref{sec4}, we shall prove our main results, namely, Theorem \ref{Theo1} and Theorem \ref{Theo2}, and provide examples. In Section \ref{section5}, we explore some applications of our main results in the study of Hermitian geometry with torsion on principal torus bundles over flag manifolds. The main purpose of this last section is to prove Theorem \ref{CYThomo1} and Theorem \ref{asthenoflag}.

\section{Almost complex structures on products of almost contact manifolds}
\label{sec2} 

\subsection{Almost contact manifolds} Let us recall some basic facts and generalities on almost contact geometry. Our approach is based on \cite{SASAKIALMOST}, \cite{BLAIR}.

\begin{definition}
An almost contact manifold is a (2n+1)-dimensional smooth manifold $M$ endowed with structure tensors $(\phi, \xi,\eta)$, such that $\phi \in {\text{End}}(TM)$, $\xi \in \Gamma(TM)$, and $\eta \in \Omega^{1}(M)$, satisfying
\begin{equation}
\label{almostcontact}
 \phi \circ \phi = - {\rm{id}} + \eta \otimes \xi, \ \ \eta(\xi) = 1.    
\end{equation}
\end{definition}

\begin{remark}
Given an almost contact manifold $M$ with structure tensors $(\phi, \xi,\eta)$, we can show from \ref{almostcontact} the following additional properties:
\begin{center}
$\phi(\xi) = 0$, \ \ $\eta \circ \phi = 0$ \ \ and \ \ $\rank(\phi) = 2n$.
\end{center}
Even though these additional properties can be derived from \ref{almostcontact}, see for instance \cite[Theorem 4.1]{BLAIR}, many authors include the properties above in the definition of almost contact manifolds.
\end{remark}
In the setting of almost contact manifolds we have the concept of normality which is characterized by the equation
 \begin{equation}
\label{normality}
\big [ \phi,\phi \big ] + d\eta \otimes \xi = 0,
\end{equation}
where $[\phi,\phi]$ is the Nijenhuis torsion of $\phi$.
\begin{definition}[Sasaki and Hatakeyama, \cite{SASAKIHARAKEYMAALMOST}]
An almost contact manifold $M$ with structure tensors $(\phi,\xi,\eta)$ which satisfy \ref{normality} is called normal almost contact manifold.
\end{definition}
\begin{remark}
Given an almost contact manifold $M$ with structure tensors $(\phi,\xi,\eta)$, we can consider the manifold defined by $M \times \mathbb{R}$. We denote a vector field on $M \times \mathbb{R}$ by $(X,f\frac{d}{dt})$, where $X$ is tangent to $M$, $t$ is the coordinate on $\mathbb{R}$, and $f \in C^{\infty}(M \times \mathbb{R})$. From this, we can define an almost complex structure on $M \times \mathbb{R}$ by setting
\begin{equation}
\label{complexcone}
J \Big (X,f\frac{d}{dt} \Big ) = \Big (\phi(X) - f \xi, \eta(X) \frac{d}{d t} \Big ).
\end{equation}
By following \cite{SASAKIHARAKEYMAALMOST}, we can show that 

\begin{center}
$\big [ \phi,\phi \big ] + d\eta \otimes \xi = 0 \Longleftrightarrow \big [ J,J \big ] = 0.$
\end{center}
Thus, we have the normality condition for $(\phi,\xi,\eta)$ equivalent to the integrability condition for the almost complex structure $J$ defined in \ref{complexcone}. 
\end{remark}

A special context on which we have a natural normal almost contact structure is provide by the following result.

\begin{theorem}[\cite{MORIMOTO}, \cite{HATAKEYMA}]
\label{almostcircle}
Let $M$ be the total space of a principal ${\rm{U}}(1)$-bundle over a complex manifold $(N,J)$. Suppose we have a connection $1$-form $\sqrt{-1}\eta$ on $M$ such that $d\eta = \pi^{\ast}\omega$, here $\pi$ denotes the projection of $M$ onto $N$, and $\omega$ is a $2$-form on $N$ satisfying 

\begin{center}

$\omega(JX,JY) = \omega(X,Y),$

\end{center}
for $X,Y \in \Gamma(TN)$. Then, we can define a $(1,1)$-tensor field $\phi$ on $M$ and a vector field $\xi$ on $M$ such that $(\phi,\xi,\eta)$ is a normal almost contact structure on $M$.
\end{theorem}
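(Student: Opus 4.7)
The plan is to build the structure tensors directly from the bundle data: take $\xi$ to be the fundamental vector field of the $\mathrm{U}(1)$-action (so $\eta(\xi)=1$ is just the normalization of the connection), and define $\phi$ by horizontally lifting the complex structure $J$. Concretely, using the horizontal distribution $\mathcal{H}=\ker(\eta)$, every $X\in T_pM$ decomposes uniquely as $X=X^h+\eta(X)\xi$ with $X^h\in\mathcal{H}$, and I would set $\phi(X):=(J(\pi_*X))^h$, together with $\phi(\xi)=0$. With this definition the almost contact identity $\phi^2=-\mathrm{id}+\eta\otimes\xi$ is immediate: $\phi^2(X)=(J^2\pi_*X)^h=-X^h=-X+\eta(X)\xi$, and $\eta(\xi)=1$ holds by construction.

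The substantive content is normality, namely $[\phi,\phi]+d\eta\otimes\xi=0$. I would split this into the three cases according to the number of vertical arguments. If both arguments are $\xi$, the identity is trivial. If one argument is $\xi$ and the other is a $\mathrm{U}(1)$-invariant horizontal lift $X$, all brackets in $[\phi,\phi](\xi,X)$ vanish because the equivariance of the connection forces $[\xi,X]=0$ (and $\phi\xi=0$), while $d\eta(\xi,X)=(\pi^*\omega)(\xi,X)=0$ since $\pi_*\xi=0$; so both sides vanish.

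The core step is the horizontal–horizontal case. For horizontal lifts $X,Y$ of vector fields $\widetilde X,\widetilde Y$ on $N$, the structure equation of the connection gives
\[
[X,Y]=[\widetilde X,\widetilde Y]^h-(\pi^*\omega)(X,Y)\,\xi.
\]
Expanding
\[
[\phi,\phi](X,Y)=[\phi X,\phi Y]-\phi[\phi X,Y]-\phi[X,\phi Y]+\phi^2[X,Y]
\]
and using $\phi X=(J\widetilde X)^h$, $\phi Y=(J\widetilde Y)^h$, the horizontal parts assemble into $[J,J](\widetilde X,\widetilde Y)^h$, which vanishes by integrability of $J$ on $N$. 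The vertical parts contribute only through $\phi^2[X,Y]$ (which cancels its own vertical contribution since $\phi^2\xi=0$) and through $[\phi X,\phi Y]$, whose vertical component is $-\pi^*\omega(J\widetilde X,J\widetilde Y)\,\xi$; here the $J$-invariance hypothesis $\omega(JU,JV)=\omega(U,V)$ rewrites this as $-d\eta(X,Y)\,\xi$, which is exactly $-(d\eta\otimes\xi)(X,Y)$. Hence $[\phi,\phi](X,Y)+d\eta(X,Y)\,\xi=0$ on all horizontal lifts, and by $C^\infty(M)$-linearity of $[\phi,\phi]+d\eta\otimes\xi$ in its arguments, this extends to arbitrary vector fields.

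The main obstacle I anticipate is the sign and $J$-invariance bookkeeping in the horizontal–horizontal case: one must track carefully how the curvature term $-\pi^*\omega(X,Y)\xi$ appears in each of the four brackets, show that $\phi$ kills those vertical summands except in $[\phi X,\phi Y]$, and confirm that the surviving vertical contribution combines with $J$-invariance of $\omega$ to reproduce precisely the term $-d\eta\otimes\xi$ required by the normality equation. Once this is settled, the remaining verifications are formal.
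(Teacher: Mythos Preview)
Your proposal is correct and follows essentially the same approach as the paper: both take $\xi$ to be the fundamental vector field of the $\mathrm{U}(1)$-action, define $\phi$ by horizontally lifting $J$ via the connection, and then reduce the normality condition to the integrability of $J$ together with the $(1,1)$-type hypothesis on $\omega$. In fact you supply more detail than the paper does, since the paper only outlines the construction and refers to \cite[Theorem 6]{MORIMOTO} for the actual verification of $[\phi,\phi]+d\eta\otimes\xi=0$, whereas you carry out the case-by-case bracket computation explicitly.
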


\begin{proof}
The complete proof for this result can be found in \cite{MORIMOTO}, \cite{HATAKEYMA}. Let us briefly outline the main ideas involved. Consider $\xi = \frac{\partial}{\partial \theta} \in \Gamma(TM)$ as being the vector field defined by the infinitesimal action of $\mathfrak{u}(1)$ on $M$ and let $\sqrt{-1}\eta \in \Omega^{1}(M;\mathfrak{u}(1))$ be the connection $1$-form such that $d\eta = \pi^{\ast}\omega$. Without loss of generality, we can suppose that $\eta(\xi) = 1$.

Now, we define $\phi \in {\text{End}}(TM)$ by setting
\begin{center}
$ \phi(X) := \begin{cases}
    (J\pi_{\ast}X)^{H}, \ \ \ {\text{if}}  \ \ X \bot \xi.\\
    \ \ \ \ \  0 \  \  \ \ \ \ \ ,  \ \  \ {\text{if}} \ \ X \parallel \xi .                   \\
  \end{cases}$
    
\end{center}
Here we denote by $(J\pi_{\ast}X)^{H}$ the horizontal lift of $J\pi_{\ast}X$ relative to the connection $\sqrt{-1}\eta \in \Omega^{1}(M;\mathfrak{u}(1))$. A straightforward computation shows that $(\phi, \xi, \eta)$ defines an almost contact structure. For the normality condition, we just need to check that

\begin{center}

$\big [ J,J \big ] \equiv 0$ and $\omega \in \Omega^{1,1}(N) \implies \big [ \phi,\phi \big ] + d\eta \otimes \xi = 0,$

\end{center}
the details of the implication above can be found in \cite[Theorem 6]{MORIMOTO}.
\end{proof}

An important result which will be useful for us is the following.

\begin{theorem}[\cite{HATAKEYMA}]
\label{metrichatakeyma}
Let $\pi \colon M \to N$ be a principal circle bundle over an almost complex manifold $(N,J_{N})$. Then we have an almost contact structure $(\phi,\xi,\eta)$ on $M$ which satisfies 

\begin{enumerate}

\item $\sqrt{-1}\eta$ is a connection $1$-form on $M$.

\item $\phi$ is invariant under the transformations generated by the infinitesimal transformation $\xi = \frac{\partial}{\partial \theta}$.

\item $\pi_{\ast} \circ \phi = J_{N} \circ \pi_{\ast}$.
\end{enumerate}
Moreover, if $N$ is an almost Hermitian manifold with metric $g_{N}$, then we can also define an associated Riemannian metric $g_{M}$ on $M$ such that
\begin{equation}
\label{hatakeymaeq}
g_{M}  = \pi^{\ast} g_{N} + \eta \otimes \eta \ \ and \ \ \mathscr{L}_{\xi}g_{M} = 0,
\end{equation}
i.e. $\xi$ is a Killing vector field on $(M,g_{M})$.

\end{theorem}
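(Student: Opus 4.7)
The plan is to essentially adapt the construction sketched in the proof of Theorem \ref{almostcircle}, but decoupled from the integrability/normality assumption, and then add the metric piece at the end. First I would fix any principal connection on $\pi \colon M \to N$ (which exists because any principal $S^{1}$-bundle over a paracompact base admits one, via a partition of unity argument), write it as $\sqrt{-1}\eta \in \Omega^{1}(M;\mathfrak{u}(1))$, and normalize so that $\eta(\xi)=1$, where $\xi = \tfrac{\partial}{\partial\theta}$ denotes the fundamental vector field of the $S^{1}$-action. This immediately gives item (1) and the second half of \ref{almostcontact}.

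Next I would define $\phi \in \mathrm{End}(TM)$ exactly as in the proof of Theorem \ref{almostcircle}: for any $X \in \Gamma(TM)$, decompose $X = X^{H} + \eta(X)\xi$ using the horizontal distribution $\mathscr{H} = \ker\eta$, and set
\begin{equation*}
\phi(X) \ce \bigl(J_{N}\,\pi_{\ast}X\bigr)^{H},
\end{equation*}
interpreted as $0$ on vertical vectors. Since $\pi_{\ast}$ restricts to an isomorphism $\mathscr{H}_{p} \to T_{\pi(p)}N$ with inverse the horizontal lift, the identity $\phi\circ\phi = -\mathrm{id}+\eta\otimes\xi$ reduces on horizontal vectors to the fact that $J_{N}^{2}=-\mathrm{id}$ on $TN$, and trivially holds on $\xi$; this establishes \ref{almostcontact}, together with $\phi(\xi)=0$ and $\pi_{\ast}\circ\phi = J_{N}\circ\pi_{\ast}$, giving items (2) and (3) at the same time. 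The invariance of $\phi$ under the flow of $\xi$ (item (2)) is then automatic: by definition of a principal connection, both $\eta$ and the horizontal distribution are $S^{1}$-invariant, the map $\pi_{\ast}$ intertwines the $S^{1}$-action with the identity on $TN$, and $J_{N}$ lives on the base, so the entire construction of $\phi$ is $S^{1}$-equivariant, and in particular $\mathscr{L}_{\xi}\phi = 0$.

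For the metric statement, I would simply put $g_{M} \ce \pi^{\ast}g_{N} + \eta\otimes\eta$. Positive-definiteness is clear from the $\mathscr{H}\oplus\mathbb{R}\xi$ decomposition: $\pi^{\ast}g_{N}$ is positive definite on $\mathscr{H}$ and degenerate along $\xi$, while $\eta\otimes\eta$ is positive definite along $\xi$ and vanishes on $\mathscr{H}$. For the Killing condition $\mathscr{L}_{\xi}g_{M}=0$, note that $\pi\circ\Phi_{t}^{\xi} = \pi$ for the flow $\Phi_{t}^{\xi}$ of $\xi$, hence $\mathscr{L}_{\xi}\pi^{\ast}g_{N}=\pi^{\ast}\mathscr{L}_{\pi_{\ast}\xi}g_{N}=0$, and the $S^{1}$-invariance of the connection form gives $\mathscr{L}_{\xi}\eta = \iota_{\xi}d\eta + d(\iota_{\xi}\eta) = 0$ (since $\iota_{\xi}d\eta = 0$ by horizontality of the curvature and $\iota_{\xi}\eta = 1$ is constant). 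Combining these two vanishings yields \ref{hatakeymaeq}.

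The argument is essentially a bookkeeping exercise and I do not expect any serious obstacle: the one point that deserves care is verifying that $\phi^{2} = -\mathrm{id}+\eta\otimes\xi$ on a general vector $X$ rather than only on purely horizontal or purely vertical ones, which follows cleanly from the decomposition $X = X^{H}+\eta(X)\xi$ together with $\pi_{\ast}\xi=0$, so that $\phi(X) = \phi(X^{H})$ lies in $\mathscr{H}$ and then $\phi^{2}(X) = -X^{H} = -X + \eta(X)\xi$. All other steps are routine applications of the definition of a principal connection and of the horizontal-vertical decomposition.
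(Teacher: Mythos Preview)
Your argument is correct and follows the standard construction. Note, however, that the paper does not actually supply a proof of this theorem: it is stated with a citation to \cite{HATAKEYMA} and no proof is given in the text. The only related argument in the paper is the proof sketch for Theorem~\ref{almostcircle}, which is essentially the same construction you outline (connection form, horizontal lift of $J_N$), but carried out under the additional hypothesis that the curvature is of type $(1,1)$ so that normality can be concluded. Your write-up correctly isolates the part of that argument which does not require any integrability and adds the metric piece, so there is nothing substantive to compare against.

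One small slip in your exposition: after verifying $\phi^{2}=-\mathrm{id}+\eta\otimes\xi$ and $\pi_{\ast}\circ\phi = J_{N}\circ\pi_{\ast}$ you write ``giving items (2) and (3) at the same time'', but item~(2) is the $\xi$-invariance of $\phi$, which you then (correctly) argue separately in the next sentence. Just adjust the wording so that only item~(3) is claimed at that point.
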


\subsection{Contact geometry and almost contact geometry} An important class of almost contact manifolds is provided by contact manifolds. Let us recall some basic generalities on contact geometry.
\begin{definition}
Let $M$ be a smooth connected manifold of dimension $2n + 1$. A contact structure on $M$ is a $1$-form $\eta \in \Omega^{1}(M)$ which satisfies $\eta \wedge (d\eta)^{n} \neq 0$.
\end{definition}
When a smooth connected $(2n+1)$-dimensional manifold $M$ admits a contact structure $\eta \in \Omega^{1}(M)$ the pair $(M,\eta)$ is called contact manifold. Given a contact manifold $(M,\eta)$, at each point $p \in M$ we have from the condition $\eta \wedge (d\eta)^{n} \neq 0$ that $(d\eta)_{p}$ is a quadratic form of rank $2n$ in the Grassman algebra $\bigwedge T_{p}^{\ast}M$, thus we obtain 
\begin{equation}
T_{p}M = \mathscr{D}_{p} \oplus \mathcal{F}_{\eta_{p}},
\end{equation}
such that $\mathscr{D} = \ker(\eta)$ and
\begin{center}
$p \in M \mapsto \mathcal{F}_{\eta_{p}} = \big\{ X \in T_{p}M \ \big | \ (d\eta)_{p}(X,T_{p}M) = 0 \big \} \subset T_{p}M,$ 
\end{center}
defines the characteristic foliation.

Let $(M,\eta)$ be a contact manifold. From the condition $\eta \wedge (d\eta)^{n} \neq 0$, we have that there exists $\xi \colon C^{\infty}(M) \to C^{\infty}(M)$, such that 
\begin{equation}
\label{derivation}
df \wedge (d\eta)^{n} = \xi(f)\eta \wedge (d\eta)^{n},
\end{equation}
$\forall f \in  C^{\infty}(M)$. From this, a straightforward computation shows that $\xi$ is a $\mathbb{R}$-linear derivation on $C^{\infty}(M)$, hence $\xi \in \Gamma(TM)$. Now, from Equation \ref{derivation} we can show that 
\begin{center}
$\beta(\xi)\eta \wedge (d\eta)^{n} = \beta \wedge (d\eta)^{n}$, 
\end{center}
$\forall \beta \in \Omega^{1}(M)$. By using this last fact we have
\begin{center}
$\eta(\xi)\eta \wedge (d\eta)^{n} = \eta \wedge (d\eta)^{n}$ \ \ and \ \ $d\eta(X,\xi)\eta \wedge (d\eta)^{n} = \displaystyle \frac{1}{n+1}\iota_{X}(d\eta)^{n+1}=0,$ 
\end{center}
for all $X \in \Gamma(TM)$. Therefore, we obtain $\xi \in \Gamma(TM)$ which satisfies
\begin{equation}
\eta(\xi) = 1 \ \ {\text{and}} \ \ d\eta(\xi,\cdot) = 0,
\end{equation}
see for instance \cite{TAKIZAWA} for more details about the description above. The vector field $\xi$ is called the characteristic vector field, or Reeb vector field, of the contact structure $\eta$.

A contact structure $\eta \in \Omega^{1}(M)$ is called regular if the associated characteristic vector field $\xi \in \Gamma(TM)$ is regular, namely, if every point of the manifold has a neighborhood such that any integral curve of the vector field passing through the neighborhood passes through only once \cite{PALAIS}. In this case $(M,\eta)$ is called regular contact manifold. 

In the setting of compact regular contact manifolds we have the following important well-known result.

\begin{theorem}[Boothby-Wang, \cite{BW}]
\label{BWT}
Let $\eta$ be a regular contact structure on a compact smooth manifold $M$, then:

\begin{enumerate}

    \item $M$ is a principal ${\rm{U}}(1)$-bundle over $N = M/{\rm{U}}(1)$,\\
    
    \item $\eta' = \sqrt{-1}\eta$ defines a connection on this bundle, and\\
    
    \item the manifold $N$ is a symplectic manifold whose the symplectic form $\omega$ determines an integral cocycle on $N$ which satisfies $d\eta = \pi^{\ast}\omega$, where $\pi \colon M \to N$.
\end{enumerate}

\end{theorem}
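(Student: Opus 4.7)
The plan is to follow the classical Boothby--Wang strategy, bootstrapping from the Reeb field of $\eta$ to an honest $\mathrm{U}(1)$-action on $M$, and then showing that $d\eta$ descends to the desired symplectic form on the quotient.

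\emph{Step 1: Constructing the $\mathrm{U}(1)$-bundle structure.} I would first note that regularity of $\eta$ means, by definition, that the Reeb vector field $\xi$ is regular: every point of $M$ has a flowbox neighborhood crossed at most once by each integral curve of $\xi$. The top form $\eta \wedge (d\eta)^{n}$ is a volume form on $M$ that is $\xi$-invariant (since $\mathcal{L}_\xi \eta = 0$ and $\mathcal{L}_\xi d\eta = 0$), so compactness of $M$ forces the orbit closures of $\xi$ to be compact $\xi$-invariant sets; combined with regularity and local flowbox charts, this forces each orbit to be a closed circle with a common minimal period. After rescaling, we may assume this period is $2\pi$, producing a free smooth action of $\mathrm{U}(1)$ on $M$ whose orbit space $N := M/\mathrm{U}(1)$ is a smooth manifold and for which $\pi\colon M \to N$ is a principal $\mathrm{U}(1)$-bundle.

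\emph{Step 2: Connection and descent.} To check that $\sqrt{-1}\eta$ is a connection on $M \to N$, I need $\eta(\xi) = 1$, which is the defining Reeb property, and $\mathcal{L}_\xi \eta = 0$, which follows from Cartan's formula
\begin{equation*}
\mathcal{L}_\xi \eta = d(\iota_\xi \eta) + \iota_\xi d\eta = d(1) + 0 = 0,
\end{equation*}
using that $d\eta(\xi,\cdot) \equiv 0$. The two-form $d\eta$ is then horizontal ($\iota_\xi d\eta = 0$) and $\mathrm{U}(1)$-invariant ($\mathcal{L}_\xi d\eta = d\mathcal{L}_\xi \eta = 0$), hence basic, so there is a unique $\omega \in \Omega^{2}(N)$ with $\pi^{\ast}\omega = d\eta$; closedness of $\omega$ follows from $\pi^{\ast} d\omega = d^{2}\eta = 0$ and the injectivity of $\pi^{\ast}$ on forms. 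Non-degeneracy of $\omega$ is forced by the contact condition: $\eta \wedge (d\eta)^{n} \neq 0$ means that $(d\eta)^{n}$ is non-degenerate on the horizontal distribution $\ker \eta$, so $\omega^{n}$ trivializes $\bigwedge^{2n}T^{\ast}N$. Finally, by Chern--Weil theory, the curvature of the connection $\sqrt{-1}\eta$ is $\sqrt{-1}\,\pi^{\ast}\omega$, and with the period-$2\pi$ normalization chosen in Step~1, $[\omega]$ represents $2\pi c_1$ of the bundle $M \to N$ and defines an integral cocycle on $N$.

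\emph{Main obstacle.} The genuinely delicate step is Step~1: passing from a regular $\mathbb{R}$-flow to a free $\mathrm{U}(1)$-action requires showing that all Reeb orbits close up with a uniform period. Regularity alone only provides a smooth one-dimensional foliation; one must combine it with compactness of $M$, the invariance of the natural volume form $\eta \wedge (d\eta)^{n}$, and a local flowbox argument à la Palais in order to rule out non-closed orbits and to synchronize their periods. Everything after Step~1 is essentially the Cartan-calculus descent argument together with standard Chern--Weil theory.
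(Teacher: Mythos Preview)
The paper does not actually prove this theorem: it is stated as a classical result and attributed to Boothby--Wang \cite{BW}, with no argument supplied. So there is no ``paper's own proof'' to compare against; what you have written is essentially the standard Boothby--Wang argument, and your identification of the delicate point (promoting the regular Reeb flow to a free $\mathrm{U}(1)$-action with uniform period) is exactly where the real work lies in the original reference.

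Your Step~2 is correct and complete as written. In Step~1, your sketch is on the right track but slightly compressed at the key juncture: regularity plus compactness gives (via Palais) that the leaf space $N$ is a manifold and that each Reeb orbit is an embedded circle, but constancy of the period function is a separate issue that you should make explicit. The usual route is to observe that the period function is continuous (in fact smooth) on $M$, constant along orbits, and then use the existence of the invariant volume form $\eta\wedge(d\eta)^n$ together with a local flowbox argument to see it is locally constant, hence globally constant by connectedness; only then can you rescale $\eta$ to obtain period $2\pi$. You gestured at these ingredients, but the logical dependency (closed orbits first, then smooth period function, then constancy, then rescaling) deserves to be spelled out if you intend this as a self-contained proof rather than a summary.
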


The next result states that, in fact, the converse of Theorem \ref{BWT} is also true. 

\begin{theorem}[Kobayashi, \cite{TOROIDAL}]
\label{CONVBW}
Let $(N,\omega_{N})$ be a symplectic manifold such that $[\omega_{N}] \in H^{2}(N,\mathbb{Z})$, then there exists a principal ${\rm{U}}(1)$-bundle $\pi \colon M \to N$ with a connection $1$-form $\eta' \in \Omega^{1}(M,\mathfrak{u}(1))$ which determines a regular contact structure $\eta = -\sqrt{-1}\eta'$ on $M$ which satisfies $d\eta = \pi^{\ast}\omega_{N}$.

\end{theorem}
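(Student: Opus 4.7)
The strategy is to build $M$ as the total space of a principal $U(1)$-bundle realizing the integral class $[\omega_N]$, equip it with a connection whose curvature is exactly $\omega_N$ (not merely cohomologous to it), and then verify the contact, curvature and regularity conditions directly.

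First I would invoke the classification of principal $U(1)$-bundles over $N$ by $H^{2}(N,\mathbb{Z})$ — equivalently, the isomorphism $\text{Pic}(N) \cong \mathscr{P}(N,U(1))$ mentioned in the paper after Theorem \ref{Theo1} in \eqref{isocircleline} — to produce a principal $U(1)$-bundle $\pi\colon M \to N$ with first Chern class $c_{1}(M)=[\omega_{N}]$. Next, pick any principal connection $\sqrt{-1}\alpha$ on $M$; its curvature form $\Omega_\alpha = \pi^{*}\psi$ for some closed real $2$-form $\psi$ on $N$ with $[\psi]=[\omega_N]$. Hence there exists $\beta \in \Omega^{1}(N)$ with $\omega_N - \psi = d\beta$, and the adjusted $1$-form $\sqrt{-1}\eta' := \sqrt{-1}\alpha + \sqrt{-1}\pi^{*}\beta$ is again a principal connection (since the correction term is horizontal and basic) whose curvature is now exactly $\pi^{*}\omega_N$. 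Setting $\eta := -\sqrt{-1}\eta'$, I get a real $1$-form on $M$ with $d\eta = \pi^{*}\omega_N$.

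To check the contact condition, let $2n = \dim N$ so that $\dim M = 2n+1$. Then
\begin{equation*}
\eta \wedge (d\eta)^{n} \;=\; \eta \wedge \pi^{*}(\omega_{N}^{n}).
\end{equation*}
Since $\omega_{N}^{n}$ is a volume form on $N$, the form $\pi^{*}(\omega_{N}^{n})$ is horizontal and non-vanishing when evaluated on any frame of horizontal vectors, and $\eta$ is by construction non-zero on the vertical direction generated by the fundamental vector field $\xi = \partial/\partial\theta$ of the $U(1)$-action (normalized so that $\eta(\xi)=1$). Thus $\eta \wedge (d\eta)^{n}$ is a volume form on $M$, so $\eta$ is a contact structure.

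Finally, regularity of $\eta$ follows from the principal-bundle structure itself: the characteristic vector field is $\xi$, which is the infinitesimal generator of the free, effective $U(1)$-action on $M$. Any sufficiently small tubular neighborhood of an orbit is of product form $\mathcal{O}\times U$ for some trivializing open $U\subset N$, and every integral curve of $\xi$ meeting this tube lies in a single orbit, which crosses any local slice once. Therefore $\xi$ is a regular vector field and $\eta$ is a regular contact structure.

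The only step with any real content is the adjustment of the connection so that its curvature equals $\omega_N$ on the nose; this is the standard Chern--Weil observation, and once it is in place the remaining verifications are formal consequences of $d\eta=\pi^{*}\omega_N$ and the symplecticity of $\omega_N$.
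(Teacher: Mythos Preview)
The paper does not supply its own proof of Theorem \ref{CONVBW}; it is quoted without argument as a classical result of Kobayashi \cite{TOROIDAL}. So there is nothing in the paper to compare your proposal against directly.

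That said, your argument is the standard one and is correct. The three ingredients --- (i) existence of a principal $U(1)$-bundle with prescribed integral Chern class, (ii) the Chern--Weil adjustment $\eta' \mapsto \eta' + \sqrt{-1}\pi^{\ast}\beta$ to force the curvature to equal $\pi^{\ast}\omega_N$ exactly, and (iii) the verification that $\eta \wedge (d\eta)^n = \eta \wedge \pi^{\ast}(\omega_N^n)$ is a volume form because $\omega_N^n$ is --- are precisely the steps in Kobayashi's original argument and in the treatment of \cite[Theorem~2.5]{BLAIR}. Your identification of the Reeb field with the fundamental vector field of the $U(1)$-action, and hence the regularity of $\eta$ via the free circle action, is also the usual route. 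There is nothing missing.
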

We are particularly interested in the following setting. 

\begin{definition} 
\label{contactdef}
A contact manifold $(M,\eta)$ is said to be homogeneous if there is a connected Lie group $G$ acting transitively and
 effectively as a group of diffeomorphisms on $M$ which leave $\eta$ invariant, i.e. $g^{\ast}\eta = \eta$, $\forall g \in G$.

\end{definition}

We denote a homogeneous contact manifold by $(M,\eta,G)$. From this, we have the following important result.

\begin{theorem}[Boothby-Wang, \cite{BW}]
Let $(M,\eta,G)$ be a homogeneous contact manifold. Then the contact form $\eta$ is regular. Moreover, $M = G/K$ is a fiber bundle over $G/H_{0}K$ with fiber $H_{0}K/K$, where $H_{0}$ is the connected component of a $1$-dimensional Lie group $H$, and $H_{0}$ is either diffeomorphic to ${\rm{U}}(1)$ or $\mathbb{R}$.

\end{theorem}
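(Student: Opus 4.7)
The plan is to extract the claimed fibration directly from the Reeb vector field of $\eta$, using the transitive $G$-action. Writing $M = G/K$ for $K$ the isotropy of a basepoint $o \in M$, my first step is to show that the characteristic vector field $\xi$ is $G$-invariant. This is immediate from uniqueness: $\xi$ is characterized by $\eta(\xi)=1$ and $\iota_\xi d\eta = 0$, and since each $g \in G$ satisfies $g^{\ast}\eta = \eta$ (hence also $g^{\ast} d\eta = d\eta$), the pushforward $g_{\ast}\xi$ satisfies the same two equations, so $g_{\ast}\xi = \xi$.

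The second step is to analyze the flow $\psi_t$ of $\xi$. Because $\xi$ is $G$-invariant, $\psi_t$ is a $G$-equivariant self-diffeomorphism of $G/K$. The group of all such diffeomorphisms is $N_G(K)/K$ acting by right multiplication, so there exists a smooth $1$-parameter subgroup $\bar{\alpha}\colon \mathbb{R}\to N_G(K)/K$ with $\psi_t(gK) = g \bar{\alpha}(t)K$. Lifting its infinitesimal generator to $X_0 \in \mathfrak{n}_{\mathfrak{g}}(\mathfrak{k})$ via the exponential and setting $\alpha(t)=\exp(tX_0)$, the image $H := \alpha(\mathbb{R}) \subset N_G(K) \subset G$ is a $1$-dimensional Lie subgroup whose identity component $H_0$ is diffeomorphic either to $\mathbb{R}$ (if $\alpha$ is injective) or to $U(1)$ (otherwise). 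The condition $\eta(\xi_o)=1 \ne 0$ forces $X_0 \notin \mathfrak{k}$, so $H_0 \cap K$ is discrete and $H_0 \cdot o$ is a $1$-dimensional submanifold.

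Third, I would derive the regularity of $\eta$ and the bundle structure from the homogeneity of the $\xi$-foliation. Since $\psi_t$ commutes with the transitive $G$-action, every Reeb leaf is of the form $g\cdot(H_0 \cdot o)$ and diffeomorphic to $H_0 / (H_0\cap K)$; picking a slice in $\ker\eta_o \subset T_o M$ transverse to $\xi_o$ at $o$ and translating it by $G$ produces local product charts verifying Palais's regularity condition. With $H_0 \subset N_G(K)$, the product $H_0 K$ is a subgroup of $G$, and the natural projection $\pi\colon G/K \to G/H_0 K$, $gK \mapsto g H_0 K$, is a $G$-equivariant submersion whose fiber over $[eH_0K]$ is $H_0 K / K = H_0 \cdot o$; local trivializations then come from the transitive left $G$-action on the base.

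The main obstacle I anticipate is showing that $H_0 K$ is a closed subgroup of $G$, equivalently that the Reeb leaves are embedded (rather than merely immersed) submanifolds of $M$. A priori $\exp(\mathbb{R}X_0)$ could be dense in a higher-dimensional torus of $G$, in which case the leaf through $o$ would be dense in a strictly larger invariant set and Palais regularity would fail. Following \cite{BW}, one resolves this by arguing that the pointwise non-degeneracy $\eta\wedge (d\eta)^n \ne 0$ evaluated at $o$, combined with the algebraic constraints that $G$-invariance of $\eta$ imposes on the pair $(\mathfrak{g}, X_0)$, forces $X_0$ to generate a closed $1$-parameter subgroup modulo $\mathfrak{k}$. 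This gives closedness of $H_0K$ in $G$, turns $G/H_0K$ into a smooth manifold, and yields the claimed fiber bundle structure with fiber $H_0K/K \cong H_0/(H_0\cap K)$ of the stated topological type.
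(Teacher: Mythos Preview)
The paper does not supply its own proof of this theorem; it is quoted as a background result with attribution to Boothby and Wang \cite{BW}, and no argument is given in the text. There is therefore nothing in the paper to compare your proposal against directly.

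That said, your outline tracks the classical Boothby--Wang strategy: use $G$-invariance of $\eta$ to deduce $G$-invariance of the Reeb field $\xi$, identify the Reeb flow with a one-parameter subgroup of $N_G(K)/K$, lift to obtain $H$, and read off the fibration $G/K \to G/H_0K$. You correctly isolate the genuine difficulty, namely closedness of $H_0K$ in $G$ (equivalently, that Reeb orbits are embedded rather than merely immersed). Your final paragraph gestures at this but does not actually carry it out; saying that ``the algebraic constraints \ldots force $X_0$ to generate a closed $1$-parameter subgroup modulo $\mathfrak{k}$'' is a placeholder, not an argument. In the original \cite{BW} this step requires real work, and your proposal would need to supply it to be a complete proof. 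Since the present paper defers entirely to \cite{BW}, however, your sketch is already more than what the paper itself provides.
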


If we suppose that $(M,\eta,G)$ is compact and simply connected, then according to \cite{MONTGOMERY}, without loss of generality, we can suppose that $G$ is compact. Furthermore, according to \cite{WANG} we can in fact suppose that $G$ is a semisimple Lie group. Hence, we have the following theorem.

\begin{theorem}[Boothby-Wang, \cite{BW}]
\label{BWhomo}
Let $(M,\eta,G)$ be a compact simply connected homogeneous contact manifold. Then $M$ is a circle bundle over a complex flag manifold $(N,\omega_{N})$ such that $\omega_{N}$ defines a $G$-invariant Hodge metric which satisfies $d\eta = \pi^{\ast}\omega_{N}$, where $\pi \colon M \to (N,\omega_{N})$.
\end{theorem}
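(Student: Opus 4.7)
The plan is to bootstrap from the earlier theorems of Boothby--Wang (Theorems \ref{BWT} and the preceding statement on homogeneous contact manifolds) together with the structure theory of compact homogeneous symplectic manifolds. First I would observe that, because $G$ acts on $M$ preserving the contact form $\eta$, it must also preserve $d\eta$, and hence preserve the Reeb vector field $\xi$ characterized intrinsically by $\eta(\xi)=1$ and $\iota_{\xi}d\eta=0$. Consequently the flow of $\xi$ commutes with the $G$-action and descends to a trivial action on the orbit space $N \ce M/\mathrm{U}(1)$. Writing $M = G/K$ for the isotropy $K$ at a base point, the enlarged subgroup $H \ce \mathrm{U}(1)\cdot K$ is a closed Lie subgroup, and one obtains a principal $\mathrm{U}(1)$-bundle $\pi\colon M = G/K \to N = G/H$ on which $\sqrt{-1}\eta$ is a connection 1-form. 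This is exactly the Boothby--Wang fibration guaranteed by Theorem \ref{BWT}, so there is a closed integral 2-form $\omega_{N}\in \Omega^{2}(N)$ with $d\eta = \pi^{\ast}\omega_{N}$ and $[\omega_{N}]\in H^{2}(N,\mathbb{Z})$.

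Next I would upgrade $\omega_{N}$ to a $G$-invariant symplectic form: the $G$-invariance of $\eta$ (Definition \ref{contactdef}) propagates to $d\eta$, which is a basic form with respect to $\pi$, so the unique $\omega_{N}$ with $\pi^{\ast}\omega_{N}=d\eta$ is itself $G$-invariant. Since $G$ acts transitively on $N$ through a compact connected semisimple Lie group (using the reductions attributed to Montgomery and Wang recalled just before the statement), we are in the setting of a compact simply connected homogeneous space admitting a $G$-invariant symplectic form.

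The main work, and the step I expect to be the principal obstacle, is the classification: any such space is a complex flag manifold, and $\omega_{N}$ is automatically K\"ahler with respect to the associated invariant complex structure. The argument here is the standard Borel/Matsushima/Wang structure theorem. I would carry it out by noting that the isotropy $H$ must coincide with the centralizer of a torus in $G$: the $G$-invariant 2-form $\omega_{N}$ corresponds, via the Chevalley--Eilenberg identification, to an $\mathrm{Ad}(H)$-invariant element of $\Lambda^{2}(\mathfrak{g}/\mathfrak{h})^{\ast}$ whose radical is exactly $\mathfrak{h}$; combined with compactness and semisimplicity of $G$, this forces $H$ to be of the form $C_{G}(T)$ for a torus $T\subset G$, so that $N = G/C_{G}(T)$ is a generalized flag variety. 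The same $\omega_{N}$, together with the canonical $G$-invariant complex structure $J_{0}$ on $G/C_{G}(T)$ built from a choice of positive roots (as recalled in the introduction via $P=P_{\Theta}$), gives a compatible $G$-invariant K\"ahler metric on $N$.

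Finally, I would conclude that because $[\omega_{N}]\in H^{2}(N,\mathbb{Z})$ is integral and $\omega_{N}$ is a K\"ahler form, the invariant K\"ahler metric on $N$ is in fact a Hodge metric, and the Boothby--Wang relation $d\eta = \pi^{\ast}\omega_{N}$ is inherited from Theorem \ref{BWT}. This proves all three items in the statement. The delicate point throughout is the classification step identifying $N$ as a flag manifold; the rest is bookkeeping around invariance and the prior Boothby--Wang construction.
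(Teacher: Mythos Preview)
The paper does not supply its own proof of this theorem: it is stated as a cited result of Boothby--Wang \cite{BW}, preceded only by the two preparatory remarks reducing $G$ to a compact semisimple Lie group via \cite{MONTGOMERY} and \cite{WANG}. There is therefore nothing in the paper to compare your argument against; your proposal supplies strictly more detail than the paper contains for this statement.

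That said, your sketch tracks the classical Boothby--Wang/Borel argument faithfully. The reduction to the fibration via Theorem~\ref{BWT} and the $G$-invariance of $\omega_{N}$ are routine, and you correctly isolate the substantive step as the identification of $N=G/H$ with a generalized flag variety through the centralizer-of-a-torus characterization of $H$. One small point worth tightening: you should note explicitly that $N$ inherits simple connectedness from $M$ via the homotopy exact sequence of the circle bundle, since the structure theorem you invoke for compact homogeneous K\"{a}hler (or symplectic) manifolds requires this hypothesis to exclude torus factors and force the flag-manifold conclusion. With that in place your outline is sound.
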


The next result shows that in the setting of Theorem \ref{BWhomo} we have a complete description of compact simply connected homogeneous contact manifolds in terms of negative line bundles over flag manifolds. Given a complex manifold $N$, for every $L \in {\text{Pic}}(N)$ we can take a Hermitian structure $H$ on $L$ and consider the circle bundle defined by 
\begin{equation}
Q(L) = \Big \{ u \in L \ \Big | \ \sqrt{H(u,u)} = 1 \Big\}.
\end{equation}
Now, we have the following characterization for compact simply connected homogeneous contact manifolds, see for instance \cite{CONTACTCORREA}. 
\begin{proposition}
\label{CONTACTSIMPLY}
Let $(M,\eta,G)$ be a compact simply connected homogeneous contact manifold, then $M$ is a principal ${\rm{U}}(1)$-bundle $\pi \colon Q(L) \to N$ over a complex flag manifold, for some ample line bundle $L^{-1} \in {\text{Pic}}(N)$. Moreover, if $d\eta = \pi^{\ast}\omega_{N}$, where $[\omega_{N}] \in H^{2}(N,\mathbb{Z})$ is a $G$-invariant K\"{a}hler-Einstein metric, it follows that
\begin{equation}
L = K_{N}^{\otimes \frac{1}{I(N)}},
\end{equation}
where $I(N)$ is the Fano index of $N$. 
\end{proposition}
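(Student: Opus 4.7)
The plan combines three ingredients: the homogeneous Boothby-Wang theorem (Theorem \ref{BWhomo}), the line-bundle reformulation of Kobayashi's converse via the isomorphism \ref{isocircleline}, and the structure theory of the Picard group of a complex flag manifold developed in Section \ref{sec3}. I first apply Theorem \ref{BWhomo}: since $(M,\eta,G)$ is compact, simply connected, and homogeneous, $M$ fibers as a principal ${\rm U}(1)$-bundle $\pi \colon M \to N$ over a complex flag manifold $N$, equipped with a $G$-invariant Hodge metric $\omega_{N}$ satisfying $d\eta = \pi^{\ast}\omega_{N}$ and $[\omega_{N}] \in H^{2}(N,\mathbb{Z})$.

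Next, I promote this circle bundle to a line bundle via \ref{isocircleline} (the line-bundle form of Theorem \ref{CONVBW}): there is a unique $L \in {\rm Pic}(N)$ with $M \cong Q(L)$ and $c_{1}(L^{-1}) = [\omega_{N}]$. Because $\omega_{N}$ is a positive $(1,1)$-form, Kodaira's embedding theorem shows that $L^{-1}$ is ample, which proves the first assertion.

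Under the additional hypothesis that $\omega_{N}$ is a $G$-invariant K\"{a}hler-Einstein metric, the Einstein equation ${\text{Ric}}(\omega_{N}) = \lambda\,\omega_{N}$ gives $[\omega_{N}] = \lambda^{-1} c_{1}(N) = -\lambda^{-1} c_{1}(K_{N})$, so $[\omega_{N}]$ lies in the ray $\mathbb{R}_{>0}\,c_{1}(N) \subset H^{2}(N,\mathbb{R})$. By definition of the Fano index, $K_{N}^{-1}$ is $I(N)$-divisible in ${\rm Pic}(N)$, and the primitive integral class along $c_{1}(N)$ is $c_{1}\bigl(K_{N}^{\otimes -1/I(N)}\bigr)$. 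Using the description of ${\rm Pic}(N)$ in terms of fundamental weights $\omega_{\alpha}$, $\alpha \in \Sigma \backslash \Theta$ (Section \ref{sec3}), together with the uniqueness up to positive scaling of invariant K\"{a}hler-Einstein metrics on a flag manifold, the $G$-invariance and integrality of $[\omega_{N}]$ pin it down to this primitive class. Combined with $c_{1}(L^{-1}) = [\omega_{N}]$, this yields $L = K_{N}^{\otimes \frac{1}{I(N)}}$.

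The hardest piece is the last identification: verifying that the integral $G$-invariant K\"{a}hler-Einstein representative is the primitive class in its direction, rather than a positive integer multiple of it. This rests on the explicit representation-theoretic description of ${\rm Pic}(X_{P})$ developed in Section \ref{sec3}, combined with the classical fact that a complex flag manifold admits an invariant K\"{a}hler-Einstein metric unique up to positive scaling; the $G$-invariance forces $\omega_{N}$ to be this canonical metric, and integrality together with the divisibility statement defining $I(N)$ fixes the normalization.
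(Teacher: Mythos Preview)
The paper does not give its own proof of this proposition; it is stated with a reference to \cite{CONTACTCORREA}. Your outline is on the right track, but there is a genuine gap in the final identification.

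Your claim that ``integrality together with the divisibility statement defining $I(N)$ fixes the normalization'' is not correct. The K\"{a}hler--Einstein condition forces $[\omega_N]$ to lie on the ray $\mathbb{R}_{>0}\,c_1(N)$, and integrality then gives $[\omega_N] = \tfrac{k}{I(N)}\,c_1(N)$ for some positive integer $k$; but every such $k$ yields an integral class, and the corresponding metric is still K\"{a}hler--Einstein (with a rescaled Einstein constant). Uniqueness of the invariant K\"{a}hler--Einstein metric \emph{up to positive scaling} cannot distinguish among these, since they are all scalar multiples of one another. So nothing in your argument excludes $k \geq 2$.

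What actually pins down $k = 1$ is the hypothesis you never invoke in the second part: the simple connectivity of $M$. From the homotopy exact sequence of $S^1 \hookrightarrow M \to N$ together with $\pi_1(N) = 0$, one has
\[
\pi_1(M) \cong \mathrm{coker}\bigl(\pi_2(N) \xrightarrow{\ \Delta\ } \pi_1(S^1)\bigr),
\]
where the connecting map is pairing with the Euler class $e(M) = [\omega_N]$. Hence $\pi_1(M) = 0$ if and only if $[\omega_N]$ is indivisible in $H^2(N,\mathbb{Z})$. Writing $[\omega_N] = \tfrac{k}{I(N)} \sum_{\alpha \in \Sigma \setminus \Theta} \langle \delta_P, h_\alpha^\vee \rangle\,[\Omega_\alpha]$ in the basis of Proposition~\ref{C8S8.2Sub8.2.3P8.2.6} and using $I(N) = \gcd_\alpha \langle \delta_P, h_\alpha^\vee \rangle$, indivisibility forces $k = 1$. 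This is exactly the mechanism behind the discussion following Theorem~\ref{contacthomo}, where dropping simple connectivity produces the quotients $Q(L)/\mathbb{Z}_\ell$.
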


The next result together with the last proposition allow us to describe all compact homogeneous contact manifolds, the proof for the result below can be found in \cite{BOYERGALICKI}.

\begin{theorem}
\label{contacthomo}
Let $(M,\eta,G)$ be a compact homogeneous contact manifold. Then
\begin{enumerate}
    
    \item $M$ is a non-trivial circle bundle over a complex flag manifold,\\
    
    \item $M$ has finite fundamental group, and the universal cover $\widetilde{M}$ of $M$ is a compact homogeneous contact manifold. 
\end{enumerate}

\end{theorem}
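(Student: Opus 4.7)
The plan is to deduce the statement from the Boothby--Wang theorem for homogeneous contact manifolds, together with a structural reduction of the transitive group $G$ (via \cite{MONTGOMERY}, \cite{WANG}) and the homotopy long exact sequence of the resulting circle fibration. I shall not attempt an independent proof of Borel's classification of compact homogeneous symplectic manifolds under a compact semisimple Lie group, but cite it through the earlier Theorem \ref{BWhomo}.

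First, I apply the Boothby--Wang result for homogeneous contact manifolds already recalled in the excerpt: the contact form $\eta$ on $M$ is regular and $M = G/K$ fibers over $N \ce G/H_{0}K$ with $1$-dimensional fiber $H_{0}K/K$. Since $M$ is compact, the fiber is compact and connected, hence diffeomorphic to $S^{1}$; so $\pi \colon M \to N$ is a principal ${\rm{U}}(1)$-bundle. The relation $d\eta = \pi^{\ast}\omega_{N}$ combined with the contact condition $\eta \wedge (d\eta)^{n} \neq 0$ forces $\omega_{N}^{n} \neq 0$, so the first Chern class $c_{1}(\pi) = [\omega_{N}] \in H^{2}(N,\mathbb{Z})$ is non-trivial and the bundle is non-trivial, yielding part (1) modulo identifying $N$.

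Next, to identify $N$ as a flag manifold I reduce the transitive group: by \cite{MONTGOMERY} I may replace $G$ by a compact transitive subgroup, and by \cite{WANG} by a compact semisimple one. The base $N$ is then a compact homogeneous symplectic manifold under a compact semisimple Lie group, so by Borel's classical theorem (the same input used in Theorem \ref{BWhomo}) $N$ is a complex flag manifold in the sense of the introduction, and in particular simply connected. This finishes the proof of part (1).

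Finally, I feed this into the homotopy long exact sequence of $S^{1} \hookrightarrow M \to N$, which reads
\begin{equation*}
\pi_{2}(N) \xrightarrow{\delta} \pi_{1}(S^{1}) = \mathbb{Z} \longrightarrow \pi_{1}(M) \longrightarrow \pi_{1}(N) = 0,
\end{equation*}
where $\delta$ is, up to sign, the evaluation of $c_{1}(\pi) = [\omega_{N}]$ on $\pi_{2}(N) \cong H_{2}(N,\mathbb{Z})$. Since $[\omega_{N}]$ is an integral K\"{a}hler class on a flag manifold, it pairs positively with any Schubert line in $H_{2}(N,\mathbb{Z})$, so $\delta \neq 0$ and $\pi_{1}(M)$ is a finite cyclic quotient of $\mathbb{Z}$. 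Consequently, $\widetilde{M}$ is compact, $\eta$ and the $G$-action lift to $\widetilde{M}$, and the lifted $G$-action is transitive and preserves the lifted contact form; this yields part (2). The main obstacle will be verifying $\delta \neq 0$, i.e., exhibiting a class in $\pi_{2}(N)$ on which the integral K\"{a}hler class $[\omega_{N}]$ evaluates non-trivially; it is exactly this step that upgrades the existence of a circle-bundle structure to the finiteness of $\pi_{1}(M)$, and therefore to the universal-cover statement in (2).
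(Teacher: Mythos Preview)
The paper does not prove this theorem; it simply states that ``the proof for the result below can be found in \cite{BOYERGALICKI}.'' So there is no paper's proof to compare against, and your sketch must be assessed on its own.

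Your overall architecture is the standard one and most of the steps are sound, but there is a genuine gap at the reduction step. You write that ``by \cite{MONTGOMERY} I may replace $G$ by a compact transitive subgroup,'' but Montgomery's theorem requires the homogeneous space to be simply connected --- this is precisely how the paper itself invokes Montgomery, just before Theorem~\ref{BWhomo}, under that explicit hypothesis. At the point where you invoke it, neither $M$ nor the base $N$ is known to be simply connected; establishing that $\pi_1(M)$ is finite is exactly part~(2) of what you are trying to prove, so the argument is circular as written. Without the reduction to a compact (and then semisimple) transitive group you cannot directly appeal to Borel's classification to identify $N$ as a flag manifold, and hence the homotopy-sequence step (which uses $\pi_1(N)=0$) is not yet justified. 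The treatment in \cite{BOYERGALICKI} closes this by a more careful structural analysis of the transitive group and its action on $M$ (in particular ruling out a torus factor in $N$); you would need either to reproduce that analysis, or to supply an independent reason why some compact group already acts transitively on $M$, before the rest of your argument can proceed.
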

The result above provides a complete description for any compact homogeneous contact manifold $(M,\eta,G)$ as being a quotient space 

\begin{center}
    
$M = \widetilde{M}/\Gamma$,     
    
\end{center}
where $\widetilde{M} = Q(L)$ is given by Proposition \ref{CONTACTSIMPLY} and $\Gamma = \mathbb{Z}_{\ell} \subset {\rm{U}}(1) \hookrightarrow \widetilde{M}$ is a cyclic group given by the deck transformations of the universal cover $\widetilde{M}$, see for instance \cite{BOYERGALICKI}. Hence, we have 

\begin{center}

$M = Q(L)/\mathbb{Z}_{\ell} = \ell \cdot Q(L) = \underbrace{Q(L) + \cdots + Q(L)}_{\ell-{\text{times}}}$,    

\end{center}
see for instance \cite{TOROIDAL}, \cite[Chapter 2]{BLAIR}. In this paper we also shall use the notation $M = Q(L^{\otimes \ell})$.

Therefore, under the assumption of the Einstein condition in the associated Boothby-Wang fibration $\pi \colon (M,\eta) \to (N,\omega_{N})$, i.e. ${\text{Ric}}(\omega_{N}) = k\omega_{N}$, $k \in \mathbb{Z}_{>0}$, we have

\begin{center}

$N = G^{\mathbb{C}}/P = G/G \cap P$ \ \ and \ \ $M = Q(K_{N}^{\otimes \frac{\ell}{I(N)}})$,

\end{center}
where $G^{\mathbb{C}}$ is a complexification of $G$, $P \subset G^{\mathbb{C}}$ is a parabolic Lie subgroup, and $I(N)$ is the Fano index of $N$.
\begin{remark}
 As we can see, it is suitable to denote $N = X_{P}$ in order to emphasize the parabolic Lie subgroup $P \subset G^{\mathbb{C}}$.
\end{remark}

 \subsection{Morimoto's construction of almost complex structures} 
 \label{subsec2.3}
 In what follows we shall cover some basic results concerned to the construction of almost complex structures on products of almost contact manifolds, our approach is according to \cite{MORIMOTO}.
 
 Let $M_{1}$ and $M_{2}$ be almost contact manifolds with structure tensors $(\phi_{1},\xi_{1},\eta_{1})$ and $(\phi_{2},\xi_{2},\eta_{2})$, respectively. For any $X \in TM_{1}$ and $Y \in TM_{2}$, we can define
 \begin{equation}
 \label{almostcomplexstruct}
 J(X,Y) = \big (\phi_{1}(X) - \eta_{2}(Y)\xi_{1}, \phi_{2}(Y) + \eta_{1}(X)\xi_{2} \big ),
 \end{equation}
it is straightforward to check that $J\circ J = -{\rm{id}}$. From this, we have the following proposition:

\begin{proposition}
Let $M_{1}$ and $M_{2}$ be almost contact manifolds. Then $M_{1} \times M_{2}$ admits an almost complex structure induced by the almost contact structure of $M_{1}$ and $M_{2}$.
\end{proposition}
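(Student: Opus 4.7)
The plan is to verify directly that the endomorphism $J \in {\text{End}}(T(M_{1} \times M_{2}))$ defined in \eqref{almostcomplexstruct} satisfies $J \circ J = -{\rm{id}}$, using the structural identities of an almost contact manifold. Since $T(M_{1} \times M_{2}) = TM_{1} \oplus TM_{2}$ canonically, and $J$ is built from the smooth tensors $(\phi_{i},\xi_{i},\eta_{i})$ in a bilinear way, smoothness of $J$ is automatic. Hence only the algebraic identity $J^{2} = -{\rm{id}}$ requires a pointwise check, and the proposition reduces to this calculation.

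The key tools I would invoke are the four identities $\phi_{i}^{2} = -{\rm{id}} + \eta_{i} \otimes \xi_{i}$, $\eta_{i}(\xi_{i}) = 1$, $\phi_{i}(\xi_{i}) = 0$, and $\eta_{i} \circ \phi_{i} = 0$, for $i = 1,2$. The first two are part of Definition 2.1, while the last two follow from the defining relations (as noted in the Remark after Definition 2.1, or by \cite[Theorem 4.1]{BLAIR}). Concretely, given $(X,Y) \in TM_{1} \oplus TM_{2}$, I would write $X' = \phi_{1}(X) - \eta_{2}(Y)\xi_{1}$ and $Y' = \phi_{2}(Y) + \eta_{1}(X)\xi_{2}$, so that $J(X,Y) = (X',Y')$, and then apply $J$ again to $(X',Y')$.

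Expanding the first slot,
$$\phi_{1}(X') - \eta_{2}(Y')\xi_{1} = \phi_{1}^{2}(X) - \eta_{2}(Y)\phi_{1}(\xi_{1}) - \bigl[\eta_{2}(\phi_{2}(Y)) + \eta_{1}(X)\eta_{2}(\xi_{2})\bigr]\xi_{1},$$
which collapses via the four identities to $-X + \eta_{1}(X)\xi_{1} - \eta_{1}(X)\xi_{1} = -X$. A symmetric computation for the second slot yields $-Y$, so $J^{2}(X,Y) = (-X,-Y)$.

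The only subtlety lies in tracking the cross-terms produced by the mixing of $\xi_{1}$ with $\eta_{2}$ and $\xi_{2}$ with $\eta_{1}$ in the definition of $J$: the necessary cancellations are forced precisely by $\eta_{i}(\xi_{i}) = 1$ together with the vanishing identities $\phi_{i}(\xi_{i}) = 0$ and $\eta_{i} \circ \phi_{i} = 0$. Because the check is purely pointwise and algebraic, no genuine obstacle arises; the verification is essentially bookkeeping, and the proposition follows.
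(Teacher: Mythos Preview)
Your proof is correct and follows exactly the approach the paper indicates: the paper simply asserts that ``it is straightforward to check that $J\circ J = -{\rm{id}}$'' immediately before stating the proposition, and your verification supplies precisely those details using the almost contact identities.
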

 
We have the following characterization of the integrability condition for the almost complex structure \ref{almostcomplexstruct}

\begin{theorem}[Morimoto, \cite{MORIMOTO}]
\label{MORIMOTOTHEOREM}
Under the hypotheses of the last proposition, the almost complex structure $J$ is integrable if and only if both $(\phi_{1},\xi_{1},\eta_{1})$ and $(\phi_{2},\xi_{2},\eta_{2})$ are normal. 
\end{theorem}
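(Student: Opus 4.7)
The plan is to verify the claim via the Newlander--Nirenberg theorem, i.e. reduce integrability of $J$ to the vanishing of the Nijenhuis tensor
\[
N_{J}(U,V) = [JU,JV] - J[JU,V] - J[U,JV] - [U,V],
\]
for all vector fields $U,V$ on $M_{1}\times M_{2}$. Since $N_{J}$ is $C^{\infty}(M_{1}\times M_{2})$-bilinear and skew-symmetric, it suffices to evaluate it on extensions of vector fields tangent to the factors. Concretely, for $X\in TM_{1}$ and $Y\in TM_{2}$, I will identify them with their canonical lifts to $M_{1}\times M_{2}$, so that $[X,Y]=0$ whenever one field is tangent to $M_{1}$ and the other to $M_{2}$. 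This lets me split the computation into three cases: $N_{J}(X,Y)$ with both arguments tangent to $M_{1}$, both tangent to $M_{2}$, and one of each.

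In the pure cases, say $U=X$, $V=Y$ both tangent to $M_{1}$, I will expand $J$ using \eqref{almostcomplexstruct}. Since $\eta_{2}(X)=\eta_{2}(Y)=0$, one has $JU = (\phi_{1}X,\eta_{1}(X)\xi_{2})$ and similarly for $JV$. Computing $[JU,JV]$ and the Lie derivatives $J[JU,V]$ and $J[U,JV]$, and collecting the $M_{1}$-component using the almost contact identities $\eta_{1}\circ\phi_{1}=0$, $\phi_{1}\xi_{1}=0$, $\phi_{1}^{2}=-\mathrm{id}+\eta_{1}\otimes\xi_{1}$, I expect the $TM_{1}$-projection to be exactly $[\phi_{1},\phi_{1}](X,Y) + d\eta_{1}(X,Y)\xi_{1}$, while the $TM_{2}$-projection reduces to a linear combination of $d\eta_{1}(X,Y) - \eta_{1}([\phi_{1}X,Y]+[X,\phi_{1}Y])$-type expressions, which by the standard Sasaki--Hatakeyama identities vanish once the normality of $(\phi_{1},\xi_{1},\eta_{1})$ is assumed. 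A symmetric computation handles the pair tangent to $M_{2}$.

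The mixed case $N_{J}(X,Y)$, with $X\in TM_{1}$ and $Y\in TM_{2}$, is where the interaction between the two structures shows up. Using $[X,Y]=0$, the formula collapses to $N_{J}(X,Y) = [JX,JY] - J[JX,Y] - J[X,JY]$. Expanding each term and again invoking the almost contact identities, I expect to obtain an expression whose vanishing is equivalent to the auxiliary identities
$(\mathscr{L}_{\xi_{i}}\phi_{i})=0$ and $(\mathscr{L}_{\xi_{i}}\eta_{i})=0$, which are known consequences of the normality of $(\phi_{i},\xi_{i},\eta_{i})$ (cf. \cite[Thm.~6.2]{BLAIR}). Thus the assumption that both structures are normal will kill every component of $N_{J}$.

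For the converse, I will restrict $N_{J}$ to pairs of vector fields tangent to a single factor: $N_{J}\big((X,0),(Y,0)\big) = 0$ for all $X,Y \in TM_{1}$ projects to $[\phi_{1},\phi_{1}](X,Y) + d\eta_{1}(X,Y)\xi_{1} = 0$, which is the normality of $(\phi_{1},\xi_{1},\eta_{1})$; symmetrically one obtains the normality of $(\phi_{2},\xi_{2},\eta_{2})$. The main obstacle is the forward direction, in particular the sign-tracking and simplification of the many terms that appear in the Nijenhuis bracket of $J$; the combinatorial miracle that all remainders vanish precisely under the two normality conditions is the crux of Morimoto's original argument \cite[Theorem 6]{MORIMOTO}, and I would follow his organization of the terms closely rather than re-derive them from scratch.
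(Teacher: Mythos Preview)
The paper does not supply its own proof of this statement: it is recorded as Morimoto's theorem with a citation to \cite{MORIMOTO} and used as a black box. Your outline is precisely the classical argument (the one in \cite{MORIMOTO} and reproduced in \cite[\S 6.1]{BLAIR}): split $N_{J}$ over the three types of pairs, identify the $TM_{i}$-component in the pure cases with $[\phi_{i},\phi_{i}]+d\eta_{i}\otimes\xi_{i}$, and check that the remaining components are controlled by the auxiliary tensors $N^{(2)}$, $N^{(3)}$, $N^{(4)}$, all of which vanish once $N^{(1)}=0$. So your proposal is correct and matches the intended (cited) proof; there is nothing in the paper to compare it against beyond the reference.
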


An interesting corollary of Theorem \ref{MORIMOTOTHEOREM} is the following result of Calabi and Eckmann \cite{CALABIECKMANN}.

\begin{corollary}
\label{CALABIECKMANNCORO}
The manifold $S^{2n+1}\times S^{2m+1}$ admits a complex structure.
\end{corollary}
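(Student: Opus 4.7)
The plan is to realize each odd sphere as a principal circle bundle over complex projective space, equip each with a normal almost contact structure via Theorem \ref{almostcircle}, and then invoke Theorem \ref{MORIMOTOTHEOREM} to obtain an integrable almost complex structure on the product. This reduces the corollary to producing the required normal almost contact structures on $S^{2n+1}$ and $S^{2m+1}$ separately.

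First I would recall the Hopf fibration
$$S^{1} \hookrightarrow S^{2n+1} \xrightarrow{\pi} \mathbb{C}P^{n},$$
which exhibits $S^{2n+1}$ as a principal ${\rm{U}}(1)$-bundle over the Kähler manifold $(\mathbb{C}P^{n},\omega_{FS})$, where $\omega_{FS}$ is the Fubini--Study form. The tautological connection on the associated line bundle $\mathcal{O}(-1) \to \mathbb{C}P^{n}$ restricts to a connection $1$-form $\sqrt{-1}\eta$ on $S^{2n+1}$ whose curvature satisfies $d\eta = \pi^{\ast}\omega_{FS}$, up to a positive multiplicative constant. Since $\omega_{FS}$ is of type $(1,1)$, it automatically satisfies the compatibility condition $\omega_{FS}(JX,JY) = \omega_{FS}(X,Y)$ required by Theorem \ref{almostcircle}. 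Therefore, Theorem \ref{almostcircle} yields a normal almost contact structure $(\phi_{1},\xi_{1},\eta_{1})$ on $S^{2n+1}$. Repeating the argument with $n$ replaced by $m$ gives a normal almost contact structure $(\phi_{2},\xi_{2},\eta_{2})$ on $S^{2m+1}$.

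Next I would apply Theorem \ref{MORIMOTOTHEOREM} to the two normal almost contact manifolds $S^{2n+1}$ and $S^{2m+1}$. This theorem guarantees that the almost complex structure $J$ on $S^{2n+1} \times S^{2m+1}$ defined by
$$J(X,Y) = \big(\phi_{1}(X) - \eta_{2}(Y)\xi_{1},\; \phi_{2}(Y) + \eta_{1}(X)\xi_{2}\big)$$
is integrable, precisely because both underlying almost contact structures have been arranged to be normal. By the Newlander--Nirenberg theorem, $J$ then defines a genuine complex structure on the product, proving the corollary.

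The only nontrivial point is the first step, namely, producing the normal almost contact structure on each sphere, and its entire content is absorbed into Theorem \ref{almostcircle}: one must verify that the connection on the Hopf bundle has curvature of type $(1,1)$ on the base. This is immediate from the Kähler property of $\omega_{FS}$, so no real obstacle arises. (Equivalently, one could appeal to Proposition \ref{CONTACTSIMPLY} with $X_{P} = \mathbb{C}P^{n}$ and $L = \mathcal{O}(-1) = K_{\mathbb{C}P^{n}}^{\otimes 1/I(\mathbb{C}P^{n})}$ to recover the same Sasakian--hence normal almost contact--structure on $S^{2n+1} = Q(\mathcal{O}(-1))$.)
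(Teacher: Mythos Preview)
Your proof is correct and follows essentially the same approach as the paper: the paper's discussion immediately after the corollary explains that each odd sphere is realized as a principal ${\rm U}(1)$-bundle over $\mathbb{C}P^{n}$ via the Hopf fibration, that Theorem \ref{almostcircle} (together with Theorem \ref{BWhomo} and Proposition \ref{CONTACTSIMPLY}) endows it with a normal almost contact structure, and that Theorem \ref{MORIMOTOTHEOREM} then yields the complex structure on the product. Your alternative remark invoking Proposition \ref{CONTACTSIMPLY} with $L = K_{\mathbb{C}P^{n}}^{\otimes 1/(n+1)}$ is exactly the identification $S^{2n+1} = Q(K_{\mathbb{C}P^{n}}^{\otimes 1/(n+1)})$ that the paper uses.
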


\begin{remark} 
\label{fundamentalremark}
Notice that if we have principal circle bundles $M_{1}$ and $M_{2}$ over almost Hermitian manifolds $(N_{i},J_{N_{i}},g_{N_{i}})$, $i=1,2$, respectively, then we have an almost Hermitian structure $(g,J)$ on $M_{1} \times M_{2}$, such that $J$ is defined as in \ref{almostcomplexstruct} and
\begin{equation}
g((X,Y),(Z,W)) = g_{M_{1}}(X,Z) + g_{M_{2}}(Y,W),
\end{equation}
where $g_{M_{i}}$ is the Riemannian metric on $M_{i}$, $i = 1,2$, obtained from Theorem \ref{metrichatakeyma}. Moreover, by considering the almost contact structures $(\phi_{1},\xi_{1},\eta_{1})$ and $(\phi_{2},\xi_{2},\eta_{2})$ of $M_{1}$ and $M_{2}$, respectively, a straightforward computation shows that the fundamental 2-form $\Omega = g(J \otimes {\rm{id}})$ is given by
\begin{equation}
\label{fundformproduct}
\Omega = \pi_{1}^{\ast} \omega_{N_{i}} + \pi_{2}^{\ast} \omega_{N_{2}} + \eta_{1} \wedge \eta_{2},
\end{equation}
where $\omega_{N_{i}} = g_{N_{i}}(J_{N_{i}} \otimes {\rm{id}})$, $i = 1,2$, $\eta_{1}$ and $\eta_{2}$ are taken as 1-forms extended to the product $M_{1}\times M_{2}$.

\end{remark}

The result provided by Corollary \ref{CALABIECKMANNCORO} can be understood in terms of Lie theory as follows. Associated to each odd-dimensional sphere we have a complex Hopf fibration, i.e.,

\begin{center}

${\rm{U}}(1) \hookrightarrow S^{2n+1} \to \mathbb{C}P^{n}$ \ \ and \ \ ${\rm{U}}(1) \hookrightarrow S^{2m+1} \to \mathbb{C}P^{m}.$

\end{center}
Now, notice that both principal ${\rm{U}}(1)$-bundles above can be endowed with a normal almost contact structure. Actually, it follows from Theorem \ref{almostcircle}, Theorem \ref{BWhomo}, and Proposition \ref{CONTACTSIMPLY} that any circle bundle
\begin{center}
${\rm{U}}(1) \hookrightarrow Q(K_{N}^{\otimes \frac{\ell}{I(N)}}) \to N = X_{P},$
\end{center}
can be endowed with a normal almost contact structure. Therefore, since for every complex Hopf fibration ${\rm{U}}(1) \hookrightarrow S^{2n+1} \to \mathbb{C}P^{n}$ we have
\begin{center}
$S^{2n+1} = Q(K_{\mathbb{C}P^{n}}^{\otimes \frac{1}{n+1}})$ \ \ and \ \ $\mathbb{C}P^{n} = {\rm{SU}}(n+1)/{\rm{SU}}(n)\times {\rm{U}}(1),$
\end{center}
it follows that the complex manifold obtained by the product of two odd-dimensional spheres is a particular example of Morimoto's construction \ref{MORIMOTOTHEOREM}.

The comment above leads to the following generalization for the Calabi and Eckmann construction. Consider the following principal ${\rm{U}}(1)$-bundles

\begin{center}

${\rm{U}}(1) \hookrightarrow Q(K_{N_{1}}^{\otimes \frac{\ell_{1}}{I(N_{1})}}) \to N_{1} = X_{P_{1}}$ \ \ and \ \ ${\rm{U}}(1) \hookrightarrow Q(K_{N_{2}}^{\otimes \frac{\ell_{2}}{I(N_{2})}}) \to N_{2} = X_{P_{2}}.$

\end{center}
From Theorem \ref{MORIMOTOTHEOREM} we have a compact complex manifold defined by the product
\begin{center}
$Q(K_{N_{1}}^{\otimes \frac{\ell_{1}}{I(N_{1})}}) \times  Q(K_{N_{2}}^{\otimes \frac{\ell_{2}}{I(N_{2})}}).$
\end{center}
In the setting above, if we take invariant K\"{a}hler structures $\omega_{N_{i}}$ on $N_{i}$, $i = 1,2$, then we have from Remark \ref{fundamentalremark} that
\begin{equation}
d\Omega = d\eta_{1} \wedge \eta_{2} - \eta_{1} \wedge d\eta_{2},
\end{equation}
where $\Omega$ is defined as in \ref{fundformproduct}. Thus, it follows that
\begin{equation}
\label{notkahler}
\big (Q(K_{N_{1}}^{\otimes \frac{\ell_{1}}{I(N_{1})}}) \times  Q(K_{N_{2}}^{\otimes \frac{\ell_{2}}{I(N_{2})}}), \Omega, J \big ),
\end{equation}
defines a compact Hermitian manifold which is not K\"{a}hler.

\subsection{Manjar\'{i}n's construction of one-parameter family of complex structures} In this subsection we shall give a brief description of the construction of one-parameter family of complex structures on the product of normal almost contact manifolds. The method which we will present is essentially the content of \cite[Proposition 2.9]{Manjarin}. For the sake of compatibility to \cite{Manjarin}, let us introduce some basic facts related to ${\rm{CR}}$-structures.

\begin{definition}
\label{defCR}
A ${\rm{CR}}$-manifold is a differentiable manifold $M$ endowed with a complex subbundle $T^{(1,0)}M$ of the complexified tangent bundle $TM\otimes \mathbb{C}$, which satisfies $T^{(1,0)}M \cap \overline{T^{(1,0)}M} = \{0\}$ and the Frobenius (formal) integrability property
\begin{equation}
\label{integrability}
\big [\Gamma(T^{(1,0)}M), \Gamma(T^{(1,0)}M)\big ] \subseteq \Gamma(T^{(1,0)}M).
\end{equation}
The subbundle $T^{(1,0)}M$ satisfying the properties above is called a ${\rm{CR}}$-structure on $M$. When $T^{(1,0)}M$ does not satisfy the integrability condition \ref{integrability}, we call $T^{(1,0)}M$ an almost ${\rm{CR}}$-structure on $M$.
\end{definition}

Let $M$ be an almost contact manifold with structure tensors $(\phi, \xi,\eta)$. We can define an almost ${\rm{CR}}$-structure on $M$ by using its structure tensors as follows: consider $\mathscr{D} = \ker(\eta)$ and define 
\begin{equation}
\label{complexCR}
{\mathcal{J}}_{\phi} \colon \mathscr{D}^{\mathbb{C}} \to \mathscr{D}^{\mathbb{C}},
\end{equation}
where $\mathscr{D}^{\mathbb{C}} = \mathscr{D}\otimes\mathbb{C}$, and ${\mathcal{J}}_{\phi}$ is the $\mathbb{C}$-linear extension of $\phi|_{\mathscr{D}}$. From this, we set
\begin{equation}
\label{AlmostCR}
T^{(1,0)}M := \big \{ X \in \mathscr{D}^{\mathbb{C}} \ \ \big | \ \ {\mathcal{J}}_{\phi}(X) = \sqrt{-1}X \big \}.
\end{equation}
It is straightforward to check that $T^{(1,0)}M \cap \overline{T^{(1,0)}M} = \{0\}$, thus we obtain an almost ${\rm{CR}}$-structure on $M$. 

In general, $T^{(1,0)}M$ defined as in \ref{AlmostCR} may fail to be integrable. By a result of S. Ianus \cite{Ianus}, see also \cite[Theorem 6.6]{BLAIR}, we have that 
\begin{equation}
\big [ \phi,\phi \big ] + d\eta \otimes \xi = 0 \Longrightarrow \big [\Gamma(T^{(1,0)}M), \Gamma(T^{(1,0)}M)\big ] \subseteq \Gamma(T^{(1,0)}M),
\end{equation}
where $T^{(1,0)}M$ is given by \ref{AlmostCR}. Thus, a normal almost contact manifold is always a ${\rm{CR}}$-manifold. If we have a Riemannian metric $g$ on $M$ compatible with $(\phi, \xi,\eta)$, in the sense that
\begin{equation}
\label{contactmetric}
g(\phi(X),\phi(Y)) = g(X,Y) - \eta(X)\eta(Y),
\end{equation}
$\forall X,Y \in TM$, we call $(M,\phi, \xi,\eta,g)$ an almost contact metric manifold. The necessary and sufficient condition for a contact metric manifold $(M,\phi, \xi,\eta,g)$ to be a ${\rm{CR}}$-manifold were provided by S. Tanno \cite{TANNO}, see also \cite[Theorem 6.7]{BLAIR}. Actually, a contact metric manifold might be ${\rm{CR}}$ without the structure $(\phi, \xi,\eta)$ being normal.

Given a ${\rm{CR}}$-manifold $(M,T^{(1,0)}M)$, we can define a real subbundle $\mathscr{D}$ of $TM$ by setting
\begin{equation}
\label{realCR}
 \mathscr{D}:= TM \cap \big (T^{(1,0)}M \oplus \overline{T^{(1,0)}M}\big).
\end{equation}
The subbundle $\mathscr{D}$ defines the \textit{Levi distribution} associated to $T^{(1,0)}M$, see for instance \cite{SORIN}. Further, we can define $\mathcal{J} \colon \mathscr{D} \to \mathscr{D}$, by imposing that 
\begin{equation}
\label{complexcontact}
X - \sqrt{-1}\mathcal{J}(X) \in T^{(1,0)}M, \ \ \forall X \in \mathscr{D}.
\end{equation}
By taking $T^{(0,1)}M = \overline{T^{(1,0)}M}$, we obtain
\begin{center}
$\mathscr{D}\otimes \mathbb{C} = T^{(1,0)}M \oplus T^{(0,1)}M$.
\end{center}
If we consider the $\mathbb{C}$-linear extension of $\mathcal{J}$, we have  
\begin{center}
$T^{(1,0)}M = \big \{ X \in \mathscr{D}^{\mathbb{C}} \ \ \big | \ \ {\mathcal{J}}(X) = \sqrt{-1}X \big \}$ \ \ and \ \ $T^{(0,1)}M = \big \{ X \in \mathscr{D}^{\mathbb{C}} \ \ \big | \ \ {\mathcal{J}}(X) = -\sqrt{-1}X \big \},$
\end{center}
see for instance \cite{BOGGESS}. As we can see from the description above, the ${\rm{CR}}$-structure $T^{(1,0)}M$ is completely determined by $(\mathscr{D},\mathcal{J})$. We denote by ${\text{Aut}}_{{\rm{CR}}}(M)$ the subset of ${\text{Diff}}(M)$ of maps such that $f_{\ast} \colon TM \to TM$ preserves $\mathscr{D}$ and commutes with $\mathcal{J}$. Let $\{\varphi_{t} \ | \ t \in \mathbb{R}\}$ be the flow induced by a smooth $\mathbb{R}$-action on $M$. We say that $\{\varphi_{t}\}$ defines a ${\rm{CR}}$-action if $\varphi_{t} \in {\text{Aut}}_{{\rm{CR}}}(M)$, $\forall t \in \mathbb{R}$. When $\dim_{\mathbb{R}}(M) = 2n + 1$, we call the action {\textit{transverse}} to the ${\rm{CR}}$-structure if the smooth vector field $T \in \Gamma(TM)$, defined by
\begin{equation}
\label{characteristicCR}
T(p) = \displaystyle \frac{d}{dt} \Big |_{t=0}\varphi_{t}(p),
\end{equation}
$\forall p \in M$, is everywhere transverse to $\mathscr{D}$, i.e., $TM = \mathscr{D} \oplus \langle T \rangle$.

From the ideas above we have the following (equivalent) alternative definition of normal almost contact structure.

\begin{definition}
\label{normalCR}
A normal almost contact structure on a manifold $M$ of odd-dimension is a pair $(T^{(1,0)}M,\varphi_{t})$ where $T^{(1,0)}M$ is a ${\text{CR}}$-structure and $\{\varphi_{t}\}$ is a flow induced by a smooth $\mathbb{R}$-action which is transverse to the ${\text{CR}}$-structure $T^{(1,0)}M$. Given a normal almost contact structure $(T^{(1,0)}M,\varphi_{t})$ we define its characteristic $1$-form $\eta$ by the conditions 
\begin{equation}
\label{conditionalmostcontact}
\eta(T) = 1 \ \ and \ \ \ker(\eta) = \mathscr{D},
\end{equation}
where $T \in \Gamma(TM)$ is defined by the flow $\{\varphi_{t}\}$ as in \ref{characteristicCR}, and $\mathscr{D}= TM \cap (T^{(1,0)}M \oplus \overline{T^{(1,0)}M})$. Therefore, we also denote a normal almost contact structure by $(T,\mathscr{D},\eta)$.
\end{definition}

\begin{remark}
\label{nacsequivalence}
In the setting above, given a normal almost contact structure $(T^{(1,0)}M,\varphi_{t})$ on a manifold $M$ of odd-dimension, we can recover the structure tensors as in \ref{almostcontact} as follows: consider the associated Levi distribution $\mathscr{D}= T^{(1,0)}M \cap TM$, and the tensor $\mathcal{J} \colon \mathscr{D} \to \mathscr{D}$ as in \ref{complexcontact}. We define a $(1,1)$-tensor $\phi_{\mathcal{J}} \in {\text{End}}(TM)$ from an extension of $\mathcal{J}$ by requiring that 
\begin{center}
$\phi_{\mathcal{J}}(T) = 0.$
\end{center}
It follows from \ref{conditionalmostcontact} that $\phi_{\mathcal{J}} \circ \phi_{\mathcal{J}} = - {\rm{id}} + \eta \otimes T$. Thus, by considering the structure tensors $(\phi_{\mathcal{J}},T,\eta)$ on $M$, we obtain an almost contact structure in the sense of  \ref{almostcontact}. The normality condition \ref{normality} follows from the integrability condition \ref{integrability}.
\end{remark}
\begin{proposition}[M. Manjar\'{i}n, \cite{Manjarin}] 
\label{Manjarincomplex}
Let $M_{1}$ and $M_{2}$ be two manifolds endowed with normal almost contact structures $(T_{1},\mathscr{D}_{1},\eta_{1})$ and $(T_{2},\mathscr{D}_{2},\eta_{2})$, respectively. Then, there exists a $1$-parametric family of complex structures $J_{\tau}$  on the product $M_{1}\times M_{2}$, for $\tau \in \mathbb{C}\backslash \mathbb{R}$, so that the complex manifold $M_{1} \times M_{2}$ admits a nowhere vanishing holomorphic vector field $v_{\tau}$.
\end{proposition}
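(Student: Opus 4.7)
The plan is to construct $J_\tau$ by specifying its $+\sqrt{-1}$-eigenbundle $T^{(1,0)}_\tau \subset T(M_1 \times M_2) \otimes \mathbb{C}$ explicitly from the ingredients supplied by the two normal almost contact structures: the CR-subbundles $T^{(1,0)}M_i \subset \mathscr{D}_i \otimes \mathbb{C}$ and the nowhere vanishing transverse characteristic vector fields $T_i$. For $\tau \in \mathbb{C}\setminus\mathbb{R}$, I would set
\[
T^{(1,0)}_\tau \;:=\; T^{(1,0)}M_1 \;\oplus\; T^{(1,0)}M_2 \;\oplus\; \mathbb{C}\langle T_1 + \tau T_2 \rangle.
\]
Because $T_1,T_2$ are real, the complex conjugate of $T_1 + \tau T_2$ is $T_1 + \overline{\tau}T_2$, and the hypothesis $\tau \notin \mathbb{R}$ is exactly what makes this pair $\mathbb{C}$-linearly independent; hence $T^{(1,0)}_\tau \oplus \overline{T^{(1,0)}_\tau} = T(M_1 \times M_2)\otimes \mathbb{C}$, yielding a well-defined almost complex structure $J_\tau$.

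Integrability of $J_\tau$ reduces to verifying $[\Gamma(T^{(1,0)}_\tau),\Gamma(T^{(1,0)}_\tau)] \subseteq \Gamma(T^{(1,0)}_\tau)$. Brackets internal to either $T^{(1,0)}M_i$ close by the Frobenius condition of Definition \ref{defCR}, while brackets between sections of $T^{(1,0)}M_1$ and $T^{(1,0)}M_2$ vanish because the subbundles are pulled back from different factors of the product. The only nontrivial case is $[T_1 + \tau T_2, X]$ for $X \in \Gamma(T^{(1,0)}M_i)$; the factor on which $X$ does not live contributes zero, leaving $[T_i, X]$, which lies in $\Gamma(T^{(1,0)}M_i)$ since the flow of $T_i$ belongs to ${\text{Aut}}_{{\rm{CR}}}(M_i)$ by Definition \ref{normalCR}. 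By the Newlander-Nirenberg theorem, $J_\tau$ is then a complex structure on $M_1 \times M_2$.

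For the nowhere vanishing holomorphic vector field, take $v_\tau := T_1 + \tau T_2$. It is nowhere zero because each $T_i$ is transverse to $\mathscr{D}_i$ and therefore nowhere zero. By construction $v_\tau \in \Gamma(T^{(1,0)}_\tau)$, so holomorphicity amounts to $[v_\tau, \Gamma(T^{(0,1)}_\tau)] \subseteq \Gamma(T^{(0,1)}_\tau)$, which is the complex conjugate of the bracket analysis just performed, supplemented by the trivial $[v_\tau, \overline{v_\tau}] = 0$ since $T_1,T_2$ live on different factors and therefore commute.

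The main obstacle is really just bookkeeping: ensuring that $\tau \in \mathbb{C}\setminus\mathbb{R}$ is used at exactly the point where the sum-plus-conjugate decomposition is required, and keeping straight the factor decomposition of Lie brackets. No deeper analytic input is needed, because the tensorial normality of each $(\phi_i,\xi_i,\eta_i)$ has already been repackaged, via Remark \ref{nacsequivalence} and Definition \ref{normalCR}, into precisely the CR-integrability and CR-preservation statements consumed in the computation above.
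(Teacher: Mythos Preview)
Your proposal is correct and follows essentially the same argument as the paper: the paper also defines the $(1,0)$-bundle as $T^{(1,0)}M_1 \oplus T^{(1,0)}M_2 \oplus \langle v_\tau \rangle_{\mathbb{C}}$ and verifies integrability via the bracket conditions $[T^{(1,0)}M_i,T^{(1,0)}M_i]\subseteq T^{(1,0)}M_i$ and $[T_i,T^{(1,0)}M_i]\subseteq T^{(1,0)}M_i$. The only cosmetic discrepancy is that the paper sets $v_\tau := T_1 - \tau T_2$ rather than $T_1 + \tau T_2$, which amounts to the harmless reparametrization $\tau \mapsto -\tau$.
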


The $1$-parametric family of complex structures $J_{\tau}$ on $M_{1} \times M_{2}$ obtained from the proposition above is defined by imposing that 
\begin{equation}
T^{(1,0)}(M_{1}\times M_{2}) = T^{(1,0)}M_{1} \oplus T^{(1,0)}M_{2} \oplus \langle v_{\tau} := T_{1} - \tau T_{2} \rangle_{\mathbb{C}}.
\end{equation}
The integrability of $J_{\tau} \in {\text{End}}(T(M_{1}\times M_{2}))$ follows from the fact that:
\begin{center}
$\big [ T^{(1,0)}M_{i},T^{(1,0)}M_{i}\big ] \subseteq T^{(1,0)}M_{i}$ \ \ and \ \ $\big [T_{i},T^{(1,0)}M_{i} \big ] \subseteq T^{(1,0)}M_{i}$, \ \ $i = 1,2.$ 
\end{center}

\begin{remark}
\label{complexform}
Another way to characterize the complex structure $J_{\tau}$ above is the following. Recall that a differential form $\omega \in \Omega^{\bullet}(M)$ is called basic with respect to a foliation $\mathcal{F}$ if $\mathscr{L}_{X}\omega = \iota_{X}d\omega = 0$, for every vector field $X$ tangent to the leaves of $\mathcal{F}$. By considering the complex structures $\mathcal{J}_{i} \colon \mathscr{D}_{i} \to \mathscr{D}_{i}$, $i = 1,2$, we have 
\begin{center}
$\mathscr{D}_{i}\otimes \mathbb{C} = T^{(1,0)}M_{i} \oplus T^{(0,1)}M_{i}$, $i = 1,2.$
\end{center}
Thus, since by hypothesis both ${\text{CR}}$-structures are normal, it follows that 
\begin{equation}
T(M_{1} \times M_{2}) = \mathscr{D}_{1} \oplus \mathscr{D}_{2} \oplus \langle T_{1},T_{2} \rangle_{\mathbb{R}},
\end{equation}
and $\mathcal{J}_{1} \oplus \mathcal{J}_{2} \in {\text{End}}(\mathscr{D}_{1} \oplus \mathscr{D}_{2})$ is integrable. Therefore, we have that the foliation generated by $T_{1}$ and $T_{2}$ is transversely holomorphic, notice that $[T_{1},T_{2}] = 0$. Now, by taking the complex valued $1$-form
\begin{equation}
\label{Characterization}
\Psi_{\tau} = \frac{\sqrt{-1}}{2{\text{Im}}(\tau)} \big (\overline{\tau}\eta_{1} + \eta_{2} \big ),
\end{equation}
it is not difficult to see that $\Psi(v_{\tau}) = 1$ and $\Psi_{\tau}(\overline{v_{\tau}}) = 0$. Moreover, we have that $d\Psi_{\tau}$ is basic with respect to the foliation generated by $T_{1}$ and $T_{2}$, and  $d\Psi_{\tau}$ is of $(1,1)$-type with respect to the transverse holomorphic structure induced by $\mathcal{J}_{1} \oplus \mathcal{J}_{2}$. Thus, we have a complex structure by taking an extension $J_{\tau} \in {\text{End}}(T(M_{1}\times M_{2}))$ of $\mathcal{J}_{1} \oplus \mathcal{J}_{2}$ whose the induced map $J_{\tau} \colon T(M_{1}\times M_{2}) \otimes \mathbb{C} \to T(M_{1}\times M_{2}) \otimes \mathbb{C}$ satisfies 
\begin{equation}
J_{\tau} (\Psi_{\tau}) = \sqrt{-1} \Psi_{\tau},
\end{equation}
i.e., we take $J_{\tau}$ in such a way that $\Psi_{\tau}$ is of the $(1,0)$-type. The integrability of $J_{\tau} \in {\text{End}}(T(M_{1}\times M_{2}))$ follows from the fact that we can take local basis formed by $\Psi_{\tau}$, and forms of $(1,0)$-type with respect to the transverse holomorphic structure induced by $\mathcal{J}_{1} \oplus \mathcal{J}_{2}$, such that 
\begin{center}
$d\big (\Omega^{1,0}(M_{1} \times M_{2}) \big ) \subseteq \Omega^{2,0}(M_{1} \times M_{2}) \oplus \Omega^{1,1}(M_{1} \times M_{2})$,
\end{center}
for more details see \cite{Manjarin}. Hence, the complex valued $1$-form defined in \ref{Characterization} allows us to recover completely the $1$-parametric family of complex structure provided by \ref{Manjarincomplex}.
\end{remark}

\begin{remark} 
\label{matrixcomplex}
Under the hypothesis of Proposition \ref{Manjarincomplex}, and considering the last comments, if we denote $\tau = a + \sqrt{-1}b \in \mathbb{C} \backslash \mathbb{R}$, the complex structure $J_{\tau} \in {\text{End}}(T(M_{1}\times M_{2}))$ also can be thought as an extension of $\mathcal{J}_{1} \oplus \mathcal{J}_{2} \in {\text{End}}(\mathscr{D}_{1} \oplus \mathscr{D}_{2})$ which satisfies 
\begin{equation}
J_{\tau}|_{\langle T_{1},T_{2} \rangle_{\mathbb{R}}} = \begin{pmatrix}
 - \frac{a}{b} & -\frac{1}{b} \\
 \frac{a^{2}+b^{2}}{b} &  \ \ \ \frac{a}{b}
\end{pmatrix}, 
\end{equation}
where $J_{\tau}|_{\langle T_{1},T_{2} \rangle_{\mathbb{R}}}$ stands for the matrix of the restriction of $J_{\tau}$ on $\langle T_{1},T_{2} \rangle_{\mathbb{R}}$ with respect to the natural basis $\{T_{1},T_{2}\}$. Therefore, a straightforward computation shows us that, if $\tau = \sqrt{-1}$, it follows that 
\begin{center}
$J_{\sqrt{-1}}(X,Y) = \big (\phi_{\mathcal{J}_{1}}(X) - \eta_{2}(Y)T_{1}, \phi_{\mathcal{J}_{2}}(Y) + \eta_{1}(X)T_{2} \big )$,
\end{center}
i.e., for $a = 0$, and $b = 1$, the complex structure $J_{\tau}$ provided by Proposition \ref{Manjarincomplex} coincides with Morimoto's complex structure \ref{almostcomplexstruct}. Thus, the result of Proposition \ref{Manjarincomplex} provides a generalization for Calabi-Eckmann manifolds and for complex structures on the product of two normal almost contact manifolds (i.e. Morimoto's construction).
\end{remark}

\begin{remark}[Tsukada's complex structures] \label{Tsukadacomplex}In \cite{Tsukada}, K. Tsukada introduced a 1-parametric family of complex structures on products of normal almost complex manifolds defined as follows: let $M_{1}$ and $M_{2}$ be normal almost contact manifolds with structure tensors $(\phi_{1},\xi_{1},\eta_{1})$ and $(\phi_{2},\xi_{2},\eta_{2})$, respectively. From this we can define an almost complex structure $J_{a,b} \in  {\text{End}}(T(M_{1}\times M_{2}))$, $a + \sqrt{-1}b \in \mathbb{C} \backslash \mathbb{R}$, by setting
\begin{equation}
J_{a,b} = \phi_{1} - \bigg ( \frac{a}{b}\eta_{1} + \frac{a^{2} + b^{2}}{b} \eta_{2}\bigg) \otimes \xi_{1} + \phi_{2} + \bigg ( \frac{1}{b}\eta_{1} + \frac{a}{b} \eta_{2}\bigg) \otimes \xi_{2}.
\end{equation}
As in \cite{MORIMOTO}, the integrability of the almost complex structure defined above follows from the normality condition of both almost contact structures involved in the construction. It is straightforward to check that 
\begin{equation}
J_{a,b}|_{\langle \xi_{1},\xi_{2} \rangle_{\mathbb{R}}} = \begin{pmatrix}
 - \frac{a}{b} & - \frac{a^{2} + b^{2}}{b} \\
 \ \frac{1}{b} &  \ \ \ \frac{a}{b}
\end{pmatrix}. 
\end{equation}
Thus, after a suitable change in the extension of $\mathcal{J}_{1} \oplus \mathcal{J}_{2} \in {\text{End}}(\mathscr{D}_{1} \oplus \mathscr{D}_{2})$, we see that the complex structures obtained from Proposition \ref{Manjarincomplex} coincides with Tsukada's complex structures.
\end{remark}

\section{Line bundles and principal $S^{1}$-bundles over complex flag manifolds} 
\label{sec3}

This section is devoted to provide some basic results about holomorphic line bundles and principal $S^{1}$-bundles over flag manifolds. The main references for the results which we shall cover in the next subsections are \cite{TOROIDAL}, \cite{CONTACTCORREA}, and \cite{BLAIR}. 

\subsection{Line bundles over flag manifolds}
\label{subsec3.1}
We start by collecting some basic facts about simple Lie algebras and simple Lie groups. Let $\mathfrak{g}^{\mathbb{C}}$ be a complex simple Lie algebra, by fixing a  Cartan subalgebra $\mathfrak{h}$ and a simple root system $\Sigma \subset \mathfrak{h}^{\ast}$, we have a decomposition of $\mathfrak{g}^{\mathbb{C}}$ given by
\begin{center}
$\mathfrak{g}^{\mathbb{C}} = \mathfrak{n}^{-} \oplus \mathfrak{h} \oplus \mathfrak{n}^{+}$, 
\end{center}
where $\mathfrak{n}^{-} = \sum_{\alpha \in \Pi^{-}}\mathfrak{g}_{\alpha}$ and $\mathfrak{n}^{+} = \sum_{\alpha \in \Pi^{+}}\mathfrak{g}_{\alpha}$, here we denote by $\Pi = \Pi^{+} \cup \Pi^{-}$ the root system associated to the simple root system $\Sigma = \{\alpha_{1},\ldots,\alpha_{l}\} \subset \mathfrak{h}^{\ast}$. We also denote by $\kappa$ the Cartan-Killing form of $\mathfrak{g}^{\mathbb{C}}$.

Now, given $\alpha \in \Pi^{+}$, we have $h_{\alpha} \in \mathfrak{h}$ such  that $\alpha = \kappa(\cdot,h_{\alpha})$, we can choose $x_{\alpha} \in \mathfrak{g}_{\alpha}$ and $y_{\alpha} \in \mathfrak{g}_{-\alpha}$ such that $[x_{\alpha},y_{\alpha}] = h_{\alpha}$. For every $\alpha \in \Sigma$, we can set 
\begin{equation}
h_{\alpha}^{\vee} = \frac{2}{\kappa(h_{\alpha},h_{\alpha})}h_{\alpha},
\end{equation}
from this we have the fundamental weights $\{\omega_{\alpha} \ | \ \alpha \in \Sigma\} \subset \mathfrak{h}^{\ast}$, where $\omega_{\alpha}(h_{\beta}^{\vee}) = \delta_{\alpha \beta}$, $\forall \alpha, \beta \in \Sigma$. We denote by
\begin{equation}
\Lambda_{\mathbb{Z}_{\geq 0}}^{\ast} = \bigoplus_{\alpha \in \Sigma}\mathbb{Z}_{\geq 0}\omega_{\alpha}, 
\end{equation}
the set of integral dominant weights of $\mathfrak{g}^{\mathbb{C}}$. From the Lie algebra representation theory, given $\mu \in \Lambda_{\mathbb{Z}_{\geq 0}}^{\ast}$ we have an irreducible $\mathfrak{g}^{\mathbb{C}}$-module $V(\mu)$ with highest weight $\mu$, we denote by $v_{\mu}^{+} \in V(\mu)$ the highest weight vector associated to $\mu \in  \Lambda_{\mathbb{Z}_{\geq 0}}^{\ast}$.

Let $G^{\mathbb{C}}$ be a connected, simply connected and complex Lie group with simple Lie algebra $\mathfrak{g}^{\mathbb{C}}$, and consider $G \subset G^{\mathbb{C}}$ as being a compact real form of $G^{\mathbb{C}}$. Given a parabolic Lie subgroup $P \subset G^{\mathbb{C}}$, without loss of generality, we can suppose

\begin{center}
$P  = P_{\Theta}$, \ for some \ $\Theta \subset \Sigma$.
\end{center}

By definition, we have $P_{\Theta} = N_{G^{\mathbb{C}}}(\mathfrak{p}_{\Theta})$, where ${\text{Lie}}(P_{\Theta}) = \mathfrak{p}_{\Theta} \subset \mathfrak{g}^{\mathbb{C}}$ is given by

\begin{center}

$\mathfrak{p}_{\Theta} = \mathfrak{n}^{+} \oplus \mathfrak{h} \oplus \mathfrak{n}(\Theta)^{-},$ \ with \ $\mathfrak{n}(\Theta)^{-} = \displaystyle \sum_{\alpha \in \langle \Theta \rangle^{-}} \mathfrak{g}_{\alpha}$.

\end{center}
It will be useful for us to consider the following basic subgroups

\begin{center}

$T^{\mathbb{C}} \subset B \subset P \subset G^{\mathbb{C}}$.

\end{center}
For each element in the chain of subgroups above we have the following characterization: 

\begin{enumerate}

\item $T^{\mathbb{C}} = \exp(\mathfrak{h})$,  \ \ (complex torus)

\item $B = N^{+}T^{\mathbb{C}}$, where $N^{+} = \exp(\mathfrak{n}^{+})$, \ \ (Borel subgroup)

\item $P = P_{\Theta} = N_{G^{\mathbb{C}}}(\mathfrak{p}_{\Theta})$, for some $\Theta \subset \Sigma \subset \mathfrak{h}^{\ast}$. \ \ (parabolic Lie subgroup)

\end{enumerate}
Associated to the data above we will be concerned to study the {\it generalized complex flag manifold} defined by  
$$
X_{P} = G^{\mathbb{C}} / P = G /G \cap P.
$$
The following theorem allows us to describe all $G$-invariant K\"{a}hler structures on $X_{P}$ in terms of local K\"{a}hler potentials.
\begin{theorem}[Azad-Biswas, \cite{AZAD}]
\label{AZADBISWAS}
Let $\omega \in \Omega^{1,1}(X_{P})^{G}$ be a closed invariant real $(1,1)$-form, then we have

\begin{center}

$\pi^{\ast}\omega = \sqrt{-1}\partial \overline{\partial}\varphi$,

\end{center}
where $\pi \colon G^{\mathbb{C}} \to X_{P}$ is the projection map, and $\varphi \colon G^{\mathbb{C}} \to \mathbb{R}$ is given by 
\begin{center}
$\varphi(g) = \displaystyle \sum_{\alpha \in \Sigma \backslash \Theta}c_{\alpha}\log||gv_{\omega_{\alpha}}^{+}||$, 
\end{center}
with $c_{\alpha} \in \mathbb{R}_{\geq 0}$, $\forall \alpha \in \Sigma \backslash \Theta$. Conversely, every function $\varphi$ as above defines a closed invariant real $(1,1)$-form $\omega_{\varphi} \in \Omega^{1,1}(X_{P})^{G}$. Moreover, if $c_{\alpha} > 0$,  $\forall \alpha \in \Sigma \backslash \Theta$, then $\omega_{\varphi}$ defines a K\"{a}hler form on $X_{P}$.

\end{theorem}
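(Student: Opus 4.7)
The plan is to reduce the statement to a computation of Chern curvatures of homogeneous Hermitian line bundles on $X_P$, parametrized by characters of $P$, and then compute those curvatures explicitly via Borel--Weil type embeddings using highest weight vectors.

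First I would observe that since $X_P$ is simply connected and compact K\"ahler, the natural map $\operatorname{Pic}(X_P)\otimes_{\mathbb{Z}}\mathbb{R}\to H^{1,1}(X_P,\mathbb{R})$ via first Chern classes is an isomorphism. By averaging over the compact group $G$, every real closed $(1,1)$-class on $X_P$ has a unique $G$-invariant representative, and the $\partial\overline{\partial}$-lemma on a compact K\"ahler manifold, together with $G$-invariance, forces $\omega$ to be determined by its class. Hence it suffices to exhibit, for every $\alpha\in\Sigma\setminus\Theta$, a closed invariant real $(1,1)$-form $\omega_\alpha$ whose $\pi$-pullback equals $\sqrt{-1}\partial\overline{\partial}\log\|g v_{\omega_\alpha}^+\|$, and to show that these forms generate $H^{1,1}(X_P,\mathbb{R})$.

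The generation claim is the standard fact that the homogeneous line bundles $L_{\omega_\alpha}=G^{\mathbb{C}}\times_P\mathbb{C}_{-\omega_\alpha}$, as $\alpha$ runs over $\Sigma\setminus\Theta$, form a $\mathbb{Z}$-basis of $\operatorname{Pic}(X_P)$. This is because the characters of $P=P_\Theta$ trivial on the semisimple part of its Levi factor are exactly the $\mathbb{Z}$-span of $\{\omega_\alpha:\alpha\in\Sigma\setminus\Theta\}$, and since $X_P$ is simply connected every holomorphic line bundle is homogeneous. To construct the representative form $\omega_\alpha$, I would use the $G^{\mathbb{C}}$-equivariant embedding
\begin{equation}
\iota_\alpha\colon X_P\hookrightarrow \mathbb{P}(V(\omega_\alpha)),\qquad gP\longmapsto [g\cdot v_{\omega_\alpha}^+],
\end{equation}
which is well defined precisely because $P$ stabilizes the line $\mathbb{C}v_{\omega_\alpha}^+$ for $\alpha\in\Sigma\setminus\Theta$. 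Pulling back the Fubini--Study form of $\mathbb{P}(V(\omega_\alpha))$ (built from the $G$-invariant Hermitian inner product on $V(\omega_\alpha)$) along $\iota_\alpha$ produces $\omega_\alpha$, and by the standard Fubini--Study potential computation one has, on the preimage of any open set where $\iota_\alpha$ has a local holomorphic lift,
\begin{equation}
\pi^\ast\omega_\alpha=\sqrt{-1}\partial\overline{\partial}\log\|g v_{\omega_\alpha}^+\|,
\end{equation}
which is well defined on all of $G^{\mathbb{C}}$ since the highest weight vector never vanishes under the $G^{\mathbb{C}}$-action.

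For a general closed invariant real $(1,1)$-form $\omega$, writing $[\omega]=\sum_{\alpha\in\Sigma\setminus\Theta}\tfrac{c_\alpha}{2}[\omega_\alpha]$ in $H^{1,1}(X_P,\mathbb{R})$ (the factor is absorbed into $\log\|\cdot\|$ vs.\ $\log\|\cdot\|^2$) and invoking uniqueness of the $G$-invariant representative yields $\omega=\sum_\alpha\tfrac{c_\alpha}{2}\omega_\alpha$, hence $\pi^\ast\omega=\sqrt{-1}\partial\overline{\partial}\varphi$ with $\varphi$ as claimed. Conversely, any such $\varphi$ is $B$-right-invariant modulo pluriharmonic terms (because the right action of $P$ only rescales $gv_{\omega_\alpha}^+$ by a character and adds a pluriharmonic term that drops out of $\partial\overline{\partial}$), so $\sqrt{-1}\partial\overline{\partial}\varphi$ descends to a closed invariant real $(1,1)$-form on $X_P$. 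Finally, the K\"ahler condition under $c_\alpha>0$ for all $\alpha$ follows because each $\omega_\alpha$ is the restriction of a Fubini--Study K\"ahler form, hence positive on the image of $\iota_\alpha$, and a strictly positive linear combination of such pullbacks is positive on $X_P$ (here one uses that the product embedding $\prod_\alpha \iota_\alpha\colon X_P\hookrightarrow \prod_\alpha\mathbb{P}(V(\omega_\alpha))$ is an immersion).

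The main obstacle I anticipate is the descent step: verifying carefully that $\log\|g v_{\omega_\alpha}^+\|$, although only $P$-invariant up to a pluriharmonic cocycle, gives rise to a globally well-defined closed invariant form on $X_P$, and tracking that the resulting form is real and of pure type $(1,1)$. The cleanest way to handle this is to compute the change of $\varphi$ under $g\mapsto gp$, $p\in P$, and to notice that this change is the real part of a holomorphic function on a neighborhood, so $\sqrt{-1}\partial\overline{\partial}$ annihilates it.
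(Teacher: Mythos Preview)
The paper does not prove this theorem; it is quoted from \cite{AZAD} (Azad--Biswas) and used as a black box, so there is no ``paper's own proof'' to compare against. Your outline is a reasonable reconstruction of the standard argument: identify $H^{1,1}(X_P,\mathbb{R})$ with $\operatorname{Pic}(X_P)\otimes\mathbb{R}$, realize the generators via Borel--Weil embeddings $\iota_\alpha\colon X_P\hookrightarrow\mathbb{P}(V(\omega_\alpha))$, pull back Fubini--Study, and use uniqueness of $G$-invariant representatives (which follows because $G$ acts transitively, so $G$-invariant functions are constant and the $\partial\overline\partial$-lemma gives no freedom). The descent step you flag is indeed the only delicate point, and your proposed handling---that right translation by $p\in P$ changes $\log\|gv_{\omega_\alpha}^+\|$ by $\log|\chi_{\omega_\alpha}(p)|$, the real part of a holomorphic function, hence killed by $\sqrt{-1}\partial\overline\partial$---is exactly right.

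One remark: the statement as recorded in the paper asserts $c_\alpha\in\mathbb{R}_{\geq 0}$ for \emph{every} closed invariant real $(1,1)$-form $\omega$, which cannot be literally correct (the space of such forms is a real vector space, so $-\omega$ would violate it). Your argument correctly produces arbitrary real $c_\alpha$; the non-negativity is only relevant for the semi-positivity/K\"ahler assertions in the second half of the statement. You should not try to force $c_\alpha\geq 0$ in the first part.
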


\begin{remark}
\label{innerproduct}
It is worth pointing out that the norm $|| \cdot ||$ in the last theorem is a norm induced by a fixed $G$-invariant inner product $\langle \cdot, \cdot \rangle_{\alpha}$ on $V(\omega_{\alpha})$, $\forall \alpha \in \Sigma \backslash \Theta$. 
\end{remark}

Let $X_{P}$ be a flag manifold associated to some parabolic Lie subgroup $P = P_{\Theta} \subset G^{\mathbb{C}}$. According to Theorem \ref{AZADBISWAS}, by taking a fundamental weight $\omega_{\alpha} \in \Lambda_{\mathbb{Z}_{\geq0}}^{\ast}$, such that $\alpha \in \Sigma \backslash \Theta$, we can associate to this weight a closed real $G$-invariant $(1,1)$-form $\Omega_{\alpha} \in \Omega^{1,1}(X_{P})^{G}$ which satisfies 
\begin{equation}
\label{fundclasses}
\pi^{\ast}\Omega_{\alpha} = \sqrt{-1}\partial \overline{\partial} \varphi_{\omega_{\alpha}},
\end{equation}
where $\pi \colon G^{\mathbb{C}} \to G^{\mathbb{C}} / P = X_{P}$ and $\varphi_{\omega_{\alpha}}(g) = \displaystyle \frac{1}{2\pi}\log||gv_{\omega_{\alpha}}^{+}||^{2}$. 

The characterization for $G$-invariant real $(1,1)$-forms on $X_{P}$ provided by Theorem \ref{AZADBISWAS} can be used to compute the Chern class for holomorphic line bundles over $X_{P}$. Let us briefly describe how it can be done. Since each $\omega_{\alpha} \in \Lambda_{\mathbb{Z}_{\geq 0}}^{\ast}$ is an integral dominant weight, we can associate to it a holomorphic character $\chi_{\omega_{\alpha}} \colon T^{\mathbb{C}} \to \mathbb{C}^{\times}$, such that $(d\chi_{\omega_{\alpha}})_{e} = \omega_{\alpha}$, see for instance \cite[p. 466]{TAYLOR}. Given a parabolic Lie subgroup $P \subset G^{\mathbb{C}}$, we can take the extension $\chi_{\omega_{\alpha}} \colon P \to \mathbb{C}^{\times}$ and define a holomorphic line bundle as a vector bundle associated to the principal $P$-bundle $P \hookrightarrow G^{\mathbb{C}} \to G^{\mathbb{C}}/P$ by the twisted product
\begin{equation}
\label{C8S8.2Sub8.2.1Eq8.2.4}
L_{\chi_{\omega_{\alpha}}} =  G^{\mathbb{C}} \times_{\chi_{\omega_{\alpha}}} \mathbb{C}_{-\omega_{\alpha}}.
\end{equation}

\begin{remark}
\label{remarkcocycle}
In the description above we consider $\mathbb{C}_{-\omega_{\alpha}}$ as a $P$-space with the action $pz = \chi_{\omega_{\alpha}}(p)^{-1}z$, $\forall p \in P$ and $\forall z \in \mathbb{C}$ (cf. \cite{Bott}). Therefore, in terms of $\check{C}$ech cocycles, if we consider an open cover $X_{P} = \bigcup_{i \in I}U_{i}$ and
$G^{\mathbb{C}} = \{(U_{i})_{i \in I}, \psi_{ij} \colon U_{i} \cap U_{j} \to P\}$, then we have 
\begin{equation}
L_{\chi_{\omega_{\alpha}}} = \Big \{(U_{i})_{i \in I},\chi_{\omega_{\alpha}}^{-1} \circ \psi_{i j} \colon U_{i} \cap U_{j} \to \mathbb{C}^{\times} \Big \}.
\end{equation}
Thus, $L_{\chi_{\omega_{\alpha}}} = \{g_{ij}\} \in \check{H}^{1}(X_{P},\mathscr{O}_{X_{P}}^{\ast})$, with $g_{ij} = \chi_{\omega_{\alpha}}^{-1} \circ \psi_{i j}$, where $i,j \in I$.
\end{remark}
For us it will be important to consider the following results, see for instance \cite{AZAD} and \cite{TOROIDAL}.
\begin{proposition}
\label{C8S8.2Sub8.2.3P8.2.7}
Let $X_{P}$ be a flag manifold associated to some parabolic Lie subgroup $P = P_{\Theta}\subset G^{\mathbb{C}}$. Then, for every fundamental weight $\omega_{\alpha} \in \Lambda_{\mathbb{Z}_{\geq 0}}^{\ast}$, such that $\alpha \in \Sigma \backslash \Theta$, we have
\begin{equation}
\label{C8S8.2Sub8.2.3Eq8.2.28}
c_{1}(L_{\chi_{\omega_{\alpha}}}) = [\Omega_{\alpha}].
\end{equation}

\end{proposition}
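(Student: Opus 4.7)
The plan is to exhibit a natural Hermitian metric on $L_{\chi_{\omega_{\alpha}}}$ whose Chern curvature representative agrees on the nose with $\Omega_{\alpha}$, so that equality of cohomology classes follows at once from Chern--Weil theory. The construction of the metric uses the highest weight vector $v_{\omega_{\alpha}}^{+} \in V(\omega_{\alpha})$ and the fixed $G$-invariant Hermitian inner product $\langle \cdot,\cdot \rangle_{\alpha}$ from Remark \ref{innerproduct}.

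First I would verify the transformation law for the function $\rho \colon G^{\mathbb{C}} \to \mathbb{R}_{>0}$ defined by $\rho(g) = \|g v_{\omega_{\alpha}}^{+}\|^{-2}$. Since $\alpha \in \Sigma \backslash \Theta$, the line $\mathbb{C} v_{\omega_{\alpha}}^{+} \subset V(\omega_{\alpha})$ is $P_{\Theta}$-stable: the Borel subalgebra acts by the character $\chi_{\omega_{\alpha}}$ on $v_{\omega_{\alpha}}^{+}$, while for each $\beta \in \Theta$ the $\mathfrak{sl}_{2}$-triple $(x_{\beta},y_{\beta},h_{\beta}^{\vee})$ satisfies $h_{\beta}^{\vee} \cdot v_{\omega_{\alpha}}^{+} = 0$ and $x_{\beta}\cdot v_{\omega_{\alpha}}^{+} = 0$, forcing $y_{\beta}\cdot v_{\omega_{\alpha}}^{+} = 0$ by $\mathfrak{sl}_{2}$-representation theory. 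Hence $p\, v_{\omega_{\alpha}}^{+} = \chi_{\omega_{\alpha}}(p)\, v_{\omega_{\alpha}}^{+}$ for all $p \in P_{\Theta}$, and consequently $\rho(gp) = |\chi_{\omega_{\alpha}}(p)|^{-2}\rho(g)$. This is precisely the equivariance required for $\rho$ to define a Hermitian metric $h$ on $L_{\chi_{\omega_{\alpha}}} = G^{\mathbb{C}}\times_{P}\mathbb{C}_{-\omega_{\alpha}}$ by the prescription $\|[g,z]\|_{h}^{2} = \rho(g)|z|^{2}$ (cf.\ Remark \ref{remarkcocycle}).

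Next I would write down the Chern curvature of $h$ on a trivializing open set. Given a local holomorphic section $s_{U} \colon U \subset X_{P} \to G^{\mathbb{C}}$, the unit section $\tilde{s}_{U}(x) \ce [s_{U}(x),1]$ of $L_{\chi_{\omega_{\alpha}}}$ satisfies $h(\tilde{s}_{U},\tilde{s}_{U})(x) = \|s_{U}(x)v_{\omega_{\alpha}}^{+}\|^{-2}$. The standard Chern--Weil formula for a holomorphic line bundle gives
\begin{equation*}
c_{1}(L_{\chi_{\omega_{\alpha}}},h)\big|_{U} = -\frac{\sqrt{-1}}{2\pi}\,\partial\overline{\partial}\log h(\tilde{s}_{U},\tilde{s}_{U}) = \frac{\sqrt{-1}}{2\pi}\,\partial\overline{\partial}\log\|s_{U}v_{\omega_{\alpha}}^{+}\|^{2}.
\end{equation*}

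Finally I would match this with the local potential for $\Omega_{\alpha}$ from Theorem \ref{AZADBISWAS}. Specializing that theorem to $c_{\alpha}=1$ and $c_{\beta}=0$ for $\beta\neq\alpha$ yields $\pi^{\ast}\Omega_{\alpha} = \tfrac{\sqrt{-1}}{2\pi}\,\partial\overline{\partial}\log\|g v_{\omega_{\alpha}}^{+}\|^{2}$ on $G^{\mathbb{C}}$. Pulling back via $s_{U}$, and using $\pi\circ s_{U} = \mathrm{id}_{U}$, gives $\Omega_{\alpha}|_{U} = \tfrac{\sqrt{-1}}{2\pi}\,\partial\overline{\partial}\log\|s_{U}v_{\omega_{\alpha}}^{+}\|^{2}$, so the two $(1,1)$-forms coincide as differential forms on $U$, hence as global closed forms on $X_{P}$, which yields \eqref{C8S8.2Sub8.2.3Eq8.2.28} in de Rham (equivalently, real) cohomology.

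The main technical point to be careful about is the first step: the claim that $v_{\omega_{\alpha}}^{+}$ is a genuine eigenvector for all of $P_{\Theta}$ (not only for $B$) relies essentially on the hypothesis $\alpha \notin \Theta$. If $\alpha$ were in $\Theta$, the $\mathfrak{sl}_{2}$-argument above would fail and $\rho$ would not descend to a section of a line bundle on $X_{P}$, which is consistent with the fact that such $\omega_{\alpha}$ do not contribute to $\mathrm{Pic}(X_{P})$. The remaining computations are routine applications of the Chern--Weil formalism.
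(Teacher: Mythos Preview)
Your proposal is correct and follows essentially the same route as the paper's own proof: both construct the Hermitian metric on $L_{\chi_{\omega_{\alpha}}}$ from the function $\|g\,v_{\omega_{\alpha}}^{+}\|^{-2}$, verify the required $P_{\Theta}$-equivariance via $p\,v_{\omega_{\alpha}}^{+} = \chi_{\omega_{\alpha}}(p)v_{\omega_{\alpha}}^{+}$, and then identify the Chern curvature form with $\Omega_{\alpha}$. Your treatment is in fact slightly more detailed than the paper's at the equivariance step, supplying the $\mathfrak{sl}_{2}$-argument that the paper leaves implicit; one small cosmetic point is that your phrase ``specializing that theorem to $c_{\alpha}=1$'' does not quite match the normalization in Theorem~\ref{AZADBISWAS} --- it is cleaner to cite the definition~\eqref{fundclasses} of $\Omega_{\alpha}$ directly, which already carries the factor $\tfrac{1}{2\pi}$.
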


\begin{proof}
Consider an open cover $X_{P} = \bigcup_{i \in I} U_{i}$ which trivializes both $P \hookrightarrow G^{\mathbb{C}} \to X_{P}$ and $L_{\chi_{\omega_{\alpha}}} \to X_{P}$, such that $\alpha \in \Sigma \backslash \Theta$, and take a collection of local sections $(s_{i})_{i \in I}$, such that $s_{i} \colon U_{i} \to G^{\mathbb{C}}$. From this, we define $q_{i} \colon U_{i} \to \mathbb{R}_{+}$ by
\begin{equation}
\label{functionshermitian}
q_{i} =  {\mathrm{e}}^{-2\pi \varphi_{\omega_{\alpha}} \circ s_{i}} = \frac{1}{||s_{i}v_{\omega_{\alpha}}^{+}||^{2}},
\end{equation}
for every $i \in I$. These functions $(q_{i})_{i \in I}$ satisfy $q_{j} = |\chi_{\omega_{\alpha}}^{-1} \circ \psi_{ij}|^{2}q_{i}$ on $U_{i} \cap U_{j} \neq \emptyset$, here we have used that $s_{j} = s_{i}\psi_{ij}$ on $U_{i} \cap U_{j} \neq \emptyset$, and $pv_{\omega_{\alpha}}^{+} = \chi_{\omega_{\alpha}}(p)v_{\omega_{\alpha}}^{+}$ for every $p \in P$ and $\alpha \in \Sigma \backslash \Theta$. Hence, we have a collection of functions $(q_{i})_{i \in I}$ which satisfies on $U_{i} \cap U_{j} \neq \emptyset$
\begin{equation}
\label{collectionofequ}
q_{j} = |g_{ij}|^{2}q_{i},
\end{equation}
such that $g_{ij} = \chi_{\omega_{\alpha}}^{-1} \circ \psi_{i j}$, where $i,j \in I$.

From the collection of smooth functions described above we can define a Hermitian structure $H$ on $L_{\chi_{\omega_{\alpha}}}$ by taking on each trivialization $f_{i} \colon L_{\chi_{\omega_{\alpha}}} \to U_{i} \times \mathbb{C}$ a metric defined by
\begin{equation}
\label{hermitian}
H((x,v),(x,w)) = q_{i}(x) v\overline{w},
\end{equation}
for $(x,v),(x,w) \in L_{\chi_{\omega_{\alpha}}}|_{U_{i}} \cong U_{i} \times \mathbb{C}$. The Hermitian metric above induces a Chern connection $\nabla = d + \partial \log H$ with curvature $F_{\nabla}$ satisfying 
\begin{equation}
\displaystyle \frac{\sqrt{-1}}{2\pi}F_{\nabla} = \Omega_{\alpha}.
\end{equation}
Hence, it follows that $c_{1}(L_{\chi_{\omega_{\alpha}}}) = [\Omega_{\alpha}]$. From the ideas described above we have the desired result.
\end{proof}
\begin{proposition}
\label{C8S8.2Sub8.2.3P8.2.6}
Let $X_{P}$ be a flag manifold associated to some parabolic Lie subgroup $P = P_{\Theta}\subset G^{\mathbb{C}}$. Then, we have
\begin{equation}
\label{picardeq}
{\text{Pic}}(X_{P}) = H^{1,1}(X_{P},\mathbb{Z}) = H^{2}(X_{P},\mathbb{Z}) = \displaystyle \bigoplus_{\alpha \in \Sigma \backslash \Theta}\mathbb{Z}[\Omega_{\alpha}].
\end{equation}
\end{proposition}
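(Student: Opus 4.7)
The plan is to establish the three equalities in turn, using the exponential sheaf sequence, the Hodge decomposition, and the identification of line bundles with characters of $P$. First, I would extract $\text{Pic}(X_{P}) \cong H^{2}(X_{P},\mathbb{Z})$ from the long exact sequence of
\begin{equation*}
0 \to \mathbb{Z} \to \mathscr{O}_{X_{P}} \to \mathscr{O}_{X_{P}}^{\ast} \to 0,
\end{equation*}
for which it suffices to verify $H^{1}(X_{P},\mathscr{O}_{X_{P}}) = H^{2}(X_{P},\mathscr{O}_{X_{P}}) = 0$. These vanishings follow from the Borel--Weil--Bott theorem applied to the trivial one-dimensional $P$-module, whose only surviving cohomology is the constants in degree zero.

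Next, I would combine two structural features of $X_{P}$ to obtain $H^{1,1}(X_{P},\mathbb{Z}) = H^{2}(X_{P},\mathbb{Z})$. On the one hand, the Bruhat decomposition $X_{P} = \bigsqcup_{w \in W/W_{\Theta}} BwP/P$ paves $X_{P}$ by complex affine cells, giving a CW-structure with only even-dimensional cells; consequently $H^{\ast}(X_{P},\mathbb{Z})$ is concentrated in even degrees and is torsion-free. On the other hand, the vanishing $H^{2}(X_{P},\mathscr{O}_{X_{P}}) = 0$ together with Hodge symmetry forces $H^{2,0}(X_{P}) = H^{0,2}(X_{P}) = 0$, so the Hodge decomposition degenerates to $H^{2}(X_{P},\mathbb{C}) = H^{1,1}(X_{P},\mathbb{C})$. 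Intersecting with the torsion-free lattice $H^{2}(X_{P},\mathbb{Z})$ then yields the claim.

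For the explicit description as $\bigoplus_{\alpha \in \Sigma \setminus \Theta}\mathbb{Z}[\Omega_{\alpha}]$, I would invoke the isomorphism $\chi \mapsto L_{\chi}$ between the character lattice $X^{\ast}(P)$ and $\text{Pic}(X_{P})$. Since $G^{\mathbb{C}}$ is simply connected and simple, we have $\text{Pic}(G^{\mathbb{C}}) = 0$, so every line bundle on $X_{P}$ becomes trivial after pullback along $G^{\mathbb{C}} \to X_{P}$, and the descent data is prescribed precisely by a holomorphic character of $P$. The character lattice of $P = P_{\Theta}$ is freely generated by $\{\chi_{\omega_{\alpha}} : \alpha \in \Sigma \setminus \Theta\}$, these being the integral weights that vanish on the coroots $h_{\beta}^{\vee}$ for $\beta \in \Theta$. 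Combining this identification with Proposition \ref{C8S8.2Sub8.2.3P8.2.7}, which records $c_{1}(L_{\chi_{\omega_{\alpha}}}) = [\Omega_{\alpha}]$, closes the chain.

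The main obstacle is the isomorphism $X^{\ast}(P) \cong \text{Pic}(X_{P})$: all other ingredients (Borel--Weil--Bott, Hodge decomposition, Bruhat cell decomposition) are standard, but the descent argument hinges on the vanishing $\text{Pic}(G^{\mathbb{C}}) = 0$, which itself depends on the hypothesis that $G^{\mathbb{C}}$ is simply connected. Once that step is secured, the three equalities chain together without further difficulty.
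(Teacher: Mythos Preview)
Your proposal is correct and follows the standard route to this result. Note, however, that the paper does not actually supply its own proof of this proposition: it is stated as a known fact with references to \cite{AZAD} and \cite{TOROIDAL}, immediately after the proof of Proposition~\ref{C8S8.2Sub8.2.3P8.2.7}. So there is no ``paper's proof'' to compare against in detail.

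That said, your argument aligns well with the surrounding material in the paper. The character-lattice step you identify as the main obstacle is in fact addressed in Remark~\ref{C8S8.2Sub8.2.3R8.2.10}, where the decomposition $P_{\Theta} = [P_{\Theta},P_{\Theta}]T(\Sigma \backslash \Theta)^{\mathbb{C}}$ is used to show ${\text{Hom}}(P,\mathbb{C}^{\times}) = {\text{Hom}}(T(\Sigma \backslash \Theta)^{\mathbb{C}},\mathbb{C}^{\times})$, which is precisely the statement that $X^{\ast}(P)$ is freely generated by $\{\chi_{\omega_{\alpha}} : \alpha \in \Sigma \backslash \Theta\}$. Your use of the exponential sequence with Borel--Weil--Bott vanishing, and of the Bruhat cell decomposition to see that $H^{2}(X_{P},\mathbb{Z})$ is torsion-free and purely of type $(1,1)$, are both standard and correct; the paper implicitly relies on the same circle of ideas when it later invokes $H^{1,1}(X_{P},\mathbb{Z}) = H^{2}(X_{P},\mathbb{Z})$ and the Lefschetz $(1,1)$-theorem in the discussion of principal $S^{1}$-bundles.
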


\begin{remark}
\label{C8S8.2Sub8.2.3R8.2.10}
In the previous results and comments we have restricted our attention just to fundamental weights $\omega_{\alpha} \in \Lambda_{\mathbb{Z}_{\geq0}}^{\ast}$ for which $\alpha \in \Sigma \backslash \Theta$. Actually, if we have a parabolic Lie subgroup $P \subset G^{\mathbb{C}}$, such that $P = P_{\Theta}$, the decomposition 
\begin{equation}
P_{\Theta} = [P_{\Theta},P_{\Theta}]T(\Sigma \backslash \Theta)^{\mathbb{C}},
\end{equation}
see for instance \cite[Proposition 8]{Akhiezer}, such that
\begin{equation}
T(\Sigma \backslash \Theta)^{\mathbb{C}} = \exp \Big \{ \displaystyle \sum_{\alpha \in  \Sigma \backslash \Theta}a_{\alpha}h_{\alpha} \ \Big | \ a_{\alpha} \in \mathbb{C} \Big \},
\end{equation}
shows us that ${\text{Hom}}(P,\mathbb{C}^{\times}) = {\text{Hom}}(T(\Sigma \backslash \Theta)^{\mathbb{C}},\mathbb{C}^{\times})$. Therefore, if we take $\omega_{\alpha} \in \Lambda_{\mathbb{Z}_{\geq 0}}^{\ast}$, such that $\alpha \in \Theta$, we obtain $L_{\chi_{\omega_{\alpha}}} = X_{P} \times \mathbb{C}$, i.e., the associated holomorphic line bundle $L_{\chi_{\omega_{\alpha}}}$ is trivial.

\end{remark}

In order to study the Boothby-Wang fibration as in Theorem \ref{BWhomo} it will be important for us to compute $c_{1}(X_{P})$. In order to do so, let us introduce $\delta_{P} \in \mathfrak{h}^{\ast}$ by setting
\begin{equation}
\label{fundweight}
\delta_{P} = \displaystyle \sum_{\alpha \in \Pi^{+} \backslash \langle \Theta \rangle^{+}} \alpha.
\end{equation}
From this, we have the following result. 

\begin{proposition}
\label{C8S8.2Sub8.2.3Eq8.2.35}
Let $X_{P}$ be a flag manifold associated to some parabolic Lie subgroup $P = P_{\Theta}\subset G^{\mathbb{C}}$, then we have

$$K_{X_{P}}^{-1} = \det \big(T^{(1,0)}X_{P} \big) = L_{\chi_{\delta_{P}}}.$$

\end{proposition}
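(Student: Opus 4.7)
The plan is to realize $\det(T^{(1,0)}X_P)$ as a homogeneous line bundle on $X_P$ and then match the resulting character of $P = P_\Theta$ with one of the characters $\chi_\mu$ used to define the line bundles $L_{\chi_\mu}$. First, under the principal $P$-bundle $P \hookrightarrow G^{\mathbb{C}} \to X_P$, standard theory gives the canonical identification
\begin{equation*}
T^{(1,0)}X_P \cong G^{\mathbb{C}} \times_{\Ad} \big(\mathfrak{g}^{\mathbb{C}}/\mathfrak{p}_\Theta\big),
\end{equation*}
so taking determinants reduces the statement to identifying the character of $P_\Theta$ on $\det(\mathfrak{g}^{\mathbb{C}}/\mathfrak{p}_\Theta)$.

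Using the decomposition $\mathfrak{p}_\Theta = \mathfrak{n}^+ \oplus \mathfrak{h} \oplus \mathfrak{n}(\Theta)^-$ I would next write down the $T^{\mathbb{C}}$-equivariant root-space isomorphism
\begin{equation*}
\mathfrak{g}^{\mathbb{C}}/\mathfrak{p}_\Theta \cong \bigoplus_{\alpha \in \Pi^+ \setminus \langle \Theta \rangle^+}\mathfrak{g}_{-\alpha},
\end{equation*}
from which the $T^{\mathbb{C}}$-action on $\det(\mathfrak{g}^{\mathbb{C}}/\mathfrak{p}_\Theta)$ is a character with differential $-\sum_{\alpha \in \Pi^+ \setminus \langle \Theta \rangle^+}\alpha = -\delta_P$. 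By the Levi-type decomposition $P_\Theta = [P_\Theta,P_\Theta]\cdot T(\Sigma \setminus \Theta)^{\mathbb{C}}$ recalled in Remark \ref{C8S8.2Sub8.2.3R8.2.10}, every holomorphic character of $P_\Theta$ is trivial on $[P_\Theta,P_\Theta]$ and is therefore determined by its restriction to $T(\Sigma\setminus\Theta)^{\mathbb{C}}$, hence by its differential on the corresponding subalgebra. Consequently the $P_\Theta$-character appearing in $\det(\mathfrak{g}^{\mathbb{C}}/\mathfrak{p}_\Theta)$ is $\chi_{-\delta_P} = \chi_{\delta_P}^{-1}$.

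To conclude, I would match this with the convention \eqref{C8S8.2Sub8.2.1Eq8.2.4}: by definition $L_{\chi_\mu} = G^{\mathbb{C}} \times_{\chi_\mu} \mathbb{C}_{-\mu}$ with $P_\Theta$ acting on $\mathbb{C}_{-\mu}$ via $p\cdot z = \chi_\mu(p)^{-1}z$, so the $P_\Theta$-character on the fiber of $L_{\chi_\mu}$ is $\chi_\mu^{-1}$. Setting this equal to $\chi_{\delta_P}^{-1}$ forces $\mu = \delta_P$, and therefore
\begin{equation*}
K_{X_P}^{-1} = \det\big(T^{(1,0)}X_P\big) = L_{\chi_{\delta_P}},
\end{equation*}
as claimed.

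The one delicate point, and the step most prone to a sign error, is the convention-matching in the final paragraph: the construction \eqref{C8S8.2Sub8.2.1Eq8.2.4} twists by $\mathbb{C}_{-\mu}$ rather than $\mathbb{C}_\mu$, so one must compare the \emph{inverse} of the adjoint character on $\det(\mathfrak{g}^{\mathbb{C}}/\mathfrak{p}_\Theta)$ with the character defining $L_{\chi_\mu}$, not the character itself. Once this sign convention is properly tracked, the proof reduces to the purely root-theoretic weight computation of the second paragraph.
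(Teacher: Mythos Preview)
Your argument is correct and is precisely the standard proof of this fact: identify the holomorphic tangent bundle as the associated bundle $G^{\mathbb{C}}\times_{\Ad}(\mathfrak{g}^{\mathbb{C}}/\mathfrak{p}_\Theta)$, compute the torus weight on the determinant via the root-space decomposition, extend the resulting character to $P_\Theta$ using the Levi decomposition, and match the sign convention in \eqref{C8S8.2Sub8.2.1Eq8.2.4}. Your attention to the twist by $\mathbb{C}_{-\mu}$ rather than $\mathbb{C}_{\mu}$ is exactly the point where one can go wrong, and you handled it correctly.

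The paper itself does not prove this proposition; it is stated as a known result (in the spirit of the references cited for the surrounding propositions, e.g.\ \cite{AZAD}, \cite{TOROIDAL}) and then used to expand $K_{X_P}^{-1}$ in terms of the generators $L_{\chi_{\omega_\alpha}}$. So there is nothing to compare your approach against --- you have supplied the argument the paper omits.
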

From Remark \ref{remarkcocycle}, the result above allows us to write 
$$K_{X_{P}}^{-1} = \bigg \{(U_{i})_{i \in I},\chi_{\delta_{P}}^{-1} \circ \psi_{i j} \colon U_{i} \cap U_{j} \to \mathbb{C}^{\times} \bigg \}.$$
Moreover, since the holomorphic character associated to $\delta_{P}$ can be written as
$$\chi_{\delta_{P}} = \displaystyle \prod_{\alpha \in \Sigma \backslash \Theta} \chi_{\omega_{\alpha}}^{\langle \delta_{P},h_{\alpha}^{\vee} \rangle},$$
we have the following characterization

$$K_{X_{P}}^{-1} =  L_{\chi_{\delta_{P}}} = \displaystyle \bigotimes_{\alpha \in \Sigma \backslash \Theta} L_{\chi_{\omega_{\alpha}}}^{\otimes \langle \delta_{P},h_{\alpha}^{\vee} \rangle}.$$
Therefore, we obtain the following description for $c_{1}(X_{P})$
\begin{equation}
\label{Cherncanonical}
c_{1}(X_{P}) = \displaystyle \sum_{\alpha \in \Sigma \backslash \Theta} \langle \delta_{P},h_{\alpha}^{\vee} \rangle \big [ \Omega_{\alpha} \big].
\end{equation}
Thus, from Theorem \ref{AZADBISWAS} we have a K\"{a}hler-Einstein structure $\omega_{X_{P}}$ on $X_{P}$ defined by

\begin{equation}
\label{canonicalmetric}
\omega_{X_{P}} =  \displaystyle \sum_{\alpha \in \Sigma \backslash \Theta} \langle \delta_{P},h_{\alpha}^{\vee} \rangle \Omega_{\alpha},
\end{equation}
notice that ${\text{Ric}}(\omega_{X_{P}}) = 2\pi \omega_{X_{P}}$. It is worth pointing out that, also from Theorem \ref{AZADBISWAS}, we have $\omega_{X_{P}}$ determined by the quasi-potential $\varphi \colon G^{\mathbb{C}} \to \mathbb{R}$ defined by
\begin{equation}
\label{quasipotential}
\varphi(g) = \displaystyle \frac{1}{2\pi} \log \Big ( \prod_{\alpha \in \Sigma \backslash \Theta} ||gv_{\omega_{\alpha}}^{+}||^{2  \langle \delta_{P},h_{\alpha}^{\vee} \rangle}\Big), 
\end{equation}
for every $g \in G^{\mathbb{C}}$. Hence, given a local section $s_{U} \colon U \subset X_{P} \to G^{\mathbb{C}}$ we have the following local expression for $\omega_{X_{P}}$
\begin{equation}
\label{localform}
\omega_{X_{P}} = \displaystyle \frac{\sqrt{-1}}{2 \pi} \partial \overline{\partial}\log \Big ( \prod_{\alpha \in \Sigma \backslash \Theta} ||s_{U}v_{\omega_{\alpha}}^{+}||^{2  \langle \delta_{P},h_{\alpha}^{\vee} \rangle}\Big).    
\end{equation}

\begin{remark}
In order to do some local computations it will be convenient for us to consider the open set defined by the ``opposite" big cell in $X_{P}$. This open set is a distinguished coordinate neighbourhood $U \subset X_{P}$ of $x_{0} = eP \in X_{P}$ defined by the maximal Schubert cell. A brief description for the opposite big cell can be done as follows: let $\Pi = \Pi^{+} \cup \Pi^{-}$ be the root system associated to the simple root system $\Sigma \subset \mathfrak{h}^{\ast}$. From this, we can define the opposite big cell $U \subset X_{P}$ by

\begin{center}

 $U =  B^{-}x_{0} = R_{u}(P_{\Theta})^{-}x_{0} \subset X_{P}$,  

\end{center}
 where $B^{-} = \exp(\mathfrak{h} \oplus \mathfrak{n}^{-})$ and
 
 \begin{center}
 
 $R_{u}(P_{\Theta})^{-} = \displaystyle \prod_{\alpha \in \Pi^{-} \backslash \langle \Theta \rangle^{-}}N_{\alpha}^{-}$, \ \ (opposite unipotent radical)
 
 \end{center}
with $N_{\alpha}^{-} = \exp(\mathfrak{g}_{\alpha})$, $\forall \alpha \in \Pi^{-} \backslash \langle \Theta \rangle^{-}$. The opposite big cell defines a contractible open dense subset of $X_{P}$, thus the restriction of any vector bundle over this open set is trivial. For further results about Schubert cells and Schubert varieties we suggest \cite{MONOMIAL}.
\end{remark}

\begin{remark} Unless otherwise stated, in the examples which we will describe throughout this work we will use the conventions of \cite{SMA} and \cite{Humphreys} for the realization of classical simple Lie algebras as matrix Lie algebras. \end{remark}

Let us illustrate the ideas described so far by means of basic examples.

\begin{example}
\label{exampleP1}
Consider $G^{\mathbb{C}} = {\rm{SL}}(2,\mathbb{C})$, we fix the triangular decomposition for $\mathfrak{sl}(2,\mathbb{C})$ given by

\begin{center}

$\mathfrak{sl}(2,\mathbb{C}) = \Big \langle x = \begin{pmatrix}
 0 & 1 \\
 0 & 0
\end{pmatrix} \Big \rangle_{\mathbb{C}} \oplus  \Big \langle h = \begin{pmatrix}
 1 & 0 \\
 0 & -1
\end{pmatrix} \Big \rangle_{\mathbb{C}} \oplus \Big \langle y = \begin{pmatrix}
 0 & 0 \\
 1 & 0
\end{pmatrix} \Big \rangle_{\mathbb{C}}$.

\end{center}
Notice that all the information about the decomposition above is codified in $\Sigma = \{\alpha\}$ and $\Pi = \{\alpha,-\alpha\}$. Also, our set of integral dominant weights in this case is given by 
\begin{center}
$\Lambda_{\mathbb{Z}_{\geq 0}}^{\ast} = \mathbb{Z}_{\geq 0}\omega_{\alpha} $.
\end{center}
We take $P = B$ (Borel subgroup) and from this we obtain $X_{B} = {\rm{SL}}(2,\mathbb{C})/B = \mathbb{C}{\rm{P}}^{1}$. Moreover, from the cellular decomposition
\begin{center}
$X_{B} = \mathbb{C}{\rm{P}}^{1} = N^{-}x_{0} \cup \pi \Big ( \begin{pmatrix}
 0 & 1 \\
 -1 & 0
\end{pmatrix} \Big ),$

\end{center}
we can take the open set defined by the opposite big cell $U =  N^{-}x_{0} \subset X_{B}$ and the local section $s_{U} \colon U \subset \mathbb{C}{\rm{P}}^{1} \to {\rm{SL}}(2,\mathbb{C})$ defined by $s_{U}(nx_{0}) = n$, \ \ $\forall n \in N^{-}$. It is worthwhile to observe that in this case we have the open set  $U =  N^{-}x_{0} \subset \mathbb{C}{\rm{P}}^{1}$ parameterized by 

\begin{center}
    
$z \in \mathbb{C} \mapsto \begin{pmatrix}
 1 & 0 \\
 z & 1
\end{pmatrix} x_{0} \subset \mathbb{C}{\rm{P}}^{1}.$
    
\end{center}
Since $V(\omega_{\alpha}) = \mathbb{C}^{2}$, $v_{\omega_{\alpha}}^{+} = e_{1}$ and $ \langle \delta_{B},h_{\alpha}^{\vee} \rangle = 2$, it follows from Equation \ref{localform} that, over the opposite big cell $U = N^{-}x_{0} \subset X_{B}$, we have

\begin{center}

$\omega_{\mathbb{C}{\rm{P}}^{1}} = \displaystyle  \frac{\sqrt{-1}}{2\pi} \langle \delta_{B},h_{\alpha}^{\vee} \rangle \partial \overline{\partial} \log \Bigg ( \Big |\Big |\begin{pmatrix}
 1 & 0 \\
 z & 1
\end{pmatrix} e_{1} \Big| \Big|^{2} \Bigg ) = \frac{\sqrt{-1}}{\pi} \partial \overline{\partial} \log (1+|z|^{2}).$

\end{center}
Notice that in this case we have $K_{\mathbb{C}{\rm{P}}^{1}}^{-1} =  T^{(1,0)}\mathbb{C}{\rm{P}}^{1} = T\mathbb{C}{\rm{P}}^{1}$, and 
$K_{\mathbb{C}{\rm{P}}^{1}} = T^{\ast}\mathbb{C}{\rm{P}}^{1}$, besides we have
\begin{center}

${\text{Pic}}(\mathbb{C}{\rm{P}}^{1}) = \mathbb{Z}c_{1}(L_{\chi_{\omega_{\alpha}}})$,    
    
\end{center}
thus $K_{\mathbb{C}{\rm{P}}^{1}}^{-1} = L_{\chi_{\omega_{\alpha}}}^{\otimes 2}$. If we denote by $L_{\chi_{\omega_{\alpha}}}^{\otimes \ell} = \mathscr{O}(\ell)$, $\forall \ell \in \mathbb{Z}$, we obtain $K_{\mathbb{C}{\rm{P}}^{1}} = \mathscr{O}(-2)$ and ${\text{Pic}}(\mathbb{C}{\rm{P}}^{1})$ generated by $\mathscr{O}(1) = L_{\chi_{\omega_{\alpha}}}$. Furthermore, in this case we have the Fano index given by $I(\mathbb{C}{\rm{P}}^{1}) = 2$, which implies that

\begin{center}

$K_{\mathbb{C}{\rm{P}}^{1}}^{\otimes \frac{1}{2}} = \mathscr{O}(-1).$
    
\end{center}
The computation above is an interesting constructive exercise to understand how the approach by elements of the Lie theory, especially representation theory, can be useful to describe geometric structures.
\end{example}

\begin{example}
\label{examplePn}
Let us briefly describe the generalization of the previous example for $X_{P} = \mathbb{C}{\rm{P}}^{n}$. At first, we recall some basic data related to the Lie algebra $\mathfrak{sl}(n+1,\mathbb{C})$. By fixing the Cartan subalgebra $\mathfrak{h} \subset \mathfrak{sl}(n+1,\mathbb{C})$ given by diagonal matrices whose the trace is equal to zero, we have the set of simple roots given by
$$\Sigma = \Big \{ \alpha_{l} = \epsilon_{l} - \epsilon_{l+1} \ \Big | \ l = 1, \ldots,n\Big\},$$
here $\epsilon_{l} \colon {\text{diag}}\{a_{1},\ldots,a_{n+1} \} \mapsto a_{l}$, $ \forall l = 1, \ldots,n+1$. Therefore, the set of positive roots is given by
$$\Pi^+ = \Big \{ \alpha_{ij} = \epsilon_{i} - \epsilon_{j} \ \Big | \ i<j  \Big\}. $$
In this example we consider $\Theta = \Sigma \backslash \{\alpha_{1}\}$ and $P = P_\Theta$. Now, we take the open set defined by the opposite big cell $U =  R_{u}(P_{\Theta})^{-}x_{0} \subset \mathbb{C}{\rm{P}}^{n}$, where $x_0=eP$ (trivial coset) and  
\begin{center}
 $ R_{u}(P_{\Theta})^{-} = \displaystyle \prod_{\alpha \in \Pi^{-} \backslash \langle \Theta \rangle^{-}}N_{\alpha}^{-}$, \ with \ $N_{\alpha}^{-} = \exp(\mathfrak{g}_{\alpha})$, $\forall \alpha \in \Pi^{-} \backslash \langle \Theta \rangle^{-}$.
 \end{center}
We remark that in this case the open set $U =  R_{u}(P_{\Theta})^{-}x_{0}$ is parameterized by
\begin{center}
$(z_{1},\ldots,z_{n}) \in \mathbb{C}^{n} \mapsto \begin{pmatrix}
1 & 0 &\cdots & 0 \\
z_{1} & 1  &\cdots & 0 \\                  
\ \vdots  & \vdots &\ddots  & \vdots  \\
z_{n} & 0 & \cdots &1 
 \end{pmatrix}x_{0} \in U =  R_{u}(P_{\Theta})^{-}x_{0}$.
\end{center}
Notice that the coordinate system above is induced directly from the exponential map $\exp \colon {\text{Lie}}(R_{u}(P)^{-}) \to R_{u}(P)^{-}$.

From this, we can take a local section $s_{U} \colon U \subset \mathbb{C}{\rm{P}}^{n} \to {\rm{SL}}(n+1,\mathbb{C})$, such that 
$$s_{U}(nx_{0}) = n \in {\rm{SL}}(n+1,\mathbb{C}).$$ 
Since $V(\omega_{\alpha_{1}}) = \mathbb{C}^{n+1}$, $v_{\omega_{\alpha_{1}}}^{+} = e_{1}$ and $ \langle \delta_{P_{\Sigma \backslash \{\alpha_{1}\}}},h_{\alpha_{1}}^{\vee} \rangle = n+1$, it follows from Equation \ref{localform} that over the opposite big cell $U =  R_{u}(P_{\Theta})^{-}x_{0} \subset \mathbb{C}{\rm{P}}^{n}$ we have the expression of $\omega_{\mathbb{C}{\rm{P}}^{n}}$ given by
$$\omega_{\mathbb{C}{\rm{P}}^{n}} = \displaystyle \frac{(n+1)}{2\pi} \sqrt{-1}\partial \overline{ \partial} \log \Big (1 + \sum_{l = 1}^{n}|z_{l}|^{2} \Big ).$$
Notice that in this case we have ${\text{Pic}}(\mathbb{C}{\rm{P}}^{n}) = \mathbb{Z}c_{1}(L_{\chi_{\omega_{\alpha_{1}}}})$, thus $K_{\mathbb{C}{\rm{P}}^{n}}^{-1} = L_{\chi_{\omega_{\alpha_{1}}}}^{\otimes (n+1)}$. If we denote by $L_{\chi_{\omega_{\alpha_{1}}}}^{\otimes \ell} = \mathscr{O}(\ell)$, $\forall \ell \in \mathbb{Z}$, we obtain $K_{\mathbb{C}{\rm{P}}^{n}} = \mathscr{O}(-n-1)$ and ${\text{Pic}}(\mathbb{C}{\rm{P}}^{n})$ generated by $\mathscr{O}(1) = L_{\chi_{\omega_{\alpha_{1}}}}$. Moreover, in this case we have the Fano index given by $I(\mathbb{C}{\rm{P}}^{n}) = n+1$, which implies that

\begin{center}

$K_{\mathbb{C}{\rm{P}}^{n}}^{\otimes \frac{1}{n+1}} = \mathscr{O}(-1).$

\end{center}

\end{example}

\begin{example}
\label{grassmanian}
Consider $G^{\mathbb{C}} = {\rm{SL}}(4,\mathbb{C})$, here we use the same choice of Cartan subalgebra and conventions for the simple root system as in the previous example. Since our simple root system is given by
$$\Sigma = \Big \{ \alpha_{1} = \epsilon_{1}-\epsilon_{2}, \alpha_{2} = \epsilon_{2} - \epsilon_{3}, \alpha_{3} = \epsilon_{3} - \epsilon_{4}\Big \},$$
by taking $\Theta = \Sigma \backslash \{\alpha_{2}\}$ we obtain  for $P = P_{\Theta}$ the flag manifold $X_{P} = {\rm{Gr}}(2,\mathbb{C}^{4})$ (Klein quadric). Notice that in this case we have ${\text{Pic}}({\rm{Gr}}(2,\mathbb{C}^{4})) = \mathbb{Z}c_{1}(L_{\chi_{\alpha_{2}}})$. Thus, from Proposition \ref{C8S8.2Sub8.2.3Eq8.2.35}, it follows that 
$$K_{{\rm{Gr}}(2,\mathbb{C}^{4})}^{-1} = L_{\chi_{\omega_{\alpha_{2}}}}^{\otimes \langle \delta_{P},h_{\alpha_{2}}^{\vee} \rangle}.$$
By considering our Lie-theoretical conventions, we have
$$\Pi^{+} \backslash \langle \Theta \rangle^{+} = \Big \{ \alpha_{2}, \alpha_{1}+\alpha_{2},\alpha_{2}+\alpha_{3},\alpha_{1} + \alpha_{2} + \alpha_{3}\Big \},$$
hence
$$\delta_{P} = \displaystyle \sum_{\alpha \in \Pi^{+} \backslash \langle \Theta \rangle^{+}} \alpha = 2\alpha_{1} + 4 \alpha_{2} + 2\alpha_{3}.$$
By means of the Cartan matrix of  $\mathfrak{sl}(4,\mathbb{C})$ we obtain: $ \langle \delta_{P},h_{\alpha_{2}}^{\vee} \rangle = 4 \implies K_{{\rm{Gr}}(2,\mathbb{C}^{4})}^{-1} = L_{\chi_{\omega_{\alpha_{2}}}}^{\otimes4}.$ 
In what follows, we will use the following notation: 
$$L_{\chi_{\omega_{\alpha_{2}}}}^{\otimes \ell} := \mathscr{O}_{\alpha_{2}}(\ell),$$
for every $\ell \in \mathbb{Z}$, therefore we have $K_{{\rm{Gr}}(2,\mathbb{C}^{4})} = \mathscr{O}_{\alpha_{2}}(-4)$. In order to compute the local expression of $\omega_{{\rm{Gr}}(2,\mathbb{C}^{4})} \in c_{1}(\mathscr{O}_{\alpha_{2}}(-4))$, we observe that in this case the quasi-potential $\varphi \colon  {\rm{SL}}(4,\mathbb{C}) \to \mathbb{R}$ is given by

\begin{center}

$\varphi(g) = \displaystyle \frac{\langle \delta_{P}, h_{\alpha_{2}}^{\vee}\rangle}{2\pi} \log \big (||g v_{\omega_{\alpha_{2}}}^{+}||^{2}\big ) = \displaystyle \frac{2}{\pi} \log \big (||g v_{\omega_{\alpha_{2}}}^{+}||^{2} \big )$,

\end{center}
where $V(\omega_{\alpha_{2}}) = \bigwedge^{2}(\mathbb{C}^{4})$ and $v_{\omega_{\alpha_{2}}}^{+} =  e_{1} \wedge e_{2}$. Thus, we fix the basis $\{e_{i} \wedge e_{j}\}_{i<j}$ for $V(\omega_{\alpha_{2}}) = \bigwedge^{2}(\mathbb{C}^{4})$. Similarly to the previous examples, we consider the open set defined by the opposite big cell $U = B^{-}x_{0} \subset {\rm{Gr}}(2,\mathbb{C}^{4})$. In this case we have the local coordinates $nx_{0} \in U$ given by

\begin{center}

$(z_{1},z_{2},z_{3},z_{4}) \in \mathbb{C}^{4} \mapsto\begin{pmatrix}
1 & 0 & 0 & 0\\
0 & 1 & 0 & 0 \\                  
z_{1}  & z_{3} & 1 & 0 \\
z_{2}  & z_{4} & 0 & 1
 \end{pmatrix} x_{0} \in U = B^{-}x_{0}.$

\end{center}
Notice that the coordinates above are obtained directly from the exponential map $\exp \colon {\text{Lie}}(R_{u}(P)^{-}) \to R_{u}(P)^{-}$. From this, by taking the local section $s_{U} \colon U \subset {\rm{Gr}}(2,\mathbb{C}^{4}) \to {\rm{SL}}(4,\mathbb{C})$, $s_{U}(nx_{0}) = n$, we obtain

\begin{center}

$\varphi(s_{U}(nx_{0})) = \displaystyle \frac{2}{\pi} \log \Bigg ( 1 + \sum_{k = 1}^{4}|z_{k}|^{2} + \bigg |\det \begin{pmatrix}
 z_{1} & z_{3} \\
 z_{2} & z_{4}
\end{pmatrix} \bigg |^{2} \Bigg)$,

\end{center}
and the following local expression for  $\omega_{{\rm{Gr}}(2,\mathbb{C}^{4})} \in c_{1}(\mathscr{O}_{\alpha_{2}}(-4))$
\begin{equation}
\label{C8S8.3Sub8.3.2Eq8.3.21}
\omega_{{\rm{Gr}}(2,\mathbb{C}^{4})} =  \displaystyle \frac{2 \sqrt{-1}}{\pi}  \partial \overline{\partial} \log \Bigg (1+ \sum_{k = 1}^{4}|z_{k}|^{2} + \bigg |\det \begin{pmatrix}
 z_{1} & z_{3} \\
 z_{2} & z_{4}
\end{pmatrix} \bigg |^{2} \Bigg).
\end{equation}
It is worthwhile to observe that in this case we have the Fano index of ${\rm{Gr}}(2,\mathbb{C}^{4})$ given by $I({\rm{Gr}}(2,\mathbb{C}^{4})) = 4$, thus we obtain

\begin{center}
    
$K_{{\rm{Gr}}(2,\mathbb{C}^{4})}^{\otimes \frac{1}{4}} = \mathscr{O}_{\alpha_{2}}(-1).$    
    
\end{center}

\end{example}

\begin{remark}
Notice that from Proposition \ref{C8S8.2Sub8.2.3Eq8.2.35} we have for a complex flag manifold $X_{P}$ its Fano index is given by 

\begin{center}   
$I(X_{P}) = {\text{gcd}} \Big (  \langle \delta_{P}, h_{\alpha}^{\vee} \rangle \ \Big | \ \alpha \in \Sigma \backslash \Theta \Big )$,    
\end{center}
here we suppose $P = P_{\Theta} \subset G^{\mathbb{C}}$, for some $\Theta \subset \Sigma$. Thus, $I(X_{P})$ can be completely determined by the Cartan matrix of $\mathfrak{g}^{\mathbb{C}}$.
\end{remark}

\subsection{Principal $S^{1}$-bundles over flag manifolds}
\label{subsec3.2}
As we have seen previously, given a complex manifold $X$ and a line bundle $L \to X$ with Hermitian structure $H$, we can define a circle bundle by taking the sphere bundle
\begin{center}
$Q(L) = \Big \{ u \in L \ \Big | \ \sqrt{H(u,u)} = 1 \Big\}$.
\end{center}
The action of ${\rm{U}}(1)$ on $Q(L)$ is defined by $u \cdot \theta = u \mathrm{e}^{2\pi \theta \sqrt{-1}}$, $\forall \theta \in {\rm{U}}(1)$ and $\forall u \in Q(L)$. Furthermore, a straightforward computation shows that $L = Q(L) \times_{{\rm{U}}(1)} \mathbb{C}$. Conversely, given a circle bundle ${\rm{U}}(1) \hookrightarrow Q \to X$, we can construct a line bundle $L(Q) \to X$ as an associated bundle such that 
\begin{center}
$L(Q) = Q \times_{{\rm{U}}(1)} \mathbb{C}$,
\end{center}
where the twisted product is taken with respect to the action
\begin{center}
$\theta \cdot (u,z) = (u \cdot \theta, \mathrm{e}^{-2\pi \theta \sqrt{-1}}z)$, 
\end{center}
$\forall \theta \in {\rm{U}}(1)$ and $\forall (u,z) \in Q \times \mathbb{C}$. If we denote the set of all isomorphism classes of circle bundles over $X$ by

\begin{center}
$\mathscr{P}(X,{\rm{U}}(1))$,    
\end{center}
the previous idea provides the correspondences: 
\begin{center}
${\text{Pic}}^{\infty}(X) \to \mathscr{P}(X,{\rm{U}}(1))$, $L \mapsto Q(L)$ \ \ and \ \ $\mathscr{P}(X,{\rm{U}}(1)) \to {\text{Pic}}^{\infty}(X)$, $Q \mapsto L(Q)$,     
\end{center}
where ${\text{Pic}}^{\infty}(X)$ denotes the smooth Picard group of $X$, i.e., the set of isomorphism classes of complex vector bundles of rank $1$. Furthermore, we have 
\begin{center}
$L(Q(L)) = L$, \ \ $[u,\xi] \mapsto \xi u$ \ \ and \ \ $Q(L(Q)) = Q$, \ \ $u \mapsto [u,1]$.
\end{center}
It will be important in this work to consider the following well-known results for which the details about the proofs can be found in \cite{TOROIDAL}, \cite[Theorem 2.1]{BLAIR}.

\begin{theorem}

The set $\mathscr{P}(X,{\rm{U}}(1))$ of isomorphism classes of all principal circle bundles over $X$ forms an additive group. The identity element is given by the trivial bundle.
\end{theorem}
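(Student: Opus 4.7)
The plan is to exploit the bijection $Q \mapsto L(Q)$ already established in the excerpt to transport the classical abelian group structure of the smooth Picard group ${\text{Pic}}^{\infty}(X)$ (under tensor product) over to $\mathscr{P}(X,{\rm{U}}(1))$. This reduces the theorem to standard facts about tensor products of complex line bundles, which sidesteps the need to build the group law from scratch.

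Concretely, I would define the sum of two isomorphism classes by
$$[Q_1] + [Q_2] := \big[Q\big(L(Q_1) \otimes L(Q_2)\big)\big],$$
and check that it is independent of representatives: any bundle isomorphism $Q_i \cong Q_i'$ induces an isomorphism of associated line bundles $L(Q_i) \cong L(Q_i')$, hence $L(Q_1)\otimes L(Q_2) \cong L(Q_1')\otimes L(Q_2')$, and the sphere bundle construction $L \mapsto Q(L)$ (with respect to compatible Hermitian structures) sends isomorphisms to isomorphisms. An equivalent intrinsic description is to form the fiber product $Q_1 \times_X Q_2$, which is a principal $T^{2}$-bundle, and quotient by the antidiagonal ${\rm{U}}(1)$-action $(u_1,u_2)\cdot\theta = (u_1\cdot\theta,\, u_2\cdot\theta^{-1})$; the residual diagonal ${\rm{U}}(1)$-action endows the quotient with the structure of a principal circle bundle. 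In $\check{C}$ech language, if $Q_k$ has transition functions $h_{ij}^{(k)}\colon U_i\cap U_j \to {\rm{U}}(1)$ for $k=1,2$, then $Q_1 + Q_2$ has transition functions $h_{ij}^{(1)}\cdot h_{ij}^{(2)}$, which also exhibits the further identification $\mathscr{P}(X,{\rm{U}}(1)) \cong H^1(X,\underline{{\rm{U}}(1)})$.

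The group axioms then follow mechanically from properties of the tensor product: associativity and commutativity come from the canonical isomorphisms $(L_1\otimes L_2)\otimes L_3 \cong L_1\otimes (L_2\otimes L_3)$ and $L_1\otimes L_2 \cong L_2\otimes L_1$; the identity element is the trivial bundle $X\times {\rm{U}}(1)$, since $L(X\times {\rm{U}}(1))$ is canonically trivial and $L\otimes (X\times \mathbb{C}) \cong L$ for every line bundle $L$; and the inverse of $[Q]$ is represented by $Q(L(Q)^{\vee})$, using $L\otimes L^{\vee} \cong X\times \mathbb{C}$. Combined with the identifications $L(Q(L))=L$ and $Q(L(Q))=Q$ already recorded in the excerpt, this upgrades $Q \mapsto L(Q)$ to a group isomorphism $\mathscr{P}(X,{\rm{U}}(1)) \cong {\text{Pic}}^{\infty}(X)$.

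There is no substantial obstacle here: the entire content is formal once the correspondence $Q \mapsto L(Q)$ is in hand and one accepts that ${\text{Pic}}^{\infty}(X)$ is an abelian group under tensor product. The only mildly technical bookkeeping is verifying the functoriality of $Q \mapsto L(Q)$ on isomorphisms (so that the operation descends to isomorphism classes) and, if one adopts the intrinsic fiber-product definition of the sum, a short $\check{C}$ech computation to check that it coincides with $[Q(L(Q_1)\otimes L(Q_2))]$.
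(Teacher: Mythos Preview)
Your proposal is correct. The paper does not actually prove this theorem; it is stated as a well-known result with proofs deferred to the references \cite{TOROIDAL} and \cite[Theorem 2.1]{BLAIR}, and the paper immediately follows it with a remark adopting precisely your definition $Q_{1} + Q_{2} = Q(L(Q_{1}) \otimes L(Q_{2}))$ as the characterization of the group law. So your argument both fills in what the paper omits and matches the framework it uses.
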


\begin{remark}
\label{productcircle}
From the previous comments, it will be suitable to consider the following characterization for the group structure of  $\mathscr{P}(X,{\rm{U}}(1))$

\begin{center}
    
$Q_{1} + Q_{2} = Q(L(Q_{1}) \otimes L(Q_{2})),$    
    
\end{center}
for $Q_{1},Q_{2} \in \mathscr{P}(X,{\rm{U}}(1))$. 
\end{remark}

Given $Q \in \mathscr{P}(X,{\rm{U}}(1))$, we can consider its associated homotopy exact sequence:

\begin{center}
\begin{tikzcd}

\cdots \arrow[r] & \pi_{2}(Q) \arrow[r] & \pi_{2}(X) \arrow[r,"\Delta_{Q}"] & \pi_{1}({\rm{U}}(1)) \arrow[r] & \cdots, 

\end{tikzcd}
\end{center}
notice that, since $\pi_{1}({\rm{U}}(1)) \cong \mathbb{Z}$, it follows that $\Delta_{Q} \in {\rm{Hom}}(\pi_{2}(X),\mathbb{Z})$. From this, we have the following result. 

\begin{theorem}

Let $h \colon \pi_{2}(X) \to H_{2}(X,\mathbb{Z})$ be the natural homomorphism and $\ell$ an integer given by $\Delta_{Q}c = \ell b_{0}$, where $b_{0}$ is the generator of $\pi_{1}({\rm{U}}(1))$ and $\Delta_{Q}$ is the boundary operator of the exact homotopy sequence of a bundle $Q \in \mathscr{P}(X,{\rm{U}}(1))$. Then,

\begin{center}
    
$\Big \langle \mathrm{e}(Q),h(c) \Big \rangle = \displaystyle \int_{h(c)} \mathrm{e}(Q) = \ell,$    
    
\end{center}
where $\mathrm{e}(Q)$ denotes the Euler class of $Q \in \mathscr{P}(X,{\rm{U}}(1))$.
\end{theorem}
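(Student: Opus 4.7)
The plan is to prove the theorem by reducing to the universal case of a principal $S^{1}$-bundle over $S^{2}$ via naturality, and then appealing to the classification of such bundles in terms of their clutching functions.

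First, I would represent $c \in \pi_{2}(X)$ by a continuous map $f \colon S^{2} \to X$, and form the pullback bundle $f^{\ast}Q \to S^{2}$. By the contravariant naturality of the Euler class one has $f^{\ast}\mathrm{e}(Q) = \mathrm{e}(f^{\ast}Q)$, while by the naturality of the boundary map in the long exact sequence of homotopy groups of a fibration one has $\Delta_{Q} \circ f_{\ast} = \Delta_{f^{\ast}Q}$ on $\pi_{2}$. Writing $[\iota] \in \pi_{2}(S^{2})$ for the class of the identity and using $h_{X} \circ f_{\ast} = f_{\ast} \circ h_{S^{2}}$ for the Hurewicz homomorphisms, the pairing $\langle \mathrm{e}(Q), h(c)\rangle$ equals $\langle \mathrm{e}(f^{\ast}Q), h_{S^{2}}[\iota]\rangle = \langle \mathrm{e}(f^{\ast}Q), [S^{2}]\rangle$, and the integer $\ell$ with $\Delta_{Q}(c) = \ell b_{0}$ is characterized by $\Delta_{f^{\ast}Q}[\iota] = \ell b_{0}$. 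Thus it suffices to treat the case $X = S^{2}$ and $c = [\iota]$.

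Second, I would analyze principal $S^{1}$-bundles over $S^{2}$ via the clutching construction. Decompose $S^{2} = D^{2}_{+} \cup_{S^{1}} D^{2}_{-}$ and trivialize $P$ over each hemisphere by sections $s_{\pm} \colon D^{2}_{\pm} \to P$; along the equator $s_{-} = s_{+} \cdot \varphi$ for a continuous transition map $\varphi \colon S^{1} \to S^{1}$. The degree $\deg(\varphi) \in \pi_{1}(S^{1}) \cong \mathbb{Z}$ classifies the bundle up to isomorphism, and by the standard computation of the Euler class via transition data (or equivalently via a partition of unity construction of a connection with curvature concentrated near the equator) one has
\begin{equation*}
\langle \mathrm{e}(P), [S^{2}]\rangle = \deg(\varphi).
\end{equation*}
This identifies the left-hand side of the desired formula with the clutching integer.

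Third, I would compute the boundary homomorphism $\Delta_{P} \colon \pi_{2}(S^{2}) \to \pi_{1}(S^{1})$ directly from its definition. Given $[\iota] \in \pi_{2}(S^{2})$, lift $\iota$ partially to $P$ using $s_{+}$ on $D^{2}_{+}$; the obstruction to extending this lift over $D^{2}_{-}$ is measured by the loop in the fiber $S^{1}$ that the lifted map traces out over the equator, which by the very definition of the transition function is the loop $\varphi \colon S^{1} \to S^{1}$. Thus $\Delta_{P}[\iota] = [\varphi] = \deg(\varphi)\, b_{0}$, yielding $\ell = \deg(\varphi)$ and proving the claim.

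The main obstacle, such as it is, will be bookkeeping with signs and conventions: the boundary $\Delta_{P}$ and the pairing of the Euler class with the fundamental class each depend on a choice of orientation of $S^{2}$ and a generator $b_{0} \in \pi_{1}(S^{1})$, and I would need to ensure that the clutching degree, the Euler number, and the connecting homomorphism are computed with the same convention so that they agree as integers rather than up to sign. Once the conventions are fixed consistently, the three identifications in the second and third steps collapse into the stated equality.
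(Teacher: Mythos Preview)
The paper does not actually prove this theorem: it is stated as a well-known result with the proof deferred to the references \cite{TOROIDAL} and \cite[Theorem 2.1]{BLAIR}. Your argument via naturality (pullback to $S^{2}$), the clutching description of circle bundles over $S^{2}$, and the direct identification of $\Delta_{P}[\iota]$ with the class of the clutching map is correct and is precisely the standard proof one finds in those references; your caveat about matching orientation conventions is the only genuine subtlety, and you have identified it accurately.
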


For our purpose it will be important to consider the following corollary. 

\begin{corollary}
If $X$ is simply connected, then $\mathscr{P}(X,{\rm{U}}(1))$ is isomorphic to ${\rm{Hom}}(\pi_{2}(X),\mathbb{Z})$. The isomorphism is given by $Q \mapsto \Delta_{Q}$, where $\Delta_{Q}$ is the boundary operator of the exact homotopy sequence of a bundle $Q \in \mathscr{P}(X,{\rm{U}}(1))$.
\end{corollary}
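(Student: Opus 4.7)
The plan is to exhibit $Q \mapsto \Delta_Q$ as a composition of three standard isomorphisms and then invoke the preceding theorem to identify this composition with the stated map.

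First, I would verify that $\Delta_Q$ depends only on the isomorphism class of $Q$ and that $Q \mapsto \Delta_Q$ is a group homomorphism with respect to the addition described in Remark \ref{productcircle}. Well-definedness is immediate from the naturality of the long exact homotopy sequence under bundle isomorphisms. For the homomorphism property, recall from the preceding theorem that $\Delta_Q(c) = \langle \mathrm{e}(Q), h(c)\rangle$; since the tensor product of line bundles corresponds to addition of first Chern classes (equivalently of Euler classes of the associated circle bundles), the identity $\mathrm{e}(Q_1 + Q_2) = \mathrm{e}(Q_1) + \mathrm{e}(Q_2)$ yields $\Delta_{Q_1 + Q_2} = \Delta_{Q_1} + \Delta_{Q_2}$.

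Second, I would assemble three classical isomorphisms. Because $X$ is simply connected, Hurewicz's theorem gives an isomorphism $h \colon \pi_2(X) \to H_2(X,\mathbb{Z})$, and dualizing yields $h^{\ast}\colon {\rm{Hom}}(H_2(X,\mathbb{Z}),\mathbb{Z}) \to {\rm{Hom}}(\pi_2(X),\mathbb{Z})$. The universal coefficient theorem, together with $H_1(X,\mathbb{Z}) = 0$ (which kills the ${\rm{Ext}}$ term), furnishes an isomorphism
$$H^2(X,\mathbb{Z}) \longrightarrow {\rm{Hom}}(H_2(X,\mathbb{Z}),\mathbb{Z})$$
realized by the evaluation pairing $\alpha \mapsto \langle \alpha, - \rangle$. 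Finally, since $B{\rm{U}}(1) = \mathbb{C}P^{\infty} = K(\mathbb{Z},2)$, the Euler class defines a group isomorphism $\mathrm{e} \colon \mathscr{P}(X,{\rm{U}}(1)) \to H^2(X,\mathbb{Z})$, where the source carries the addition of Remark \ref{productcircle} and the target its usual additive structure; the compatibility of these two additive structures is precisely the multiplicativity of $c_1$ under tensor product.

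Third, composing the three isomorphisms produces an isomorphism $\mathscr{P}(X,{\rm{U}}(1)) \to {\rm{Hom}}(\pi_2(X),\mathbb{Z})$ which sends $Q$ to the functional $c \mapsto \langle \mathrm{e}(Q), h(c)\rangle$. By the preceding theorem, this functional is exactly $\Delta_Q$, completing the argument. There is no substantial obstacle here; the argument is essentially bookkeeping of standard identifications, and the only point demanding attention is the compatibility of the group structure of $\mathscr{P}(X,{\rm{U}}(1))$ in Remark \ref{productcircle} with that of $H^2(X,\mathbb{Z})$ under $\mathrm{e}$, which follows from functoriality of the associated bundle construction.
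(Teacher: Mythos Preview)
Your proposal is correct and follows essentially the same approach the paper indicates: the paper does not spell out a proof of this corollary but immediately afterwards invokes Hurewicz's theorem for simply connected $X$ and the Euler class isomorphism $\mathscr{P}(X,{\rm U}(1))\cong H^{2}(X,\mathbb{Z})$, which together with the preceding theorem yield exactly the composition you describe. Your write-up is more detailed (you make the universal coefficient step and the homomorphism property explicit), but the underlying argument is the same.
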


Now, let $X$ be a complex manifold. From Hurewicz's theorem, if $X$ is simply connected it follows that $h \colon \pi_{2}(X) \to H_{2}(X,\mathbb{Z})$ is an isomorphism, thus we obtain
\begin{equation}
\label{picinfty}
\mathscr{P}(X,{\rm{U}}(1)) \cong H^{2}(X,\mathbb{Z}) \cong {\text{Pic}}^{\infty}(X),
\end{equation}
where the first isomorphism is given by $\Delta_{Q} \mapsto \mathrm{e}(Q)$, $\forall Q \in \mathscr{P}(X,{\rm{U}}(1)) $, and the second isomorphism follows from the exponential exact sequence of sheaves

\begin{center}
\begin{tikzcd}

0 \arrow[r] & \underline{\mathbb{Z}} \arrow[r,"2\pi \sqrt{-1}"] & \underline{\mathbb{C}} \arrow[r,"\exp"] & \underline{\mathbb{C}}^{\times} \arrow[r] & 0, 

\end{tikzcd}
\end{center}
notice that $ {\text{Pic}}^{\infty}(X) \cong H^{1}(X,\underline{\mathbb{C}}^{\times})$, see for instance \cite[Chapter 2]{LOOP}.

The isomorphism \ref{picinfty} allows us to see that, when $X$ is simply connected, we have 

\begin{center}
    
$\mathrm{e}(Q) = c_{1}(L(Q))$ \ \ and \ \ $c_{1}(L) = \mathrm{e}(Q(L))$,    
    
\end{center}
$\forall Q \in \mathscr{P}(X,{\rm{U}}(1)) $, $\forall L \in {\text{Pic}}^{\infty}(X)$. 

\begin{remark}
It is worth pointing out that, in the setting above, if $X$ is not simply connected we can also obtain the isomorphism \ref{picinfty}. Actually, if we consider the natural exact sequence of sheaves
\begin{center}
\begin{tikzcd}

0 \arrow[r] & \underline{\mathbb{Z}} \arrow[r,"2\pi \sqrt{-1}"] & \underline{\sqrt{-1}\mathbb{R}} \arrow[r,"\exp"] & \underline{S}^{1} \arrow[r] & 0, 

\end{tikzcd}
\end{center}
the result follows from the associated cohomology sequence 
\begin{center}
\begin{tikzcd}

\cdots \arrow[r] & H^{1}(X, \underline{\sqrt{-1}\mathbb{R}}) \arrow[r] & H^{1}(X,\underline{S}^{1}) \arrow[r] & H^{2}(X,\underline{\mathbb{Z}}) \arrow[r] & H^{2}(X, \underline{\sqrt{-1}\mathbb{R}}) \arrow[r] &\cdots 

\end{tikzcd}
\end{center}
Notice that $\mathscr{P}(X,{\rm{U}}(1)) \cong H^{1}(X,\underline{S}^{1})$, see for instance \cite[Chapter 2, page 18]{BLAIR}.
\end{remark}
Therefore, from Proposition \ref{C8S8.2Sub8.2.3P8.2.6} and the last comments we have the following result. 

\begin{theorem}[ Kobayashi, \cite{TOROIDAL}]
\label{circlegroup}

Let $X_{P}$ be a complex flag manifold defined by a parabolic Lie subgroup $P = P_{\Theta} \subset G^{\mathbb{C}}$. Then, we have

\begin{center}
    
$\mathscr{P}(X_{P},{\rm{U}}(1)) = \displaystyle \bigoplus_{\alpha \in \Sigma \backslash \Theta}  \mathbb{Z}\mathrm{e}(Q(L_{\chi_{\omega_{\alpha}}})).$
    
\end{center}

\end{theorem}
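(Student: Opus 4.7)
The plan is to reduce the statement to a combination of three facts already established in the excerpt: the classification of circle bundles on a simply connected space by $H^2$, the explicit computation of $H^2(X_P,\mathbb{Z})$ via Proposition \ref{C8S8.2Sub8.2.3P8.2.6}, and the identification of the generators via the Chern class formula of Proposition \ref{C8S8.2Sub8.2.3P8.2.7}. The structural content is therefore entirely codified in the commutative diagram induced by the smooth exponential sheaf sequence; nothing new has to be proved from scratch.

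First, I would invoke the fact that any complex flag manifold $X_P = G^{\mathbb{C}}/P$ is simply connected: this is standard, following from the Bruhat decomposition together with the fact that $X_P$ is a compact homogeneous Hodge manifold with a cell decomposition having only even-dimensional cells. Once simply connectedness is in hand, the isomorphism displayed in \eqref{picinfty} applies and yields
\begin{equation*}
\mathscr{P}(X_P,\mathrm{U}(1)) \;\cong\; H^{2}(X_P,\mathbb{Z}) \;\cong\; \mathrm{Pic}^{\infty}(X_P),
\end{equation*}
where the first isomorphism sends $Q \mapsto \mathrm{e}(Q)$ and the second sends $L \mapsto c_1(L)$. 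Under the composition, the class of $Q \in \mathscr{P}(X_P,\mathrm{U}(1))$ corresponds to $c_1(L(Q))$, and conversely $L \in \mathrm{Pic}^{\infty}(X_P)$ corresponds to $Q(L)$.

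Next I would combine this with Proposition \ref{C8S8.2Sub8.2.3P8.2.6}, which gives
\begin{equation*}
H^{2}(X_P,\mathbb{Z}) \;=\; \bigoplus_{\alpha \in \Sigma \setminus \Theta} \mathbb{Z}\,[\Omega_\alpha],
\end{equation*}
and with Proposition \ref{C8S8.2Sub8.2.3P8.2.7}, which identifies the generator $[\Omega_\alpha]$ with the first Chern class $c_1(L_{\chi_{\omega_\alpha}})$. Under the isomorphism $\mathrm{Pic}^{\infty}(X_P) \cong \mathscr{P}(X_P,\mathrm{U}(1))$, the holomorphic line bundle $L_{\chi_{\omega_\alpha}}$ gets sent to its associated circle bundle $Q(L_{\chi_{\omega_\alpha}})$, whose Euler class agrees with $c_1(L_{\chi_{\omega_\alpha}}) = [\Omega_\alpha]$. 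Therefore the collection $\{\mathrm{e}(Q(L_{\chi_{\omega_\alpha}}))\}_{\alpha \in \Sigma \setminus \Theta}$ is a $\mathbb{Z}$-basis of $H^{2}(X_P,\mathbb{Z})$, which transported back to $\mathscr{P}(X_P,\mathrm{U}(1))$ yields precisely the claimed direct sum decomposition.

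I do not anticipate a genuine obstacle: the only subtlety is bookkeeping, namely checking that the group law on $\mathscr{P}(X_P,\mathrm{U}(1))$ recalled in Remark \ref{productcircle} (via $Q_1+Q_2 = Q(L(Q_1)\otimes L(Q_2))$) really does match the additive structure on $H^2(X_P,\mathbb{Z})$ under the Euler-class correspondence; this is immediate from $c_1(L_1 \otimes L_2) = c_1(L_1) + c_1(L_2)$, so both the group structures and the generators agree and the stated identification follows.
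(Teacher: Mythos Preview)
Your proposal is correct and follows essentially the same approach as the paper: the paper does not give a separate proof of this theorem but presents it as an immediate consequence of the preceding discussion, namely the isomorphism \eqref{picinfty} (obtained from simple connectedness of $X_P$ and Hurewicz) combined with Proposition~\ref{C8S8.2Sub8.2.3P8.2.6}. Your write-up is in fact more detailed than the paper's, since you explicitly check that the generators match via Proposition~\ref{C8S8.2Sub8.2.3P8.2.7} and that the group laws agree.
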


\begin{remark}
It is worthwhile to point out that this last result which we presented above is stated slightly different in \cite{TOROIDAL}. We proceed in this way because our approach is concerned to describe connections and curvature of line bundles and principal circle bundles, thus we use characteristic classes to describe $\mathscr{P}(X_{P},{\rm{U}}(1))$.
\end{remark}

\begin{remark}
Notice that, particularly, we have $\mathscr{P}(X_{P},{\rm{U}}(1)) \cong {\text{Pic}}(X_{P})$.
\end{remark}
In what follows we shall use the following notation:
\begin{equation}
\label{notationsimple}    
Q(\mu) := Q(L_{\chi_{\mu}}),
\end{equation}
for every $\mu \in \Lambda_{\mathbb{Z}_{\geq 0}}^{\ast}$.\ We also will denote by $\pi_{Q(\mu)}  \colon Q(\mu) \to X_{P}$ the associated projection map.

Our next task will be to compute $\mathrm{e}(Q(\omega_{\alpha})) \in H^{2}(X_{P},\mathbb{Z})$, $\forall \alpha \in \Sigma \backslash \Theta$. In order to do this, it will be important to consider Proposition \ref{C8S8.2Sub8.2.3P8.2.7} and the fact that $\mathrm{e}(Q(\omega_{\alpha})) = c_{1}(L_{\chi_{\omega_{\alpha}}})$, $\forall \alpha \in \Sigma \backslash \Theta$.

Consider an open cover $X_{P} = \bigcup_{i \in I} U_{i}$ which trivializes both $P \hookrightarrow G^{\mathbb{C}} \to X_{P}$ and $L_{\chi_{\omega_{\alpha}}} \to X_{P}$, such that $\alpha \in \Sigma \backslash \Theta$, and take a collection of local sections $(s_{i})_{i \in I}$, such that $s_{i} \colon U_{i} \subset X_{P} \to G^{\mathbb{C}}$. As we have seen, associated to this data we can define $q_{i} \colon U_{i} \to \mathbb{R}_{+}$ by setting
\begin{center}

$q_{i} =  {\mathrm{e}}^{-2\pi \varphi_{\omega_{\alpha}} \circ s_{i}} = \displaystyle \frac{1}{||s_{i}v_{\omega_{\alpha}}^{+}||^{2}},$

\end{center}
and from these functions we obtain a Hermitian structure $H$ on $L_{\chi_{\omega_{\alpha}}}$ by taking on each trivialization $f_{i} \colon L_{\chi_{\omega_{\alpha}}} \to U_{i} \times \mathbb{C}$ a Hermitian metric defined by
\begin{center}
$H((x,v),(x,w)) = q_{i}(x) v\overline{w},$
\end{center}
for $(x,v),(x,w) \in L_{\chi_{\omega_{\alpha}}}|_{U_{i}} \cong U_{i} \times \mathbb{C}$. Hence, for the pair $(L_{\chi_{\omega_{\alpha}}},H)$ we have the associated principal circle bundle

\begin{center}
    
$Q(\omega_{\alpha}) = \Big \{ u \in L_{\chi_{\omega_{\alpha}}} \ \Big | \ \sqrt{H(u,u)} = 1 \Big\}.$    
    
\end{center}
In terms of cocycles the principal circle bundle $Q(\omega_{\alpha})$ is determined by 

\begin{center}
    
$t_{ij} \colon U_{i} \cap U_{j} \to {\rm{U}}(1)$, \ \ $t_{ij} = \frac{g_{ij}}{||g_{ij}||},$
    
\end{center}
where $g_{ij} = \chi_{\omega_{\alpha}}^{-1} \circ \psi_{ij}$, see the proof of Proposition \ref{C8S8.2Sub8.2.3P8.2.7}. Therefore, if we take a local chart $h_{i} \colon \pi_{Q(\omega_{\alpha})}^{-1}(U_{i}) \subset Q(\omega_{\alpha}) \to U_{i} \times {\rm{U}}(1)$, on the transition $U_{i} \cap U_{j} \neq \emptyset$ we obtain
\begin{equation}
\label{transition}
(h_{i} \circ h_{j}^{-1})(x,a_{j}) = (x,a_{j}t_{ij}(x)) = (x,a_{i}),    
\end{equation}
thus we have $a_{i} = a_{j}t_{ij}$, on $U_{i} \cap U_{j} \neq \emptyset$. If we set 
\begin{equation}
\mathcal{A}_{i} = - \displaystyle \frac{1}{2}\big ( \partial - \overline{\partial} \big ) \log||s_{i} v_{\omega_{\alpha}}^{+}||^{2},    
\end{equation}
$\forall i \in I$, we obtain the following result:

\begin{proposition}
\label{locconnection}
The collection of local $\mathfrak{u}(1)$-valued 1-forms defined by 
\begin{equation}
\label{connection}    
\eta_{i}' = \pi_{Q(\omega_{\alpha})}^{\ast}\mathcal{A}_{i} + \frac{da_{i}}{a_{i}},
\end{equation}
where 
\begin{center}
$\mathcal{A}_{i} = - \displaystyle \frac{1}{2}\big ( \partial - \overline{\partial} \big ) \log||s_{i} v_{\omega_{\alpha}}^{+}||^{2},$
\end{center}
$\forall i \in I$, provides a connection $\eta'_{\alpha}$ on $Q(\omega_{\alpha})$ which satisfies $\eta'_{\alpha} = \eta_{i}'$ on $Q(\omega_{\alpha})|_{U_{i}}$, and
\begin{equation}
\label{eqconnections}
\displaystyle \frac{\sqrt{-1}}{2\pi}d\eta'_{\alpha} = \pi_{Q(\omega_{\alpha})}^{\ast} \Omega_{\alpha}.
\end{equation}
\end{proposition}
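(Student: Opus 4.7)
The plan is to prove two things separately: first, that the locally defined forms $\eta_i'$ agree on overlaps and hence assemble into a well-defined global $\mathfrak{u}(1)$-valued one-form $\eta_{\alpha}'$ on the total space $Q(\omega_{\alpha})$ which is genuinely a principal connection; second, that its exterior derivative satisfies the claimed curvature identity. Both parts reduce to direct computations using the formula $s_j = s_i \psi_{ij}$ for the transition of the chosen local sections of the principal $P$-bundle $G^{\mathbb{C}} \to X_P$, the transformation law $pv_{\omega_{\alpha}}^{+} = \chi_{\omega_{\alpha}}(p) v_{\omega_{\alpha}}^{+}$ for all $p \in P$, and the identity $\pi^{\ast}\Omega_{\alpha} = \sqrt{-1}\,\partial \overline{\partial} \varphi_{\omega_{\alpha}}$ from \eqref{fundclasses}, all of which are already available from Section~\ref{subsec3.1}.

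For the gluing step, I would first observe that on $U_i \cap U_j \neq \emptyset$ one has
$$
\log \|s_j v_{\omega_{\alpha}}^{+}\|^{2} \;=\; \log \|s_i v_{\omega_{\alpha}}^{+}\|^{2} \;+\; \log |\chi_{\omega_{\alpha}}(\psi_{ij})|^{2},
$$
so applying $-\tfrac{1}{2}(\partial - \overline{\partial})$ and using that $\chi_{\omega_{\alpha}}\circ \psi_{ij}$ is holomorphic yields
$$
\mathcal{A}_{j} - \mathcal{A}_{i} \;=\; -\tfrac{1}{2}\Bigg(\frac{d\chi_{\omega_{\alpha}}(\psi_{ij})}{\chi_{\omega_{\alpha}}(\psi_{ij})} - \frac{d\overline{\chi_{\omega_{\alpha}}(\psi_{ij})}}{\overline{\chi_{\omega_{\alpha}}(\psi_{ij})}}\Bigg).
$$
On the other hand, the transition cocycle of $Q(\omega_{\alpha})$ is $t_{ij} = g_{ij}/|g_{ij}|$ with $g_{ij} = \chi_{\omega_{\alpha}}^{-1}\circ \psi_{ij}$, so from $a_i = a_j t_{ij}$, together with the fact that $t_{ij}$ takes values in ${\rm{U}}(1)$, a direct computation gives $\tfrac{da_i}{a_i} - \tfrac{da_j}{a_j} = \tfrac{dt_{ij}}{t_{ij}} = -(\mathcal{A}_{j} - \mathcal{A}_{i})$ (pulled back to $Q(\omega_{\alpha})$). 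The pullback and the vertical differential terms therefore cancel out exactly, proving $\eta_{i}' = \eta_{j}'$ on the overlap. To conclude that the resulting global form $\eta_{\alpha}'$ is a connection, I would check ${\rm{U}}(1)$-equivariance and the Maurer-Cartan property on the fiber: the pullback piece $\pi^{\ast}\mathcal{A}_i$ is horizontal and ${\rm{U}}(1)$-invariant, while $\tfrac{da_i}{a_i}$ is the vertical Maurer-Cartan form, so both requirements are immediate.

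For the curvature identity, I would use that $d(\tfrac{da_i}{a_i}) = 0$ and that $\pi$ commutes with $d$, so locally $d\eta_{\alpha}' = \pi^{\ast} d\mathcal{A}_{i}$. The key identity $d(\partial - \overline{\partial}) = -2\partial \overline{\partial}$ then gives
$$
d\mathcal{A}_{i} \;=\; \partial \overline{\partial} \log \|s_i v_{\omega_{\alpha}}^{+}\|^{2}.
$$
Invoking \eqref{fundclasses}, which expresses $\pi^{\ast}\Omega_{\alpha}$ on $G^{\mathbb{C}}$ in terms of $\partial\overline{\partial}\log\|g v_{\omega_{\alpha}}^{+}\|^{2}$, after pulling back by $s_i$ one recognizes $d\mathcal{A}_{i} = -2\pi\sqrt{-1}\,\Omega_{\alpha}$ on $U_i$, and multiplying by $\tfrac{\sqrt{-1}}{2\pi}$ yields the stated equation $\tfrac{\sqrt{-1}}{2\pi}d\eta_{\alpha}' = \pi_{Q(\omega_{\alpha})}^{\ast}\Omega_{\alpha}$.

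The main obstacle I expect is bookkeeping rather than conceptual: one must be careful with sign conventions in $(\partial - \overline{\partial})$ versus $\partial\overline{\partial}$, with the direction of the cocycle $a_i = a_j t_{ij}$ relative to the Hermitian metric on $L_{\chi_{\omega_{\alpha}}}$, and with the fact that while $\log \chi_{\omega_{\alpha}}(\psi_{ij})$ may not exist globally, its logarithmic derivative $d\chi_{\omega_{\alpha}}(\psi_{ij})/\chi_{\omega_{\alpha}}(\psi_{ij})$ is globally well-defined on each connected component of the overlap. Once these conventions are fixed consistently (using the normalization already established in the proof of Proposition~\ref{C8S8.2Sub8.2.3P8.2.7}), the cancellation in the gluing step and the curvature computation proceed as outlined above.
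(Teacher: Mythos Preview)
Your approach is correct and is exactly the verification the paper has in mind: the paper does not give an explicit proof of this proposition, but the paragraph immediately preceding it sets up the cocycle $t_{ij}=g_{ij}/|g_{ij}|$, the transition rule $a_i=a_jt_{ij}$, and the relation $q_j=|g_{ij}|^2q_i$ from the proof of Proposition~\ref{C8S8.2Sub8.2.3P8.2.7}, precisely so that the reader can carry out the two checks you describe (agreement on overlaps, and the curvature identity via $d(\partial-\overline{\partial})=-2\partial\overline{\partial}$ together with \eqref{fundclasses}).

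One small correction in your bookkeeping: with $g_{ij}=\chi_{\omega_{\alpha}}^{-1}\circ\psi_{ij}$ and $t_{ij}=g_{ij}/|g_{ij}|=|\chi|/\chi$ (abbreviating $\chi=\chi_{\omega_{\alpha}}(\psi_{ij})$), one finds $\tfrac{dt_{ij}}{t_{ij}}=\tfrac{1}{2}\big(\tfrac{d\overline{\chi}}{\overline{\chi}}-\tfrac{d\chi}{\chi}\big)=+(\mathcal{A}_j-\mathcal{A}_i)$, not $-(\mathcal{A}_j-\mathcal{A}_i)$. This is precisely the sign needed so that $\eta_i'-\eta_j'=(\mathcal{A}_i-\mathcal{A}_j)+\tfrac{dt_{ij}}{t_{ij}}=0$, and you already anticipated such a sign adjustment in your final paragraph.
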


\begin{remark}
In what follows we will denote by $\mathcal{A} = (\mathcal{A}_{i})_{i \in I}$ the collection of (gauge) potentials obtained by the result above. We also will denote by $d\mathcal{A} \in \Omega^{1,1}(X_{P})$ the globally defined $(1,1)$-form associated to $\mathcal{A}$.
\end{remark}

The description provided by Proposition \ref{locconnection} will be fundamental for our next step to describe the contact structure of homogeneous contact manifolds.

\subsection{Examples}
\label{subsec3.3}
Let us illustrate the previous results, especially Proposition \ref{locconnection}, by means of basic examples.

\begin{example}[Hopf bundle]
\label{HOPFBUNDLE}
Consider $G^{\mathbb{C}} = {\rm{SL}}(2,\mathbb{C})$ and $P = B \subset {\rm{SL}}(2,\mathbb{C})$ as in Example \ref{exampleP1}. As we have seen, in this case we have

\begin{center}
    
$X_{B} = \mathbb{C}{\rm{P}}^{1},$ \ \ and \ \ $\mathscr{P}(\mathbb{C}{\rm{P}}^{1},{\rm{U}}(1)) = \mathbb{Z}\mathrm{e}(Q(\omega_{\alpha}))$,
    
\end{center}
where $Q(\omega_{\alpha}) = Q(\mathscr{O}(1))$. Since $K_{\mathbb{C}{\rm{P}}^{1}}^{\otimes \frac{1}{2}} = \mathscr{O}(-1)$, it follows that $Q(-\omega_{\alpha}) = S^{3}$. By considering the opposite big cell $U =  N^{-}x_{0} \subset X_{B}$ and the local section $s_{U} \colon U \subset \mathbb{C}{\rm{P}}^{1} \to {\rm{SL}}(2,\mathbb{C})$ defined by $s_{U}(nx_{0}) = n$, \ \ $\forall n \in N^{-}$, we obtain from Proposition \ref{locconnection} the following local expression
\begin{center}
$\mathcal{A}_{U} = \displaystyle \frac{1}{2}\big ( \partial - \overline{\partial} \big ) \log||s_{U} v_{\omega_{\alpha}^{+}}||^{2},$      
\end{center}
on the opposite big cell $U \subset \mathbb{C}{\rm{P}}^{1}$. Thus, we have

\begin{center}
    
$\mathcal{A}_{U} =  \displaystyle  \frac{1}{2} \big ( \partial - \overline{\partial} \big ) \log \Bigg ( \Big |\Big |\begin{pmatrix}
 1 & 0 \\
 z & 1
\end{pmatrix} e_{1} \Big| \Big|^{2} \Bigg ) = \displaystyle  - \frac{1}{2}\frac{zd\overline{z} - \overline{z}dz}{(1+|z|^{2})}.$    
    
\end{center}
Hence, we have a principal ${\rm{U}}(1)$-connection on $Q(-\omega_{\alpha}) = S^{3}$ (locally) defined by

\begin{center}

$\eta'_{\alpha} = \displaystyle  - \frac{1}{2}\frac{zd\overline{z} - \overline{z}dz}{(1+|z|^{2})} + \frac{da_{U}}{a_{U}}.$    
    
\end{center}
Therefore, we have 

\begin{center}
$\mathrm{e}(S^{3}) = \displaystyle \bigg [\frac{\sqrt{-1}}{2\pi}d\mathcal{A} \bigg ] \in H^{2}( \mathbb{C}{\rm{P}}^{1},\mathbb{Z}).$
\end{center}
It is worth mentioning that from the ideas above, given $Q \in \mathscr{P}(\mathbb{C}{\rm{P}}^{1},{\rm{U}}(1))$, it follows that $Q = Q(-\ell \omega_{\alpha})$, for some $\ell \in \mathbb{Z}$, thus we have

\begin{center}
    
$Q = S^{3}/\mathbb{Z}_{\ell}$ \ \  and \ \ $\mathrm{e}(Q) = \displaystyle \bigg [\frac{\ell \sqrt{-1}}{2\pi}d\mathcal{A} \bigg ] \in H^{2}( \mathbb{C}{\rm{P}}^{1},\mathbb{Z}).$
    
\end{center}
Thus, we obtain the Euler class of the principal circle bundle defined by $ Q(-\ell \omega_{\alpha}) = S^{3}/\mathbb{Z}_{\ell}$ (Lens space).
\end{example}

\begin{example}[Complex Hopf fibrations]
\label{COMPLEXHOPF}
The previous example can be easily generalized. Let us briefly describe how it can be done.

Consider the basic data as in Example \ref{examplePn}, namely, the complex simple Lie group $G^{\mathbb{C}} = {\rm{SL}}(n+1,\mathbb{C})$ and the parabolic Lie subgroup $P = P_{\Sigma \backslash \{\alpha_{1}\}}$. As we have seen, in this case we have

\begin{center}
    
$X_{P_{\Sigma \backslash \{\alpha_{1}\}}} = \mathbb{C}{\rm{P}}^{n}$ \ \  and  \ \ $\mathscr{P}(\mathbb{C}{\rm{P}}^{n},{\rm{U}}(1)) = \mathbb{Z}\mathrm{e}(Q(\omega_{\alpha_{1}}))$,
    
\end{center}
where $Q(\omega_{\alpha_{1}}) = Q(\mathscr{O}(1))$. Since $K_{\mathbb{C}{\rm{P}}^{n}}^{\otimes \frac{1}{n+1}} = \mathscr{O}(-1)$, it follows that $Q(-\omega_{\alpha_{1}}) = S^{2n+1}$. From Proposition \ref{locconnection} and a similar computation as in the previous example, we have

\begin{center}
    
$\mathcal{A}_{U} = \displaystyle \frac{1}{2} \big ( \partial - \overline{\partial} \big )\log \Big (1 + \sum_{l = 1}^{n}|z_{l}|^{2} \Big ),$
    
\end{center}
on the opposite big cell $U \subset \mathbb{C}{\rm{P}}^{n}$. Hence, we have a principal ${\rm{U}}(1)$-connection on $Q(-\omega_{\alpha_{1}}) = S^{2n+1}$ (locally) defined by

\begin{center}

$\eta'_{\alpha_{1}} = \displaystyle  - \frac{1}{2} \sum_{l = 1}^{n}\frac{z_{l}d\overline{z}_{l} - \overline{z}_{l}dz_{l}}{\big (1 + \sum_{l = 1}^{n}|z_{l}|^{2} \big )} + \frac{da_{U}}{a_{U}}.$    
    
\end{center}
Therefore, we obtain  

\begin{center}
$\mathrm{e}(S^{2n+1}) = \displaystyle \bigg [\frac{\sqrt{-1}}{2\pi}d\mathcal{A} \bigg ] \in H^{2}( \mathbb{C}{\rm{P}}^{n},\mathbb{Z}).$
\end{center}
It is worth pointing out that, given $Q \in \mathscr{P}(\mathbb{C}{\rm{P}}^{n},{\rm{U}}(1))$, it follows that $Q = Q(-\ell \omega_{\alpha_{1}})$, for some $\ell \in \mathbb{Z}$, thus we have

\begin{center}
    
$Q = S^{2n+1}/\mathbb{Z}_{\ell}$ \ \  and \ \ $\mathrm{e}(Q) = \displaystyle \bigg [\frac{\ell \sqrt{-1}}{2\pi}d\mathcal{A} \bigg ] \in H^{2}( \mathbb{C}{\rm{P}}^{n},\mathbb{Z}).$
    
\end{center}
Hence, we obtain the Euler class of the principal circle bundle defined by the Lens space $ Q(-\ell \omega_{\alpha_{1}}) = S^{2n+1}/\mathbb{Z}_{\ell}$.
\end{example}

\begin{example}[Stiefel manifold]
\label{STIEFEL}
Now, consider $G^{\mathbb{C}} = {\rm{SL}}(4,\mathbb{C}),$ and $P = P_{\Sigma \backslash \{\alpha_{2}\}}$ as in Example \ref{grassmanian}. In this case we have

\begin{center}
$X_{P_{\Sigma \backslash \{\alpha_{2}\}}} = {\rm{Gr}}(2,\mathbb{C}^{4}),$ \ \ and \ \ $\mathscr{P}({\rm{Gr}}(2,\mathbb{C}^{4}),{\rm{U}}(1)) = \mathbb{Z}\mathrm{e}(Q(\omega_{\alpha_{2}})),$
\end{center}
where $Q(\omega_{\alpha_{2}}) = Q(\mathscr{O}_{\alpha_{2}}(1))$. Since $K_{{\rm{Gr}}(2,\mathbb{C}^{4})}^{\otimes \frac{1}{4}} = \mathscr{O}_{\alpha_{2}}(-1)$, it follows that $Q(-\omega_{\alpha_{2}}) = \mathscr{V}_{2}(\mathbb{R}^{6})$ (Stiefel manifold). From Proposition \ref{locconnection} and the computations of Example \ref{grassmanian} we obtain

\begin{center}
    
$\mathcal{A}_{U} = \displaystyle \frac{1}{2}\big ( \partial - \overline{\partial} \big ) \log \Big ( 1 + \sum_{k = 1}^{4}|z_{k}|^{2} + \bigg |\det \begin{pmatrix}
 z_{1} & z_{3} \\
 z_{2} & z_{4}
\end{pmatrix} \bigg |^{2} \Big),$
    
\end{center}
on the opposite big cell $U \subset {\rm{Gr}}(2,\mathbb{C}^{4})$. Hence, we have a principal ${\rm{U}}(1)$-connection on $Q(-\omega_{\alpha_{2}}) = \mathscr{V}_{2}(\mathbb{R}^{6})$ (locally) defined by

\begin{center}

$\eta'_{\alpha_{2}} = \displaystyle \frac{1}{2}\big ( \partial - \overline{\partial} \big ) \log \Big ( 1 + \sum_{k = 1}^{4}|z_{k}|^{2} + \bigg |\det \begin{pmatrix}
 z_{1} & z_{3} \\
 z_{2} & z_{4}
\end{pmatrix} \bigg |^{2} \Big) + \frac{da_{U}}{a_{U}}.$    
    
\end{center}
Thus, we obtain 

\begin{center}
$\mathrm{e}(\mathscr{V}_{2}(\mathbb{R}^{6})) = \displaystyle \bigg [\frac{\sqrt{-1}}{2\pi}d\mathcal{A} \bigg ] \in H^{2}(  {\rm{Gr}}(2,\mathbb{C}^{4}),\mathbb{Z}).$
\end{center}
Notice that, given $Q \in \mathscr{P}({\rm{Gr}}(2,\mathbb{C}^{4}),{\rm{U}}(1))$, it follows that $Q = Q(-\ell \omega_{\alpha_{2}})$, for some $\ell \in \mathbb{Z}$. Therefore, we have

\begin{center}
    
$Q =\mathscr{V}_{2}(\mathbb{R}^{6})/\mathbb{Z}_{\ell}$ \ \  and \ \ $\mathrm{e}(Q) = \displaystyle \bigg [\frac{\ell \sqrt{-1}}{2\pi}d\mathcal{A} \bigg ] \in H^{2}( {\rm{Gr}}(2,\mathbb{C}^{4}),\mathbb{Z}).$
    
\end{center}
Hence, we obtain the Euler class of the principal circle bundle defined by $ Q(-\ell \omega_{\alpha}) =\mathscr{V}_{2}(\mathbb{R}^{6})/\mathbb{Z}_{\ell}$.
\end{example}

Let us explain how the examples above fit inside of a more general setting. Let $G^{\mathbb{C}}$ be a complex simply connected simple Lie group, and consider $P \subset G^{\mathbb{C}}$ as being a parabolic Lie subgroup. If we suppose that $P = P_{\Sigma \backslash \{\alpha\}}$, i.e., $P$ is a maximal parabolic Lie subgroup, then we have 

\begin{center}
    
$\mathscr{P}(X_{ P_{\Sigma \backslash \{\alpha\}}},{\rm{U}}(1)) = \mathbb{Z}{\mathrm{e}}(Q(\omega_{\alpha})).$
    
\end{center}
In order to simplify the notation, let us denote $P_{\Sigma \backslash \{\alpha\}}$ by $P_{\omega_{\alpha}}$. A straightforward computation shows that 
\begin{equation}
\label{propertiesbasic}
I(X_{P_{\omega_{\alpha}}}) = \langle \delta_{P_{\omega_{\alpha}}},h_{\alpha}^{\vee} \rangle, \ \ {\text{and}} \ \ K_{X_{P_{\omega_{\alpha}}}}^{ \otimes \frac{1}{ \langle \delta_{P_{\omega_{\alpha}}},h_{\alpha}^{\vee} \rangle}} = L_{\chi_{\omega_{\alpha}}}^{-1},
\end{equation}
thus we have
\begin{equation}
\label{maxparaboliccase}
Q(K_{X_{P_{\omega_{\alpha}}}}^{ \otimes \frac{1}{ \langle \delta_{P_{\omega_{\alpha}}},h_{\alpha}^{\vee} \rangle}}) = Q(-\omega_{\alpha}).    
\end{equation}
Now, consider the following definition. 
\begin{definition}[\cite{MINUSCULE}, \cite{SARA}]
A fundamental weight $\omega_{\alpha}$ is called minuscule if it satisfies the condition

\begin{center}

$\langle \omega_{\alpha},h_{\beta}^{\vee} \rangle \in \{0,1\}, \ \forall \beta \in \Pi^{+}.$
    
\end{center}
A flag manifold $X_{P_{\omega_{\alpha}}}$ associated to a maximal parabolic Lie subgroup $P_{\omega_{\alpha}}$ is called minuscule flag manifold if $\omega_{\alpha}$ is a minuscule weight.
\end{definition}

\begin{remark}
\label{exampleminuscle}
The flag manifolds of the previous examples are particular cases of flag manifolds defined by maximal parabolic Lie subgroups. 
\end{remark}

\section{Basic model and proof of the main results}
\label{sec4}

In this section we shall prove Theorem \ref{Theo1} and Theorem \ref{Theo2}. In order to do so, we employ the results developed in the previous sections.

\subsection{Basic model}
\label{basiccase}
As mentioned above, in this section we will prove some of the main results of this work. In order to motivate the ideas involved in our proofs, let us start by recalling some basic facts.

As we have seen, given a compact homogeneous contact manifold $(M,\eta,G)$, we have that $M = Q(L),$ for some ample line bundle $L^{-1} \in {\text{Pic}}(X_{P})$, see Theorem \ref{BWhomo}. Further, under the assumption that $c_{1}(L^{-1})$ defines a K\"{a}hler-Einstein metric on $X_{P} = G^{\mathbb{C}}/P$, we have

\begin{center}
$L = K_{X_{P}}^{\otimes \frac{\ell}{I(X_{P})}}.$
\end{center}

The examples of compact homogeneous contact manifolds associated to flag manifolds defined by maximal parabolic Lie subgroups will be useful for us in the next subsections.  In what follows we will further explore these particular examples. As we have seen, from \ref{maxparaboliccase}, if $P = P_{\omega_{\alpha}}$ it follows that 

\begin{center}
    
$M = Q(- \ell \omega_{\alpha}) = Q(- \omega_{\alpha})/\mathbb{Z}_{\ell},$
    
\end{center}
for some $\ell \in \mathbb{Z}_{>0}$. Hence, from Proposition \ref{locconnection} we have a connection $\eta'_{\alpha}$ defined on $Q(- \ell \omega_{\alpha})$ by

\begin{center}
$\eta_{\alpha}' =  \displaystyle \frac{\ell}{2}\big ( \partial - \overline{\partial} \big ) \log||s_{i} v_{\omega_{\alpha}}^{+}||^{2}+ \frac{da_{i}}{a_{i}}.$
\end{center}
Thus, a contact structure on $M = Q(-\ell \omega_{\alpha})$ is obtained from $\eta = - \sqrt{-1} \eta'_{\alpha}$. If we consider $a_{i} = \mathrm{e}^{\sqrt{-1}\theta_{i}}$, where $\theta_{i}$ is real, and is defined up to an integral multiple of $2 \pi$, we have that 

\begin{center}

$\eta = \displaystyle - \frac{\ell \sqrt{-1}}{2}\big ( \partial - \overline{\partial} \big ) \log||s_{i} v_{\omega_{\alpha}}^{+}||^{2} + d\theta_{i},$    
    
\end{center}
it is not difficult to check that $d\eta = 2\pi \ell \pi^{\ast}\Omega_{\alpha}.$    
    
This particular case turns out to be the basic model for all the cases which we have described in the examples of the previous sections. As we will see, the ideas developed above are essentially the model for the general case of circle bundles over complex flag manifolds. In the next sections we will come back to this basic example in order to illustrate some constructions.

\subsection{Proof of main results}

In order to prove our main result we start with a fundamental theorem which gathers together some important features of circle bundles over complex flag manifolds.

\begin{theorem}
\label{fundamentaltheorem}
Let $X_{P} = G^{\mathbb{C}}/P$ be a complex flag manifold associated to some parabolic Lie subgroup $P = P_{\Theta}$ of a complex simple Lie group $G^{\mathbb{C}}$. Then, given a principal $S^{1}$-bundle $Q \in \mathscr{P}(X_{P},{\rm{U}}(1))$, we have that

\begin{enumerate}

\item $Q = \displaystyle \sum_{\alpha \in \Sigma \backslash \Theta} Q(\ell_{\alpha}\omega_{\alpha})$, such that $\ell_{\alpha} \in \mathbb{Z}$, $\forall \alpha \in \Sigma \backslash \Theta$.

\item The manifold defined by the total space $Q$ admits a normal almost contact structure $(\phi,\xi = \frac{\partial}{\partial \theta},\eta)$, such that
\begin{equation}
\label{localalmostcontact}
\eta = \displaystyle \frac{\sqrt{-1}}{2}\big ( \partial - \overline{\partial} \big ) \log \Big ( \prod_{\alpha \in \Sigma \backslash \Theta} ||s_{U}v_{\omega_{\alpha}}^{+}||^{2  \ell_{\alpha}}\Big) + d\theta_{U},
\end{equation}
where $s_{U} \colon U \subset X_{P} \to G^{\mathbb{C}}$ is a local section, and $v_{\omega_{\alpha}}^{+}$ is the highest weight vector with weight $\omega_{\alpha}$ associated to the fundamental irreducible $\mathfrak{g}^{\mathbb{C}}$-module $V(\omega_{\alpha})$, $\forall \alpha \in \Sigma \backslash \Theta$. Moreover, $\phi \in {\text{End}}(TQ)$ is completely determined by the horizontal lift of $\sqrt{-1}\eta$ and the canonical invariant complex structure $J_{0}$ of $X_{P}$. Furthermore, it satisfies $\pi_{\ast} \circ \phi = J_{0} \circ \pi_{\ast}$.

\item We have a Riemannian metric $g_{Q}$ on $Q$ such that 
\begin{equation}
g_{Q} = \pi^{\ast} \big (\omega_{X_{P}}({\rm{id}} \otimes J_{0}) \big ) + \eta \otimes \eta, \ \ {\text{and}} \ \ \mathscr{L}_{\xi}g_{Q} = 0, 
\end{equation}
where $\omega_{X_{P}}$ is an invariant K\"{a}hler form on $X_{P}$. 

\end{enumerate}

\end{theorem}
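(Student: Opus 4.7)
The plan is to deduce each of the three items essentially as a synthesis of the results catalogued in Sections 2 and 3, with the only genuine work being to track the explicit form of the connection under the additive group structure on $\mathscr{P}(X_P,{\rm U}(1))$.

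For part (1), I would invoke Theorem \ref{circlegroup}, which gives $\mathscr{P}(X_P,{\rm U}(1))=\bigoplus_{\alpha\in\Sigma\setminus\Theta}\mathbb{Z}\,\mathrm{e}(Q(\omega_\alpha))$, combined with the group law recalled in Remark \ref{productcircle}, namely $Q_1+Q_2=Q(L(Q_1)\otimes L(Q_2))$. Thus any $Q\in\mathscr{P}(X_P,{\rm U}(1))$ is of the form $\sum_{\alpha\in\Sigma\setminus\Theta}Q(\ell_\alpha\omega_\alpha)$ with $\ell_\alpha\in\mathbb{Z}$, and the associated line bundle is $L(Q)=\bigotimes_{\alpha}L_{\chi_{\omega_\alpha}}^{\otimes\ell_\alpha}=L_{\chi_\mu}$ for $\mu=\sum_\alpha \ell_\alpha\omega_\alpha$.

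For part (2), I would apply Theorem \ref{almostcircle}: once we exhibit a principal $S^1$-connection on $Q$ whose curvature is the pullback of an invariant $(1,1)$-form on $X_P$, the normal almost contact structure $(\phi,\xi,\eta)$ and the identity $\pi_\ast\circ\phi=J_0\circ\pi_\ast$ follow automatically. To write the connection explicitly, I would observe that the Hermitian metric \eqref{hermitian} on $L_{\chi_{\omega_\alpha}}$ is multiplicative under tensor products (highest weight vectors of Cartan products are tensor products of highest weight vectors, so $||s_U v_\mu^+||^2$ factors as a product $\prod_\alpha||s_Uv^+_{\omega_\alpha}||^{2\ell_\alpha}$ up to a rescaling that can be absorbed in the $G$-invariant inner product $\langle\cdot,\cdot\rangle_\mu$). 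Using Proposition \ref{locconnection} for each summand and adding the resulting local $\mathfrak{u}(1)$-connections, I obtain
\begin{equation*}
\eta'_i=-\tfrac{1}{2}(\partial-\overline{\partial})\log\Big(\prod_{\alpha\in\Sigma\setminus\Theta}||s_iv^+_{\omega_\alpha}||^{2\ell_\alpha}\Big)+\frac{da_i}{a_i}
\end{equation*}
on $Q|_{U_i}$; setting $a_i=\mathrm{e}^{\sqrt{-1}\theta_i}$ and taking $\eta=-\sqrt{-1}\eta'$ yields the stated formula \eqref{localalmostcontactform}. The curvature is of type $(1,1)$ because $d[(\partial-\overline{\partial})f]=2\partial\overline{\partial}f$ for a smooth $f$, and it descends to a multiple of the $G$-invariant class $\sum_\alpha\ell_\alpha[\Omega_\alpha]$ by Proposition \ref{C8S8.2Sub8.2.3P8.2.7}; this verifies the hypothesis of Theorem \ref{almostcircle} and closes part (2).

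For part (3), given that $X_P$ is K\"ahler with invariant metric $g_{X_P}=\omega_{X_P}({\rm id}\otimes J_0)$ by Theorem \ref{AZADBISWAS}, I would directly apply Theorem \ref{metrichatakeyma} to the principal circle bundle $\pi\colon Q\to X_P$ endowed with the connection $\sqrt{-1}\eta$ constructed in part (2). This produces the Riemannian metric $g_Q=\pi^\ast g_{X_P}+\eta\otimes\eta$ together with $\mathscr{L}_\xi g_Q=0$, exactly as asserted. The sole nontrivial point in the whole argument is the local factorization of the Hermitian metric on $L_{\chi_\mu}$ into the product of the metrics on $L_{\chi_{\omega_\alpha}}$, which is where the additive structure on $\mathscr{P}(X_P,{\rm U}(1))$ in part (1) meets the multiplicative structure on Picard; this is the step I expect to require the most care but which is handled cleanly by a proper choice of $G$-invariant inner products on the $V(\omega_\alpha)$ (cf.\ Remark \ref{innerproduct}).
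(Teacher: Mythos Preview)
Your proposal is correct and follows essentially the same route as the paper: item (1) via Theorem \ref{circlegroup}, item (2) by summing the local connections from Proposition \ref{locconnection} and then invoking Theorem \ref{almostcircle} once the curvature is seen to be of type $(1,1)$, and item (3) directly from Theorem \ref{metrichatakeyma}. The only minor difference is that you frame the key step as a factorization of the Hermitian metric on $L_{\chi_\mu}$ via the Cartan product, whereas the paper simply adds the local $\mathfrak{u}(1)$-potentials coming from each summand $Q(\ell_\alpha\omega_\alpha)$; these are two phrasings of the same computation and yield the identical formula for $\eta$.
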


\begin{proof}
The proof for each fact above goes as follows. From Theorem \ref{circlegroup}, up to isomorphism, we can write $Q$ 
\begin{center}
$Q = \displaystyle \sum_{\alpha \in \Sigma \backslash \Theta} Q(\ell_{\alpha}\omega_{\alpha})$, 
\end{center}
such that $\ell_{\alpha} \in \mathbb{Z}$, $\forall \alpha \in \Sigma \backslash \Theta$. Thus, we have item $(1)$. Now, we can apply Proposition \ref{locconnection} and obtain a connection on $Q$ given by
\begin{center}
$\eta^{Q} = \displaystyle  \sum_{\alpha \in \Sigma \backslash \Theta} \Bigg \{- \frac{\ell_{\alpha}}{2}\big ( \partial - \overline{\partial} \big ) \log \Big ( ||s_{U}v_{\omega_{\alpha}}^{+}||^{2}\Big) + \frac{da_{U}^{\alpha}}{a_{U}^{\alpha}} \Bigg \}$,
\end{center}
note that $\mathrm{e}(Q) = \sum \ell_{\alpha}\big [\Omega_{\alpha} \big ]$. By rearranging the expression above, we have
\begin{center}
$\eta^{Q} = -\displaystyle \frac{1}{2}\big ( \partial - \overline{\partial} \big ) \log \Big (  \prod_{\alpha \in \Sigma \backslash \Theta} ||s_{U}v_{\omega_{\alpha}}^{+}||^{2\ell_{\alpha} }\Big) + \sqrt{-1}d\theta_{U}$.
\end{center}
Hence, from $\eta = - \sqrt{-1} \eta^{Q}$ we obtain the expression in Equation \ref{localalmostcontact}. Since

\begin{center}

$d\eta = - \displaystyle  \sum_{\alpha \in \Sigma \backslash \Theta} 2 \pi \ell_{\alpha}\pi^{\ast}\Omega_{\alpha}$,

\end{center}
it follows from \ref{fundclasses} that $d\eta$ is the pullback of a $(1,1)$-form on $X_{P}$. Therefore, from Theorem \ref{metrichatakeyma} and Theorem \ref{almostcircle} we obtain item $(2)$ and item $(3)$.
\end{proof}
\begin{remark}
\label{classification}
Notice that Theorem \ref{fundamentaltheorem} provides a concrete generalization for the result introduced in \cite{MORIMOTO} which states that every compact simply connected homogeneous contact manifold admits a normal almost contact structure. Moreover, we have that Theorem \ref{fundamentaltheorem} provides a classification for such structures. In fact, by considering the left action of the compact real form $G \subset G^{\mathbb{C}}$ on $X_{P}$, it follows that
\begin{center}
$H_{G}^{\bullet}(X_{P},\mathbb{R}) \cong H_{DR}^{\bullet}(X_{P},\mathbb{R}),$
\end{center}
see for instance \cite[Theorem 1.28]{Algmodels}. Therefore, given $Q \in \mathscr{P}(X_{P},{\rm{U}}(1))$, and a connection $1$-form $\sqrt{-1} \eta_{0} \in \Omega^{1}(Q,\sqrt{-1}\mathbb{R})$, we can suppose that $d\eta_{0} = \pi^{\ast} \omega_{0}$, such that $\omega_{0} \in \Omega^{2}(X_{P})^{G}$. It follows from the uniqueness of $\omega_{0}$ as $G$-invariant representative that 
\begin{center}
$\eta_{0} = \eta + \pi^{\ast}\lambda,$
\end{center}
where $\eta \in \Omega^{1}(Q)$ is given by \ref{localalmostcontact}, and $\lambda \in \Omega^{1}(X_{P})$ satisfies $d\lambda = 0$. Since $\pi_{1}(X_{P}) = \{0\}$, it follows that $\lambda = df$, for some $f \in C^{\infty}(X_{P})$. From this, we obtain a gauge transformation
\begin{center}
$g_{f} \colon Q \to Q$, \ \ \ $u \mapsto u \cdot \mathrm{e}^{\sqrt{-1}f(\pi(u))},$
\end{center}
which satisfies $\sqrt {-1}\eta_{0} = g_{f}^{\ast} (\sqrt{-1}\eta)$. Thus, up to gauge transformations, the normal almost contact strucrures provided by Theorem \ref{fundamentaltheorem} are unique in the homogeneous setting.
\end{remark}

Now, by following  \cite{MORIMOTO}, \cite{Manjarin}, and by using Theorem \ref{fundamentaltheorem}, we can prove the following theorem.

\begin{theorem}
\label{main2}
Let $X_{P_{i}}$ be a complex flag manifold associated to some parabolic Lie subgroup $P_{i} \subset G_{i}^{\mathbb{C}}$, such that $i = 1,2$. Then, given principal $S^{1}$-bundles $Q_{1} \in \mathscr{P}(X_{P_{1}},{\rm{U}}(1))$ and $Q_{2} \in \mathscr{P}(X_{P_{2}},{\rm{U}}(1))$, we have the following results:

\begin{enumerate}

\item There exists a $1$-parametric family of complex structures $J_{\tau} \in {\text{End}}(T(Q_{1} \times Q_{2}))$ determined by $\mathcal{J}_{1} \oplus \mathcal{J}_{2}$, and by a complex valued $1$-form $\Psi_{\tau} \in \Omega^{1}(Q_{1} \times Q_{2}) \otimes \mathbb{C}$, which satisfies $J_{\tau}(\Psi_{\tau}) = \sqrt{-1} \Psi_{\tau}$, defined by
$$\Psi_{\tau} = \displaystyle{\frac{\sqrt{-1}}{2{\text{Im}}(\tau)} \Bigg \{\overline{\tau} \Bigg [d^{c}\log \Bigg ( \frac{1}{\displaystyle{\prod_{\alpha \in \Sigma_{1} \backslash \Theta_{1}} ||s_{U_{1}}v_{\omega_{\alpha}}^{+}||^{\ell_{\alpha}}}}\Bigg ) + d\theta_{U_{1}} \Bigg ] + d^{c}\log \Bigg ( \frac{1}{\displaystyle{\prod_{\beta \in \Sigma_{2} \backslash \Theta_{2}} ||s_{U_{2}}w_{\omega_{\beta}}^{+}||^{\ell_{\beta}}}}\Bigg ) + d\theta_{U_{2}} \Bigg \}},$$
for some local section $s_{U_{i}} \colon U_{i} \subset X_{P_{i}} \to G_{i}^{\mathbb{C}}$, $i = 1,2$, such that $\tau \in \mathbb{C} \backslash \mathbb{R}$. 
\item Particularly, if $\tau = \sqrt{-1}$, we have Morimoto's complex structure  $J_{\sqrt{-1}} \in {\text{End}}(T(Q_{1} \times Q_{2}))$, i.e.,
\begin{equation}
J_{\sqrt{-1}}(X,Y) = \big (\phi_{1}(X) - \eta_{2}(Y)\xi_{1}, \phi_{2}(Y) + \eta_{1}(X)\xi_{2} \big ),
\end{equation}
for all $(X,Y) \in T(Q_{1} \times Q_{2})$, where $(\phi_{i},\xi_{i},\eta_{i})$ is a normal almost contact structure on $Q_{i}$, $i =1,2$, as in Theorem \ref{fundamentaltheorem}. Moreover, by considering the Riemannian metric $g_{Q_{1}} \times g_{Q_{2}}$, with $g_{Q_{i}}$ as in Theorem \ref{fundamentaltheorem}, we obtain a Hermitian non-K\"{a}hler structure $(J_{\sqrt{-1}},g_{Q_{1}} \times g_{Q_{2}})$ on $Q_{1} \times Q_{2}$, with fundamental form $\Omega$ given by 
\begin{equation}
\Omega = \pi_{1}^{\ast} \omega_{X_{P_{1}}} + \pi_{2}^{\ast} \omega_{X_{P_{2}}} + \eta_{1} \wedge \eta_{2},
\end{equation}
where $\pi_{i} \colon Q_{i} \to X_{P_{i}}$, and $\omega_{X_{P_{i}}}$ is an invariant K\"{a}hler metric on $X_{P_{i}}$, $i = 1,2$. Furthermore, regarding the complex structure $J_{\sqrt{-1}} \in {\text{End}}(T(Q_{1} \times Q_{2}))$ described above, we have that the natural projection map

\begin{center}

$\pi_{1} \times \pi_{2} \colon (Q_{1}\times Q_{2},J_{\sqrt{-1}}) \to (X_{P_{1}}\times X_{P_{2}}, J_{1} \times J_{2})$,

\end{center}
is holomorphic, where $J_{i}$ is the canonical invariant complex structure on $X_{P_{i}}$ induced from $G_{i}^{\mathbb{C}}$, for $i = 1,2$.
\end{enumerate}
\end{theorem}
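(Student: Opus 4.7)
The plan is to reduce the theorem to the combination of Theorem~\ref{fundamentaltheorem} (giving the explicit normal almost contact data on each factor) with Proposition~\ref{Manjarincomplex} and its concrete description in Remark~\ref{complexform}, and then specialize at $\tau = \sqrt{-1}$ using Remark~\ref{matrixcomplex}.

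First I would apply Theorem~\ref{fundamentaltheorem} to each bundle $Q_{i} \to X_{P_{i}}$ to obtain normal almost contact structures $(\phi_{i},\xi_{i},\eta_{i})$ with $\eta_{i}$ given locally by \eqref{localalmostcontact}. Rewriting the occurring $\frac{\sqrt{-1}}{2}(\partial - \overline{\partial})f$ as $-\frac{1}{2}d^{c} f$ (with $d^{c} = \sqrt{-1}(\overline{\partial}-\partial)$), each $\eta_{i}$ takes the form
$$\eta_{i} = d^{c}\log\Bigg(\frac{1}{\prod_{\alpha \in \Sigma_{i}\backslash \Theta_{i}} \|s_{U_{i}}v_{\omega_{\alpha}}^{+}\|^{\ell_{\alpha}}}\Bigg) + d\theta_{U_{i}}.$$
Since the $(\phi_{i},\xi_{i},\eta_{i})$ are normal, Proposition~\ref{Manjarincomplex} produces a $1$-parametric family $J_{\tau}$ of integrable complex structures on $Q_{1}\times Q_{2}$ for $\tau \in \mathbb{C}\setminus\mathbb{R}$. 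Remark~\ref{complexform} characterizes $J_{\tau}$ as the unique extension of the transverse complex structure $\mathcal{J}_{1}\oplus \mathcal{J}_{2}$ on $\mathscr{D}_{1}\oplus \mathscr{D}_{2}$ for which
$$\Psi_{\tau} = \frac{\sqrt{-1}}{2\,\text{Im}(\tau)}\big(\overline{\tau}\,\eta_{1} + \eta_{2}\big)$$
is of type $(1,0)$, and substituting the explicit expressions above gives exactly the formula in item~(1).

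For item~(2), Remark~\ref{matrixcomplex} shows that the specialization $\tau = \sqrt{-1}$ of Manjarín's construction coincides with Morimoto's almost complex structure \eqref{almostcomplexstruct}; I would use this identification to write $J_{\sqrt{-1}}$ in the stated form. Next, combining Theorem~\ref{metrichatakeyma} (which gives $g_{Q_{i}} = \pi_{i}^{\ast}g_{X_{P_{i}}} + \eta_{i}\otimes \eta_{i}$ compatible with $(\phi_{i},\xi_{i},\eta_{i})$) with Remark~\ref{fundamentalremark} yields that $g = g_{Q_{1}}\times g_{Q_{2}}$ is compatible with $J_{\sqrt{-1}}$ and that its fundamental form is precisely $\Omega = \pi_{1}^{\ast}\omega_{X_{P_{1}}} + \pi_{2}^{\ast}\omega_{X_{P_{2}}} + \eta_{1}\wedge \eta_{2}$. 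The non-K\"ahler property is immediate from $d\Omega = d\eta_{1}\wedge \eta_{2} - \eta_{1}\wedge d\eta_{2}$, which is nonzero whenever at least one $Q_{i}$ is topologically nontrivial. Finally, holomorphicity of $\pi_{1}\times \pi_{2}$ follows directly from property~(3) of Theorem~\ref{metrichatakeyma}, namely $(\pi_{i})_{\ast}\circ \phi_{i} = J_{i}\circ (\pi_{i})_{\ast}$ and $(\pi_{i})_{\ast}\xi_{i} = 0$: applying $(\pi_{1}\times \pi_{2})_{\ast}$ to the formula \eqref{cplxstructure} kills the $\eta_{j}(\cdot)\xi_{i}$ corrections and produces $(J_{1}\times J_{2})\circ (\pi_{1}\times \pi_{2})_{\ast}$.

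The main subtlety I anticipate is essentially bookkeeping rather than conceptual: keeping track of the sign conventions relating the $\frac{\sqrt{-1}}{2}(\partial - \overline{\partial})$ presentation of the connection forms in Theorem~\ref{fundamentaltheorem} to the $d^{c}$ notation of the statement, and verifying that the complex-valued form $\Psi_{\tau}$ is really of bidegree $(1,0)$ for the extension produced by Remark~\ref{complexform} (i.e.\ that it pairs trivially with $\overline{v_{\tau}} = \xi_{1} - \overline{\tau}\xi_{2}$ and to $1$ with $v_{\tau} = \xi_{1} - \tau\xi_{2}$), which underlies the integrability step already carried out in \cite{Manjarin}.
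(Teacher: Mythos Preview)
Your proposal is correct and follows essentially the same route as the paper: apply Theorem~\ref{fundamentaltheorem} to each factor to get the explicit normal almost contact data, feed this into Proposition~\ref{Manjarincomplex} and Remark~\ref{complexform} to obtain $J_{\tau}$ and the explicit $\Psi_{\tau}$, then specialize via Remark~\ref{matrixcomplex} and use the metric description from Theorem~\ref{fundamentaltheorem} (equivalently Theorem~\ref{metrichatakeyma}) together with Remark~\ref{fundamentalremark} for the fundamental form and the holomorphicity of the projection. Your extra care in noting that $d\Omega \neq 0$ requires at least one $Q_{i}$ to be nontrivial is a point the paper's proof glosses over.
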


\begin{proof}
The proof follows from an application of some results which we have described previously. We first note that, for each $Q_{i} \in \mathscr{P}(X_{P_{i}},{\rm{U}}(1))$, by applying the previous Theorem \ref{fundamentaltheorem}, we can assign a normal almost contact structure $(\phi_{i},\xi_{i},\eta_{i})$, $i = 1,2$. From these normal almost contact structures we obtain $\rm{CR}$-structures which in turn are normal almost contact structures $(T_{i},\mathscr{D}_{i},\eta_{i})$, $i =1,2$, in the sense of Definition \ref{normalCR}. 

Now, by applying Proposition \ref{Manjarincomplex}, we obtain a 1-parametric family of complex structures $J_{\tau} \in {\text{End}}(T(Q_{1} \times Q_{2}))$, $\tau \in \mathbb{C}\backslash \mathbb{R}$, defined from extensions of $\mathcal{J}_{1} \oplus \mathcal{J}_{2}$, which satisfy 
\begin{equation}
J_{\tau}(\Psi_{\tau}) = \sqrt{-1}\Psi_{\tau},
\end{equation}
where $\Psi_{\tau} \in \Omega^{1}(Q_{1} \times Q_{2}) \otimes \mathbb{C}$ is given by
\begin{equation}
\displaystyle \Psi_{\tau} = \frac{\sqrt{-1}}{2{\text{Im}}(\tau)} \big (\overline{\tau}\eta_{1} + \eta_{2} \big ),
\end{equation}
see Remark \ref{complexform} for more details about the comments above.

From Theorem \ref{main2} we can describe explicitly $\Psi_{\tau}$. In fact, if we denote $d^{c} := \sqrt{-1}(\overline{\partial} - \partial)$, we get from item (2) of Theorem \ref{fundamentaltheorem} that 
\begin{center}
$\displaystyle \eta_{1} = d^{c}\log \Bigg ( \frac{1}{\displaystyle{\prod_{\alpha \in \Sigma_{1} \backslash \Theta_{1}} ||s_{U_{1}}v_{\omega_{\alpha}}^{+}||^{\ell_{\alpha}}}}\Bigg ) + d\theta_{U_{1}}$, \ \ \ and \ \ \ $ \displaystyle \eta_{2} = d^{c}\log \Bigg ( \frac{1}{\displaystyle{\prod_{\beta \in \Sigma_{2} \backslash \Theta_{2}} ||s_{U_{2}}w_{\omega_{\beta}}^{+}||^{\ell_{\beta}}}}\Bigg ) + d\theta_{U_{2}},$
\end{center}
for some local section $s_{U_{i}} \colon U_{i} \subset X_{P_{i}} \to G_{i}^{\mathbb{C}}$, $i = 1,2$, here we denote, respectively, by $v_{\omega_{\alpha}}^{+}$, $\alpha \in \Sigma_{1}$, and $w_{\omega_{\beta}}^{+}$, $\beta \in \Sigma_{2}$, the highest-weight vectors associated to the fundamental irreducible representations of $G_{1}^{\mathbb{C}}$ and $G_{2}^{\mathbb{C}}$. From this, we conclude the proof of item (1).

For item (2), it is straightforward to verify that, if $\tau = \sqrt{-1}$, then
\begin{center}
$J_{\sqrt{-1}}(X,Y) = \big (\phi_{\mathcal{J}_{1}}(X) - \eta_{2}(Y)T_{1}, \phi_{\mathcal{J}_{2}}(Y) + \eta_{1}(X)T_{2} \big )$,
\end{center}
for all $(X,Y) \in T(Q_{1} \times Q_{2})$, where $(\phi_{i},\xi_{i} = T_{i},\eta_{i})$ is a normal almost contact structure on $Q_{i}$, $i =1,2$, see for instance Remark \ref{matrixcomplex}. Now, from item (3) of Theorem \ref{fundamentaltheorem}, we have 
\begin{center}

$g_{Q_{i}} = \pi_{i}^{\ast} \big (\omega_{X_{P_{i}}}({\rm{id}} \otimes J_{i}) \big ) + \eta_{i} \otimes \eta_{i}$, 

\end{center}
where $(\omega_{X_{P_{i}}},J_{i})$ is the invariant K\"{a}hler structure on $X_{P_{i}}$, $i = 1,2$, see Equation \ref{localform}. Therefore, by setting 

\begin{center}

$g((X,Y),(Z,W)) = g_{Q_{1}}(X,Z) + g_{Q_{2}}(Y,W),$

\end{center}
we obtain a Hermitian metric on $Q_{1} \times Q_{2}$. It is straightforward to verify that $\Omega = g(J \otimes {\rm{id}})$ is given by

\begin{center}

$\Omega = \pi_{1}^{\ast} \omega_{X_{P_{1}}} + \pi_{2}^{\ast} \omega_{X_{P_{2}}} + \eta_{1} \wedge \eta_{2}$.

\end{center}
Moreover, since $d\omega_{X_{P_{i}}} = 0$, we have $d\Omega = d\eta_{1} \wedge \eta_{2} - \eta_{1} \wedge d\eta_{2}$, which means that $\Omega$ is not closed. Thus, $(Q_{1} \times Q_{2}, \Omega, J)$ defines a Hermitian manifold which is non-K\"{a}hler. Now, since from item (2) of Theorem \ref{fundamentaltheorem} we have $(\pi_{i})_{\ast} \circ \phi_{i} = J_{i} \circ (\pi_{i})_{\ast}$, a straightforward computation shows that

\begin{center}

$(\pi_{1} \times \pi_{2})_{\ast} \circ J = (J_{1} \times J_{2}) \circ (\pi_{1} \times \pi_{2})_{\ast}.$

\end{center}
Thus, we have the desired result.
\end{proof}

\begin{remark}
Notice that in the setting above we have a natural induced principal $T^{2}$-bundle  

\begin{center}

$T^{2} \hookrightarrow Q_{1} \times Q_{2} \to X_{P_{1}} \times X_{P_{2}}$,

\end{center}
such that the action of $T^{2} = {\rm{U}}(1) \times {\rm{U}}(1)$ is the diagonal action on the product $Q_{1} \times Q_{2}$.
\end{remark}

\begin{remark}
As in Subsection \ref{subsec2.3}, consider the following principal ${\rm{U}}(1)$-bundles

\begin{center}

${\rm{U}}(1) \hookrightarrow Q(K_{X_{P_{1}}}^{\otimes \frac{\ell_{1}}{I(X_{P_{1}})}}) \to X_{P_{1}},$ \ \ and \ \ ${\rm{U}}(1) \hookrightarrow Q(K_{X_{P_{2}}}^{\otimes \frac{\ell_{2}}{I(X_{P_{2}})}}) \to X_{P_{2}},$

\end{center}
such that $\ell_{1},\ell_{2} \in \mathbb{Z}_{>0}$. A direct application of Theorem \ref{main2} provides that any product of Homogeneous contact manifolds as above can be endowed with a Hermitian non-K\"{a}hler structure. Moreover, since this last ideas also can be applied to any product of circle bundles associated to ample line bundles over flag manifolds, we recover the result \cite[p. 432]{MORIMOTO} for simply connected homogeneous contact manifolds, which implies the Calabi and Eckmann construction \cite{CALABIECKMANN} for product of two odd-dimensional spheres.

Another important fact to notice is the following. Since $H^{1}(X_{P_{1}},\mathbb{R}) = \{0\}$ and $c_{1}(K_{X_{P_{1}}}^{\otimes \frac{\ell_{1}}{I(X_{P_{1}})}}) \in H^{2}(X_{P_{1}},\mathbb{R})$ is non-zero, we have that  $Q(K_{X_{P_{1}}}^{\otimes \frac{\ell_{1}}{I(X_{P_{1}})}}) \times Q(K_{X_{P_{2}}}^{\otimes \frac{\ell_{2}}{I(X_{P_{2}})}})$ does not admit any symplectic structure, $ \forall \ell_{1},\ell_{2} \in \mathbb{Z}_{>0}$, see \cite[Theorem 2.13]{SANKARAN}. Thus, Theorem \ref{fundamentaltheorem} together with Theorem \ref{main2} allow us to explicitly describe Hermitian structures on a huge class of (compact) non-K\"{a}hler manifolds. These manifolds provide a concrete huge family of compact complex manifolds which are not algebraic.
\end{remark}

\subsection{Examples of Hermitian non-K\"{a}hler structures}
\label{sec5}
In this subsection we apply Theorem \ref{fundamentaltheorem} and Theorem \ref{main2} in concrete cases. We start with a basic case which covers an important class of flag manifolds, namely, the class of complex flag manifolds with Picard number one.

\begin{example}[Basic model] 
\label{examplebasic}
The first example which we will explore are given by principal $S^{1}$-bundles over flag manifolds defined by maximal parabolic Lie subgroups.

As in Subsection \ref{basiccase}, let $G^{\mathbb{C}}$ be a simply connected complex Lie group with simple Lie algebra, and let $X_{P_{\omega_{\alpha}}}$ be a complex flag manifold associated to some maximal parabolic Lie subgroup $P_{\omega_{\alpha}} \subset G^{\mathbb{C}}$. Since in this case we have $\mathscr{P}(X_{P_{\omega_{\alpha}}},{\rm{U}}(1)) = \mathbb{Z}{\mathrm{e}}(Q(\omega_{\alpha}))$, given $Q \in \mathscr{P}(X_{P_{\omega_{\alpha}}},{\rm{U}}(1))$, it follows that $Q = Q(\ell \omega_{\alpha})$. Moreover, from Theorem \ref{fundamentaltheorem} we have a normal almost contact structure $(\phi,\xi = \frac{\partial}{\partial \theta},\eta)$ on $Q$, such that
\begin{center}
$\eta = \displaystyle \frac{\ell}{2}d^{c} \log \Bigg ( \frac{1}{||s_{U}v_{\omega_{\alpha}}^{+}||^{2}}\Bigg) + d\theta_{U}$.
\end{center}
Therefore, given complex flag manifolds $X_{P_{1}}$ and $X_{P_{2}}$, such that $P_{1} = P_{\omega_{\alpha}} \subset G_{1}^{\mathbb{C}}$ and $P_{2} = P_{\widetilde{\omega}_{\beta}} \subset G_{2}^{\mathbb{C}}$, for every pair $(Q_{1},Q_{2})$, such that $Q_{i} \in \mathscr{P}(X_{P_{i}},{\rm{U}}(1))$, $i = 1,2$, from Theorem \ref{main2} we have a 1-parametric family of complex structures $J_{\tau} \in {\text{End}}(T(Q_{1} \times Q_{2}))$, $\tau \in \mathbb{C} \backslash \mathbb{R}$, determined by $J_{\tau}(\Psi_{\tau}) = \sqrt{-1}\Psi_{\tau}$, such that 
\begin{center}
$\displaystyle \Psi_{\tau} = \frac{\sqrt{-1}}{2{\text{Im}}(\tau)} \big (\overline{\tau}\eta_{1} + \eta_{2} \big ).$
\end{center}
Further, for the particular case $\tau = \sqrt{-1}$, we have the Hermitian non-K\"{a}hler structure $(J_{\sqrt{-1}},\Omega)$ on $Q_{1} \times Q_{2}$, such that

\begin{center}
$J_{\sqrt{-1}}(X,Y) = \big (\phi_{1}(X) - \eta_{2}(Y)\xi_{1}, \phi_{2}(Y) + \eta_{1}(X)\xi_{2}\big)$,
\end{center}
for all $(X,Y) \in T(Q_{1} \times Q_{2})$, where $(\phi_{i},\xi_{i} = \frac{\partial}{\partial \theta_{i}},\eta_{i})$, $i = 1,2$, is obtained from Theorem \ref{fundamentaltheorem}, such that
\begin{center}
$\eta_{1} = \displaystyle \frac{\ell_{1}}{2}d^{c} \log \Bigg ( \frac{1}{||s_{U_{1}}v_{\omega_{\alpha}}^{+}||^{2}}\Bigg) + d\theta_{U_{1}}$ \ \ and \ \ $\eta_{2} =  \displaystyle \frac{\ell_{2} }{2}d^{c} \log \Bigg ( \frac{1}{||s_{U_{2}}v_{\widetilde{\omega}_{\beta}}^{+}||^{2}}\Bigg) + d\theta_{U_{2}}$,
\end{center}
where $s_{U_{i}} \colon U_{i} \subset X_{P_{i}} \to G_{i}^{\mathbb{C}}$, $i = 1,2$, and the fundamental form $\Omega$ is given by

$\Omega = \displaystyle \frac{\sqrt{-1}}{2\pi} \Bigg \{\partial \overline{\partial}\log \Big ( ||s_{U_{1}}v_{\omega_{\alpha}}^{+}||^{2I(X_{P_{1}})}\Big) + \partial \overline{\partial} \log \Big ( ||s_{U_{2}}v_{\widetilde{\omega}_{\beta}}^{+}||^{2I(X_{P_{2}})}\Big)\Bigg\}$

$$ + \ \ \Bigg ( \displaystyle \frac{\ell_{1}}{2}d^{c} \log \Bigg ( \frac{1}{||s_{U_{1}}v_{\omega_{\alpha}}^{+}||^{2}}\Bigg) + d\theta_{U_{1}} \Bigg ) \wedge \Bigg (   \displaystyle \frac{\ell_{2} }{2}d^{c} \log \Bigg ( \frac{1}{||s_{U_{2}}v_{\widetilde{\omega}_{\beta}}^{+}||^{2}}\Bigg) + d\theta_{U_{2}}  \Bigg ),$$
where $I(X_{P_{1}}) = \langle \delta_{P_{\omega_{\alpha}}},h_{\alpha}^{\vee} \rangle$ and $I(X_{P_{2}}) = \langle \delta_{P_{\widetilde{\omega}_{\beta}}},\widetilde{h}_{\beta}^{\vee} \rangle$. Notice that in the expression above we consider the K\"{a}hler forms on $X_{P_{1}}$ and $X_{P_{2}}$ provided by the expression \ref{localform}. Hence, from Theorem \ref{main2} we have a Hermitian non-K\"{a}hler structure $(\Omega,J_{\sqrt{-1}})$ on $Q(\ell_{1} \omega_{\alpha}) \times Q(\ell_{2}\widetilde{\omega}_{\beta})$, $\forall \ell_{1},\ell_{2} \in \mathbb{Z}$, completely determined by elements of representation theory of $G_{1}^{\mathbb{C}}$ and $G_{2}^{\mathbb{C}}$. Notice also that in the setting above we have a natural induced principal $T^{2}$-bundle  

\begin{center}

$T^{2} \hookrightarrow Q_{1} \times Q_{2} \to X_{P_{1}} \times X_{P_{2}}$,

\end{center}
such that the action of $T^{2} = {\rm{U}}(1) \times {\rm{U}}(1)$ is the diagonal action on the product $Q_{1} \times Q_{2}$.

\end{example}

In the context of flag manifolds associated to maximal parabolic Lie subgroups $P_{\omega_{\alpha}} \subset G^{\mathbb{C}}$ we denote

\begin{center}

$\mathcal{Q}_{X_{P_{\omega_{\alpha}}}}(-\ell) = Q(K_{X_{P_{\omega_{\alpha}}}}^{\otimes \frac{\ell}{I(X_{P_{\omega_{\alpha}}})}}),$

\end{center}
for every $\ell \in \mathbb{Z}_{>0}$. In what follows, we provide some concrete application for the construction presented in Example \ref{examplebasic}.

\begin{example}[Herminitan structure on $\mathscr{V}_{2}(\mathbb{R}^{6}) \times S^{3}$]
\label{Grassmansphere}
In order to describe the associated 1-parametric family of complex structures and the Hermitian non-K\"{a}hler structure on $\mathscr{V}_{2}(\mathbb{R}^{6}) \times S^{3}$, provided by Theorem \ref{main2}, we notice that from Theorem \ref{fundamentaltheorem} we have contact structures associated to 

\begin{center}

${\rm{U}}(1) \hookrightarrow \mathscr{V}_{2}(\mathbb{R}^{6}) \to {\rm{Gr}}(2,\mathbb{C}^{4})$ \ \ and \ \ ${\rm{U}}(1) \hookrightarrow S^{3} \to \mathbb{C}{\rm{P}}^{1},$

\end{center}
respectively, given by 

\begin{center}

$\eta_{1} = \displaystyle \frac{1}{2}d^{c}\log \bigg ( 1 + \sum_{k = 1}^{4}|z_{k}|^{2} + \bigg |\det \begin{pmatrix}
 z_{1} & z_{3} \\
 z_{2} & z_{4}
\end{pmatrix} \bigg |^{2} \bigg) + d\theta_{U_{1}},$ \ \ and \ \ $\eta_{2} =  \displaystyle \frac{\overline{z}dz - zd\overline{z}}{2\sqrt{-1}(1 + |z|^{2})} + d\theta_{U_{2}},$

\end{center}
note that $\mathcal{Q}_{ {\rm{Gr}}(2,\mathbb{C}^{4})}(-1) = \mathscr{V}_{2}(\mathbb{R}^{6})$, and $\mathcal{Q}_{ \mathbb{C}{\rm{P}}^{1}}(-1) = S^{3}$. Therefore, from the normal almost contact structures $(\phi_{i},\xi_{i} = \frac{\partial}{\partial \theta_{i}},\eta_{i})$, $i = 1,2$, by means of Theorem \ref{main2} we can equip $\mathscr{V}_{2}(\mathbb{R}^{6}) \times S^{3}$ with a 1-parametric family of complex structures $J_{\tau} \in {\text{End}}(T(\mathscr{V}_{2}(\mathbb{R}^{6}) \times S^{3}))$, $\tau \in \mathbb{C} \backslash \mathbb{R}$, determined by $J_{\tau}(\Psi_{\tau}) = \sqrt{-1}\Psi_{\tau}$, such that 
\begin{center}
$\displaystyle \Psi_{\tau} = \frac{\sqrt{-1}}{2{\text{Im}}(\tau)} \Bigg \{\overline{\tau} \Bigg [ \frac{1}{2}d^{c}\log \bigg ( 1 + \sum_{k = 1}^{4}|z_{k}|^{2} + \bigg |\det \begin{pmatrix}
 z_{1} & z_{3} \\
 z_{2} & z_{4}
\end{pmatrix} \bigg |^{2} \bigg) + d\theta_{U_{1}}\Bigg ] + \displaystyle \frac{\overline{z}dz - zd\overline{z}}{2\sqrt{-1}(1 + |z|^{2})} + d\theta_{U_{2}} \Bigg \}.$
\end{center}
Moreover, for the particular case $\tau = \sqrt{-1}$, we have a Hermitian non-K\"{a}hler structure $(J_{\sqrt{-1}},\Omega)$, such that
\begin{center}
$J_{\sqrt{-1}}(X,Y) = \big (\phi_{1}(X) - \eta_{2}(Y)\xi_{1}, \phi_{2}(Y) + \eta_{1}(X)\xi_{2} \big )$,
\end{center}
for all $(X,Y) \in T(\mathscr{V}_{2}(\mathbb{R}^{6}) \times S^{3})$, and 

$\Omega = \displaystyle \frac{\sqrt{-1}}{\pi} \Bigg \{\displaystyle 2\partial \overline{\partial} \log \bigg (1+ \sum_{k = 1}^{4}|z_{k}|^{2} + \bigg |\det \begin{pmatrix}
 z_{1} & z_{3} \\
 z_{2} & z_{4}
\end{pmatrix} \bigg |^{2} \bigg) +   \frac{dz\wedge d\overline{z}}{(1 + |z|^{2})^{2}}\Bigg\}$
$$ \ \ + \ \ \ \Bigg (\displaystyle\frac{1}{2}d^{c}\log \bigg ( 1 + \sum_{k = 1}^{4}|z_{k}|^{2} + \bigg |\det \begin{pmatrix}
 z_{1} & z_{3} \\
 z_{2} & z_{4}
\end{pmatrix} \bigg |^{2} \bigg) + d\theta_{U_{1}} \Bigg ) \wedge \Bigg ( \displaystyle \frac{\overline{z}dz - zd\overline{z}}{2\sqrt{-1}(1 + |z|^{2})} + d\theta_{U_{2}} \Bigg ).$$
Hence, we have a compact simply connected Hermitian non-K\"{a}hler manifold defined by $(\mathscr{V}_{2}(\mathbb{R}^{6}) \times S^{3}, \Omega,J_{\sqrt{-1}})$.

The ideas above can be naturally used to describe 1-parametric families of complex structures and Hermitian non-K\"{a}hler structures also on

\begin{center}

$S^{3} \times S^{3}$, \ \ $S^{3} \times \mathbb{R}P^{3}$, \ \  $\mathscr{V}_{2}(\mathbb{R}^{6}) \times \mathscr{V}_{2}(\mathbb{R}^{6})$, \ \ $\mathscr{V}_{2}(\mathbb{R}^{6}) \times \mathbb{R}P^{3}$ \ \ and \ \ $\mathbb{R}P^{3} \times \mathbb{R}P^{3}$,

\end{center}
recall that $\mathbb{R}P^{3} = S^{3}/\mathbb{Z}_{2}$, see Example \ref{HOPFBUNDLE}.
\end{example}

\begin{example}
\label{Grassmanian}
The example above can be naturally generalized to principal ${\rm{U}}(1)$-bundles of the form
\begin{center}

${\rm{U}}(1) \hookrightarrow \mathcal{Q}_{ {\rm{Gr}}(k,\mathbb{C}^{n+1})}(-\ell)  \to {\rm{Gr}}(k,\mathbb{C}^{n+1})$. 

\end{center}
In fact, consider $G^{\mathbb{C}} = {\rm{SL}}(n+1,\mathbb{C})$, by fixing the Cartan subalgebra $\mathfrak{h} \subset \mathfrak{sl}(n+1,\mathbb{C})$ given by diagonal matrices whose the trace is equal to zero, as in Example \ref{examplePn} we have the set of simple roots given by
$$\Sigma = \Big \{ \alpha_{l} = \epsilon_{l} - \epsilon_{l+1} \ \Big | \ l = 1, \ldots,n\Big\},$$
here $\epsilon_{l} \colon {\text{diag}}\{a_{1},\ldots,a_{n+1} \} \mapsto a_{l}$, $ \forall l = 1, \ldots,n+1$. In this case we consider $\Theta = \Sigma \backslash \{\alpha_{k}\}$ and $P = P_{\omega_{\alpha_{k}}}$, thus we obtain

\begin{center}

${\rm{SL}}(n+1,\mathbb{C})/P_{\omega_{\alpha_{k}}} = {\rm{Gr}}(k,\mathbb{C}^{n+1}).$

\end{center}
A straightforward computation shows that $I({\rm{Gr}}(k,\mathbb{C}^{n+1})) = n+1,$ and $\mathscr{P}({\rm{Gr}}(k,\mathbb{C}^{n+1}),{\rm{U}}(1)) = \mathbb{Z}\mathrm{e}(Q(\omega_{\alpha_{k}})).$ Hence, from Example \ref{examplebasic}, it follows that  

\begin{center}

$\mathcal{Q}_{ {\rm{Gr}}(k,\mathbb{C}^{n+1})}(-1) = Q(-\omega_{\alpha_{k}}).$

\end{center}
Now, since $V(\omega_{\alpha_{k}}) = \bigwedge^{k}(\mathbb{C}^{n+1})$ and $v_{\omega_{\alpha_{k}}}^{+} = e_{1} \wedge \ldots \wedge e_{k}$, by taking the coordinate neighborhood $U =  R_{u}(P_{\omega_{\alpha_{k}}})^{-}x_{0} \subset {\rm{Gr}}(k,\mathbb{C}^{n+1})$, such that 

\begin{center}

$Z \in \mathbb{C}^{(n+1-k)k} \mapsto n(Z)x_{0} = \begin{pmatrix}
 \ 1_{k} & 0_{k,n+1-k} \\
 Z & 1_{n+1-k}
\end{pmatrix}x_{0},$

\end{center}
here we have identified $\mathbb{C}^{(n+1-k)k} \cong {\rm{M}}_{n+1-k,k}(\mathbb{C})$, we can take the local section $s_{U} \colon U \subset {\rm{Gr}}(k,\mathbb{C}^{n+1})\to {\rm{SL}}(n+1,\mathbb{C})$ defined by

\begin{center}

$s_{U}(n(Z)x_{0}) = n(Z) = \begin{pmatrix}
 \ 1_{k} & 0_{k,n+1-k} \\
 Z & 1_{n+1-k}
\end{pmatrix}.$

\end{center}
From this, we obtain the gauge potential 

\begin{center}

$A_{U} = (n+1)\partial \log \Bigg (\sum_{I} \bigg | \det_{I} \begin{pmatrix}
 \ 1_{k} \\
 Z 
\end{pmatrix} \bigg |^{2} \Bigg ),$

\end{center}
where the sum above is taken over all $k \times k$ submatrices whose the lines are labeled by $I = \{i_{1} < \ldots < i_{k}\} \subset \{1, \ldots, n+1\}$, here we consider the canonical basis for $\bigwedge^{k}(\mathbb{C}^{n+1})$ in the computations. From the potential above, we obtain the K\"{a}hler form on ${\rm{Gr}}(k,\mathbb{C}^{n+1})$ given by

\begin{center}

$\omega_{{\rm{Gr}}(k,\mathbb{C}^{n+1})} = \displaystyle \frac{(n+1)}{2\pi \sqrt{-1}}\overline{\partial} \partial\log \textstyle{ \Bigg (\sum_{I} \bigg | \det_{I} \begin{pmatrix}
 \ 1_{k} \\
 Z 
\end{pmatrix} \bigg |^{2} \Bigg )}.$

\end{center}
Now, from Theorem \ref{fundamentaltheorem}, we have an almost contact structure $(\phi,\xi = \frac{\partial}{\partial \theta},\eta)$ on $\mathcal{Q}_{ {\rm{Gr}}(k,\mathbb{C}^{n+1})}(-\ell) = Q(-\ell\omega_{\alpha_{k}})$ , such that
\begin{center}

$\eta = \displaystyle \frac{\ell}{2}d^{c} \log \Bigg (\sum_{I} \bigg | \det_{I} \begin{pmatrix}
 \ 1_{k} \\
 Z 
\end{pmatrix} \bigg |^{2} \Bigg ) + d\theta_{U}$.

\end{center} 
From the ideas above and Theorem \ref{main2}, given ${\rm{Gr}}(k,\mathbb{C}^{n+1})$ and ${\rm{Gr}}(r,\mathbb{C}^{m+1})$, we can equip the product

\begin{center}

$\mathcal{Q}_{ {\rm{Gr}}(k,\mathbb{C}^{n+1})}(-\ell_{1}) \times \mathcal{Q}_{{\rm{Gr}}(r,\mathbb{C}^{m+1})}(-\ell_{2})$,

\end{center}
with a 1-parametric family of complex structures $J_{\tau} \in {\text{End}}(T(\mathcal{Q}_{ {\rm{Gr}}(k,\mathbb{C}^{n+1})}(-\ell_{1}) \times \mathcal{Q}_{{\rm{Gr}}(r,\mathbb{C}^{m+1})}(-\ell_{2})))$, $\tau \in \mathbb{C} \backslash \mathbb{R}$, determined by $J_{\tau}(\Psi_{\tau}) = \sqrt{-1}\Psi_{\tau}$, such that 
\begin{center}
$\displaystyle \Psi_{\tau} = \frac{\sqrt{-1}}{2{\text{Im}}(\tau)} \Bigg \{\overline{\tau}\Bigg [\frac{\ell_{1}}{2}d^{c}\log \Bigg (\sum_{I_{1}} \bigg | \det_{I_{1}} \begin{pmatrix}
 \ 1_{k} \\
 Z_{1} 
\end{pmatrix} \bigg |^{2} \Bigg ) + d\theta_{U_{1}} \Bigg] + \frac{\ell_{2}}{2}d^{c}\log \Bigg (\sum_{I_{2}} \bigg | \det_{I_{2}} \begin{pmatrix}
 \ 1_{r} \\
 Z_{2} 
\end{pmatrix} \bigg |^{2} \Bigg ) + d\theta_{U_{2}} \Bigg \}.$
\end{center}
Moreover, for the particular case $\tau = \sqrt{-1}$, we have the Hermitian non-K\"{a}hler structure $(J_{\sqrt{-1}},\Omega)$ on $\mathcal{Q}_{ {\rm{Gr}}(k,\mathbb{C}^{n+1})}(-\ell_{1}) \times \mathcal{Q}_{{\rm{Gr}}(r,\mathbb{C}^{m+1})}(-\ell_{2})$, such that 

\begin{center}
$J_{\sqrt{-1}}(X,Y) = \big (\phi_{1}(X) - \eta_{2}(Y)\xi_{1}, \phi_{2}(Y) + \eta_{1}(X)\xi_{2} \big)$,
\end{center}
for all $(X,Y) \in T(\mathcal{Q}_{ {\rm{Gr}}(k,\mathbb{C}^{n+1})}(-\ell_{1}) \times \mathcal{Q}_{{\rm{Gr}}(r,\mathbb{C}^{m+1})}(-\ell_{2}))$, and

$\Omega = \displaystyle \frac{\sqrt{-1}}{2\pi} \Bigg \{(n+1)\partial \overline{\partial} \log \Bigg (\sum_{I_{1}} \bigg | \det_{I_{1}} \begin{pmatrix}
 \ 1_{k} \\
 Z_{1} 
\end{pmatrix} \bigg |^{2} \Bigg ) + (m+1)\partial \overline{\partial}  \log \Bigg (\sum_{I_{2}} \bigg | \det_{I_{2}} \begin{pmatrix}
 \ 1_{r} \\
 Z_{2} 
\end{pmatrix} \bigg |^{2} \Bigg )\Bigg\}$

$$ + \Bigg [\frac{\ell_{1}}{2}d^{c}\log \Bigg (\sum_{I_{1}} \bigg | \det_{I_{1}} \begin{pmatrix}
 \ 1_{k} \\
 Z_{1} 
\end{pmatrix} \bigg |^{2} \Bigg ) + d\theta_{U_{1}} \Bigg ] \wedge \Bigg [ \frac{\ell_{2}}{2}d^{c}\log \Bigg (\sum_{I_{2}} \bigg | \det_{I_{2}} \begin{pmatrix}
 \ 1_{r} \\
 Z_{2} 
\end{pmatrix} \bigg |^{2} \Bigg ) + d\theta_{U_{2}}  \Bigg ].$$
Thus, we have a compact Hermitian non-K\"{a}hler manifold defined by

\begin{center}

$(\mathcal{Q}_{ {\rm{Gr}}(k,\mathbb{C}^{n+1})}(-\ell_{1}) \times \mathcal{Q}_{{\rm{Gr}}(r,\mathbb{C}^{m+1})}(-\ell_{2}), \Omega, J_{\sqrt{-1}})$,

\end{center}
$\forall \ell_{1},\ell_{2} \in \mathbb{Z}$. It is strightforward to check that as a particular case of the construction above we obtain a 1-parametric family of complex structures and a Hermitian non-K\"{a}hler structure on the product of Lens spaces
\begin{center}
$S^{1} \times S^{1} \hookrightarrow \mathbb{L}_{(n,\ell_{1})} \times\mathbb{L}_{(m,\ell_{2})} \to  \mathbb{C}P^{n} \times \mathbb{C}P^{m},$
\end{center}
where $\mathbb{L}_{(n,\ell_{1})} = S^{2n+1}/ \mathbb{Z}_{\ell_{1}}$, and $\mathbb{L}_{(m,\ell_{2})} = S^{2m+1}/ \mathbb{Z}_{\ell_{2}}$. Moreover, particularly, we also have a 1-parametric family of complex structures and a Hermitian non-K\"{a}hler structure on the product of odd-dimensional spheres $S^{2n+1} \times S^{2m+1}$.
\end{example}

Now, we will illustrate how our approach can be used in the setting of maximal flag manifolds. 

\begin{example}
\label{ALOFFWALLACH}
 Consider $G^{\mathbb{C}} = {\rm{SL}}(3,\mathbb{C})$, and $P_{\emptyset} = B \subset {\rm{SL}}(3,\mathbb{C})$ (Borel subgroup). In this case we have the Wallach flag manifold given by the quotient space

\begin{center}

$W_{6} = {\rm{SL}}(3,\mathbb{C})/B = {\rm{SU}}(3)/T^{2}.$

\end{center}
Since the simple root system in this case is given by $\Sigma = \{\alpha_{1},\alpha_{2}\}$, it follows that 

\begin{center}

$\Pi^{+} = \big \{ \alpha_{1},\alpha_{2}, \alpha_{1} + \alpha_{2}\big \}$.

\end{center}
Thus, we obtain $\delta_{B} = 2\alpha_{1} + 2\alpha_{2}$. A straightforward computation shows that $\delta_{B} = 2\omega_{\alpha_{1}} + 2\omega_{\alpha_{2}}$, and $I(W_{6}) = 2$. Moreover, we have the following characterization 

\begin{center}
$\mathscr{P}(W_{6},{\rm{U}}(1)) = \mathbb{Z}\mathrm{e}(Q(\omega_{\alpha_{1}})) \oplus \mathbb{Z}\mathrm{e}(Q(\omega_{\alpha_{2}})).$
\end{center}
Hence, we have $\mathcal{Q}_{W_{6}}(-1) = Q(K_{W_{6}}^{\otimes \frac{1}{I(W_{6})}})$, such that 

\begin{center}

$\mathcal{Q}_{W_{6}}(-1) = Q(-\omega_{\alpha_{1}}) + Q(-\omega_{\alpha_{2}}) = {\rm{SU}}(3)/{\rm{U}}(1) = X_{1,1},$

\end{center}
notice that the manifold $X_{1,1}$ is an example of Aloff-Wallach space \cite{ALOFFWALLACH}. 

In order to compute the contact structure for $X_{1,1}$ as in Theorem \ref{fundamentaltheorem}, we notice that 

\begin{center}

$V(\omega_{\alpha_{1}}) = \mathbb{C}^{3}$ \ \  and \ \ $V(\omega_{\alpha_{2}}) = \bigwedge^{2}(\mathbb{C}^{3}),$

\end{center}
where $v_{\omega_{\alpha_{1}}}^{+} = e_{1}$, and $v_{\omega_{\alpha_{2}}}^{+} = e_{1} \wedge e_{2}$. From this, we consider the opposite big cell $U_{1} = R_{u}(B)^{-}x_{0} \subset W_{6}$, such that 

\begin{center}

$U_{1} = \Bigg \{ \begin{pmatrix}
1 & 0 & 0 \\
w_{1} & 1 & 0 \\                  
w_{2}  & w_{3} & 1
 \end{pmatrix}x_{0} \ \Bigg | \ w_{1},w_{2},w_{3} \in \mathbb{C} \Bigg \}$.

\end{center}
By taking the local section $s_{U_{1}} \colon U_{1} \subset W_{6} \to {\rm{SL}}(3,\mathbb{C})$, such that $s_{U_{1}}(nx_{0}) = n$, a straightforward computation shows that the contact form on $X_{1,1}$ provided by Theorem \ref{fundamentaltheorem} is given by 

\begin{center}

$\eta_{1} = \displaystyle \frac{1}{2} d^{c} \log \Bigg [ \bigg ( 1 + \displaystyle \sum_{i = 1}^{2}|w_{i}|^{2} \bigg ) \bigg (1 + |w_{3}|^{2} + \bigg | \det \begin{pmatrix}
w_{1} & 1  \\                  
w_{2}  & w_{3} 
 \end{pmatrix} \bigg |^{2} \bigg ) \Bigg ] + d\theta_{U_{1}},$

\end{center}
here we consider the canonical basis for $\mathbb{C}^{3}$ and $\bigwedge^{2}(\mathbb{C}^{3})$. Therefore, we have a normal almost contact structure  $(\phi_{1},\xi_{1} = \frac{\partial}{\partial \theta_{1}},\eta_{1})$ on $X_{1,1}$.

We notice that from \ref{localform} we have a  ${\rm{SU}}(3)$-invariant K\"{a}hler-Einstein metric on $W_{6}$ given by 
\begin{center}
$\omega_{W_{6}} = \displaystyle \frac{\sqrt{-1}}{\pi} \partial \overline{\partial}\log \Bigg [ \bigg ( 1 + \sum_{i = 1}^{2}|w_{i}|^{2} \bigg ) \bigg (1 + |w_{3}|^{2} + \bigg | \det \begin{pmatrix}
w_{1} & 1  \\                  
w_{2}  & w_{3} 
 \end{pmatrix} \bigg |^{2} \bigg ) \Bigg ],$

\end{center} 
so it follows that $\frac{d\eta_{1}}{\pi} = \omega_{W_{6}}$.

Now, we can combine the construction above with the previous constructions. Actually, consider

\begin{center}

${\rm{U}}(1) \hookrightarrow \mathscr{V}_{2}(\mathbb{R}^{6}) \to {\rm{Gr}}(2,\mathbb{C}^{4}),$ \ \ and \ \ ${\rm{U}}(1) \hookrightarrow S^{3} \to \mathbb{C}{\rm{P}}^{1},$

\end{center}
and the contact structures given, respectively, by 

\begin{enumerate}

\item $\eta_{2} = \displaystyle \frac{1}{2}d^{c} \log \bigg ( 1 + \sum_{k = 1}^{4}|z_{k}|^{2} + \bigg |\det \begin{pmatrix}
 z_{1} & z_{3} \\
 z_{2} & z_{4}
\end{pmatrix} \bigg |^{2} \bigg) + d\theta_{U_{2}},$  

\item $\eta_{3} =  \displaystyle \frac{\overline{z}dz - zd\overline{z}}{2\sqrt{-1}(1 + |z|^{2})} + d\theta_{U_{3}},$

\end{enumerate}
recall that $\mathcal{Q}_{ {\rm{Gr}}(2,\mathbb{C}^{4})}(-1) = \mathscr{V}_{2}(\mathbb{R}^{6})$, and $\mathcal{Q}_{ \mathbb{C}{\rm{P}}^{1}}(-1) = S^{3}$. If we consider the normal almost contact structures $(\phi_{i},\xi_{i} = \frac{\partial}{\partial \theta_{i}},\eta_{i})$, $i = 1,2,3$, obtained from Theorem \ref{fundamentaltheorem} on the manifolds described above, we can apply Theorem \ref{main2} and obtain a 1-parametric family of complex structures and Hermitian non-K\"{a}hler structures on the total space of the principal $T^{2}$-bundles

\begin{center}

$T^{2} \hookrightarrow X_{1,1} \times \mathscr{V}_{2}(\mathbb{R}^{6}) \to W_{6} \times {\rm{Gr}}(2,\mathbb{C}^{4}),
$ \ \ and \ \ $T^{2} \hookrightarrow X_{1,1} \times S^{3} \to W_{6} \times \mathbb{C}{\rm{P}}^{1}.$ 

\end{center}
In the first case, we have $J_{\tau} \in {\text{End}}(T(X_{1,1} \times \mathscr{V}_{2}(\mathbb{R}^{6})))$, $\tau \in \mathbb{C} \backslash \mathbb{R}$, determined by $J_{\tau}(\Psi_{\tau}) = \sqrt{-1}\Psi_{\tau}$, such that 

$\displaystyle \Psi_{\tau} = \frac{\sqrt{-1}}{2{\text{Im}}(\tau)} \Bigg \{\overline{\tau} \Bigg [\frac{1}{2} d^{c} \log \Bigg [ \bigg ( 1 + \displaystyle \sum_{i = 1}^{2}|w_{i}|^{2} \bigg ) \bigg (1 + |w_{3}|^{2} + \bigg | \det \begin{pmatrix}
w_{1} & 1  \\                  
w_{2}  & w_{3} 
 \end{pmatrix} \bigg |^{2} \bigg ) \Bigg ] + d\theta_{U_{1}} \Bigg] $
 
 $$ \displaystyle + \ \ \frac{1}{2}d^{c}\log \bigg ( 1 + \sum_{k = 1}^{4}|z_{k}|^{2} + \bigg |\det \begin{pmatrix}
 z_{1} & z_{3} \\
 z_{2} & z_{4}
\end{pmatrix} \bigg |^{2} \bigg) + d\theta_{U_{2}} \Bigg \}.$$
As before, for $\tau = \sqrt{-1}$ we have a Hermitian non-K\"{a}hler structure $(X_{1,1} \times \mathscr{V}_{2}(\mathbb{R}^{6}),\Omega_{1},J_{\sqrt{-1}})$, where
\begin{center}
$J_{\sqrt{-1}}(Z,Y) = \big (\phi_{1}(Z) - \eta_{2}(Y)\xi_{1}, \phi_{2}(Y) + \eta_{1}(Z)\xi_{2} \big )$,
\end{center}
for all $(Z,Y) \in T(X_{1,1} \times \mathscr{V}_{2}(\mathbb{R}^{6}))$, and

\begin{center}

$\Omega_{1} = \pi_{1}^{\ast} \omega_{W_{6}} + \pi_{2}^{\ast} \omega_{{\rm{Gr}}(2,\mathbb{C}^{4})} + \eta_{1} \wedge \eta_{2},$

\end{center}
here we omitted the local expression of $\Omega_{1}$, since it is quite extensive. In the second case, we have $J_{\tau} \in {\text{End}}(T(X_{1,1} \times S^{3}))$, $\tau \in \mathbb{C} \backslash \mathbb{R}$, determined by $J_{\tau}(\Psi_{\tau}) = \sqrt{-1}\Psi_{\tau}$, such that  
$$
\displaystyle \Psi_{\tau} = \frac{\sqrt{-1}}{2{\text{Im}}(\tau)} \Bigg \{\overline{\tau} \Bigg [\frac{1}{2} d^{c} \log \Bigg [ \bigg ( 1 + \displaystyle \sum_{i = 1}^{2}|w_{i}|^{2} \bigg ) \bigg (1 + |w_{3}|^{2} + \bigg | \det \begin{pmatrix}
w_{1} & 1  \\                  
w_{2}  & w_{3} 
 \end{pmatrix} \bigg |^{2} \bigg ) \Bigg ] + d\theta_{U_{1}} \Bigg] + \displaystyle \frac{\overline{z}dz - zd\overline{z}}{2\sqrt{-1}(1 + |z|^{2})} + d\theta_{U_{3}}\Bigg \}.
$$
Now, for the particular case $\tau = \sqrt{-1}$, we have $(X_{1,1} \times S^{3},\Omega_{2},J_{\sqrt{-1}})$, where
\begin{center}
$J_{\sqrt{-1}}(Z,Y) = \big (\phi_{1}(Z) - \eta_{3}(Y)\xi_{1}, \phi_{3}(Y) + \eta_{1}(Z)\xi_{3} \big)$,
\end{center}
for all $(Z,Y) \in T(X_{1,1} \times S^{3})$, and

$\Omega_{2} = \displaystyle \frac{\sqrt{-1}}{\pi} \Bigg \{\displaystyle  \partial \overline{\partial}\log \Bigg [ \bigg ( 1 + \sum_{i = 1}^{2}|w_{i}|^{2} \bigg ) \bigg (1 + |w_{3}|^{2} + \bigg | \det \begin{pmatrix}
w_{1} & 1  \\                  
w_{2}  & w_{3} 
 \end{pmatrix} \bigg |^{2} \bigg ) \Bigg ] + \frac{dz\wedge d\overline{z}}{(1 + |z|^{2})^{2}}\Bigg\}$

$$ + \ \ \Bigg \{\displaystyle \frac{1}{2} d^{c} \log \Bigg [ \bigg ( 1 + \displaystyle \sum_{i = 1}^{2}|w_{i}|^{2} \bigg ) \bigg (1 + |w_{3}|^{2} + \bigg | \det \begin{pmatrix}
w_{1} & 1  \\                  
w_{2}  & w_{3} 
 \end{pmatrix} \bigg |^{2} \bigg ) \Bigg ] + d\theta_{U_{1}} \Bigg \} \wedge \Bigg ( \displaystyle \frac{\overline{z}dz - zd\overline{z}}{2\sqrt{-1}(1 + |z|^{2})} + d\theta_{U_{3}} \Bigg ).$$
Note that the same idea explored above also can be used to obtain 1-parametric family of complex structures, and Hermitian non-K\"{a}hler structures, on $X_{1,1} \times X_{1,1}$ and $\mathbb{R}P^{3} \times X_{1,1}$.

\end{example}

Our next example illustrate how our results can be applied in the setting of complex flag manifolds with Picard number greater than one to construct examples of complex manifolds from Cartesian products of normal almost contact manifolds which are not contact manifolds.

\begin{example}
\label{Picardnotone}
As in the previous example, consider the Wallach flag manifold

\begin{center}

$W_{6} = {\rm{SL}}(3,\mathbb{C})/B = {\rm{SU}}(3)/T^{2}.$

\end{center}
As we have seen, in this case we have $\mathscr{P}(W_{6},{\rm{U}}(1)) = \mathbb{Z}\mathrm{e}(Q(\omega_{\alpha_{1}})) \oplus \mathbb{Z}\mathrm{e}(Q(\omega_{\alpha_{2}})).$ Thus, we can apply Theorem \ref{fundamentaltheorem} in order to obtain a normal almost contact structure $(\phi_{p,q},\xi_{p,q},\eta_{p,q})$ defined on

\begin{center}
$ \mathscr{Q}_{p,q} = Q(p \omega_{\alpha_{1}}) + Q(q\omega_{\alpha_{2}}),$
\end{center}
for every $p,q \in \mathbb{Z}$, such that 
\begin{center}

$\eta_{p,q} = \displaystyle \frac{\sqrt{-1}}{2}\big ( \partial - \overline{\partial} \big ) \log \Bigg [ \bigg ( 1 + \displaystyle \sum_{i = 1}^{2}|w_{i}|^{2} \bigg )^{p} \bigg (1 + |w_{3}|^{2} + \bigg | \det \begin{pmatrix}
w_{1} & 1  \\                  
w_{2}  & w_{3} 
 \end{pmatrix} \bigg |^{2} \bigg )^{q} \Bigg ] + d\theta_{U}.$ 

\end{center}
Therefore, we can apply Theorem \ref{main2} for $X_{P_{i}} = W_{6}$, $i = 1,2$, and then we have $J_{\tau} \in {\text{End}}(T(\mathscr{Q}_{p,q} \times \mathscr{Q}_{r,s}))$, $\tau \in \mathbb{C} \backslash \mathbb{R}$, determined by $J_{\tau}(\Psi_{\tau}) = \sqrt{-1}\Psi_{\tau}$, such that 

\begin{center}
$\displaystyle \Psi_{\tau} = \frac{\sqrt{-1}}{2{\text{Im}}(\tau)} \big (\overline{\tau}\eta_{p,q} + \eta_{r,s} \big ).$
\end{center}
Also, for $\tau = \sqrt{-1}$ we have a Hermitian non-K\"{a}hler structure $(\Omega_{pqrs},J_{\sqrt{-1}})$ on $\mathscr{Q}_{p,q} \times \mathscr{Q}_{r,s}$, where
\begin{center}
$J_{\sqrt{-1}}(Z,Y) = \big (\phi_{p,q}(Z) - \eta_{r,s}(Y)\xi_{p,q}, \phi_{r,s}(Y) + \eta_{p,q}(Z)\xi_{r,s} \big )$,
\end{center}
for all $(Z,Y) \in T(\mathscr{Q}_{p,q} \times \mathscr{Q}_{r,s})$, and

\begin{center}

$\Omega_{pqrs} = \pi_{1}^{\ast} \omega_{W_{6}} + \pi_{2}^{\ast} \omega_{W_{6}} + \eta_{p,q} \wedge \eta_{r,s},$

\end{center}
$\forall p,q,r,s \in \mathbb{Z}$. From this, we obtain examples of complex manifolds given by Cartesian products of normal almost contact manifolds which are not contact manifolds.

\begin{table}[H]
\label{table2}
\caption{ Table with examples which illustrate the case on which the almost contact manifolds involved in the Cartesian products of normal almost contact manifolds are not contact manifolds. We consider $p,q,r,s \in \mathbb{Z}$ to define the products.}

\begin{tabular}{|c|c|c|}
\hline
Normal         &         &                    \\  
almost contact & $Q(p\omega_{\alpha_{1}})$ & $Q(q\omega_{\alpha_{2}})$          \\
manifolds      &         &                     \\         
\hline
               &         &                    \\  
$Q(r\omega_{\alpha_{1}})$       & $Q(r\omega_{\alpha_{1}}) \times Q(p\omega_{\alpha_{1}})$ & $Q(r\omega_{\alpha_{1}}) \times Q(q\omega_{\alpha_{2}})$ \\
        &         &                  \\  
\hline
       &         &                                      \\  
$Q(s\omega_{\alpha_{2}})$& $Q(s\omega_{\alpha_{2}}) \times Q(p\omega_{\alpha_{1}})$ & $Q(s\omega_{\alpha_{2}}) \times Q(q\omega_{\alpha_{2}})$ \\
       &         &                                      \\  
\hline
\end{tabular}

\end{table}
\end{example}

\section{Applications in Hermitian geometry with torsion}
\label{section5}
In this section we shall explore some applications of our main results in the study of Hermitian geometry with torsion in principal torus bundles over complex flag manifolds.
\subsection{Calabi-Yau connections with torsion on Vaisman manifolds} In this subsection we further explore some applications of our main results. The goal is to provide constructive methods to describe certain structures associated to Hermitian manifolds defined by principal torus bundles over complex flag manifolds. We start by recalling some basic facts related to Hermitian manifolds.
\begin{remark}
\label{complexconv}
Let $(M,J)$ be a complex manifold. In this paper we consider $J(\xi) = \xi \circ J$, instead of $J(\xi) = -\xi \circ J$ as in \cite{GRANTCHAROV}, $\forall \xi \in T^{\ast}M$. Therefore, we should have some change of sign in certain expressions.
\end{remark}

Given a Hermitian manifold $(M,g,J)$, we have an associated $2$-form $\Omega \in \Omega^{2}(M)$, called fundamental $2$-form, such that
\begin{center}
$\Omega(X,Y) = g(JX,Y),$
\end{center}
$\forall X,Y \in TM$. From this, by considering the Levi-Civita connection $\nabla^{LC}$ associated to $g$, we can use the fundamental $2$-form to define a $1$-parametric family of connections $\nabla^{t} \colon \Gamma(TM) \times \Gamma(TM) \to \Gamma(TM)$, where $t$ is a free parameter, such that
\begin{equation}
g(\nabla^{t}_{X}(Y),Z) = g(\nabla^{LC}_{X}(Y),Z) + \frac{t-1}{4}(d^{c}\Omega)(X,Y,Z) + \frac{t+1}{4}(d^{c}\Omega)(X,JY,JZ), 
\end{equation}
$\forall X,Y,Z \in TM$, the connections $\nabla^{t}$ are called canonical connections of $(M,g,J)$, e.g. \cite{Gauduchon1}. Among these connections we have the Chern connection $\nabla^{C} = \nabla^{1}$ and the Bismut connection, or KT connection, given by $\nabla^{B} = \nabla^{-1}$. The Bismut connection is the unique connection which satisfies 
\begin{enumerate}
\item $\nabla^{B}g = 0$,
\item $\nabla^{B}J = 0$,
\item $g(T_{\nabla^{B}}(X,Y),Z) = d\Omega(JX,JY,JZ) = (Jd\Omega)(X,Y,Z),$
\end{enumerate}
$\forall X,Y,Z \in TM$, where $T_{\nabla^{B}}$ is the torsion of $\nabla^{B}$. 
\begin{remark}
Notice that from the characterization above of $\nabla^{B}$, it follows that 
\begin{center}
$g(\nabla^{B}_{X}(Y),Z )= g(\nabla^{LC}_{X}(Y),Z) + \displaystyle \frac{1}{2}T_{B}(X,Y,Z),$
\end{center}
$\forall X,Y,Z \in TM$, here we consider $T_{B} = -d^{c}\Omega = Jd\Omega$. Moreover, we have $\nabla^{B} = \nabla^{LC} + \frac{1}{2}T_{\nabla^{B}}$.
\end{remark}

In what follows we will focus our attention in the study of Bismut connections.
\begin{definition}
A KT structure on a smooth manifold $M^{2n}$ is a triple $(g, J, \nabla^{B})$, such that $(g,J)$ is a Hermitian structure and $\nabla^{B}$ is the corresponding Bismut connection.
\end{definition}
For the sake of simplicity, in some cases, we will denote by $(M,T_{B})$ a manifold endowed with a KT structure, where $T_{B} \in \Omega^{3}(M)$ is defined as before. 

Since $\nabla^{B}$ is a Hermitian connection, it follows that ${\text{Hol}}^{0}(\nabla^{B}) \subset {\rm{U}}(n)$, where ${\text{Hol}}^{0}(\nabla^{B})$ denotes the restricted holonomy group of the Bismut connection. 

\begin{definition}A Hermitian manifold $(M,g,J)$ is said to be Calabi-Yau with torsion, shortly CYT, if the restricted holonomy group of the Bismut connection ${\text{Hol}}^{0}(\nabla^{B})$ is contained in ${\rm{SU}}(n)$.
\end{definition}

\begin{remark}
\label{KTdef}
Another important class of KT structures are the strong KT structures (SKT). A KT structure $(g,J,\nabla^{B})$ is said to be SKT if $dT_{B} = 0$. The class of strong KT structures have been recently studied due to their applications in physics and generalized K\"{a}hler geometry, see \cite{Finosurvey} and references therein.
\end{remark}

Let us illustrate the ideas discussed so far by means of a simple example which incorporates some results covered previously.

\begin{example}
\label{KTMorimoto}
Let $X_{P_{i}}$ be a complex flag manifold associated to some parabolic Lie subgroup $P_{i} \subset G_{i}^{\mathbb{C}}$, such that $i = 1,2$. By taking $S^{1}$-bundles $Q_{1} \in \mathscr{P}(X_{P_{1}},{\rm{U}}(1))$ and $Q_{2} \in \mathscr{P}(X_{P_{2}},{\rm{U}}(1))$, it follows from Theorem \ref{main2} that the Hermitian non-K\"{a}hler structure $(J_{\sqrt{-1}},g_{Q_{1}} \times g_{Q_{2}})$ defines a KT structure $(g_{Q_{1}} \times g_{Q_{2}}, J_{\sqrt{-1}},\nabla^{B})$ on $Q_{1} \times Q_{2}$, such that 
\begin{center}
$T_{B} = J_{\sqrt{-1}} d\big (\eta_{1} \wedge \eta_{2} \big )$,
\end{center}
where 
\begin{center}
$\displaystyle \eta_{1} = d^{c}\log \Bigg ( \frac{1}{\displaystyle{\prod_{\alpha \in \Sigma_{1} \backslash \Theta_{1}} ||s_{U_{1}}v_{\omega_{\alpha}}^{+}||^{\ell_{\alpha}}}}\Bigg ) + d\theta_{U_{1}}$, \ \ \ and \ \ \ $ \displaystyle \eta_{2} = d^{c}\log \Bigg ( \frac{1}{\displaystyle{\prod_{\beta \in \Sigma_{2} \backslash \Theta_{2}} ||s_{U_{2}}w_{\omega_{\beta}}^{+}||^{\ell_{\beta}}}}\Bigg ) + d\theta_{U_{2}},$
\end{center}
recall that $J_{\sqrt{-1}}(X,Y) = \big (\phi_{1}(X) - \eta_{2}(Y)\xi_{1}, \phi_{2}(Y) + \eta_{1}(X)\xi_{2} \big )$, $\forall X,Y \in T(Q_{1} \times Q_{2})$. From this, we have a manifold with explicit KT structure given by $(Q_{1} \times Q_{2},T_{B})$.
\end{example}

As we have seen in Example \ref{KTMorimoto}, from Theorem \ref{main2} we can describe explicitly the KT structure induced by the Morimoto's Hermitian structure on the torus bundle
\begin{center}
$\pi_{1} \times \pi_{2} \colon (Q_{1}\times Q_{2},J_{\sqrt{-1}}) \to (X_{P_{1}}\times X_{P_{2}}, J_{1} \times J_{2})$.
\end{center}
In \cite{GRANTCHAROV} an another approach to construct Hermitian structures on total spaces of a principal torus bundles by using $(1,0)$-connection with $(1,1)$-curvature is provided. Let us briefly describe this construction. Let $M$ be the total space of a principal $T^{2n}$-bundle over a Hermitian manifold $B$ with characteristic classes of $(1,1)$-type. By choosing a connection 
\begin{center}
$\Phi = {\text{diag}}\big \{\sqrt{-1}\eta_{1}, \ldots, \sqrt{-1}\eta_{2n} \big \} \in \Omega^{1}(M;{\text{Lie}}(T^{2n})),$
\end{center}
we have $d\eta_{j} = \pi^{\ast}\psi_{j}$, such that $\psi_{j} \in \Omega^{1,1}(B)$, $\forall j = 1,\ldots,2n$. From this, we can construct a complex structure $\mathscr{J}$ on $M$ by using the horizontal lift of the base complex structure on $\ker(\Phi)$ (horizontal space), since the vertical space is identified with the tangent space of an even-dimensional torus, we can set $\mathscr{J}(\eta_{2k-1}) = -\eta_{2k}$, $1 \leq k \leq n$. Thus, we have a well-defined almost complex structure $\mathscr{J} \in {\text{End}}(TM)$. It is straightforward to verify that $\mathscr{J}$ is in fact integrable, see \cite[Lemma 1]{GRANTCHAROV}. Notice that our convention \ref{complexconv} reflects a slight change of sign in the definition of $\mathscr{J} \in {\text{End}}(TM)$, cf. \cite{GRANTCHAROV}.

By considering a Hermitian metric $g_{B}$ on the base manifold $B$, we can use the connection $\Phi$ described above to define a Hermitian metric on $(M,\mathscr{J})$. In fact, we can set
\begin{equation}
\label{metrictotal}
g_{M} = \pi^{\ast}g_{B} + \sum_{i = 1}^{n} \big (\eta_{2i-1} \otimes \eta_{2i-1} + \eta_{2i} \otimes \eta_{2i} \big) .
\end{equation}
Since $\mathscr{J}(\eta_{2k-1}) = -\eta_{2k}$, it follows that the fundamental 2-form $\Omega_{M} = g_{M}(\mathscr{J} \otimes {\rm{id}})$ is given by
\begin{equation}
\Omega_{M} = \pi^{\ast} \omega_{B} + \sum_{i = 1}^{n} \eta_{2i-1} \wedge \eta_{2i}
\end{equation}
where $\omega_{B}$ is a the fundamental 2-form of $B$, here we consider $a \wedge b = a \otimes b - b \otimes a$, $\forall a,b \in \Omega^{1}(M)$.

\begin{remark}
It is worth pointing out that the KT structure provided by Morimoto's Hermitian structure in Example \ref{KTMorimoto} is a particular case of the KT structure described above. Actually, if we consider $B = X_{P_{1}} \times X_{P_{2}}$ and $M = Q_{1} \times Q_{2}$, such that $Q_{i} \in \mathscr{P}(X_{P_{i}},{\rm{U}}(1))$, $i = 1,2$, then we have $J_{\sqrt{-1}}(\xi_{1}) = \xi_{2}$, and
\begin{center}
$(\pi_{1} \times \pi_{2})_{\ast} \circ J_{\sqrt{-1}} = (\pi_{1} \times \pi_{2})_{\ast} \circ (\phi_{1} \times \phi_{2}) = (J_{1}\times J_{2}) \circ (\pi_{1} \times \pi_{2})_{\ast},$
\end{center}
see Theorem \ref{fundamentaltheorem} and Theorem \ref{main2}. Also, it is straightforward to verify that fundamental 2-form $\Omega_{Q_{1} \times Q_{2}} = g_{Q_{1} \times Q_{2}}(J_{\sqrt{-1}} \otimes {\rm{id}})$ satisfies
\begin{center}
$\Omega_{Q_{1} \times Q_{2}} = (\pi_{1} \times \pi_{2})^{\ast}(\omega_{X_{P_{1}}} \times \omega_{X_{P_{1}}}) + \eta_{1} \wedge \eta_{2},$
\end{center}
where $\omega_{X_{P_{i}}}$ is an invariant K\"{a}hler metric on $X_{P_{i}}$, $i = 1,2$.
\end{remark}

\begin{remark}
Notice that if the base manifold $B$ is compact, K\"{a}hler and the curvature $\Phi$ is integral, then by the Lefschetz theorem on $(1,1)$-classes, any such bundle can be obtained as the unitary frame bundle associated to a Whitney sum of holomorphic line bundles. Thus, in this last case we have $M = {\rm{U}}(E)$ such that
\begin{center}
$E = L_{1} \oplus \cdots \oplus L_{2n},$
\end{center}
where $L_{i} \in {\text{Pic}}(B)$, and $c_{1}(L_{i}) = -\big[\frac{\psi_{i}}{2\pi}\big]$, $1 \leq i \leq 2n$.
\end{remark}
Since we are interested in applications of the results of \cite{GRANTCHAROV} in principal torus bundle over complex flag manifolds, in what follows we assume that in the fibration $T^{2n} \hookrightarrow M \to B$ the base manifold $B$ is a compact Hermitian manifold.

Recall that, given a K\"{a}hler manifold $(B,\omega_{B})$, a differential form $\psi \in \Omega^{\bullet}(B)$ is called primitive if $\Lambda_{\omega_{B}}(\psi) = 0$, where $\Lambda_{\omega_{B}}$ is the dual of the Lefschetz operator $L_{\omega_{B}} \colon \bigwedge^{\bullet}(B) \to \bigwedge^{\bullet+2}(B)$, $L_{\omega_{B}}(\psi) = \psi \wedge \omega_{B}$, e.g. \cite{DANIEL}. By following \cite{GRANTCHAROV}, we have the following results.

\begin{proposition}[\cite{GRANTCHAROV}]
\label{CYTEinstein}
Suppose that $B$ is compact real $2m$-dimensional K\"{a}hler-Einstein manifold with positive scalar curvature. Let its scalar
curvature be normalized to be $2m^{2}$. Suppose that $M$ is an even-dimensional toric bundle with curvature $d\Phi = {\text{diag}}\{\sqrt{-1}d\eta_{1},\ldots,\sqrt{-1}d\eta_{2n}\}$, such that $d\eta_{1} = \pi^{\ast}\omega_{B}$, and for all $2 \leq k \leq 2n$, $d\eta_{k} = \pi^{\ast}\psi_{k}$, with $\psi_{k}$ being primitive; then $M$ admits a CYT structure.
\end{proposition}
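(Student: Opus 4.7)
The plan is to equip $M$ with the Hermitian structure $(g_M,\mathscr{J})$ built from the connection $\Phi$ and the K\"ahler form $\omega_B$ as described in the paragraph preceding the proposition, and then to show that the Bismut--Ricci form $\rho^{B}_{M}$ vanishes identically. Since CYT is equivalent to $\rho^{B}_{M}=0$ (i.e.\ the Bismut connection induced on $K_{M}$ is flat), this yields $\mathrm{Hol}^{0}(\nabla^{B})\subset \mathrm{SU}(m+n)$, where $m+n=\dim_{\mathbb{C}}M$. Integrability of $\mathscr{J}$ is automatic from \cite[Proposition 4.26]{Algmodels}, since every $d\eta_{j}=\pi^{\ast}\psi_{j}$ is of type $(1,1)$.

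To compute $\rho^{B}_{M}$, I work in a local unitary $(1,0)$-coframe $\{e^{\alpha}\}_{\alpha=1}^{m}$ on $B$ (pulled back to $M$) together with
\[
\theta_{k}:=\eta_{2k-1}+\sqrt{-1}\,\eta_{2k},\qquad k=1,\dots,n.
\]
A direct check using $\mathscr{J}(\eta_{2k-1})=-\eta_{2k}$ shows that $\{e^{\alpha},\theta_{k}\}$ is a unitary $(1,0)$-coframe of $(M,g_{M},\mathscr{J})$; in particular $\nu:=e^{1}\wedge\cdots\wedge e^{m}\wedge\theta_{1}\wedge\cdots\wedge\theta_{n}$ is a local nowhere-vanishing section of $K_{M}$. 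Using $T_{B}=-d^{c}\Omega_{M}$ together with $g_{M}(\nabla^{B}_{X}Y,Z)=g_{M}(\nabla^{LC}_{X}Y,Z)+\tfrac{1}{2}T_{B}(X,Y,Z)$, I extract the connection $1$-form $\alpha_{\nu}$ induced by $\nabla^{B}$ on $K_{M}$ along this coframe; its curvature equals $\rho^{B}_{M}=d\alpha_{\nu}$.

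The key computation, performed by O'Neill--type bookkeeping on the Riemannian submersion $\pi\colon M\to B$ with totally geodesic fibres, produces the identity
\[
\rho^{B}_{M}\;=\;\pi^{\ast}\rho_{B}\;-\;\sum_{j=1}^{2n}\bigl(\Lambda_{\omega_{B}}\psi_{j}\bigr)\,\pi^{\ast}\omega_{B},
\]
where $\Lambda_{\omega_{B}}$ is the dual Lefschetz operator of $\omega_{B}$ and $\rho_{B}$ is the Chern--Ricci form of $B$. Now apply the hypotheses: the primitivity of $\psi_{k}$ for $k\geq 2$ gives $\Lambda_{\omega_{B}}\psi_{k}=0$; the identity $d\eta_{1}=\pi^{\ast}\omega_{B}$ gives $\Lambda_{\omega_{B}}\psi_{1}=m$; the K\"ahler--Einstein normalisation (scalar curvature $2m^{2}$) gives $\rho_{B}=m\,\omega_{B}$. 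Substituting,
\[
\rho^{B}_{M}\;=\;\pi^{\ast}\rho_{B}-m\,\pi^{\ast}\omega_{B}\;=\;\pi^{\ast}(\rho_{B}-m\,\omega_{B})\;=\;0,
\]
which is the CYT condition.

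\textbf{Main obstacle.} The technical heart is the explicit identification of $\rho^{B}_{M}$ in terms of $\rho_{B}$ and the curvatures $\psi_{j}$; the primitivity of $\psi_{k}$, $k\geq 2$, has to kick in precisely at the level of the trace $\Lambda_{\omega_{B}}\psi_{j}$ that appears when one computes $d\alpha_{\nu}$. A conceptual shortcut uses the Ivanov--Papadopoulos-type identity relating $\rho^{B}_{M}$ and the Chern--Ricci form $\rho^{C}_{M}$ through the Lee form $\theta_{M}$ of $(g_{M},\mathscr{J})$: one verifies that $\theta_{M}$ decomposes into horizontal and vertical parts controlled by $\Lambda_{\omega_{B}}\psi_{j}$, so that the primitive condition collapses the vertical contribution to the single non-primitive direction $d\eta_{1}$, and the Einstein condition then cancels it against $\pi^{\ast}\rho_{B}$.
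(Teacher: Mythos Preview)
Your overall strategy coincides with the paper's: equip $M$ with the Hermitian structure built from $\Phi$ and $\omega_{B}$, then show the Bismut--Ricci form vanishes via the general formula relating it to the base Ricci form and the traces $\Lambda_{\omega_{B}}\psi_{j}$ (this is precisely the content of the remark immediately following the proposition, which cites \cite[Proposition 5]{GRANTCHAROV}). However, the ``key identity'' you write down is not the correct one. The formula from \cite{GRANTCHAROV}, as quoted in the paper, is
\[
\rho^{B}_{M}\;=\;\pi^{\ast}\Bigl(\rho_{B}\;-\;\sum_{j=1}^{2n}\bigl(\Lambda_{\omega_{B}}\psi_{j}\bigr)\,\psi_{j}\Bigr),
\]
i.e.\ each correction term is $(\Lambda_{\omega_{B}}\psi_{j})\,\psi_{j}$, not $(\Lambda_{\omega_{B}}\psi_{j})\,\omega_{B}$ as you wrote. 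For a general closed $(1,1)$-form $\psi_{j}$ these differ, so your displayed identity is false as stated and is not what an O'Neill-type computation would actually yield.

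You are rescued only by the specific hypotheses: for $k\geq 2$ the primitivity gives $\Lambda_{\omega_{B}}\psi_{k}=0$, so those terms vanish in either version, and for $j=1$ one has $\psi_{1}=\omega_{B}$, so the single surviving term $(\Lambda_{\omega_{B}}\psi_{1})\psi_{1}=m\,\omega_{B}$ happens to agree with what you wrote. Thus your final line $\rho^{B}_{M}=\pi^{\ast}(\rho_{B}-m\,\omega_{B})=0$ is correct, but the route passes through a misstated intermediate formula. Replace your displayed identity with the correct one; then the argument is exactly the paper's.
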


\begin{remark}
\label{CYTcondition}
The proof of the last proposition follows from the following facts:

\begin{enumerate}

\item If $(B,\omega_{B})$ is compact and K\"{a}hler, then
\begin{equation}
\displaystyle {\text{Ric}}^{\nabla^{B}}(\Omega_{M}) = \pi^{\ast} \Big ( {\text{Ric}}^{\nabla}(\omega_{B}) - \sum_{k = 1}^{2n}\Lambda_{\omega_{B}}(\psi_{k})\psi_{k}\Big ),
\end{equation}
where $\nabla^{B}$ is the Bismut connection associated to the metric \ref{metrictotal}, and $\nabla$ is the Chern connection associated to $\omega_{B}$.
\item If $(B,\omega_{B})$ is compact and K\"{a}hler-Einstein with $c_{1}(B) > 0$, then by considering ${\text{Ric}}^{\nabla}(\omega_{B}) = m\omega_{B}$, it follows that 
\begin{equation}
\displaystyle {\text{Ric}}^{\nabla^{B}}(\Omega_{M}) = 0 \Longleftrightarrow \omega_{B} = \frac{1}{m} \sum_{k = 1}^{2n}\Lambda_{\omega_{B}}(\psi_{k})\psi_{k},
\end{equation}
see \cite[Proposition 5]{GRANTCHAROV} for more details.
\end{enumerate}

\end{remark}

\begin{corollary}[\cite{GRANTCHAROV}] 
\label{CYTcanonical}
Let $Q$ be the principal ${\rm{U}}(1)$-bundle of the maximum root of the canonical bundle of a compact K\"{a}hler-Einstein manifold $B$ with positive scalar curvature. Then $M = Q \times {\rm{U}}(1)$ admits a CYT structure.
\end{corollary}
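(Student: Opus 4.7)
The plan is to reduce the corollary to a direct application of Proposition \ref{CYTEinstein} in the case $n = 1$. The key observation is that the product $M = Q \times {\rm U}(1)$ is naturally the total space of a principal $T^{2}$-bundle $T^{2} \hookrightarrow M \xrightarrow{\pi} B$, whose two circle factors consist of the homogeneous contact bundle $Q \to B$ and the trivial bundle $B \times {\rm U}(1) \to B$. On this $T^{2}$-bundle we set up the diagonal connection one-form
\[
\Phi = {\text{diag}}\bigl\{\sqrt{-1}\eta_{1},\, \sqrt{-1}\, d\sigma\bigr\},
\]
where $\sqrt{-1}\eta_{1}$ is the principal ${\rm U}(1)$-connection on $Q$ supplied by Proposition \ref{locconnection} (equivalently, the contact form furnished by Theorem \ref{fundamentaltheorem} applied to the maximum root) and $\sqrt{-1}\, d\sigma$ is the flat Maurer-Cartan form on the trivial ${\rm U}(1)$-factor.

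Next, normalize the K\"ahler-Einstein metric so that $\text{Ric}(\omega_{B}) = m\omega_{B}$, with $m = \dim_{\mathbb{C}} B$; this is exactly the normalization under which the scalar curvature of $B$ equals $2m^{2}$, as required by Proposition \ref{CYTEinstein}. Since $Q$ arises from the maximum root $K_{B}^{\otimes 1/I(B)}$, the Euler class $\mathrm{e}(Q)$ is proportional to $[\omega_{B}]$, and by rescaling $\eta_{1}$ (equivalently, by passing to a suitable positive integer power of the maximum root as in Theorem \ref{CYThomo1}) we arrange $d\eta_{1} = \pi^{\ast}\omega_{B}$ on the nose. For the second factor, flatness of the trivial bundle yields $d(d\sigma) = 0 = \pi^{\ast}\psi_{2}$ with $\psi_{2} = 0$; the zero $(1,1)$-form is trivially primitive since $\Lambda_{\omega_{B}}(0) = 0$.

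With the two hypotheses of Proposition \ref{CYTEinstein} thus in place, the resulting Hermitian structure on $M$, built by horizontally lifting $J_{B}$ and declaring $\mathscr{J}(\eta_{1}) = -d\sigma$ along the fibres, is automatically CYT. Explicitly, the fundamental form becomes $\Omega_{M} = \pi^{\ast}\omega_{B} + \eta_{1} \wedge d\sigma$, and the Ricci-form identity of Remark \ref{CYTcondition} collapses to
\[
\text{Ric}^{\nabla^{B}}(\Omega_{M}) = \pi^{\ast}\bigl(m\omega_{B} - \Lambda_{\omega_{B}}(\omega_{B})\,\omega_{B}\bigr) = 0,
\]
which is an immediate consequence of $\Lambda_{\omega_{B}}(\omega_{B}) = m$.

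The principal obstacle is purely one of bookkeeping: coordinating the scale of the K\"ahler-Einstein metric with the normalization of the connection $\eta_{1}$ so that the equality $d\eta_{1} = \pi^{\ast}\omega_{B}$ holds exactly and not merely up to a multiplicative constant. The Fano condition $c_{1}(B) > 0$ (equivalently, the positivity of the scalar curvature) is precisely what makes this calibration consistent, since it guarantees that $\mathrm{e}(Q)$ has the correct sign to match a positive K\"ahler-Einstein form. Once this scaling is fixed, the corollary follows by direct invocation of Proposition \ref{CYTEinstein} with the trivial primitive form $\psi_{2} = 0$ in the role of the second curvature.
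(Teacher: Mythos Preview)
Your overall strategy---view $M=Q\times\mathrm U(1)$ as a $T^{2}$-bundle over $B$, take $\psi_{2}=0$ (trivially primitive) for the second factor, and reduce to a Ricci-form identity on the base---is the right one. The gap is in the normalization step. You impose \emph{two} independent conditions on the base metric: (i) scalar curvature $2m^{2}$, i.e.\ $\mathrm{Ric}(\omega_{B})=m\,\omega_{B}$, and (ii) $d\eta_{1}=\pi^{\ast}\omega_{B}$. Condition (i) forces $[\omega_{B}]=\tfrac{2\pi}{m}c_{1}(B)$, while (ii) forces $[\omega_{B}]=-2\pi\,\mathrm e(Q)=\tfrac{2\pi}{I(B)}c_{1}(B)$ since $Q$ is the maximum root of $K_{B}$. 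These agree only when $I(B)=m$, which fails already for $B=\mathbb C P^{n}$ (where $I=n+1\neq n=m$). Your proposed remedies do not help: multiplying $\eta_{1}$ by a constant destroys the connection normalization $\eta_{1}(\xi)=1$, and ``passing to a power of the maximum root'' changes $Q$ itself, which the corollary fixes.

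The paper does not give a separate proof of the corollary, but its proof of Theorem~\ref{CYThomo} (the flag-manifold instance) shows how to close the gap: do not try to invoke Proposition~\ref{CYTEinstein} on the nose. Instead keep $\psi$ fixed by $d\eta=\pi^{\ast}\psi$ (so $[\psi]=\tfrac{2\pi}{I(B)}c_{1}(B)$) and use the general identity of Remark~\ref{CYTcondition}(1),
\[
\mathrm{Ric}^{\nabla^{B}}(\Omega_{M})=\pi^{\ast}\bigl(\mathrm{Ric}^{\nabla}(\omega_{B})-\Lambda_{\omega_{B}}(\psi)\,\psi\bigr),
\]
with $\psi_{2}=0$. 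Writing $\omega_{B}=c\,\psi$ (possible since both are proportional to $c_{1}(B)$), one solves $\mathrm{Ric}^{\nabla}(\omega_{B})=\Lambda_{\omega_{B}}(\psi)\,\psi$ for $c=\tfrac{m}{I(B)}$; the resulting K\"ahler--Einstein metric has Einstein constant $I(B)^{2}/m$, not $m$, and the Hermitian structure $\Omega_{M}=\pi^{\ast}\omega_{B}+\eta\wedge d\sigma=\tfrac{m}{I(B)}d\eta+\eta\wedge d\sigma$ is CYT. So your last displayed identity is correct only after replacing $\omega_{B}$ by the appropriate multiple of $\psi$; the ``bookkeeping'' you flag is in fact the whole content of the argument.
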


\begin{remark}
It is worthwhile to point out that the statement of lemma above is slightly different from the statement in \cite{GRANTCHAROV}. In fact, keeping our conventions, we consider ${\text{Lie}}({\rm{U}}(1)) = \sqrt{-1}\mathbb{R}$ instead of ${\text{Lie}}({\rm{U}}(1)) = \mathbb{R}$, so we have ${\rm{e}}(Q) = -\frac{1}{I(B)}c_{1}(B)$, where $I(B)$ denotes the Fano index of $B$, cf. \cite{GRANTCHAROV}. Therefore, we need to consider the canonical bundle instead of the anti-canonical bundle.
\end{remark}

From Theorem \ref{fundamentaltheorem} and Corollary \ref{CYTcanonical}, we have the following result.

\begin{theorem}
\label{CYThomo}
Let $X_{P}$ be a complex flag manifold, with real dimension $2m$, associated to some parabolic Lie subgroup $P \subset G^{\mathbb{C}}$, and let $I(X_{P})$ be its Fano index. Then the manifold $M = Q(L) \times {\rm{U}}(1)$, such that $L = K_{X_{P}}^{\otimes \frac{\ell}{I(X_{P})}}$, $\ell > 0$, admits a CYT structure $(g_{M},\mathscr{J},\nabla^{B})$ whose fundamental form $\Omega_{M} = g_{M}(\mathscr{J} \otimes {\rm{id}})$ is given by
\begin{equation}
\label{CYTstructure}
\Omega_{M} = \frac{m\ell}{I(X_{P})} d\eta + \eta \wedge d\sigma,
\end{equation}
such that $\sqrt{-1} d\sigma \in \Omega^{1}({\rm{U}}(1);\sqrt{-1}\mathbb{R})$ is the Maurer-Cartan form, and (locally)
\begin{equation}
\eta =  \frac{1}{2I(X_{P})}d^{c}\log \Big (\big | \big |s_{U}v_{\delta_{P}}^{+} \big| \big |^{2} \Big ) + d\theta_{U},
\end{equation}
for some local section $s_{U} \colon U \subset X_{P} \to G^{\mathbb{C}}$, where $v_{\delta_{P}}^{+}$ denotes the highest weight vector of weight $\delta_{P}$ associated to the irreducible $\mathfrak{g}^{\mathbb{C}}$-module $V(\delta_{P})$.
\end{theorem}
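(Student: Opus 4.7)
The plan is to proceed in two main stages: first extract the explicit formula for the connection $1$-form $\eta$ on $Q(L)$ from Theorem \ref{fundamentaltheorem}, and second invoke the toric bundle construction of \cite{GRANTCHAROV} (in the guise of Proposition \ref{CYTEinstein} and Corollary \ref{CYTcanonical}) to produce the CYT data on $M = Q(L) \times {\rm{U}}(1)$.

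For the first step, one writes $L = K_{X_P}^{\otimes \ell/I(X_P)}$ as the holomorphic line bundle $L_{\chi_{-(\ell/I(X_P))\delta_P}}$ by Proposition \ref{C8S8.2Sub8.2.3Eq8.2.35}. Setting $\ell_\alpha = -(\ell/I(X_P))\,\langle \delta_P, h_\alpha^\vee\rangle$ in part $(2)$ of Theorem \ref{fundamentaltheorem} and using the multiplicative identity
$$\|s_U v_{\delta_P}^+\|^{2} \;=\; \prod_{\alpha \in \Sigma\setminus\Theta} \|s_U v_{\omega_\alpha}^+\|^{2\,\langle \delta_P,h_\alpha^\vee\rangle},$$
which follows from embedding $V(\delta_P)$ as the Cartan component of $\bigotimes V(\omega_\alpha)^{\otimes \langle \delta_P,h_\alpha^\vee\rangle}$ (with the induced $G$-invariant inner product as in Remark \ref{innerproduct}), the $\ell_\alpha$-formula collapses to the single expression displayed in the theorem, up to a scaling of $\eta$ by $\ell/I(X_P)$ that will be absorbed into the coefficient of $d\eta$ in $\Omega_M$. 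In particular $d\eta$ is a real $(1,1)$-form on $X_P$ which is a positive multiple of the $G$-invariant K\"ahler--Einstein form $\omega_{X_P}$ of Equation \ref{canonicalmetric}.

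For the second step, one observes that $M = Q(L) \times {\rm{U}}(1)$ is naturally a principal $T^{2}$-bundle over $X_P$ with connection
$$\Phi \;=\; {\text{diag}}\bigl(\sqrt{-1}\,\eta,\, \sqrt{-1}\,d\sigma\bigr), \qquad d\Phi \;=\; {\text{diag}}\bigl(\sqrt{-1}\,d\eta,\, 0\bigr),$$
whose second curvature vanishes and whose first curvature is proportional to $\pi^{\ast}\omega_{X_P}$ by Step 1. This puts us exactly in the setting of \cite{GRANTCHAROV}: define $\mathscr{J}$ on $M$ by declaring the horizontal distribution $\ker(\Phi)$ to be $\mathscr{J}$-invariant with $\mathscr{J}|_{\ker \Phi}$ the horizontal lift of the invariant complex structure $J_0$ on $X_P$, and setting $\mathscr{J}(\xi) = \partial_\sigma$ on the vertical $T^2$-factor. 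The metric $g_M = \pi^{\ast}g_{X_P} + \eta \otimes \eta + d\sigma \otimes d\sigma$ (after the K\"ahler--Einstein rescaling of $g_{X_P}$ that normalises the scalar curvature to $2m^{2}$) is Hermitian with respect to $\mathscr{J}$, and the fundamental form becomes $\Omega_M = \pi^{\ast}\omega_{B} + \eta \wedge d\sigma$. Absorbing the normalisation factor between $\omega_B$ and $d\eta$ rewrites this as the claimed expression $\Omega_M = \tfrac{m\ell}{I(X_P)}\,d\eta + \eta \wedge d\sigma$.

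It remains to verify the CYT condition. For this we apply Proposition \ref{CYTEinstein}: the base $X_P$ is a compact K\"ahler--Einstein manifold of positive scalar curvature, the first curvature $\psi_1$ is the K\"ahler form itself after rescaling, and the second curvature $\psi_2 = 0$ is trivially primitive. The resulting identity in Remark \ref{CYTcondition}(2) reads
$$\omega_B \;=\; \tfrac{1}{m}\,\Lambda_{\omega_B}(\psi_1)\,\psi_1,$$
and since $\Lambda_{\omega_B}(\omega_B) = m$ this is satisfied on the nose. Consequently ${\text{Ric}}^{\nabla^B}(\Omega_M) = 0$ and therefore ${\text{Hol}}^{0}(\nabla^B) \subset {\rm{SU}}(m+1)$, which is precisely the CYT property.

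The main obstacle I foresee is purely one of bookkeeping: tracing the three independent normalisations (the factor $\ell/I(X_P)$ coming from the root of the canonical bundle in Theorem \ref{fundamentaltheorem}, the Einstein constant $2\pi$ built into $\omega_{X_P}$ through Equation \ref{canonicalmetric}, and the scalar-curvature normalisation $2m^{2}$ required in Proposition \ref{CYTEinstein}) so that the coefficient of $d\eta$ in $\Omega_M$ comes out to be exactly $m\ell/I(X_P)$. All remaining steps are a direct assembly of results already established in Sections \ref{sec3} and \ref{sec4} together with \cite[Proposition 5, Corollary 6]{GRANTCHAROV}.
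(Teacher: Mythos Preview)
Your proposal is correct and follows essentially the same route as the paper: extract $\eta$ from Theorem \ref{fundamentaltheorem} with $\ell_\alpha = -\tfrac{\ell}{I(X_P)}\langle\delta_P,h_\alpha^\vee\rangle$, collapse the product via the Cartan-component embedding $V(\delta_P)\subset\bigotimes V(\omega_\alpha)^{\otimes\langle\delta_P,h_\alpha^\vee\rangle}$, set up the $T^2$-connection $\Phi=\mathrm{diag}(\sqrt{-1}\eta,\sqrt{-1}d\sigma)$, and invoke the Grantcharov--Grantcharov--Poon criterion of Proposition \ref{CYTEinstein}/Remark \ref{CYTcondition}. The paper carries out the normalisation you flag as bookkeeping by solving ${\text{Ric}}^\nabla(\omega_0)-\Lambda_{\omega_0}(\psi)\psi=0$ explicitly for the Einstein constant $\lambda=I(X_P)^2/(\ell^2 m)$, which yields $\omega_0=\tfrac{m\ell}{I(X_P)}\psi$ and hence the coefficient $\tfrac{m\ell}{I(X_P)}$ in front of $d\eta$; note also that the $\eta$ you obtain carries a factor of $\ell$ (as in the paper's proof, equation \eqref{contactroot}), so the ``absorb the scaling into $d\eta$'' remark should be replaced by that direct computation rather than a rescaling of $\eta$ itself.
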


\begin{proof}
The proof goes as follows. From Proposition \ref{C8S8.2Sub8.2.3Eq8.2.35} and Theorem \ref{fundamentaltheorem}, it follows that 
\begin{center}

$Q(K_{X_{P}}^{\otimes \frac{\ell}{I(X_{P})}}) = \displaystyle \sum_{\alpha \in \Sigma \backslash \Theta}Q \big ( \textstyle{- \ell \frac{ \langle \delta_{P},h_{\alpha}^{\vee} \rangle}{I(X_{P})}} \omega_{\alpha} \big).$
    
\end{center}
Therefore, from Proposition \ref{connection} we have a connection one-form on $Q(K_{X_{P}}^{\otimes \frac{\ell}{I(X_{P})}})$ defined by
\begin{equation}
\label{localconnection}
\eta' = \displaystyle \sum_{\alpha \in \Sigma \backslash \Theta}  \frac{ \ell \langle \delta_{P},h_{\alpha}^{\vee} \rangle}{2 I(X_{P})} ( \partial - \overline{\partial} \big ) \log \big (||s_{U} v_{\omega_{\alpha}}^{+}||^{2} \big )+ a_{U}^{-1}da_{U},    
\end{equation}
thus the associated contact structure is given by $\eta = - \sqrt{-1} \eta'$. If we consider $a_{U} = \mathrm{e}^{\sqrt{-1}\theta_{U}}$, where $\theta_{U}$ is real and is defined up to an integral multiple of $2 \pi$, by rearranging the expression above we obtain
\begin{equation}
\label{contactcanonical}
\eta = \displaystyle - \frac{\ell \sqrt{-1}}{2 I(X_{P})}\big ( \partial - \overline{\partial} \big )\log \Big ( \prod_{\alpha \in \Sigma \backslash \Theta} ||s_{U}v_{\omega_{\alpha}}^{+}||^{2  \langle \delta_{P},h_{\alpha}^{\vee} \rangle}\Big) + d\theta_{U}.    
\end{equation}
Now, we recall some basic facts about representation theory of simple Lie algebras \cite[p. 186]{PARABOLICTHEORY}:

\begin{enumerate}

    \item $V(\delta_{P}) \subset \bigotimes_{\alpha \in \Sigma \backslash \Theta} V(\omega_{\alpha})^{ \otimes \langle \delta_{P},h_{\alpha}^{\vee} \rangle}$; 
    
    \item $v_{\delta_{P}}^{+} = \bigotimes_{\alpha \in \Sigma \backslash \Theta}v_{\omega_{\alpha}}^{+ \otimes \langle \delta_{P},h_{\alpha}^{\vee} \rangle}$, where $v_{\omega_{\alpha}}^{+} \in V(\omega_{\alpha})$ is the highest weight vector of highest weight $\omega_{\alpha}$, $\forall \alpha \in \Sigma \backslash \Theta$.
    
\end{enumerate}
From these two facts, we can take a $G$-invariant inner product on $V(\delta_{P})$ induced from a $G$-invariant inner product $\langle \cdot, \cdot \rangle_{\alpha}$ on each factor $V(\omega_{\alpha})$, $\forall \alpha \in \Sigma \backslash \Theta$, such that
\begin{equation}
\label{innerinv}
\Big \langle  \bigotimes_{\alpha \in \Sigma \backslash \Theta} \big(v_{1}^{(\alpha)} \otimes \cdots \otimes v_{\langle \delta_{P},h_{\alpha}^{\vee} \rangle}^{(\alpha)}\big),  \bigotimes_{\alpha \in \Sigma \backslash \Theta} \big(w_{1}^{(\alpha)} \otimes \cdots \otimes w_{\langle \delta_{P},h_{\alpha}^{\vee} \rangle}^{(\alpha)}\big) \Big \rangle = \prod_{ \alpha \in \Sigma \backslash \Theta}\big \langle v_{1}^{(\alpha)},w_{1}^{(\alpha)} \big \rangle_{\alpha}\cdots \big \langle v_{\langle \delta_{P},h_{\alpha}^{\vee} \rangle}^{(\alpha)},w_{\langle \delta_{P},h_{\alpha}^{\vee} \rangle}^{(\alpha)} \big \rangle_{\alpha},
\end{equation}
Hence, by rearranging the expression \ref{contactcanonical}, we obtain obtain from the norm induced by the inner product above the following expression
\begin{equation}
\label{contactroot}
\eta =  \displaystyle \frac{\ell}{2I(X_{P})}d^{c}\log \Big (\big | \big |s_{U}v_{\delta_{P}}^{+} \big| \big |^{2} \Big ) + d\theta_{U}.
\end{equation}
From this, given $M = Q(K_{X_{P}}^{\otimes \frac{\ell}{I(X_{P})}}) \times {\rm{U}}(1)$ we have
\begin{center}
$TM = TQ(K_{X_{P}}^{\otimes \frac{\ell}{I(X_{P})}}) \oplus T{\rm{U}}(1),$
\end{center}
thus we can define a connection $\Phi \in \Omega^{1}(M;{\text{Lie}}(T^{2}))$ such that
\begin{center}
$\Phi = \begin{pmatrix}
 \sqrt{-1}\eta & 0 \\
 0 & \sqrt{-1}d\sigma
\end{pmatrix}.$
\end{center}
Now, by considering the complex structure $\mathscr{J} \in {\text{End}}(M)$ defined previously, see also \cite[Lemma 1]{GRANTCHAROV}, we can fix an invariant  K\"{a}hler-Einstein metric $\omega_{0} \in \Omega^{1,1}(X_{P})$, such that ${\text{Ric}}^{\nabla}(\omega_{0}) = \lambda \omega_{0}$, $\lambda > 0$, and consider the Hermitian metric on $M = Q(K_{X_{P}}^{\otimes \frac{\ell}{I(X_{P})}}) \times {\rm{U}}(1)$ defined by $g_{M} = \Omega_{M}(\rm{id}\otimes \mathscr{J})$, such that 
\begin{center}
$\Omega_{M} = \pi^{\ast}\omega_{0} + \eta \wedge d\sigma$.
\end{center}
Therefore, from Proposition \ref{CYTEinstein} and Remark \ref{CYTcondition}, in order to obtain a CYT structure we need to solve the equation
\begin{center}
 ${\text{Ric}}^{\nabla}(\omega_{0}) - \Lambda_{\omega_{0}}(\psi)\psi = 0,$
\end{center}
where $\psi \in \Omega^{1,1}(X_{P})$ satisfies $d\eta = \pi^{\ast}\psi$, notice that from \ref{contactroot} we have
\begin{equation}
\label{curvform}
\psi = \displaystyle \frac{\ell\sqrt{-1}}{I(X_{P})}\partial \overline{\partial} \log \Big (\big | \big |s_{U}v_{\delta_{P}}^{+} \big| \big |^{2} \Big ),
\end{equation}
thus $\frac{\psi}{2\pi} = \frac{\ell}{I(X_{P})}c_{1}(X_{P})$, see Equation \ref{Cherncanonical} and Equation \ref{localform}. Since ${\text{Ric}}^{\nabla}(\omega_{0}) = 2\pi c_{1}(X_{P})$, we obtain
\begin{equation}
\lambda \omega_{0} - \frac{\ell \Lambda_{\omega_{0}}(\psi)}{I(X_{P})}\lambda \omega_{0} = 0 \Longleftrightarrow 1 - \frac{\ell \Lambda_{\omega_{0}}(\psi)}{I(X_{P})} = 0.
\end{equation}
Thus, since $\psi = \frac{\ell \lambda}{I(X_{P})}\omega_{0} \Longrightarrow \Lambda_{\omega_{0}}(\psi) =  \frac{\ell \lambda m}{I(X_{P})}$, we can solve the equation above on the right side for the parameter $\lambda$. We obtain
\begin{equation}
\displaystyle \lambda = \frac{I(X_{P})^{2}}{\ell^{2}m},
\end{equation}
and the base metric $\omega_{0}$ is given by
\begin{equation}
\label{inducedbasic}
\omega_{0} = \frac{m\ell^{2}\sqrt{-1}}{I(X_{P})^{2}} \partial \overline{\partial} \log \Big (\big | \big |s_{U}v_{\delta_{P}}^{+} \big| \big |^{2} \Big ), 
\end{equation}
notice that $\psi = \frac{I(X_{P})}{m \ell} \omega_{0}$. Therefore, we have 
\begin{center}
$\Omega_{M} = \displaystyle \frac{m\ell}{I(X_{P})} d\eta + \eta \wedge d\sigma$,
\end{center}
such that $\eta$ is given by expression \ref{contactroot}, which concludes the proof.
\end{proof}

\begin{remark}
\label{complexHopf}
Notice that in the setting of Theorem \ref{CYThomo}, we have the following decomposition for the tangent space of $M = Q(L) \times {\rm{U}}(1)$
\begin{center}
$TM = \ker(\eta) \oplus \big \langle \xi = \frac{\partial }{\partial \theta} \big \rangle \oplus \big \langle \frac{\partial}{\partial \sigma} \big\rangle.$
\end{center}
Thus, by considering the associated normal almost contact structure $(\phi,\xi,\eta)$ and the induced ${\rm{CR}}$-structure on $Q(L)$, we can describe the complex structure $\mathscr{J}\in {\text{End}}(TM)$, as being
\begin{center}
$ \mathscr{J} =  \begin{pmatrix}
\ \mathcal{J}_{\phi} & 0 &  \ \ 0 \\
0 & 0 & -1 \\                  
0  & 1 &  \ \ 0
 \end{pmatrix},$
\end{center}
where ${\mathcal{J}}_{\phi} \colon \mathscr{D} \to \mathscr{D}$, such that $\mathscr{D} = \ker(\eta)$, see for instance \ref{complexCR}. It is worthwhile to observe that $\mathscr{J}(\eta) = -d\sigma$. Besides, notice that in terms of the structure tensors  $(\phi,\xi,\eta)$ we have
\begin{equation}
\mathscr{J}(Y) = \displaystyle \phi(Y) + \eta(Y)\frac{\partial}{\partial \sigma} \ \ {\text{and}} \ \  \displaystyle \mathscr{J}\Big (\frac{\partial}{\partial \sigma} \Big ) = - \xi,
\end{equation}
$\forall Y \in TM$ tangent to $Q(L)$, recall that ${\mathcal{J}}_{\phi} = \phi|_{\mathscr{D}}$, see \ref{complexCR}.
\end{remark}

\begin{remark}
Although we denote by $\sqrt{-1} d\sigma \in \Omega^{1}({\rm{U}}(1);\sqrt{-1}\mathbb{R})$ the Maurer-Cartan form on $S^{1}$, it is worthwhile to point out that the 1-form $d\sigma \in \Omega^{1}(S^{1})$ is not exact. In fact, we have $d\sigma$ obtained from the restriction of the well-known closed 1-form $\Omega_{0} \in  \Omega^{1}(\mathbb{C}^{\times})$ defined by
\begin{equation}
\Omega_{0} = \displaystyle {\text{Im}}\Bigg (\frac{dw}{w} \Bigg) = \frac{udv - vdu}{u^{2} + v^{2}},
\end{equation}
here we consider $S^{1} \subset \mathbb{C}^{\times}$, and coordinates $w = u + \sqrt{-1}v \in \mathbb{C}^{\times}$.
\end{remark}

\begin{definition}
A Hermitian manifold $(M,g,J)$ is called locally conformally K\"{a}hler (L.C.K.) if it satisfies one of the following equivalent conditions: 

\begin{enumerate}

\item There exists an open cover $\mathscr{U}$ of $M$ and a family of smooth functions $\{f_{U}\}_{U \in \mathscr{U}}$, $f_{U} \colon U \to \mathbb{R}$, such that each local metric
\begin{equation}
g_{U} = {\mathrm{e}}^{-f_{U}}g|_{U},
\end{equation}
is K\"{a}hlerian, $\forall U \in \mathscr{U}$.

\item There exists a globally defined closed $1$-form $\theta \in \Omega^{1}(M)$ such that 
\begin{equation}
d\Omega = \theta \wedge \Omega.
\end{equation}
\end{enumerate}
\end{definition}

\begin{remark}
\label{LocalLee}
Notice that the two conditions in the definition above tells us that
\begin{center}
$\theta |_{U} = df_{U}$,
\end{center}
$\forall U \in \mathscr{U}$, see \cite{Dragomir}. The closed $1$-form $\theta \in \Omega^{1}(M)$ which satisfies the second condition of the definition above is called the Lee form of a L.C.K. manifold $(M,g,J)$. It is worth mentioning that, if the Lee form $\theta$ of a L.C.K. manifold $(M,g,J)$ is exact, i.e., $\theta = df$, such that $f \in C^{\infty}(M)$, then have that $(M,{\rm{e}}^{-f}g,J)$ is K\"{a}hler. In what follows, unless otherwise stated, we will assume that $\theta$ is not exact, and $\theta \not \equiv 0$.
\end{remark}

An important subclass of L.C.K. manifolds is defined by the parallelism of the Lee form with respect to the Levi-Civita connection of $g$.

\begin{definition}
A L.C.K. manifold $(M,g,J)$ is called a Vaisman manifold if $\nabla \theta \equiv 0$, where $\nabla$ is the Levi-Civita connection of $g$.
\end{definition}

 We observe that, under the hypotheses of Theorem \ref{CYThomo}, it follows that $Q(L)$ is a Sasakian manifold, see for instance \cite{CONTACTCORREA} for an explicit description of the associated Sasaki structure. Hence, if we consider its Riemannian cone $ {\mathscr{C}}(Q(L)) = Q(L) \times \mathbb{R}^{+}$, by taking a real number $q \in \mathbb{R}$, $q > 0$, and considering an equivalence relation $\sim_{q}$ on ${\mathscr{C}}(Q(L))$ generated by $(x,t) \sim (x,qt)$, the quotient ${\mathscr{C}}(Q(L))/ \sim_{q}$ defines a Vaisman manifold \cite{Structurevaisman}, \cite{Imersionvaisman}, or generalized Hopf manifold \cite{Vaisman}, with the Gauduchon metric \cite{Gauduchon} provided by an isomorphism ${\mathscr{C}}(Q(L))/ \sim_{q} \cong Q(L) \times S^{1}$. Being more precise, we have the following result.
 
\begin{corollary}
\label{CYTVaisman}
The manifold $M = Q(L) \times {\rm{U}}(1)$, such that $L = K_{X_{P}}^{\otimes \frac{\ell}{I(X_{P})}}$, $\ell > 0$, with the Hermitian structure $(g_{M},\mathscr{J})$ provided in Theorem \ref{CYThomo}, is also a Vaisman manifold. 
\end{corollary}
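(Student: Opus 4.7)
The plan is to produce the Lee form explicitly from the formula for $\Omega_{M}$ in Theorem~\ref{CYThomo} and then exhibit $(M,g_{M})$ as a Riemannian product in order to conclude that the Lee form is parallel.

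\textbf{Step 1 (L.C.K. structure and Lee form).} From the proof of Theorem~\ref{CYThomo} one has $d\eta=\pi^{\ast}\psi$ with $\psi=\tfrac{I(X_{P})}{m\ell}\,\omega_{0}$, so that $\tfrac{m\ell}{I(X_{P})}d\eta=\pi^{\ast}\omega_{0}$ and
\[
\Omega_{M}=\pi^{\ast}\omega_{0}+\eta\wedge d\sigma.
\]
Since $\omega_{0}$ is closed and $d(d\sigma)=0$, I would first compute $d\Omega_{M}=d\eta\wedge d\sigma=\tfrac{I(X_{P})}{m\ell}\pi^{\ast}\omega_{0}\wedge d\sigma=-\tfrac{I(X_{P})}{m\ell}d\sigma\wedge\pi^{\ast}\omega_{0}$. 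Testing the ansatz $\theta=c\,d\sigma$ and using $d\sigma\wedge d\sigma=0$ yields $\theta\wedge\Omega_{M}=c\,d\sigma\wedge\pi^{\ast}\omega_{0}$, and matching with $d\Omega_{M}$ forces $c=-\tfrac{I(X_{P})}{m\ell}$. Hence the $1$-form $\theta=-\tfrac{I(X_{P})}{m\ell}\,d\sigma$ is closed, non-exact (its cohomology class restricts to a generator of $H^{1}(S^{1},\mathbb{R})$) and satisfies $d\Omega_{M}=\theta\wedge\Omega_{M}$, so $(M,g_{M},\mathscr{J})$ is L.C.K. with Lee form $\theta$.

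\textbf{Step 2 (parallelism).} Next I would identify $(M,g_{M})$ as a Riemannian product of $Q(L)$ with the standard circle. Using Remark~\ref{complexHopf} (so that $\mathscr{J}\xi=\partial/\partial\sigma$, $\mathscr{J}(\partial/\partial\sigma)=-\xi$, and $\mathscr{J}|_{\mathscr{D}}=\phi$), a direct pairing computation in the splitting $TM=\mathscr{D}\oplus\langle\xi\rangle\oplus\langle\partial/\partial\sigma\rangle$ of $g_{M}(\cdot,\cdot)=\Omega_{M}(\cdot,\mathscr{J}\cdot)$ yields
\[
g_{M}=\pi^{\ast}g_{0}+\eta\otimes\eta+d\sigma\otimes d\sigma,
\]
with $g_{0}=\omega_{0}(\cdot,J_{0}\cdot)$ the invariant K\"ahler-Einstein metric on $X_{P}$ fixed in the proof of Theorem~\ref{CYThomo}. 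This displays $g_{M}$ as the product of the Riemannian metric $\widetilde{g}_{Q}:=\pi^{\ast}g_{0}+\eta\otimes\eta$ on $Q(L)$ with the standard flat metric on $S^{1}$. The Levi-Civita connection $\nabla$ of $g_{M}$ therefore splits as a direct sum, and since $\theta$ is, up to the constant $-\tfrac{I(X_{P})}{m\ell}$, the pullback along the second projection $M\to S^{1}$ of the parallel $1$-form $d\sigma$ on $(S^{1},d\sigma\otimes d\sigma)$, it is parallel: $\nabla\theta=0$. Combined with Step~1 this gives the Vaisman condition.

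The only non-formal step is the identification of $g_{M}$ as a Riemannian product; the potential subtlety is that $\mathscr{J}$ mixes the two \emph{vertical} directions $\xi$ and $\partial/\partial\sigma$, so one must check that every mixed term in $\Omega_{M}(X,\mathscr{J}Y)$ coming from either $\pi^{\ast}\omega_{0}$ or $\eta\wedge d\sigma$ vanishes across the three summands $\mathscr{D}$, $\langle\xi\rangle$, $\langle\partial/\partial\sigma\rangle$. Once this purely algebraic verification is carried out, the parallelism of $\theta$ is immediate from the product structure.
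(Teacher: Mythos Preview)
Your proof is correct and follows essentially the same route as the paper. Step~1 is identical to the paper's computation of the Lee form $\theta=-\tfrac{I(X_{P})}{m\ell}\,d\sigma$; for Step~2 the paper phrases the parallelism via Koszul's identity $2g_{M}(\nabla_{X}A,Y)=d\theta(X,Y)+(\mathscr{L}_{A}g_{M})(X,Y)$ and checks that the Lee vector field $A\propto\partial/\partial\sigma$ is Killing, whereas you reach the same conclusion by exhibiting $g_{M}$ as the Riemannian product $\big(\pi^{\ast}g_{0}+\eta\otimes\eta\big)\times d\sigma\otimes d\sigma$ --- the Killing property and the product splitting are two sides of the same observation.
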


\begin{proof}
The result follows from the following ideas. By considering the fundamental $2$-form $\Omega_{M}$ associated to $(g_{M},\mathscr{J})$, we obtain
\begin{equation}
d\Omega_{M} = d\eta \wedge d\sigma = \Big (- \frac{I(X_{P})}{m\ell}d\sigma \Big ) \wedge \Omega_{M} \Longrightarrow \theta = - \frac{I(X_{P})}{m\ell}d\sigma.
\end{equation}
Thus, we have $d\Omega_{M} = \theta \wedge \Omega_{M} $, with $d\theta = 0$. Now, by considering the Levi-Civita connection $\nabla$ associated to the metric $g_{M} = \Omega_{M}({\rm{id}} \otimes \mathscr{J})$, it follows from Koszul's formula that 
\begin{equation}
2g_{M}(\nabla_{X}A,Y) = d\theta(X,Y) + (\mathscr{L}_{A}g_{M})(X,Y),
\end{equation}
$\forall X,Y \in \Gamma(TM)$, where $\theta = g_{M}(\cdot,A)$. Since $d\theta = 0$ and $A = -\frac{m\ell}{I(X_{P})}\frac{\partial}{\partial \sigma}$, we have
\begin{equation}
(\mathscr{L}_{A}g_{M}) = \big(\mathscr{L}_{A}d\sigma \big) \otimes d\sigma + d\sigma \otimes \big (\mathscr{L}_{A}d\sigma \big) = 0.
\end{equation}
Hence, we have $\nabla \theta \equiv 0  \Longrightarrow (M,\mathscr{J},g_{M})$ is Vaisman.
\end{proof}

In what follows, we provide some concrete examples which illustrate the result of Theorem \ref{CYThomo}. 
\begin{remark}
Given a complex flag manifold $X_{P}$, associated to some parabolic Lie subgroup $P \subset G^{\mathbb{C}}$, for the sake of simplicity, in the next examples we shall consider the invariant K\"{a}hler-Einstein metric $\rho_{0} \in \Omega^{1,1}(X_{P})$ such that
\begin{equation}
\label{riccinorm}
\rho_{0} = \sqrt{-1}\partial \overline{\partial} \log \Big (\big | \big |s_{U}v_{\delta_{P}}^{+} \big| \big |^{2} \Big ).
\end{equation}
The K\"{a}hler-Einstein metric above satisfies ${\text{Ric}}^{\nabla}(\rho_{0}) = \rho_{0}$, see Equation \ref{Cherncanonical} and Equation \ref{localform}. Notice that $\omega_{0} = \frac{m\ell^{2}}{I(X_{P})^{2}}\rho_{0}$, where $\omega_{0}$ is defined by \ref{inducedbasic}.
\end{remark}
\begin{example}[Basic Model] 
\label{CYTbasic}
Let us describe how the result provided by Theorem \ref{CYThomo} can be applied in the basic model \ref{basiccase}. Since this basic model allows us to describe explicitly CYT structures for a huge class of examples, our treatment for this partricular case will be more detailed .

Let $X_{P_{\omega_{\alpha}}}$ be a complex flag manifold, with real dimension $2m$, defined by a maximal parabolic Lie subgroup $P_{\omega_{\alpha}} \subset G^{\mathbb{C}}$. In this case we have
\begin{center}
$I(X_{P_{\omega_{\alpha}}}) = \langle \delta_{P_{\omega_{\alpha}}},h_{\alpha}^{\vee} \rangle$ \ \ and \ \ $K_{X_{P_{\omega_{\alpha}}}}^{ \otimes \frac{1}{ \langle \delta_{P_{\omega_{\alpha}}},h_{\alpha}^{\vee} \rangle}} = L_{\chi_{\omega_{\alpha}}}^{-1}$.
\end{center}
Thus, by considering the K\"{a}hler-Einstein metric
\begin{center}
$\rho_{0} = \langle \delta_{P_{\omega_{\alpha}}},h_{\alpha}^{\vee} \rangle \sqrt{-1}\partial \overline{\partial} \log \Big (\big | \big |s_{U}v_{\omega_{\alpha}}^{+} \big| \big |^{2} \Big )$,
\end{center}
given $L = K_{X_{P_{\omega_{\alpha}}}}^{ \otimes \frac{\ell}{ \langle \delta_{P_{\omega_{\alpha}}},h_{\alpha}^{\vee} \rangle}}$, for some $\ell > 0$, we have a connection $\sqrt{-1}\eta \in \Omega^{1}(Q(L);\sqrt{-1}\mathbb{R})$ such that 
\begin{center}
$\eta =  \displaystyle \frac{\ell}{2}d^{c}\log \Big (\big | \big | s_{U}v_{\omega_{\alpha}}^{+} \big| \big |^{2} \Big ) + d\theta_{U},$
\end{center}
notice that, as in the proof of Theorem \ref{CYThomo}, we have
\begin{center}
$\displaystyle d\eta = \frac{\ell}{\langle \delta_{P_{\omega_{\alpha}}},h_{\alpha}^{\vee} \rangle} \pi^{\ast} \rho_{0}$ \ \ and \ \ $\displaystyle \omega_{0} = \frac{m\ell^{2}}{\langle \delta_{P_{\omega_{\alpha}}},h_{\alpha}^{\vee} \rangle^{2}}\rho_{0}.$
\end{center}
Hence, we obtain $\psi = \frac{\ell}{\langle \delta_{P_{\omega_{\alpha}}},h_{\alpha}^{\vee} \rangle} \rho_{0} \in \Omega^{1,1}(X_{P_{\omega_{\alpha}}})$, such that $d\eta = \pi^{\ast} \psi$, and also satisfying $\psi = \frac{\langle \delta_{P_{\omega_{\alpha}}},h_{\alpha}^{\vee} \rangle}{m \ell} \omega_{0}$, which in turn implies that
\begin{center}
${\text{Ric}}^{\nabla}(\omega_{0}) - \Lambda_{\omega_{0}}(\psi)\psi = 0.$
\end{center}
In fact, we have
\begin{center}
$\displaystyle {\text{Ric}}^{\nabla}(\omega_{0}) = {\text{Ric}}^{\nabla}\Big ( \frac{m\ell^{2}}{\langle \delta_{P_{\omega_{\alpha}}},h_{\alpha}^{\vee} \rangle^{2}}\rho_{0}\Big) = \frac{\langle \delta_{P_{\omega_{\alpha}}},h_{\alpha}^{\vee} \rangle^{2}}{m\ell^{2}} \omega_{0},$
\end{center}
and
\begin{center}
$\displaystyle \Lambda_{\omega_{0}}(\psi)\psi = \Lambda_{\omega_{0}}\Big (\frac{\langle \delta_{P_{\omega_{\alpha}}},h_{\alpha}^{\vee} \rangle}{m \ell} \omega_{0} \Big) \frac{\langle \delta_{P_{\omega_{\alpha}}},h_{\alpha}^{\vee} \rangle}{m \ell} \omega_{0} = \frac{\langle \delta_{P_{\omega_{\alpha}}},h_{\alpha}^{\vee} \rangle^{2}}{m\ell^{2}} \omega_{0}.$
\end{center}
Notice that in the last equation above we have used that $\Lambda_{\omega_{0}}(\omega_{0}) = m$. Therefore, it follows from Theorem \ref{CYThomo} and the computations above that
\begin{center}
$\displaystyle \Omega_{M} = \frac{m\ell^{2}}{\langle \delta_{P_{\omega_{\alpha}}},h_{\alpha}^{\vee} \rangle}\sqrt{-1}\partial \overline{\partial} \log \Big (\big | \big |s_{U}v_{\omega_{\alpha}}^{+} \big| \big |^{2} \Big ) + \Bigg (\displaystyle \frac{\ell}{2}d^{c}\log \Big (\big | \big | s_{U}v_{\omega_{\alpha}}^{+} \big| \big |^{2} \Big ) + d\theta_{U} \Bigg ) \wedge d\sigma,$
\end{center}
defines a CYT structure $(g_{M},\mathscr{J},\nabla^{B})$ on $M = Q(L)\times {\rm{U}}(1)$, such that $L = K_{X_{P_{\omega_{\alpha}}}}^{ \otimes \frac{\ell}{ \langle \delta_{P_{\omega_{\alpha}}},h_{\alpha}^{\vee} \rangle}}$, for some $\ell > 0$, where $g_{M} = \Omega_{M}(\rm{id}\otimes \mathscr{J})$, $\mathscr{J}$ is the complex structure described previously, see also Remark \ref{complexHopf}, and $\nabla^{B}$ is the associated Bismut connection.
\end{example}

\begin{example}[Hopf surface] 
\label{Hopfsurface}
A particular case of the previous example is given by the well-known Hopf surface $S^{3}\times S^{1}$. Consider $G^{\mathbb{C}} = {\rm{SL}}(2,\mathbb{C})$ and $P = B \subset {\rm{SL}}(2,\mathbb{C})$ as in Example \ref{exampleP1}. As we have seen, in this case we have

\begin{center}
    
$X_{B} = \mathbb{C}{\rm{P}}^{1}$ \ \ and \ \ $\mathscr{P}(\mathbb{C}{\rm{P}}^{1},{\rm{U}}(1)) = \mathbb{Z}\mathrm{e}(Q(\omega_{\alpha}))$,
    
\end{center}
where $Q(\omega_{\alpha}) = Q(\mathscr{O}(1))$. Since $I(\mathbb{C}{\rm{P}}^{1}) = 2$ and $K_{\mathbb{C}{\rm{P}}^{1}} = \mathscr{O}(-2)$, it follows that  $K_{\mathbb{C}{\rm{P}}^{1}}^{\otimes \frac{1}{2}} = \mathscr{O}(-1)$, thus

\begin{center}    
$Q(K_{\mathbb{C}{\rm{P}}^{1}}^{\otimes \frac{1}{2}}) = S^{3}$.
\end{center}
By following Example \ref{exampleP1}, we obtain 
\begin{center}
$\rho_{0} = \displaystyle  \langle \delta_{B},h_{\alpha}^{\vee} \rangle \sqrt{-1}\partial \overline{\partial} \log \Bigg ( \Big |\Big |\begin{pmatrix}
 1 & 0 \\
 z & 1
\end{pmatrix} e_{1} \Big| \Big|^{2} \Bigg ) = 2\sqrt{-1}\frac{dz \wedge d\overline{z}}{(1+|z|^{2})^{2}}.$
\end{center}
Moreover, in this case we have a connection $\sqrt{-1}\eta \in \Omega^{1}(S^{3};\sqrt{-1}\mathbb{R})$ such that 
\begin{center}
$\eta =  \displaystyle \frac{\sqrt{-1}}{2}\frac{zd\overline{z} - \overline{z}dz}{(1 + |z|^{2})} + d\theta_{U}.$
\end{center}
Therefore, similarly to the previous example, we obtain
\begin{center}
$\displaystyle \psi = \frac{\sqrt{-1}dz \wedge d\overline{z}}{(1+|z|^{2})^{2}}$ \ \ and \ \ $ \displaystyle \omega_{0} = \frac{\sqrt{-1}}{2}\frac{dz \wedge d\overline{z}}{(1+|z|^{2})^{2}}.$
\end{center}
such that $d\eta = \pi^{\ast}\psi$ and $\psi = 2\omega_{0}$. It is straightforward to verify that ${\text{Ric}}^{\nabla}(\omega_{0}) - \Lambda_{\omega_{0}}(\psi)\psi = 0.$ Hence, we obtain from Theorem \ref{CYThomo} that 
\begin{center}
$\displaystyle \Omega_{S^{3}\times S^{1}} = \frac{\sqrt{-1}}{2}\frac{dz \wedge d\overline{z}}{(1+|z|^{2})^{2}} + \Bigg ( \displaystyle \frac{\sqrt{-1}}{2}\frac{zd\overline{z} - \overline{z}dz}{(1 + |z|^{2})} + d\theta_{U}\Bigg ) \wedge d\sigma,$
\end{center}
defines a CYT structure $(g_{S^{3}\times S^{1}},\mathscr{J},\nabla^{B})$ on $S^{3}\times S^{1}$, where $g_{S^{3}\times S^{1}} = \Omega_{S^{3}\times S^{1}}(\rm{id}\otimes \mathscr{J})$, $\mathscr{J}$ is the complex structure described in Remark \ref{complexHopf} and $\nabla^{B}$ is the associated Bismut connection.
\end{example}

\begin{remark}
Notice that by a similar argument, and by following Example \ref{CYTbasic}, we can explicitly describe a CYT structure on $\mathbb{R}P^{3} \times S^{1}$, as a particular case of CYT structure on $(S^{3}/\mathbb{Z}_{\ell}) \times S^{1}$, $\ell >0$. 
\end{remark}

\begin{example}[Hopf manifold] We also have as a particular case of Example \ref{CYTbasic} the Hopf manifold $S^{2n+1}\times S^{1}$. As in Example \ref{examplePn}, consider $G^{\mathbb{C}} = {\rm{SL}}(n+1,\mathbb{C})$ and $P_{\omega_{\alpha_{1}}} \subset {\rm{SL}}(n+1,\mathbb{C})$. From this, we have 
\begin{center}
    
$X_{P_{\omega_{\alpha_{1}}}} = \mathbb{C}{\rm{P}}^{n},$ \ \ and \ \ $\mathscr{P}(\mathbb{C}{\rm{P}}^{n},{\rm{U}}(1)) = \mathbb{Z}\mathrm{e}(Q(\omega_{\alpha_{1}}))$,
    
\end{center}
where $Q(\omega_{\alpha_{1}}) = Q(\mathscr{O}(1))$. Since $I(\mathbb{C}{\rm{P}}^{n}) = n+1$ and $K_{\mathbb{C}{\rm{P}}^{n}} = \mathscr{O}(-n-1)$, it follows that  $K_{\mathbb{C}{\rm{P}}^{n}}^{\otimes \frac{1}{n+1}} = \mathscr{O}(-1)$, thus

\begin{center}    
$Q(K_{\mathbb{C}{\rm{P}}^{n}}^{\otimes \frac{1}{n+1}}) = S^{2n+1}$.
\end{center}
By following Example \ref{examplePn} and Example \ref{CYTbasic}, we obtain 
\begin{center}
$\rho_{0} =  \displaystyle (n+1)\sqrt{-1}\partial \overline{ \partial} \log \Big (1 + \sum_{l = 1}^{n}|z_{l}|^{2} \Big ),$
\end{center}
cf. \cite[p. 97 ]{Moroianu}. Moreover, in this case we have a connection $\sqrt{-1}\eta \in \Omega^{1}(S^{2n+1};\sqrt{-1}\mathbb{R})$ such that 
\begin{center}
$\eta = \displaystyle \frac{\sqrt{-1}}{2} \sum_{l = 1}^{n}\frac{z_{l}d\overline{z}_{l} - \overline{z}_{l}dz_{l}}{\big (1 + \sum_{l = 1}^{n}|z_{l}|^{2} \big )} + d\theta_{U},$
\end{center}
see \ref{localconnection} and Example \ref{COMPLEXHOPF}. Thus, similarly to the previous example, we obtain
\begin{center}
$\displaystyle \psi =\sqrt{-1}\partial \overline{ \partial} \log \Big (1 + \sum_{l = 1}^{n}|z_{l}|^{2} \Big )$ \ \ and \ \ $ \displaystyle \omega_{0} = \frac{n}{n+1}\sqrt{-1}\partial \overline{ \partial} \log \Big (1 + \sum_{l = 1}^{n}|z_{l}|^{2} \Big ),$
\end{center}
such that $d\eta = \pi^{\ast}\psi$, and $\psi = \frac{n+1}{n}\omega_{0}$. It is straightforward to verify that ${\text{Ric}}^{\nabla}(\omega_{0}) - \Lambda_{\omega_{0}}(\psi)\psi = 0.$
Hence, we have from Theorem \ref{CYThomo} that 
\begin{center}
$\displaystyle \Omega_{S^{2n+1}\times S^{1}} =  \frac{n}{n+1}\sqrt{-1}\partial \overline{ \partial} \log \Big (1 + \sum_{l = 1}^{n}|z_{l}|^{2} \Big ) + \Bigg ( \displaystyle\displaystyle \frac{\sqrt{-1}}{2} \sum_{l = 1}^{n}\frac{z_{l}d\overline{z}_{l} - \overline{z}_{l}dz_{l}}{\big (1 + \sum_{l = 1}^{n}|z_{l}|^{2} \big )} + d\theta_{U}\Bigg ) \wedge d\sigma,$
\end{center}
defines a CYT structure $(g_{S^{2n+1}\times S^{1}},\mathscr{J},\nabla^{B})$ on $S^{2n+1}\times S^{1}$, where $g_{S^{2n+1}\times S^{1}} = \Omega_{S^{2n+1}\times S^{1}}(\rm{id}\otimes \mathscr{J})$, $\mathscr{J}$ is the complex structure described in Remark \ref{complexHopf}, and $\nabla^{B}$ is the associated Bismut connection.
\end{example}

\begin{remark}
Similarly to Example \ref{Hopfsurface}, the ideas used in the example above can be applied to describe a CYT structure on $\mathbb{L}_{(n,\ell)} \times S^{1}$, where $\mathbb{L}_{(n,\ell)} = S^{2n+1}/ \mathbb{Z}_{\ell}$, $\forall \ell >0$, see Example \ref{COMPLEXHOPF}.
\end{remark}

\begin{example}[$\mathscr{V}_{2}(\mathbb{R}^{6}) \times S^{1}$] 
Consider $G^{\mathbb{C}} = {\rm{SL}}(4,\mathbb{C})$ and $P_{\omega_{\alpha_{2}}} \subset {\rm{SL}}(4,\mathbb{C})$ as in Example \ref{grassmanian}. In this case we have  $X_{P} = {\rm{Gr}}(2,\mathbb{C}^{4})$ (Klein quadric). Also, notice that
\begin{center}
    
${\text{Pic}}({\rm{Gr}}(2,\mathbb{C}^{4})) = \mathbb{Z}c_{1}(L_{\chi_{\alpha_{2}}})$.
    
\end{center}
Since $I({\rm{Gr}}(2,\mathbb{C}^{4})) = 4$ and $K_{{\rm{Gr}}(2,\mathbb{C}^{4})}^{\otimes \frac{1}{4}} = L_{\chi_{\alpha_{2}}}$, it follows that
\begin{center}    
$Q(K_{{\rm{Gr}}(2,\mathbb{C}^{4})}^{\otimes \frac{1}{4}}) = \mathscr{V}_{2}(\mathbb{R}^{6})$,
\end{center}
see Example \ref{STIEFEL}. Thus, from Equation \ref{C8S8.3Sub8.3.2Eq8.3.21} we obtain
\begin{center}
$\rho_{0} =  \displaystyle 4 \sqrt{-1}\partial \overline{\partial} \log \Bigg (1+ \sum_{k = 1}^{4}|z_{k}|^{2} + \bigg |\det \begin{pmatrix}
 z_{1} & z_{3} \\
 z_{2} & z_{4}
\end{pmatrix} \bigg |^{2} \Bigg)$.
\end{center}
Moreover, we have a connection $\sqrt{-1}\eta \in \Omega^{1}(\mathscr{V}_{2}(\mathbb{R}^{6});\sqrt{-1}\mathbb{R})$ such that 
\begin{center}
$\displaystyle \eta = \frac{1}{2}d^{c}\log \Bigg (1+ \sum_{k = 1}^{4}|z_{k}|^{2} + \bigg |\det \begin{pmatrix}
 z_{1} & z_{3} \\
 z_{2} & z_{4}
\end{pmatrix} \bigg |^{2} \Bigg) + d\theta_{U}.$
\end{center}
Therefore, similarly to the previous example, we obtain
\begin{center}
$\displaystyle \psi = \sqrt{-1}\partial \overline{\partial} \log \Bigg (1+ \sum_{k = 1}^{4}|z_{k}|^{2} + \bigg |\det \begin{pmatrix}
 z_{1} & z_{3} \\
 z_{2} & z_{4}
\end{pmatrix} \bigg |^{2} \Bigg)$ \ \ and \ \ $ \displaystyle \omega_{0} = \sqrt{-1}\partial \overline{\partial} \log \Bigg (1+ \sum_{k = 1}^{4}|z_{k}|^{2} + \bigg |\det \begin{pmatrix}
 z_{1} & z_{3} \\
 z_{2} & z_{4}
\end{pmatrix} \bigg |^{2} \Bigg),$
\end{center}
such that $d\eta = \pi^{\ast}\psi$, notice that $\psi = \omega_{0}$. Hence, we can verify that ${\text{Ric}}^{\nabla}(\omega_{0}) - \Lambda_{\omega_{0}}(\psi)\psi = 0.$ Thus, we obtain from Theorem \ref{CYThomo} that 

$$\Omega_{\mathscr{V}_{2}(\mathbb{R}^{6}) \times S^{1}} =  \displaystyle \sqrt{-1}\partial \overline{\partial} \log \Bigg (1+ \sum_{k = 1}^{4}|z_{k}|^{2} + \bigg |\det \begin{pmatrix}
 z_{1} & z_{3} \\
 z_{2} & z_{4}
\end{pmatrix} \bigg |^{2} \Bigg) \ + \ \Bigg ( \displaystyle\frac{1}{2}d^{c}\log \Bigg (1+ \sum_{k = 1}^{4}|z_{k}|^{2} + \bigg |\det \begin{pmatrix}
 z_{1} & z_{3} \\
 z_{2} & z_{4}
\end{pmatrix} \bigg |^{2} \Bigg) + d\theta_{U}\Bigg ) \wedge d\sigma,$$
defines a CYT structure $(g_{\mathscr{V}_{2}(\mathbb{R}^{6}) \times S^{1}},\mathscr{J},\nabla^{B})$ on $\mathscr{V}_{2}(\mathbb{R}^{6}) \times S^{1}$, where $g_{\mathscr{V}_{2}(\mathbb{R}^{6}) \times S^{1}} = \Omega_{\mathscr{V}_{2}(\mathbb{R}^{6}) \times S^{1}}(\rm{id}\otimes \mathscr{J})$, $\mathscr{J}$ is the complex structure described in Remark \ref{complexHopf}, and $\nabla^{B}$ is the associated Bismut connection.
\end{example}

\begin{remark}
As in the previous cases, the ideas applied in the last example also can be naturally generalized in order to obtain a CYT structure on $Q(K_{{\rm{Gr}}(k,\mathbb{C}^{n})}^{\otimes \frac{\ell}{I({\rm{Gr}}(k,\mathbb{C}^{n}))}}) \times S^{1}$, $\forall \ell > 0$, see Example \ref{Grassmanian}.
\end{remark}

Our next example illustrate by means of a concrete case how we can apply Theorem \ref{CYThomo} in order to obtain a CYT structure on certain principal $T^{2}$-bundles over full flag manifolds.

\begin{example}[$X_{1,1} \times S^{1}$] 
 Consider $G^{\mathbb{C}} = {\rm{SL}}(3,\mathbb{C})$, and $P_{\emptyset} = B \subset {\rm{SL}}(3,\mathbb{C})$ (Borel subgroup). As we have seen in Example \ref{ALOFFWALLACH}, in this case we have the Wallach flag manifold $X_{B} = W_{6}$ given by the quotient space

\begin{center}

$W_{6} = {\rm{SL}}(3,\mathbb{C})/B = {\rm{SU}}(3)/T^{2}.$

\end{center}
Moreover, we have the characterization $\mathscr{P}(W_{6},{\rm{U}}(1)) = \mathbb{Z}\mathrm{e}(Q(\omega_{\alpha_{1}})) \oplus \mathbb{Z}\mathrm{e}(Q(\omega_{\alpha_{2}})).$
A straightforward computation shows that $\delta_{B} = 2\omega_{\alpha_{1}} + 2\omega_{\alpha_{2}}$, and $I(W_{6}) = 2$. 
Hence, we obtain  

\begin{center}

$Q(K_{W_{6}}^{\otimes \frac{1}{I(W_{6})}}) = Q(-\omega_{\alpha_{1}}) + Q(-\omega_{\alpha_{2}}) = {\rm{SU}}(3)/{\rm{U}}(1) = X_{1,1},$

\end{center}
where $X_{1,1}$ is an example of Aloff-Wallach space, see Example \ref{ALOFFWALLACH}. By means of \ref{riccinorm}, we obtain 
\begin{center}
$\rho_{0} = \displaystyle 2\sqrt{-1}\partial \overline{\partial}\log \Bigg [ \bigg ( 1 + \sum_{i = 1}^{2}|w_{i}|^{2} \bigg ) \bigg (1 + |w_{3}|^{2} + \bigg | \det \begin{pmatrix}
w_{1} & 1  \\                  
w_{2}  & w_{3} 
 \end{pmatrix} \bigg |^{2} \bigg ) \Bigg ].$
\end{center}
Furthermore, in this case we have a connection $\sqrt{-1}\eta \in \Omega^{1}(X_{1,1};\sqrt{-1}\mathbb{R})$ such that 
\begin{center}
$$\eta = \displaystyle \frac{1}{2} d^{c} \log \Bigg [ \bigg ( 1 + \displaystyle \sum_{i = 1}^{2}|w_{i}|^{2} \bigg ) \bigg (1 + |w_{3}|^{2} + \bigg | \det \begin{pmatrix}
w_{1} & 1  \\                  
w_{2}  & w_{3} 
 \end{pmatrix} \bigg |^{2} \bigg ) \Bigg ] + d\theta_{U}.$$
\end{center}
Therefore, similarly to the previous examples, we obtain
\begin{center}
$\displaystyle \psi = \displaystyle \sqrt{-1}\partial \overline{\partial}\log \Bigg [ \bigg ( 1 + \sum_{i = 1}^{2}|w_{i}|^{2} \bigg ) \bigg (1 + |w_{3}|^{2} + \bigg | \det \begin{pmatrix}
w_{1} & 1  \\                  
w_{2}  & w_{3} 
 \end{pmatrix} \bigg |^{2} \bigg ) \Bigg ],$ 
\end{center} 
such that $d\eta = \pi^{\ast}\psi$, and 
\begin{center}
$ \displaystyle \omega_{0} = \frac{3}{2}\displaystyle \sqrt{-1}\partial \overline{\partial}\log \Bigg [ \bigg ( 1 + \sum_{i = 1}^{2}|w_{i}|^{2} \bigg ) \bigg (1 + |w_{3}|^{2} + \bigg | \det \begin{pmatrix}
w_{1} & 1  \\                  
w_{2}  & w_{3} 
 \end{pmatrix} \bigg |^{2} \bigg ) \Bigg ],$
\end{center}
notice that $\psi = \frac{2}{3}\omega_{0}$, where $m = 2$. It is straightforward to verify that ${\text{Ric}}^{\nabla}(\omega_{0}) - \Lambda_{\omega_{0}}(\psi)\psi = 0$. Thus, we obtain from Theorem \ref{CYThomo} that 

$\Omega_{X_{1,1} \times S^{1}} = \displaystyle \sqrt{-1}\partial \overline{\partial}\log \Bigg [ \bigg ( 1 + \sum_{i = 1}^{2}|w_{i}|^{2} \bigg ) \bigg (1 + |w_{3}|^{2} + \bigg | \det \begin{pmatrix}
w_{1} & 1  \\                  
w_{2}  & w_{3} 
 \end{pmatrix} \bigg |^{2} \bigg ) \Bigg ]$
 
$$+ \ \ \displaystyle \Bigg \{ \frac{1}{2} d^{c} \log \Bigg [ \bigg ( 1 + \displaystyle \sum_{i = 1}^{2}|w_{i}|^{2} \bigg ) \bigg (1 + |w_{3}|^{2} + \bigg | \det \begin{pmatrix}
w_{1} & 1  \\                  
w_{2}  & w_{3} 
 \end{pmatrix} \bigg |^{2} \bigg ) \Bigg ] + d\theta_{U} \Bigg \} \wedge d\sigma,$$  
defines a CYT structure $(g_{X_{1,1} \times S^{1}},\mathscr{J},\nabla^{B})$ on $X_{1,1} \times S^{1}$, where $g_{X_{1,1} \times S^{1}} = \Omega_{X_{1,1} \times S^{1}}(\rm{id}\otimes \mathscr{J})$, $\mathscr{J}$ is the complex structure described in Remark \ref{complexHopf} and $\nabla^{B}$ is the associated Bismut connection.
\end{example}

\begin{example}[Full flag manifolds $G/T$] A full flag manifold is defined as the homogeneous space given by $G/T$, where $G$ is a compact simple Lie group and $T \subset G$ is a maximal torus. By considering the root system $\Pi = \Pi^{+} \cup \Pi^{-}$ associated to the pair $(G,T)$ \cite{KNAPP}, from the complexification $G^{\mathbb{C}}$ of $G$ we have an identification
\begin{center}
$G/T \cong G^{\mathbb{C}}/B,$
\end{center}
where $B \subset G^{\mathbb{C}}$ is a Borel subgroup such that $B \cap G = T$. In this setting we have $\dim_{\mathbb{R}}(G/T) = 2|\Pi^{+}|$, and if we consider the principal circle bundle $Q(K_{G/T}) \to G/T$, we can apply Theorem \ref{CYThomo} and obtain an explicit CYT structure $(g_{M},\mathscr{J},\nabla^{B})$ on $M = Q(K_{G/T}) \times {\rm{U}}(1)$, such that 
\begin{equation}
{\text{Hol}}^{0}(\nabla^{B}) \subset {\rm{SU}} \big (|\Pi^{+}| + 1 \big ).
\end{equation}
An interesting feature of this particular example is that, if we denote by
\begin{equation}
\label{fullweight}
\displaystyle \varrho = \frac{1}{2} \sum_{\alpha \in \Pi^{+}}\alpha,
\end{equation}
it follows that $\delta_{B} = 2\varrho$, see \ref{fundweight}. Therefore, from Proposition \ref{C8S8.2Sub8.2.3Eq8.2.35} it follows that $K_{G/T} = L_{\chi_{2\varrho}}^{-1}$. Moreover, by using the notation \ref{notationsimple}, we can write $Q(K_{G/T}) = Q(-2\varrho)$, notice that in this case we have $I(G/T) = 2$ \cite[$\S$ 13.3]{Humphreys}. Also, we have that the CYT structure on $M = Q(-2\varrho) \times {\rm{U}}(1)$ is completely determined by 
\begin{center}
$\displaystyle \eta =  \frac{1}{4}d^{c}\log \Big (\big | \big |s_{U}v_{2\varrho}^{+} \big| \big |^{2} \Big ) + d\theta_{U},$
\end{center}
for some local section $s_{U} \colon U \subset G/T \to G^{\mathbb{C}}$, where $v_{2\varrho}^{+}$ denotes the highest weight vector of weight $2\varrho$ for the irreducible $\mathfrak{g}^{\mathbb{C}}$-module $V(2\varrho)$.
\end{example}

\subsection{Astheno-K\"{a}hler structures on products of compact homogeneous Sasaki manifolds} In this subsection we further explore the applications of our main results, i.e. Theorem \ref{fundamentaltheorem} and Theorem \ref{main2}. The main goal is providing an explicit description, in terms of elements of Lie theory, for astheno-K\"{a}hler structures by means of Tsukada's Hermitian structures on the product of two homogeneous Sasakian manifolds, e.g. \cite{Tsukada}, \cite{Matsuo1}, \cite{Matsuo2}. 

In what follows we keep the conventions and notations of the previous subsection.

\begin{definition}
\label{asthenodef}
A Hermitian manifold $M$ with complex dimension $m$ is said to be astheno-K\"{a}hler if its fundamental $2$-form $\Omega_{M}$ satisfies $dd^{c}\Omega_{M}^{m-2} = 0$, i.e., if 
\begin{equation}
\Omega_{M}^{m-2} = \underbrace{\Omega_{M} \wedge \ldots \wedge \Omega_{M}}_{(m-2){\text{-times}}}, 
\end{equation}
is pluriharmonic.
\end{definition}

In the setting of Theorem \ref{main2} we have $M = Q_{1} \times Q_{2}$, where $Q_{i} \in \mathscr{P}(X_{P_{i}},{\rm{U}}(1))$, $i = 1,2$, and we can also consider Morimoto's Hermitian structure $(\Omega_{M},J_{\sqrt{-1}})$ on $M$, see Example \ref{KTMorimoto}. Further, if we suppose that $\dim_{\mathbb{C}}(M) = 3$, the astheno-K\"{a}hler condition becomes
\begin{equation}
dd^{c}\Omega_{M} = 0.
\end{equation}
Thus, since $T_{B} = -d^{c}\Omega_{M} = J_{\sqrt{-1}}\Omega_{M}$, the astheno-K\"{a}hler condition is equivalent to the strong KT condition, see Remark \ref{KTdef}. From these last comments, it is straightforward to see that $S^{3}\times S^{3}$ endowed with Morimoto's Hermitian structure is an example of astheno-K\"{a}hler which is also strong $KT$, see \cite[Theorem 4.1]{Matsuo1}.
\begin{remark}
As we have seen in Theorem \ref{main2}, the complex structure $J_{\sqrt{-1}}$ is a particular case of a more general construction provided by \cite{Manjarin} and \cite{Tsukada}, see Proposition \ref{Manjarincomplex} and Remark \ref{Tsukadacomplex}. By following \cite{Tsukada}, \cite{Matsuo1}, \cite{Matsuo2}, we have similar results related to the construction of astheno-K\"{a}hler structures for more general Calabi-Eckmann manifolds of the form $S^{2n+1} \times S^{2m+1}$, $n,m >1$. 
\end{remark}
Let us briefly describe some general facts related to the ideas introduced in \cite{Tsukada}, and more recently applied in the setting of astheno-K\"{a}hler manifolds in \cite{Matsuo1}, \cite{Matsuo2}. 

Given an almost contact metric manifold $Q$ with structure tensors $(\phi, \xi,\eta,g)$, see \ref{contactmetric}, we say that $M$ is a \textit{contact metric} manifold if the structure tensors satisfy the compatibility conditions:
\begin{enumerate}

    \item $\eta \wedge (d\eta)^{n} \neq 0$, \  $\eta(\xi) = 1$,
    
    \item $\phi \circ \phi = - {\rm{id}} + \eta \otimes \xi$,
    
    \item $g(\phi \otimes \phi) = g - \eta \otimes \eta$,
    
    \item $d\eta = 2g(\phi \otimes {\rm{id}})$.
    
\end{enumerate}
Given a contact metric manifold $(Q,\phi, \xi,\eta,g)$, we say that $Q$ is a Sasaki manifold if and only if $\xi$ is a Killing vector field (i.e. $\mathscr{L}_{\xi}g = 0$) and the underlying almost contact structure $(\phi, \xi,\eta)$ satisfies
\begin{center}
$\big [ \phi,\phi \big ] + d\eta \otimes \xi = 0,$
\end{center}
i.e., if $(\phi, \xi,\eta)$ is a normal almost contact structure, see for instance \cite{BLAIR}.
\begin{remark}
An alternative characterization of Sasaki manifolds obtained directly from almost contact metric manifolds is the following: an almost contact metric manifold $(Q,\phi, \xi,\eta,g)$ is a Sasaki manifold if and only if 
\begin{equation}
(\nabla_{X}\phi)Y = g(X,Y)\xi - \eta(Y)X,
\end{equation}
$\forall X,Y \in TQ$, such that $\nabla$ is the associated Levi-Civita connection, e.g. \cite[Theorem 6.3]{BLAIR}.
\end{remark}
Given two Sasaki manifolds $Q_{1}$ and $Q_{2}$, with structure tensors $(\phi_{i}, \xi_{i},\eta_{i},g_{i})$, $i = 1,2$, we can consider the 1-parametric family of complex structures given by Tsukada's \cite{Tsukada} complex structures $J_{a,b} \in  {\text{End}}(T(Q_{1}\times Q_{2}))$, $a + \sqrt{-1}b \in \mathbb{C} \backslash \mathbb{R}$, by setting
\begin{equation}
\displaystyle J_{a,b} = \phi_{1} - \bigg ( \frac{a}{b}\eta_{1} + \frac{a^{2} + b^{2}}{b} \eta_{2}\bigg) \otimes \xi_{1} + \phi_{2} + \bigg ( \frac{1}{b}\eta_{1} + \frac{a}{b} \eta_{2}\bigg) \otimes \xi_{2}.
\end{equation}
Notice that, as we have seen in Remark \ref{Tsukadacomplex}, up to a suitable change, the complex structures described above are the same as in Proposition \ref{Manjarincomplex}. For this particular case, we can consider also the 1-parametric family of Hermitian metrics on the complex manifold $(Q_{1} \times Q_{2},J_{a,b})$ introduced by Tsukada \cite{Tsukada}, see also \cite{Matsuo2}, which is defined by
\begin{equation}
g_{a,b} = g_{1} + g_{2} + a\big(\eta_{1} \otimes \eta_{2} + \eta_{2} \otimes \eta_{1} \big) + \big(a^{2} + b^{2} - 1 \big)\eta_{2} \otimes \eta_{2}.
\end{equation}
From the compatibility conditions of the underlying contact metric structures  $(\phi_{i}, \xi_{i},\eta_{i},g_{i})$, $i = 1,2$, we have that the fundamental $2$-form $\Omega_{a,b} = g_{a,b}(J_{a,b}\otimes {\rm{id}})$ associated to the Hermitian manifold $(Q_{1} \times Q_{2},J_{a,b},g_{a,b})$ is given by
\begin{equation}
\Omega_{a,b} = \frac{1}{2}\Big (d\eta_{1} + d\eta_{2} \Big ) + b\eta_{1} \wedge \eta_{2}.
\end{equation}
\begin{remark}
It is worthwhile to observe that, in the setting above, if we consider the $1$-parametric family of complex structures $J_{\tau} \in {\text{End}}(T(M_{1}\times M_{2}))$, with $\tau = a +\sqrt{-1}b \in \mathbb{C} \backslash \mathbb{R}$, as in Proposition \ref{Manjarincomplex}, we can also obtain a 1-parametric family of Hermitian metrics $g_{\tau}$. In fact, the complex structures $J_{\tau}$ can be expressed in terms of structure tensors $(\phi,\xi,\eta)$ by
\begin{equation}
 J_{\tau} = \phi_{1} - \bigg ( \frac{a}{b}\eta_{1} + \frac{1}{b} \eta_{2}\bigg) \otimes \xi_{1} + \phi_{2} + \bigg (\frac{a^{2} + b^{2}}{b}\eta_{1} + \frac{a}{b} \eta_{2}\bigg) \otimes \xi_{2}.
\end{equation}
From this, we have associated to the complex structure above a Hermitian metric $g_{\tau}$ such that 
\begin{equation}
g_{\tau} = g_{1} + g_{2} + a \big(\eta_{1} \otimes \eta_{2} + \eta_{2} \otimes \eta_{1} \big) + \big (a^{2} + b^{2} - 1 \big)\eta_{1} \otimes \eta_{1}.
\end{equation}
It is straightforward to verify that $\Omega_{\tau} = g_{\tau}(J_{\tau} \otimes {\rm{id}})$ is given by
\begin{center}
$\Omega_{\tau} = \displaystyle \frac{1}{2}\Big (d\eta_{1} + d\eta_{2} \Big ) + b\eta_{1} \wedge \eta_{2}.$
\end{center}
Therefore, we obtain a precise correspondence between Tsukada's Hermitian structures and the Hermitian structures associated to the complex structured described in Proposition \ref{Manjarincomplex} (and Theorem \ref{main2}) on products of Sasaki manifolds.
\end{remark}
By considering the Hermitian manifold $(Q_{1}\times Q_{2},J_{a,b},\Omega_{a,b})$, $a + \sqrt{-1}b \in \mathbb{C} \backslash \mathbb{R}$, described above, from \cite{Matsuo2} we have the following result. 
\begin{theorem}
\label{asthenotheo}
Let $Q_{i}$ be a $(2m_{i} + 1)$-dimensional Sasaki manifold with structure tensors $(\phi_{i}, \xi_{i},\eta_{i},g_{i})$, $i = 1,2$, and $m_{1} + m_{2} + 1 > 3$. Then the Hermitian manifold $(Q_{1}\times Q_{2},J_{a,b},\Omega_{a,b})$, $a + \sqrt{-1}b \in \mathbb{C} \backslash \mathbb{R}$, is astheno-K\"{a}hler if and only if the real constants $a,b \in \mathbb{R}$ satisfy
\begin{equation}
m_{1}\big(m_{1} - 1\big) + 2am_{1}m_{2} + m_{2}\big(m_{2} - 1\big)\big(a^{2} + b^{2}\big) = 0.
\end{equation}
\end{theorem}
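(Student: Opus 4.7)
The approach is a direct exterior-algebra computation. Three ingredients drive it: the explicit shape $\Omega_{a,b}=\tfrac12(d\eta_{1}+d\eta_{2})+b\,\eta_{1}\wedge\eta_{2}$ of the Tsukada fundamental form recorded just before the statement; the Sasaki identities $d(d\eta_{i})=0$, $\iota_{\xi_{i}}d\eta_{i}=0$, together with the contact-manifold vanishing $(d\eta_{i})^{m_{i}+1}=0$ on $Q_{i}^{2m_{i}+1}$; and the explicit action of $J_{a,b}^{*}$ on one-forms. Setting $\omega_{i}:=\tfrac12 d\eta_{i}$, a direct reading of the Tsukada formula gives
\begin{equation*}
J_{a,b}^{*}\eta_{1}=-\tfrac{a}{b}\eta_{1}-\tfrac{a^{2}+b^{2}}{b}\eta_{2},\qquad J_{a,b}^{*}\eta_{2}=\tfrac{1}{b}\eta_{1}+\tfrac{a}{b}\eta_{2},\qquad J_{a,b}^{*}\omega_{i}=\omega_{i},
\end{equation*}
where the last identity is global because $\iota_{\xi_{j}}\omega_{i}=0$ for all $i,j$, and because $\omega_{i}$ is of type $(1,1)$ for $\phi_{i}$ on the horizontal distribution by the Sasaki compatibility $g_{i}(\phi_{i}X,\phi_{i}Y)=g_{i}(X,Y)-\eta_{i}(X)\eta_{i}(Y)$.

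Writing $n:=m_{1}+m_{2}+1=\dim_{\mathbb{C}}(Q_{1}\times Q_{2})$, I would first expand $\Omega_{a,b}^{n-2}$ using $(\eta_{1}\wedge\eta_{2})^{2}=0$:
\begin{equation*}
\Omega_{a,b}^{n-2}=(\omega_{1}+\omega_{2})^{n-2}+(n-2)\,b\,\eta_{1}\wedge\eta_{2}\wedge(\omega_{1}+\omega_{2})^{n-3}.
\end{equation*}
A second binomial expansion, combined with $\omega_{i}^{m_{i}+1}=0$, collapses each summand to a handful of monomials of the form $\omega_{1}^{p}\wedge\omega_{2}^{q}$ and $\eta_{1}\wedge\eta_{2}\wedge\omega_{1}^{p}\wedge\omega_{2}^{q}$. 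Differentiating, using $d\omega_{i}=0$ and $d(\eta_{1}\wedge\eta_{2})=2(\omega_{1}\wedge\eta_{2}-\eta_{1}\wedge\omega_{2})$, then applying $d^{c}$ through the tabulated action of $J_{a,b}^{*}$ above, and finally applying $d$ once more, one finds that $dd^{c}\Omega_{a,b}^{n-2}$ reduces to the single top-degree monomial $\eta_{1}\wedge\eta_{2}\wedge\omega_{1}^{m_{1}}\wedge\omega_{2}^{m_{2}}$ multiplied by a scalar polynomial in $a,b$.

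Collecting the binomial coefficients $\binom{n-2}{m_{1}},\binom{n-2}{m_{1}-1},\binom{n-3}{m_{1}},\binom{n-3}{m_{1}-1},\binom{n-3}{m_{1}-2}$ and combining them with the rational weights $\tfrac{a}{b},\tfrac{1}{b},\tfrac{a^{2}+b^{2}}{b}$ introduced by $J_{a,b}^{*}$ yields, up to a nonzero overall factor, exactly the polynomial
\begin{equation*}
m_{1}(m_{1}-1)+2a\,m_{1}m_{2}+m_{2}(m_{2}-1)(a^{2}+b^{2}),
\end{equation*}
whose vanishing is therefore equivalent to $dd^{c}\Omega_{a,b}^{n-2}=0$. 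The hypothesis $m_{1}+m_{2}+1>3$ enters to guarantee $n-2\geq 2$, so that one is in the genuine astheno-K\"{a}hler regime rather than the borderline case $n=3$ where the condition degenerates to the SKT condition $dd^{c}\Omega_{a,b}=0$ (already treated for lower-dimensional Calabi--Eckmann models in \cite{Matsuo1}). The main difficulty is purely combinatorial: carefully tracking how the two nested binomial expansions interact with the rational weights of $J_{a,b}^{*}$ to produce the claimed quadratic in $(a,b)$ as the unique surviving coefficient; no new conceptual ingredient beyond these explicit computations is required.
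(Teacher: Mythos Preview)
Your approach is correct and is precisely the direct exterior-algebra computation of Matsuo--Takahashi to which the paper defers (the paper gives no independent proof, only the remark that the argument is essentially that of \cite[Theorem 4.1]{Matsuo2} up to sign conventions). One slip to fix: the final monomial cannot be $\eta_{1}\wedge\eta_{2}\wedge\omega_{1}^{m_{1}}\wedge\omega_{2}^{m_{2}}$, since that form has degree $2n$ while $dd^{c}\Omega_{a,b}^{\,n-2}$ has degree $2n-2$. If you trace your own outline you will see why: $d^{c}\Omega_{a,b}^{\,n-2}=J^{-1}d\Omega_{a,b}^{\,n-2}$ is a sum of terms each containing exactly one $\eta_{j}$ (the $\omega_{i}$ are $J_{a,b}$-invariant, only the $\eta_{j}$ pick up the rational weights), and applying $d$ once more replaces that single $\eta_{j}$ by $2\omega_{j}$. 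Hence $dd^{c}\Omega_{a,b}^{\,n-2}$ contains no $\eta$'s at all and is a multiple of the unique surviving $\omega$-monomial $\omega_{1}^{m_{1}}\wedge\omega_{2}^{m_{2}}$ (the constraints $\omega_{i}^{m_{i}+1}=0$ and $p+q=m_{1}+m_{2}$ force $p=m_{1}$, $q=m_{2}$). With this correction your bookkeeping of the binomial coefficients against the $J_{a,b}^{*}$-weights yields the stated quadratic in $(a,b)$, exactly as in \cite{Matsuo2}.
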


\begin{remark}
Even though we are following some different conventions as in Remark \ref{complexconv}, up to some changes of signs, the proof of the last theorem remains essentially the same as the proof presented in \cite[Theorem 4.1]{Matsuo2}.
\end{remark}

\begin{definition}
A Sasakian manifold $Q$ with structure tensors $(\phi,\xi,\eta,g)$ is said to be homogeneous if there is a connected Lie group $G$ acting transitively and effectively as a group of isometries on $Q$ preserving the Sasakian structure.
\end{definition}

\begin{remark}
In order to study the application of Theorem \ref{asthenotheo} in the homogeneous setting, we observe that for a homogeneous Sasaki manifold $M$ with structure tensors $(\phi,\xi,\eta,g)$, denoting by $G$ the connected Lie group acting on $Q$, since the Lie group preserves the structure tensors $(\phi,\xi,\eta,g)$, in particular it preserves the contact structure $\eta \in \Omega^{1}(Q)$. Therefore, the underlying contact manifold $(Q,\eta)$ is homogeneous. 
\end{remark}

As we have seen in the remark above, if we suppose that a compact connected Lie group acts on a compact Sasaki manifold $Q$ in such a way that the structure tensors $(\phi,\xi,\eta,g)$ are preserved, it follows from Theorem \ref{BWhomo} and Theorem \ref{contacthomo} that $Q = Q(L)$, where $Q(L)$ is a principal circle bundle over a complex flag manifold $X_{P}$, and $L^{-1} \in {\text{Pic}}(X_{P})$ is a very ample holomorphic line bundle, see Proposition \ref{CONTACTSIMPLY}.

By following \cite{CONTACTCORREA} and Theorem \ref{fundamentaltheorem}, given a compact homogeneous Sasaki manifold $Q = Q(L)$, its structure tensors $(\phi,\xi,\eta,g)$ can be described by means of the Cartan-Ehresmann connection $\sqrt{-1}\eta \in \Omega^{1}(Q(L);\sqrt{-1}\mathbb{R})$, such that 
\begin{equation}
\label{contactveryample}
\eta = \frac{1}{2}d^{c} \log \Big ( \prod_{\alpha \in \Sigma \backslash \Theta} ||s_{U}v_{\omega_{\alpha}}^{+}||^{2  \ell_{\alpha}}\Big) + d\theta_{U},
\end{equation}
where $P = P_{\Theta}$ and $\ell_{\alpha} \in \mathbb{Z}_{>0}$, $\forall \alpha \in \Sigma \backslash \Theta$. We notice that, for $L^{-1} \in {\text{Pic}}(X_{P})$ as above, we have
\begin{equation}
L^{-1} = \bigotimes_{\alpha \in \Sigma \backslash \Theta} L_{\chi_{\alpha}}^{\otimes \ell_{\alpha}},
\end{equation}
such that $\ell_{\alpha} \in \mathbb{Z}_{>0}$, $\forall \alpha \in \Sigma \backslash \Theta$. Thus, if we consider the weight $\lambda(L) \in  \Lambda_{\mathbb{Z}_{\geq 0}}^{\ast}$ defined by
\begin{equation}
\lambda(L) = \sum_{\alpha \in \Sigma \backslash \Theta} \ell_{\alpha} \omega_{\alpha},
\end{equation}
we can associate to $M = Q(L)$ an irreducible $\mathfrak{g}^{\mathbb{C}}$-module $V(\lambda(L))$ with highest weight vector $v_{\lambda(L)}^{+} \in V(\lambda(L))$. Since we have
\begin{equation}
\displaystyle V(\lambda(L)) \subset \bigotimes_{\alpha \in \Sigma \backslash \Theta}V(\omega_{\alpha})^{\otimes \ell_{\alpha}} \ \ {\text{and}} \ \ \displaystyle v_{\lambda(L)}^{+} = \bigotimes_{\alpha \in \Sigma \backslash \Theta}(v_{\omega_{\alpha}}^{+})^{\otimes \ell_{\alpha}},
\end{equation}
see for instance \cite[p. 186]{PARABOLICTHEORY}, we can take a $G$-invariant inner product on $V(\lambda(L))$ induced from a $G$-invariant inner product $\langle \cdot, \cdot \rangle_{\alpha}$ on each factor $V(\omega_{\alpha})$, $\forall \alpha \in \Sigma \backslash \Theta$, such that
\begin{equation}
\Big \langle  \bigotimes_{\alpha \in \Sigma \backslash \Theta} \big(v_{1}^{(\alpha)} \otimes \cdots \otimes v_{\ell_{\alpha}}^{(\alpha)}\big),  \bigotimes_{\alpha \in \Sigma \backslash \Theta} \big(w_{1}^{(\alpha)} \otimes \cdots \otimes w_{\ell_{\alpha}}^{(\alpha)}\big) \Big \rangle = \prod_{ \alpha \in \Sigma \backslash \Theta}\big \langle v_{1}^{(\alpha)},w_{1}^{(\alpha)} \big \rangle_{\alpha}\cdots \big \langle v_{\ell_{\alpha}}^{(\alpha)},w_{\ell_{\alpha}}^{(\alpha)} \big \rangle_{\alpha},
\end{equation}
By considering the norm $||\cdot||$ on $V(\lambda(L))$ induced from the inner product above, we can rewrite \ref{contactveryample} as 
\begin{equation}
\eta = \frac{1}{2}d^{c} \log \Big ( ||s_{U}v_{\lambda(L)}^{+}||^{2}\Big) + d\theta_{U},
\end{equation}
for some local section $s_{U} \colon U \subset X_{P} \to G^{\mathbb{C}}$.
\begin{remark}
It is worthwhile to observe that the ample line bundle $L^{-1} \in {\text{Pic}}(X_{P})$ associated to $M = Q(L)$ is in fact very ample, i.e., we have a projective embedding 
\begin{center}
$G^{\mathbb{C}}/P \hookrightarrow \mathbb{P}(V(\lambda(L))) = {\text{Proj}}\big (H^{0}(X_{P},L^{-1})^{\ast} \big),$
\end{center}
see for instance \cite[Page 193]{Flaginterplay}, \cite[Theorem 3.2.8]{PARABOLICTHEORY}, \cite{TAYLOR}.
\end{remark}
Now, from Theorem \ref{asthenotheo}, Theorem \ref{main2} and the ideas introduced in \cite{CONTACTCORREA}, we have the following result in the homogeneous setting.
\begin{theorem}
\label{asthenohomo}
Let $Q_{i} = Q(L_{i})$ be a compact homogeneous Sasaki manifold with structure tensors $(\phi_{i},\xi_{i},\eta_{i},g_{i})$, such that $L_{i}^{-1} \in {\text{Pic}}(X_{P_{i}})$ is an ample line bundle, for some $P_{i} = P_{\Theta_{i}} \subset G_{i}^{\mathbb{C}}$, $i = 1,2$. Then we have that:
\begin{enumerate}

\item The manifold $M = Q(L_{1}) \times Q(L_{2})$ admits a $1$-parametric family of Hermitian structures $(\Omega_{a,b},J_{a,b})$, $a + \sqrt{-1}b \in \mathbb{C} \backslash \mathbb{R}$, completely determined by principal $S^{1}$-connections $\sqrt{-1}\eta_{i} \in \Omega^{1}(Q(L_{i});\sqrt{-1}\mathbb{R})$, $i = 1,2$, such that 
\begin{equation}
\displaystyle \eta_{i} = \frac{1}{2}d^{c} \log \Big ( ||s_{U_{i}}v_{\lambda(L_{i})}^{+}||^{2}\Big) + d\theta_{U_{i}},
\end{equation}
for some local section $s_{U_{i}} \colon U_{i} \subset X_{P_{i}} \to G_{i}^{\mathbb{C}}$, where $v_{\lambda(L_{i})}^{+}$ is the highest weight vector associated to an irreducible $\mathfrak{g}^{\mathbb{C}}$-module $V(\lambda(L_{i}))$, $i = 1,2$; 

\item Moreover, the Hermitian structure $(\Omega_{a,b},J_{a,b})$ is astheno-K\"{a}hler if and only if the real constants $a,b \in \mathbb{R}$ satisfy 
\begin{equation}
\label{athenocond}
m_{\Theta_{1}}\big(m_{\Theta_{1}} - 1\big) + 2am_{\Theta_{1}}m_{\Theta_{2}} + m_{\Theta_{2}}\big(m_{\Theta_{2}} - 1\big)\big(a^{2} + b^{2}\big) = 0,
\end{equation}
where $m_{\Theta_{i}} =  |\Pi_{i}^{+} \backslash \langle \Theta_{i} \rangle^{+}|$, $i = 1,2$.
\end{enumerate}

\end{theorem}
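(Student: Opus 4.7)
The plan is to reduce the statement to the non-homogeneous result of Theorem \ref{asthenotheo} by using Theorem \ref{fundamentaltheorem} to put the structure tensors of each $Q(L_i)$ into the explicit Lie-theoretic form demanded by item (1), and then matching dimensions to obtain the numerical condition \ref{athenocond}.

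For item (1), I would first invoke Theorem \ref{fundamentaltheorem} applied to each principal $S^1$-bundle $Q(L_i) \to X_{P_i}$. Writing $L_i^{-1} = \bigotimes_{\alpha \in \Sigma_i \setminus \Theta_i} L_{\chi_\alpha}^{\otimes \ell_\alpha^{(i)}}$ with $\ell_\alpha^{(i)} \in \mathbb{Z}_{>0}$ (ampleness), Theorem \ref{fundamentaltheorem} yields a normal almost contact structure $(\phi_i,\xi_i,\eta_i)$ whose connection form, after grouping the factors, reads $\eta_i = \tfrac{1}{2} d^c \log\!\prod_{\alpha}\|s_{U_i} v_{\omega_\alpha}^+\|^{2\ell_\alpha^{(i)}} + d\theta_{U_i}$. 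Exactly as in the passage leading to Equation \ref{contactroot} in the proof of Theorem \ref{CYThomo}, one uses the containment $V(\lambda(L_i)) \subset \bigotimes_\alpha V(\omega_\alpha)^{\otimes \ell_\alpha^{(i)}}$ with highest weight vector $v_{\lambda(L_i)}^+ = \bigotimes_\alpha (v_{\omega_\alpha}^+)^{\otimes \ell_\alpha^{(i)}}$ and the factorization \ref{innerinv} of the induced $G_i$-invariant inner product to collapse the product of norms into the single expression $\|s_{U_i} v_{\lambda(L_i)}^+\|^2$, giving the desired formula for $\eta_i$. The ampleness of $L_i^{-1}$ together with Theorem \ref{BWhomo} and Proposition \ref{CONTACTSIMPLY} ensures that, after rescaling so that $d\eta_i = 2g_i(\phi_i \otimes \mathrm{id})$ (which can be arranged by choosing the invariant Kähler-Einstein representative in the cohomology class of $\tfrac{d\eta_i}{\pi}$), the tuple $(\phi_i,\xi_i,\eta_i,g_i)$ is in fact a homogeneous Sasakian structure. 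The 1-parametric family of Hermitian structures $(J_{a,b}, \Omega_{a,b})$ is then the Tsukada construction recalled in Remark \ref{Tsukadacomplex}; alternatively one may obtain it from Theorem \ref{main2} after the change of extension explained there, producing $\Omega_{a,b} = \tfrac{1}{2}(d\eta_1 + d\eta_2) + b\,\eta_1\wedge\eta_2$.

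For item (2), the hypothesis of Theorem \ref{asthenotheo} is precisely that $Q_1,Q_2$ are Sasakian of real dimensions $2m_1+1$ and $2m_2+1$ with $m_1 + m_2 + 1 > 3$. Since the Boothby-Wang fibration $S^1 \hookrightarrow Q(L_i) \to X_{P_i}$ gives $\dim_\mathbb{R} Q(L_i) = 2\dim_\mathbb{C} X_{P_i} + 1$ and $\dim_\mathbb{C} X_{P_i} = |\Pi_i^+ \setminus \langle\Theta_i\rangle^+| = m_{\Theta_i}$, the parameters $m_i$ in Theorem \ref{asthenotheo} coincide with the quantities $m_{\Theta_i}$ that appear in our statement. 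Substituting $m_i = m_{\Theta_i}$ into the condition
\[
m_1(m_1-1) + 2a\,m_1 m_2 + m_2(m_2-1)(a^2 + b^2) = 0
\]
furnished by Theorem \ref{asthenotheo} yields Equation \ref{athenocond}, and the equivalence is exactly what item (2) asserts.

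The main technical point — and the only place where more than bookkeeping is required — lies in the first step: namely, verifying that the metric produced by Theorem \ref{fundamentaltheorem} together with the specific contact form $\eta_i$ of item (1) is indeed compatible with the Sasaki normalization $d\eta_i = 2g_i(\phi_i \otimes \mathrm{id})$ needed to apply Theorem \ref{asthenotheo}. This amounts to choosing the invariant Kähler-Einstein representative $\omega_{X_{P_i}}$ in the right scale (so that $d\eta_i = \pi_i^\ast \omega_{X_{P_i}}$ matches $2g_i(\phi_i \otimes \mathrm{id})$ on horizontal vectors while $g_i(\xi_i,\xi_i)=1$), exactly as in the homogeneous Sasaki description of \cite{CONTACTCORREA}. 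All remaining computations — the normality of each factor (already built into Theorem \ref{fundamentaltheorem}), the expression for $\Omega_{a,b}$, and the algebraic reduction to \ref{athenocond} — are direct consequences of the results already assembled above.
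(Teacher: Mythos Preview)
Your proposal is correct and follows essentially the same approach as the paper: both reduce item (1) to Theorem \ref{fundamentaltheorem} and Theorem \ref{main2} together with the representation-theoretic identification $v_{\lambda(L_i)}^+ = \bigotimes_\alpha (v_{\omega_\alpha}^+)^{\otimes \ell_\alpha^{(i)}}$ (which the paper carries out in the discussion preceding the theorem rather than in the proof itself), and both obtain item (2) by reading off $\dim_{\mathbb{C}} X_{P_i} = |\Pi_i^+ \setminus \langle\Theta_i\rangle^+|$ from the parabolic decomposition and plugging into Theorem \ref{asthenotheo}. Your extra care about the Sasaki normalization $d\eta_i = 2g_i(\phi_i\otimes\mathrm{id})$ is a detail the paper handles implicitly by citing \cite{CONTACTCORREA}, so your treatment is slightly more explicit but not a different route.
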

\begin{proof} The proof is a consequence of some results covered previously. The proof of item (1) follows from Theorem \ref{fundamentaltheorem} and Theorem \ref{main2}. In fact, from Theorem \ref{fundamentaltheorem} we have an explicit description for the structure tensors $(\phi_{i},\xi_{i},\eta_{i},g_{i})$ on $Q_{i} = Q(L_{i})$ in terms of the Cartan-Ehresmann connection $\sqrt{-1}\eta_{i} \in \Omega^{1}(Q(L_{i});\sqrt{-1}\mathbb{R})$, $i = 1,2$, induced by
\begin{center}
$\displaystyle \eta_{i} = \frac{1}{2}d^{c} \log \Big ( ||s_{U_{i}}v_{\lambda(L_{i})}^{+}||^{2}\Big) + d\theta_{U_{i}}.$
\end{center}
Therefore, we obtain from Theorem \ref{main2} a concrete description for Tsukada's \cite{Tsukada} Hermitian structure $(\Omega_{a,b},J_{a,b})$, $a + \sqrt{-1}b \in \mathbb{C} \backslash \mathbb{R}$, on $Q(L_{1}) \times Q(L_{2})$. The proof of item (2) follows from Theorem \ref{asthenotheo}, and from the fact that for $P = P_{\Theta}$ we have ${\text{Lie}}(P_{\Theta}) = \mathfrak{p}_{\Theta} \subset \mathfrak{g}^{\mathbb{C}}$ given by
\begin{center}
$\mathfrak{p}_{\Theta} = \mathfrak{n}^{+} \oplus \mathfrak{h} \oplus \mathfrak{n}(\Theta)^{-},$ \ with \ $\mathfrak{n}(\Theta)^{-} = \displaystyle \sum_{\alpha \in \langle \Theta \rangle^{-}} \mathfrak{g}_{\alpha}$,
\end{center}
which implies that $\dim_{\mathbb{C}}(X_{P}) = |\big ( \Pi_{i}^{+} \backslash \langle \Theta_{i} \rangle^{+}|$. 
\end{proof}

\begin{remark}
We notice that the last theorem provides a constructive method to produce examples of astheno-K\"{a}hler manifolds in such a way that the underlying Hermitian structure can be completely determined by using elements of Lie theory such as painted Dynkin graphs \cite{Alekseevsky} and representation theory of simple Lie algebras. 
\end{remark}

\begin{remark}
\label{gauduchonatheno}
As we have mentioned in the introduction, from Theorem \ref{asthenohomo} we obtain a huge class of examples of compact non-K\"{a}hler Hermitian manifolds which can be used to illustrate the solution of Gauduchon's conjecture \cite{Gauduchon}. Being more precise, given an astheno-K\"{a}hler manifold $\big (M,\Omega_{a,b},J_{a,b} \big )$ as in Theorem \ref{asthenohomo}, and fixing the Gauduchon metric $\Omega_{0}$ (cf. \cite{Gauduchon}) in the conformal class of $\Omega_{a,b}$, for every closed real $(1,1)$-form $\psi \in c_{1}^{BC}(M)$, we have 
\begin{center}
${\text{Ric}}(\Omega_{a,b}) = \psi + \sqrt{-1}\partial \overline{\partial}F$,
\end{center}
for some $F \in C^{\infty}(M)$. From this, it follows from \cite[Corollary 1.2]{Tosatti} that there exists a unique constant $A$, and a unique Gauduchon metric $\Omega_{u} \in \Omega^{2}(M)$ satisfying
\begin{equation}
\Omega_{u}^{n-1} = \Omega_{0}^{n-1} + \sqrt{-1}\partial \overline{\partial} u \wedge \Omega_{a,b}^{n-2}
\end{equation}
for some smooth function $u$, solving the Calabi-Yau equation
\begin{equation}
\Omega_{u}^{n} = {\rm{e}}^{F + A}\Omega_{a,b}^{n}.
\end{equation}
Therefore, applying $-\sqrt{-1} \partial \overline{\partial}\log $ in the both sides of the equation above, it follows that 
\begin{equation}
{\text{Ric}}(\Omega_{u}) = {\text{Ric}}(\Omega_{a,b}) -  \sqrt{-1}\partial \overline{\partial}F = \psi.
\end{equation}
The computation above provides a brief illustration of how the result of Theorem \ref{asthenohomo} can be used as a source of examples in the setting of Gauduchon's conjecture. For more details about Gauduchon's conjecture, as well as its solution, see \cite{STW}, \cite{Tosatti}, \cite{Gauduchon}.
\end{remark}

\end{document}